\newtheorem{thm}{Theorem}[section]
\newtheorem{lemma}[thm]{Lemma}
\newtheorem{corollary}[thm]{Corollary}
\newtheorem{proposition}[thm]{Proposition}
\newtheorem{thmdefi}[thm]{Theorem - Definition}
\newtheorem*{thm*}{Theorem}
\theoremstyle{definition}
\newtheorem{definition}[thm]{Definition}
\newtheorem{example}[thm]{Example}
\newtheorem{remark}[thm]{Remark}
\newtheorem{prg}[thm]{}
\newcommand{\ph}{\varphi}
\newcommand{\w}{\widetilde}
\newcommand{\ma}{\mathcal}
\newcommand{\la}{\longrightarrow}
\newcommand{\ol}{\mathcal{O}}
\newcommand{\wi}{\widehat}
\newcommand{\pr}{\mathbb{P}}
\newcommand{\Q}{\mathbb{Q}}
\newcommand{\R}{\mathbb{R}}
\newcommand{\Z}{\mathbb{Z}}
\newcommand{\N}{\mathcal{N}_1}
\newcommand{\Nu}{\mathcal{N}^1}
\newcommand{\Gr}{\operatorname{Gr}}
\newcommand{\dom}{\operatorname{dom}}
\newcommand{\im}{\operatorname{Im}}
\newcommand{\NE}{\operatorname{NE}}
\newcommand{\Exc}{\operatorname{Exc}}
\newcommand{\Supp}{\operatorname{Supp}}
\newcommand{\Lo}{\operatorname{Locus}}
\newcommand{\codim}{\operatorname{codim}}
\newcommand{\Eff}{\operatorname{Eff}}
\newcommand{\Nef}{\operatorname{Nef}}
\newcommand{\Chow}{\operatorname{Chow}}
\newcommand{\Mov}{\operatorname{Mov}}
\newcommand{\mov}{\operatorname{mov}}
\newcommand{\Hom}{\operatorname{Hom}}
\newcommand{\Hilb}{\operatorname{Hilb}}
\patchcmd{\section}{\normalfont}{\normalfont\large}{}{}
\patchcmd{\subsection}{\bfseries}{\scshape\centering}{}{}
\patchcmd{\subsection}{-.5em}{.5em}{}{}
\title{Fano $4$-folds with a small contraction}
\author{C.~Casagrande}
\address{Universit\`a di Torino,
Dipartimento di Matematica,
via Carlo Alberto 10,
10123 Torino - Italy}
\email{cinzia.casagrande@unito.it}
\date{May 18, 2022}
\subjclass[2020]{14J45,14J35,14E30}
\begin{document}
\maketitle
\begin{abstract}
Let $X$ be a smooth complex Fano 4-fold. We show that if $X$ has a small
elementary contraction, then $\rho_X\leq 12$, where $\rho_X$ is the Picard number of $X$. This
result is based on a careful study of the geometry of $X$, on which we give a lot
of information. We also show that in the boundary case $\rho_X=12$ an open subset
of $X$ has a smooth fibration with fiber $\pr^1$. 
Together with
previous results, this implies that if $X$ is a Fano 4-fold with $\rho_X\geq 13$, then every
elementary contraction of $X$ is divisorial and sends a divisor to a surface. 
The
proof is based on birational geometry and the study of families of rational
curves. More precisely the main tools are: the study of families of lines in
Fano 4-folds and the construction of divisors covered by lines, a detailed
study of fixed prime divisors, the properties of the faces of the effective
cone, and a detailed study of rational contractions of fiber type.
\end{abstract}
\renewcommand{\theequation}{\thethm}
\section{Introduction}
\noindent The classification of smooth, complex Fano varieties has been achieved up to
dimension $3$ and attracts a lot of attention also in higher dimensions.
Let us focus on dimension $4$, 
 the first open case: the context of this paper is the  study of Fano $4$-folds with ``large'' Picard number (e.g.\ $\rho\geq 6$) by means of birational geometry and families of rational curves, with the aim of gaining a good understanding of the geometry and behaviour of
these $4$-folds. 
The main result of this paper is the following.
\begin{thm}\label{main}
Let $X$ be a smooth Fano $4$-fold and $\rho_X$ its Picard number. If $X$ has a small elementary contraction, then $\rho_X\leq 12$.
\end{thm}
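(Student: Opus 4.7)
The plan is to leverage the small elementary contraction $\sigma\colon X\to Y$ to extract strong constraints on the Mori cone and the effective cone of $X$, and to combine these constraints with a careful analysis of families of rational curves and of rational contractions of fiber type.

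First I would set the stage as follows. Let $R\subset\NE(X)$ be the extremal ray contracted by $\sigma$. Because $\sigma$ is small, $\Exc(\sigma)$ has codimension $\geq 2$, so the unsplit family of minimal rational curves with class in $R$ does \emph{not} cover $X$; on the other hand, since $X$ is a Mori dream space, $\sigma$ is followed by a flip $X\dashrightarrow X^+$ that produces a new birational model on which MMP-type arguments are available. Running an MMP from $X^+$ yields additional extremal rays, hence additional unsplit covering families of rational curves on $X$, and begins to pin down a large piece of the structure of $\overline{\NE}(X)$. The key qualitative input is that the small ray $R$ lies in the interior of the movable cone, so the ``horizontal'' geometry around $\Exc(\sigma)$ is rich.

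The heart of the argument would be the construction of sufficiently many divisors covered by lines. After fixing a suitable polarization, I would identify unsplit families of minimal rational curves (``lines'') whose loci are prime divisors; the presence of the small contraction, via bend-and-break applied to the flipped curves, forces several such covering families to exist. Together with the list of fixed prime divisors of $X$, these covering families control the extremal rays of $\Eff(X)$ on both its movable and non-movable faces. By analyzing how each fixed prime divisor meets lines of the various covering families, and how it meets $\Exc(\sigma)$, I would read off numerical relations among their classes in $\Nu(X)$, each relation cutting down a dimension and producing an explicit upper bound on $\rho_X$. The next step is to exploit the faces of $\Eff(X)$, or equivalently the rational contractions of $X$: from a well-chosen face I would produce a rational contraction of fiber type $X\dashrightarrow W$, whose general fiber is a Fano (or log Fano) variety of dimension at most $3$ and whose base has $\rho_W<\rho_X$. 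Invoking the classification of Fano $3$-folds and the known bounds on relative Picard numbers in low dimensions, I would convert this fibration into the final inequality $\rho_X\leq 12$.

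The main obstacle I expect is the last step: producing a rational contraction of fiber type with controlled fibers and controlled base, when the only input is the existence of a single small elementary contraction. Showing that such a fibration always exists, and that its general fiber and base are constrained sharply enough to give the value $12$ and not merely a larger bound, requires precisely the detailed understanding of fixed prime divisors, of faces of $\Eff(X)$, and of unsplit covering families by lines alluded to above. Everything else --- existence of flips, finite generation of $\Eff(X)$, the Mori dream space property for Fano $4$-folds, the basic deformation theory of rational curves --- is standard and feeds into this final, essentially combinatorial, argument on $\Nu(X)$.
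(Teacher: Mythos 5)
Your outline correctly names the toolbox the paper actually uses (families of lines, divisors covered by lines, fixed prime divisors, faces of $\Eff(X)$, rational contractions of fiber type), but as it stands it is a plan rather than a proof: every step that would actually produce the number $12$ is left as something you ``would'' do, and you yourself flag the decisive step as an expected obstacle. The concrete gaps are these. First, the entry point is not that the small ray lies in the interior of the movable cone --- as a ray of curves it satisfies $\NE(f)\not\subset\mov(X)=\Eff(X)^{\vee}$, which is the opposite statement; this is what yields a one-dimensional face of $\Eff(X)$ negative on the ray, hence (after disposing of the movable case, which gives $\rho_X\leq 11$) a \emph{fixed} prime divisor $D$ with $D\cdot\NE(f)<0$. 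Second, the key structural reduction is missing: by Kawamata's theorem $\Exc(f)$ is a disjoint union of exceptional planes, fixed prime divisors of type $(3,2)$ contain no exceptional planes, and the type $(3,0)^{sm}$ case is handled by earlier work, so one is reduced to $D$ of type $(3,1)^{sm}$ or $(3,0)^Q$; nothing in your sketch isolates these two cases, yet the entire remainder of the argument is case analysis on them. Third, the divisors covered by lines are not obtained by bend-and-break applied to the flipped curves (the exceptional curves have $-K_X\cdot\ell=-1$ and do not deform); they come from the explicit geometry of the divisorial contraction associated to $D$ --- lines in the $\pr^2$-fibers of a blow-up of a curve, or lines in a quadric, passing through the points where the exceptional curves meet $\w{D}$ --- together with a dimension count ruling out families of lines of dimension $\geq 3$ when $\rho_X\geq 6$.

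Finally, the bound $12$ itself is not a consequence of any single fibration argument but of a long combinatorial analysis: one shows that $\Eff(X)$ is generated by classes of fixed prime divisors, classifies how the adjacent fixed divisors intersect the curves $C_D$, $C_{E_i}$ and the classes $\gamma_j$ of lines in exceptional planes, and derives linear dependencies among divisor classes that contradict $\rho_X\geq 13$. A rational contraction of fiber type with three-dimensional base gives $\rho_X\leq 12$ by a quoted theorem, and bases of dimension $1$ or $2$ give bounds via $\rho_X\leq\dim\tau+\rho_F$; but proving that one of these situations (or a terminal numerical contradiction) must occur is exactly the content of Sections 7--9 of the paper, and your proposal does not indicate how to force it. As written, the argument establishes no bound on $\rho_X$.
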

The proof of this result is based on a careful study
of the geometry of $X$, on which we give a lot of information.
 We also show an additional property in the boundary case $\rho_X= 12$, see Th.~\ref{main2}.

Apart from products of del Pezzo surfaces, to the author's knowledge all the known examples of Fano $4$-folds have $\rho\leq 9$; there is just one known family 
  with $\rho=9$, and it has small elementary contractions, see \cite{vb}. Thus we do not know whether the bound of Th.~\ref{main} is sharp.

Let us notice that if $X$ has an elementary contraction of fiber type, then $\rho_X\leq 11$ by \cite[Cor.~1.2(ii)-(iii)]{fanos}, and if $X$ has an elementary divisorial contraction sending a divisor to a point or to a curve, then $\rho_X\leq 5$ \cite[Rem.~2.17(1)]{blowup}. Therefore we have the following.
\begin{corollary}
Let $X$ be a smooth Fano $4$-fold with $\rho_X\geq 13$. Then every elementary contraction of $X$ is divisorial and sends the exceptional divisor to a surface.
\end{corollary}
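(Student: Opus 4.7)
The plan is a short case analysis on the type of an elementary contraction, using the three bounds collected in the discussion preceding the statement. Recall that an elementary contraction $\varphi\colon X\to Y$ of a smooth projective variety falls into exactly one of three classes: of fiber type ($\dim Y<\dim X$), divisorial (birational, with exceptional locus a prime divisor), or small (birational, with exceptional locus of codimension at least $2$).

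First, Theorem~\ref{main} excludes the existence of any small elementary contraction as soon as $\rho_X\geq 13>12$. Next, \cite[Cor.~1.2(ii)-(iii)]{fanos} excludes any elementary contraction of fiber type, since such a contraction forces $\rho_X\leq 11$. Hence under the hypothesis $\rho_X\geq 13$ every elementary contraction of $X$ must be divisorial.

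Finally I would examine where the exceptional divisor is sent. Since $\dim X=4$, the exceptional divisor has dimension $3$ and is contracted to a subvariety of dimension $0$, $1$, or $2$. By \cite[Rem.~2.17(1)]{blowup} the first two possibilities force $\rho_X\leq 5$, which is incompatible with $\rho_X\geq 13$. Thus the image must be a surface, completing the argument. No step in this case analysis presents a genuine obstacle: the substance lies entirely in Theorem~\ref{main}, while the corollary itself is a quick combinatorial consequence of it together with the two cited bounds from \cite{fanos} and \cite{blowup}.
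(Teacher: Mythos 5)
Your proposal is correct and follows exactly the paper's own argument: the trichotomy fiber type / small / divisorial, with the fiber-type case excluded by \cite[Cor.~1.2(ii)-(iii)]{fanos} ($\rho_X\leq 11$), the small case by Theorem~\ref{main} ($\rho_X\leq 12$), and the divisorial-to-a-point-or-curve case by \cite[Rem.~2.17(1)]{blowup} ($\rho_X\leq 5$). Nothing further is needed.
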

This is the behaviour of  products of del Pezzo surfaces, indeed we expect that for $\rho_X$ large enough these should be the only Fano $4$-folds.

\bigskip

Let us now explain the content of the paper and strategy of the proof of Th.~\ref{main}. In the sequel $X$ is a smooth, complex Fano $4$-fold.

\medskip

\noindent {\bf Previous work.}
Our starting point is the results on the geometry of Fano $4$-folds developed in \cite{eff,blowup,fibrations}, in particular: the Lefschetz defect, the classification of fixed prime divisors,  and the structure of rational contractions of fiber type. 

\smallskip

\noindent {\sc The Lefschetz defect.}
As usual we denote by $\N(X)$  the real vector space of numerical equivalence classes of  one-cycles in $X$ with real coefficients, and for every closed subset $Z\subset X$, we denote by $\N(Z,X)$ the linear subspace of $\N(X)$ spanned by classes of curves contained in $Z$.
If $X$ is Fano, the Lefschetz defect ${\delta_X}$ of $X$, introduced in \cite{codim}, is defined as follows:
\stepcounter{thm}
\begin{equation}\label{Lefschetz}
  \delta_X:=\max\bigl\{\codim\N(D,X)\,|\,D\text{ is a prime divisor in }X\bigr\}.
  \end{equation}
If $X$ is not a product of surfaces and $\delta_X\geq 2$, then $\rho_X\leq 12$ (see Th.~\ref{codim} and \ref{delta2}), so that we can reduce to the case $\delta_X\leq 1$.

\smallskip

\noindent {\sc Fixed prime divisors.}  A fixed prime divisor is a prime divisor $D$ which is the stable base locus of the linear system
$|D|$. Thanks to the bounds on the Lefschetz defect, in \cite{eff} it is shown that when $\rho_X\geq 7$ there are four possible types of fixed prime divisors of $X$, called $(3,2)$, $(3,1)^{sm}$, $(3,0)^{sm}$, and $(3,0)^Q$, in relation to the associated elementary divisorial contractions; see \S \ref{secfixed} for more details. If $X$ has a fixed prime divisor of type $(3,0)^{sm}$, then   $\rho_X\leq 12$ by \cite{blowup} (see Th.~\ref{30}), so that we can exclude this case and focus on the remaining ones.

\smallskip

\noindent {\sc Rational contractions of fiber type.} A rational contraction of fiber type is a rational map $f\colon X\dasharrow Y$ that factors as sequence of flips $X{\dasharrow} X'$ followed by a contraction of fiber type $f'\colon X'\to Y$ (i.e.\ a surjective map with connected fibers, with $Y$ normal and projective, and $\dim Y\leq 3$, see Section \ref{notation}).
Rational contractions of fiber type of Fano $4$-folds are studied in detail in \cite{fibrations}, where in particular it is shown that if $X$ has a rational contraction onto a $3$-fold, and $X$ is not a product of surfaces, then $\rho_X\leq 12$ (see Th.~\ref{3fold}). 

\medskip

\noindent {\bf New results.} The new tools and results used for the proof of Th.~\ref{main} include: the construction of families of lines and of divisors covered by lines, the properties of the faces of the effective cone, a detailed study of fixed prime divisors of type $(3,1)^{sm}$ and $(3,0)^Q$, and a detailed study of rational contractions of fiber type onto surfaces and onto $\pr^1$.

\smallskip

\noindent {\sc Families of lines.} We define a line in $X$ as a rational curve $C$ such that $-K_X\cdot C=1$, and a family of lines  as a ``maximal'' irreducible subvariety $V$ of $\Chow(X)$ whose general member is a line, see Section \ref{lines};
we study such families.
By standard deformation theory one has $\dim V\geq 2$, and we show that if $\rho_X\geq 6$, then $\dim V=2$
(Th.~\ref{uno}), because a family of lines of larger dimension yields a prime divisor $D$ with small $\dim\N(D,X)$, contradicting the results on the Lefschetz defect. Moreover the curves of the family $V$ cover a surface or a prime divisor. 
Divisors covered by lines play a crucial role in the paper; when $\rho_X\geq 7$ such a divisor is either nef, or fixed of type $(3,2)$; moreover the covering family of lines is unique (Lemmas \ref{excplane} and \ref{chitarre}).

\smallskip

\noindent {\sc Movable and fixed faces of the effective cone.}
As usual we denote by  $\Nu(X)$ the real vector space of numerical equivalence classes of divisors  with real coefficients in $X$;
the cones
$\Eff(X)$ and $\Mov(X)$, of classes of effective and movable divisors respectively, are convex polyhedral cones in $\Nu(X)$.

We say that a 
(proper) face $\tau$ of $\Eff(X)$ is a {\em movable face} if the relative interior of $\tau$ intersects the movable cone $\Mov(X)$; there exists a movable face if and only if $X$ has some non-zero, non-big movable divisor. Given a movable face $\tau$ of $\Eff(X)$, we construct a rational contraction of fiber type $f\colon X\dasharrow Y$ such that $\rho_Y= \dim(\tau\cap\Mov(X))$ and $\rho_X\leq\dim\tau+\rho_F$, where $F$ is a general fiber of $f'\colon X'\to Y$ (where as above $f$ factors as a sequence of flips followed by the regular contraction $f'$) and it is smooth, Fano, with $\dim F\leq 3$, so that $\rho_F\leq 10$ by classification (see \S \ref{movable}).

On the other hand, we say that a face $\tau$ of $\Eff(X)$ is a {\em fixed face} if $\tau\cap\Mov(X)=\{0\}$. A fixed face is always simplicial, and is generated by classes of fixed prime divisors; these divisors are in very special relative positions (see \S \ref{fixedfaces} and \ref{secfixed}).

Thus when $\Eff(X)$ has a movable face of small dimension,  we get a good bound on $\rho_X$; in particular if $\Eff(X)$ has a one-dimensional movable face, then $\rho_X\leq 11$. Therefore
 we can reduce to the case where
every one-dimensional face of $\Eff(X)$ is fixed, and this yields a lot of fixed prime divisors in $X$.

\smallskip

\noindent {\sc Strategy of the proof.}
Let $f\colon X\to Y$ be a small elementary contraction, and assume that $\rho_X\geq 7$ and $\delta_X\leq 1$. We can also assume that $\Eff(X)$ is generated by classes of fixed prime divisors, and there exists one such divisor $D$ such that $D\cdot C<0$ for a curve $C\subset X$ contracted by $f$, so that $D$ contains the exceptional locus of $f$.

After the classification of fixed prime divisors, we have four possible types for $D$;
we can exclude that $D$ is of type $(3,0)^{sm}$. If $D$ were of type $(3,2)$, 
$\Exc(f)\subset D$ would yield $\dim\N(D,X)\leq 2$, which contradicts our assumptions on $\rho_X$ and $\delta_X$. Therefore 
 $D$ is of type $(3,1)^{sm}$ or $(3,0)^Q$; for this reason we focus on these two types of fixed divisors, which are the main characters of the paper (see Rem.~\ref{stress}).

 Given a fixed prime divisor $E\subset X$ of type $(3,1)^{sm}$ or $(3,0)^Q$, we construct from $E$
several families of lines, each  covering a prime divisor different from $E$; this is a key result that allows to produce many prime divisors covered by lines with several good properties.

 Using all these preliminary results, first we show that $\rho_X\leq 12$ when $X$ has a fixed prime divisor of type $(3,1)^{sm}$ (Th.~\ref{finalmente}). The proof is quite articulated and is contained in Section \ref{case31}; we refer the reader to the beginning of that section for an overview.

Then we are left to consider the case where $X$ has only fixed prime divisors of type $(3,2)$ and $(3,0)^Q$; this is treated in Section
 \ref{Q}, and again we refer the reader to the beginning of that section for an overview.

 A frequent 
strategy in the paper is to look for movable, non-big divisors on $X$, in order to construct a rational contraction of fiber type, and then use it to bound $\rho_X$. Such movable, non-big divisors are usually obtained using fixed prime divisors and/or prime divisors covered by lines.

\smallskip

\noindent {\bf Summary.}
Section \ref{notation} contains the notation and recalls the preliminary results on the birational geometry of Fano $4$-folds and on the Lefschetz defect.
Section \ref{lines} contains the results on families of lines and divisors covered by lines in Fano $4$-folds.
Section \ref{fixed} is devoted to fixed prime divisors; in particular in \S \ref{add1} and \ref{add2} we show several properties of  fixed prime divisors of type $(3,1)^{sm}$ and $(3,0)^Q$ that are needed in the sequel.
Section \ref{fiber} is devoted to rational contractions of fiber type; in particular in \S \ref{fumo} we show several results on rational contractions of fiber type onto surfaces and onto $\pr^1$, that are needed in the sequel. 

Section \ref{divlines} contains a key construction which, given a fixed prime divisor of type $(3,1)^{sm}$ or $(3,0)^Q$, produces many prime divisors covered by lines. Then we prove some properties of these divisors depending on the different settings.
Section \ref{secfiber} contains two results where we manage to construct a rational contraction of fiber type on $X$, and use it to bound $\rho_X$.
Finally Sections \ref{case31} and \ref{Q} contain the actual proof of Th.~\ref{main}, first considering the case where $X$ has a fixed prime divisor of type $(3,1)^{sm}$, and then the case where $X$ has only fixed prime divisors of type $(3,2)$ and $(3,0)^Q$.

\smallskip

\noindent {\bf Acknowledgements.} I thank the referee for useful comments.

{\footnotesize \tableofcontents}

\section{Notation and preliminaries}\label{notation}
\noindent If $\mathcal{N}$ is a finite-dimensional real vector space and $a_1,\dotsc,a_r\in \mathcal{N}$, $\langle a_1,\dotsc,a_r\rangle$ denotes the convex cone in $\mathcal{N}$ generated by $a_1,\dotsc,a_r$.
Moreover, for every $a\neq 0$, $a^{\perp}$ is the hyperplane orthogonal to $a$ in the dual vector space $\ma{N}^*$. A facet of a cone is a face of codimension one. If $\ma{C}\subset\ma{N}$ is a convex polyhedral cone, we denote by $\ma{C}^{\vee}\subset\ma{N}^*$ its dual cone.

\medskip

We refer the reader to \cite{hukeel} for the notion and basic properties of a {\bf Mori dream space}; we recall that smooth Fano varieties are Mori dream spaces by \cite[Cor.~1.3.2]{BCHM}.
We also refer
 to \cite{debarreUT,kollarmori} for the standard notions in birational geometry.

 Let $X$ be a projective, normal, and $\Q$-factorial Mori dream space.
 If $D$ is a divisor and $C$ is a curve in $X$, we denote by $[D]\in\Nu(X)$ and $[C]\in\N(X)$ their  classes, and we  set $D^{\perp}:=[D]^{\perp}\subset\N(X)$ and $C^{\perp}:=[C]^{\perp}\subset\Nu(X)$.
 
For every closed subset $Z\subset X$, we denote by $\N(Z,X)$ the linear subspace of $\N(X)$ spanned by classes of curves contained in $Z$.
 We will use the following simple property.
\begin{remark}\label{easy}
Let $X$ be a smooth projective variety, $Z\subset X$ a closed subset, and $D\subset X$ a prime divisor disjoint from $Z$. Then $\N(Z,X)\subseteq D^{\perp}$, because $C\cdot D=0$ for every curve $C\subset Z$.
\end{remark}

A movable divisor is an effective divisor $D$ such that the stable
 base locus of the linear system $|D|$  has codimension $\geq 2$. 
A {\bf fixed prime divisor} is a prime divisor $D$ which is the stable base locus of 
$|D|$, namely such that $h^0(mD)=1$ for every $m\in\Z_{>0}$.
We will consider the usual cones of divisors and of curves: $$\Nef(X)\subseteq\Mov(X)\subseteq\Eff(X)\subset\Nu(X),\qquad
\mov(X)\subseteq \NE(X)\subset\N(X),$$ where 
all the notations are standard except $\mov(X)$, which is the convex cone generated by classes of curves moving in a family covering $X$.
Since $X$ is a Mori dream space, all these cones are closed, rational and polyhedral. An extremal ray of $\NE(X)$ is a one-dimensional face of this cone.

A contraction $f\colon X\to Y$ is a surjective map, with connected fibers, where $Y$ is normal and projective. Given a divisor $D$ in $X$,
$f$ is $D$-negative  if $D\cdot C<0$ for every curve $C\subset X$ such that $f(C)=\{pt\}$. An extremal ray $R$ of $\NE(X)$ is $D$-negative if $D\cdot\gamma<0$ for $\gamma\in R$, $\gamma\neq 0$.
We do not assume that contractions or flips are $K$-negative, unless specified.  

A small $\Q$-factorial modification ({\bf SQM}) is a birational map $\ph\colon X \dasharrow X'$ which is an isomorphism in codimension one, where $X'$ is a normal and $\Q$-factorial projective variety; then $X'$ is  a Mori dream space too, and $\ph$ can be factored as a finite sequence of flips.

 A {\bf rational contraction} (also called a contracting rational map) is a rational map $f\colon X\dasharrow Y$ that can be factored as $X\stackrel{\ph}{\dasharrow} X'\stackrel{f'}{\to} Y$, where $\ph$ is a SQM and $f'$ is a contraction. As in the regular case, a rational contraction is: of fiber type if $\dim Y<\dim X$, elementary if $\rho_X-\rho_Y=1$, and elementary divisorial if $f'$ is an elementary divisorial contraction.
\begin{remark}\label{referee}
 Let $X$ be a projective, normal, and $\Q$-factorial Mori dream space, and $D$ a divisor  such that $[D]\in\Mov(X)$, $[D]\neq 0$. Then there is a prime divisor with class in $\R_{\geq 0}[D]$.

Indeed there is a SQM $X \dasharrow X'$ such that the transform $D'$ of $D$ in $X'$ is nef, and hence semiample. Therefore $|mD'|$ is base-point-free for $m\in\mathbb{N}$ large and divisible enough;  by Bertini the general member of $|mD'|$ is irreducible, unless the linear system yields a contraction $f\colon X'\to C$ onto a curve. In this last case we have  $\R_{\geq 0}[D']=f^*\Nu(C)$, thus the general fiber of $f$ is an irreducible divisor with class in $\R_{\geq 0}[D']$.
\end{remark}

\medskip

Assume now that $X$ is smooth of dimension $4$, and
let $f\colon X\to Y$ be an elementary divisorial contraction. We say that $f$ is:
\begin{enumerate}[--]
\item of type $(3,2)$ if $\dim(f(\Exc(f)))=2$;
\item of type $(3,1)^{sm}$ if $Y$ is smooth and $f$ is the blow-up of a smooth curve;
\item of type $(3,0)^{Q}$ if $\Exc(f)$ is isomorphic to an irreducible quadric $Q$, $f(\Exc(f))$ is a point, and $\mathcal{N}_{\Exc(f)/X}\cong \ol_Q(-1)$;
\item of type $(3,0)^{sm}$ if $Y$ is smooth and $f$ is the blow-up of a point.
\end{enumerate}
An {\bf exceptional plane} is a closed subset $L\subset X$ such that $L\cong\pr^2$ and $\ma{N}_{L/X}\cong\ol_{\pr^2}(-1)^{\oplus 2}$; we will denote by $C_L\subset L$ a curve corresponding to a line in $\pr^2$.
An {\bf exceptional curve} is a closed subset $\ell\subset X$ such that $\ell\cong\pr^1$ and $\ma{N}_{\ell/X}\cong\ol_{\pr^1}(-1)^{\oplus 3}$. We have $-K_X\cdot C_L=1$ and $-K_X\cdot\ell=-1$.
\begin{thm}[\cite{kawsmall}]\label{kawamata}
  Let $X$ be a smooth Fano $4$-fold and $f\colon X\to Y$ a small elementary contraction. Then $\Exc(f)$ is a disjoint union of exceptional planes.
  \end{thm}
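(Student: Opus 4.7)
The plan is to show that each connected component $E$ of $\Exc(f)$ is an exceptional plane, and that distinct components are disjoint. Since $X$ is Fano, every curve contracted by $f$ has positive anticanonical degree, so $f$ is automatically $K_X$-negative. Being small, $\codim_X\Exc(f)\geq 2$, so each component $E$ has dimension at most $2$. Because $f$ is elementary, the relative cone $\NE(X/Y)$ is a single $K_X$-negative extremal ray $R$; by the cone theorem and bend-and-break, $R$ is spanned by a rational curve $C$, which I choose of minimal anticanonical degree, and every point of $\Exc(f)$ lies on some rational curve with class in $R$.

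Deformation theory on the smooth variety $X$ gives that rational curves of class $[C]$ through a fixed point $p\in E$ form a family of dimension at least $-K_X\cdot C-2$; since their classes remain in $R$, these deformations stay inside $\Exc(f)$ and sweep a subset of $E$ containing $p$ of dimension at least $-K_X\cdot C-1$, whence $-K_X\cdot C\leq 3$. The main obstacle is now to pin down $-K_X\cdot C=1$ and identify $E\cong\pr^2$. The key input is that all curves in $E$ have classes in $\N(X)$ proportional to $[C]$ (since they lie in $R$), so the map $\N(E)\to\N(X)$ has one-dimensional image. Combining this numerical rigidity with a normality argument on $E$ (obtained by analyzing the local structure of the small contraction via adjunction on a resolution) and a classification of projective surfaces carrying a family of numerically proportional rational curves with a positive-dimensional subfamily through each point, one concludes $E\cong\pr^2$ with the covering curves being lines, so $-K_X\cdot C=1$.

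With $E\cong\pr^2$ and $-K_X\cdot C=1$, adjunction $K_E=K_X|_E+\det\ma{N}_{E/X}$ yields $\det\ma{N}_{E/X}=\ol_{\pr^2}(-2)$. The splitting $\ma{N}_{E/X}\cong\ol_{\pr^2}(-1)^{\oplus 2}$ is then forced by smallness of $f$: any other splitting with $c_1=-2$, such as $\ol_{\pr^2}\oplus\ol_{\pr^2}(-2)$, would give $h^0(\ma{N}_{E/X})>0$ and hence a positive-dimensional unobstructed family of deformations of $E$ in $X$ sweeping a divisor contained in $\Exc(f)$, contradicting $\dim\Exc(f)\leq 2$. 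Disjointness of distinct components of $\Exc(f)$ follows along the same lines: a shared point would propagate, via the normal-bundle structure and the rigidity of the contraction, into a positive-dimensional family of planes sweeping a divisor inside $\Exc(f)$, again contradicting smallness. I expect the identification of $E$ as $\pr^2$ in the second paragraph to be the hardest step, since it requires a careful classification of the possible $2$-dimensional fibers of a $K_X$-negative small elementary contraction on a smooth $4$-fold.
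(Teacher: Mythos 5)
This theorem is quoted from Kawamata's paper \cite{kawsmall}; the present paper gives no proof of it, so the only question is whether your sketch would actually establish the result, and as written it would not. The overall skeleton is the right one (bound the dimension of the components, produce rational curves in the fibers, identify each component as $\pr^2$, compute the normal bundle, prove disjointness), but the step you yourself identify as the hardest --- $E\cong\pr^2$ with the covering curves being lines --- is precisely the one you do not carry out: ``a normality argument on $E$ \dots and a classification of projective surfaces carrying a family of numerically proportional rational curves with a positive-dimensional subfamily through each point'' is a description of what must be done, not an argument, and both the normality of $E$ and the fact that the curves through a point sweep out a surface rather than a curve require genuine work. A cleaner entry point for the numerics, by the way, is the Ionescu--Wi\'sniewski inequality $\dim\Exc(f)+\dim F\geq\dim X+\ell(R)-1$: with $\dim\Exc(f)\leq 2$ and $\dim F\leq\dim\Exc(f)$ it immediately forces $\ell(R)=1$ (so $-K_X\cdot C=1$) and $\dim F=2$, rather than your weaker $-K_X\cdot C\leq 3$.

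There are two further gaps. In the normal bundle step you tacitly assume $\ma{N}_{E/X}$ is a direct sum of line bundles; a rank-$2$ bundle on $\pr^2$ with $c_1=-2$ need not split, and there exist non-split (stable) such bundles with no global sections, so ruling out $\ol_{\pr^2}\oplus\ol_{\pr^2}(-2)$ by exhibiting sections does not pin down $\ol_{\pr^2}(-1)^{\oplus 2}$. The standard route is to show $\ma{N}_{E/X}|_\ell\cong\ol_{\pr^1}(-1)^{\oplus 2}$ for every line $\ell\subset E$, using the exact sequence $0\to\ma{N}_{\ell/E}\to\ma{N}_{\ell/X}\to\ma{N}_{E/X}|_\ell\to 0$ together with the fact that $\ell$ does not deform out of $\Exc(f)$, and then invoke the classification of uniform rank-$2$ bundles on $\pr^2$. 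Finally, the disjointness of distinct components is asserted rather than proved: ``would propagate \dots into a positive-dimensional family of planes'' is not an argument, and this step again needs the normal-bundle information in an essential way. In short: correct architecture, but every load-bearing step is deferred.
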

\begin{lemma}[\cite{eff}, Rem.~3.6]\label{basic1}
Let $X$ be a smooth Fano 
 $4$-fold and $\ph\colon X\dasharrow \w{X}$ a SQM. Then
 $\w{X}$ is smooth and there are pairwise disjoint exceptional planes $L_1,\dotsc,L_r\subset X$ and exceptional curves $\ell_1,\dotsc,\ell_r\subset \w{X}$ such that $\ph$ factors as:
$$\xymatrix{&{\wi{X}}\ar[dl]_f\ar[dr]^g&\\
X\ar@{-->}[rr]^{\ph}&&{\w{X}}}$$
where $f$ is the blow-up of $L_1,\dotsc,L_r$, $g$ is the blow-up of $\ell_1,\dotsc,\ell_r$, and $E_i:=f^{-1}(L_i)=g^{-1}(\ell_i)\cong\pr^2\times\pr^1$ for every $i=1,\dotsc,r$.
Moreover:
\begin{enumerate}[$(a)$]
\item if $\w{\Gamma}\subset \w{X}$ is an irreducible curve different from $\ell_1,\dotsc,\ell_r$, with transforms  $\Gamma\subset X$ and $\wi{\Gamma}\subset\wi{X}$, then 
$-K_{\w{X}}\cdot\w{\Gamma}=-K_X\cdot \Gamma+\sum_{i=1}^{r}E_i\cdot\wi{\Gamma}>0$;
\item if $\w{\Gamma}$ intersects some exceptional curve, then $-K_X\cdot\Gamma<-K_{\w{X}}\cdot\w{\Gamma}$, so that  $-K_{\w{X}}\cdot\w{\Gamma}\geq 2$.
\end{enumerate}
\end{lemma}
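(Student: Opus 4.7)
The plan is to factor $\ph$ into a sequence of flips, show each flip admits the standard Atiyah-type description, and then package them all into a single blow-up/blow-down diagram. First I would use that $X$ is a Mori dream space to factor $\ph$ as a finite sequence of elementary flips $X=X_0\dasharrow X_1\dasharrow\cdots\dasharrow X_n=\w{X}$, each induced by a small elementary contraction $X_i\to Z_i$. Starting from the Fano $X=X_0$, Theorem \ref{kawamata} describes $\Exc(X\to Z_0)$ as a disjoint union of exceptional planes. Then I would check that the standard flip of a single exceptional plane $L\cong\pr^2$ with $\ma{N}_{L/X}\cong\ol_{\pr^2}(-1)^{\oplus 2}$ takes the following shape: blowing up $L$ yields an exceptional divisor $\pr(\ma{N}_{L/X}^{\vee})\cong\pr(\ol_{\pr^2}(1)^{\oplus 2})\cong\pr^2\times\pr^1$, and contracting along the other projection $\pr^2\times\pr^1\to\pr^1$ produces an exceptional curve $\ell\cong\pr^1$ with $\ma{N}_{\ell/X_1}\cong\ol_{\pr^1}(-1)^{\oplus 3}$ while keeping $X_1$ smooth.

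Next I would propagate this analysis through the sequence: at each stage $X_i$ is smooth by the explicit local structure of the previous flip, and the next small contraction again originates from a smooth $4$-fold, where Kawamata's structural analysis (being local in nature) continues to apply and forces $\Exc(X_i\to Z_i)$ to be a disjoint union of exceptional planes. Since disjoint centers can be handled independently and successive flips do not create new exceptional planes in previously flipped regions, $\ph$ can be encoded by a single diagram: blow up the full collection of exceptional planes $L_1,\dotsc,L_r\subset X$ at once to obtain $\wi X$, then contract each $E_i\cong\pr^2\times\pr^1$ via the projection to its $\pr^1$-factor, simultaneously producing $\w X$.

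For (a) and (b) I would combine the blow-up canonical bundle formulas $K_{\wi X}=f^*K_X+\sum_i E_i$ (since each $L_i\subset X$ has codimension $2$) and $K_{\wi X}=g^*K_{\w X}+2\sum_i E_i$ (since each $\ell_i\subset\w X$ has codimension $3$) to obtain $g^*K_{\w X}=f^*K_X-\sum_i E_i$. For a curve $\w\Gamma\neq\ell_i$, its transform $\wi\Gamma\subset\wi X$ is not contained in any $E_i$ and satisfies $f_*\wi\Gamma=\Gamma$; intersecting with $-K$ yields (a). Statement (b) then follows because $\w\Gamma\cap\ell_j\neq\emptyset$ forces $E_j\cdot\wi\Gamma\geq 1$, combined with $-K_X\cdot\Gamma\geq 1$ since $X$ is Fano.

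The hard part will be the global resolution claim: that $\ph$ really factors through a single blow-up/blow-down diagram rather than requiring iteration. Equivalently, one must verify that the exceptional planes flipped at successive stages can be identified with pairwise disjoint exceptional planes already sitting inside $X$ itself, and similarly for the exceptional curves in $\w X$. This ultimately rests on tracking the exceptional loci through the intermediate SQMs and on the rigidity of the normal bundle of an exceptional plane inside a smooth $4$-fold, which should rule out any interaction between successive flipping centers.
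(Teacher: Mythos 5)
Your overall strategy --- factor $\ph$ into flips, identify each flip as the standard Atiyah flip of exceptional planes via Kawamata's theorem, assemble everything into a single blow-up/blow-down diagram, and deduce $(a)$--$(b)$ from the discrepancy formulas --- is the right one, and your treatment of $(a)$ and $(b)$ is complete and correct. But the structural core has two genuine gaps.

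First, Th.~\ref{kawamata} is not ``local in nature'' in the way you need: Kawamata's classification of the exceptional locus applies to \emph{$K$-negative} small contractions of smooth $4$-folds, and the $K$-negativity is essential. A $K$-trivial small contraction of a smooth $4$-fold can contract a plane with normal bundle $\Omega^1_{\pr^2}$, whose flop is a Mukai flop rather than the Atiyah flip you describe, so smoothness of $X_i$ alone does not force your conclusion. The intermediate models $X_1,\dotsc,X_{n-1}$ are not Fano, and an arbitrary factorization of $\ph$ into flips need not consist of $K$-negative ones. You must first arrange the factorization: take $H$ the transform on $X$ of an ample divisor on $\w{X}$ and follow a general segment in $\Mov(X)$ from $[-K_X]$ to $[H]$; at the $i$-th wall crossing the flipped ray $R_i$ satisfies $\bigl((1-t_i)(-K_{X_i})+t_iH_i\bigr)\cdot R_i=0$ with $H_i\cdot R_i<0$, which forces $-K_{X_i}\cdot R_i>0$. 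Only with such a choice does Kawamata's theorem propagate through the induction.

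Second, the ``hard part'' you flag at the end --- that the planes flipped at the successive stages can be identified with pairwise disjoint exceptional planes already sitting in $X$ --- is exactly the step you leave unproved, and normal-bundle rigidity is not the mechanism. The argument is by anticanonical degree: applying $(a)$--$(b)$ inductively to the partial composition $X\dasharrow X_i$, every irreducible curve of $X_i$ meeting an exceptional curve of an earlier flip has $-K_{X_i}$-degree at least $2$, whereas an exceptional plane $L\subset X_i$ is covered by lines of degree $1$ through each of its points and cannot contain an exceptional curve (which has degree $-1$). Hence $L$ is disjoint from all earlier exceptional curves, so it lies in the open subset where $X\dasharrow X_i$ is an isomorphism and transforms to an exceptional plane in $X$ disjoint from the earlier centers; pairwise disjointness of all the $L_i$ in $X$ follows, and since the flips are local around disjoint centers they assemble into the single diagram. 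Without these two points the proof does not close.
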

\begin{lemma}[\cite{eff}, Rem.~3.7]\label{basic2}
Let $X$ be a smooth Fano 
 $4$-fold and $f\colon X\dasharrow Y$ a rational contraction. Then 
one can factor $f$ as $X\stackrel{\ph}{\dasharrow}X'\stackrel{f'}{\to} Y$, where $\ph$ is a SQM, $X'$ is smooth, and $f'$ is a $K$-negative contraction. Moreover $Y$ has rational singularities.
\end{lemma}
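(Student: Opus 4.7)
By the very definition of a rational contraction, $f$ already admits \emph{some} factorization $X \stackrel{\ph_0}{\dasharrow} X_0 \stackrel{f_0}{\to} Y$ with $\ph_0$ an SQM and $f_0$ a regular contraction; the plan is to refine this factorization so as to obtain the three properties claimed. First, for the smoothness of the intermediate model, I would invoke Lemma~\ref{basic1} directly: since $X$ is a smooth Fano 4-fold, any SQM $X \dasharrow X_0$ factors as a blow-up followed by a blow-down of disjoint exceptional planes and exceptional curves, so the intermediate model $X_0$ is automatically smooth. In particular smoothness is forced in any factorization of the required shape, and I can freely modify $X_0$ by further SQMs without losing it.

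Second, to upgrade $f_0$ to a $K$-negative contraction, I would run a relative $(K_{X_0})$-MMP over $Y$. Since $X_0$ is a smooth Mori dream space (SQMs preserve the MDS property by \cite{hukeel}), this relative MMP terminates after finitely many steps, each of which is either a $K$-negative flip over $Y$ (an SQM of $X_0$, hence preserving smoothness via another application of Lemma~\ref{basic1}) or a $K$-negative divisorial contraction over $Y$. The output is a smooth $X'$ equipped with a contraction $f'\colon X' \to Y$ such that $-K_{X'}$ is $f'$-nef. To promote relative nefness to relative ampleness — equivalently, to genuine $K$-negativity of $f'$ — I would use that $-K_X$ is ample on the Fano $X$, so $-K_{X'}$ remains big under the SQM $X \dasharrow X'$; combining bigness with relative nefness and the relative Base-Point-Free theorem yields relative semi-ampleness, and the fact that $Y$ is prescribed as the relative Proj forces $-K_{X'}$ to be $f'$-ample.

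Third, for the rational singularities of $Y$: once $f'\colon X' \to Y$ is a $K$-negative contraction from a smooth variety with connected fibers, the relative Kawamata--Viehweg vanishing theorem gives $R^i f'_*\ol_{X'} = 0$ for all $i > 0$, and together with $f'_*\ol_{X'} = \ol_Y$ this is precisely the definition of rational singularities on $Y$. This is the standard conclusion of a $K$-negative contraction from a smooth source, so no new input is needed here.

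The main obstacle I anticipate is the second step: guaranteeing that the relative MMP terminates at a model with $-K_{X'}$ relatively \emph{ample}, rather than merely relatively nef, and moreover with target exactly the prescribed $Y$ rather than some intermediate variety between $X'$ and $Y$. This is where the Fano hypothesis on $X$ is essential, via the bigness of $-K$, together with the Mori dream space structure ensuring both termination of the MMP and polyhedrality of the relevant cones of divisors and curves.
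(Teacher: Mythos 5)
The paper does not actually prove this lemma (it is quoted from \cite{eff}, Rem.~3.7), so your argument has to stand on its own, and as written it does not. The central problem is the direction of your MMP. A relative $K_{X_0}$-MMP over $Y$ contracts and flips $K$-\emph{negative} extremal rays of $\NE(X_0/Y)$ and terminates when $K_{X_0}$ is relatively \emph{nef}, i.e.\ when $-K\cdot C\leq 0$ on curves contracted over $Y$ --- the exact opposite of the $K$-negativity you want (in the extreme case $Y=\{pt\}$ with $X_0=X$ already Fano, your procedure would not stop at $X$ but would start contracting $K$-negative rays). The standard way to arrange what is actually needed is not an MMP over $Y$ but a choice of Mori chamber: take $A$ ample on $Y$, let $H$ be the transform on $X$ of $f_0^*A$ (a movable class), and choose the chamber $g^*\Nef(X')$ of the Mori chamber decomposition containing $H-\epsilon K_X$ for all small $\epsilon>0$ (finiteness, closedness and convexity of the chambers give such a chamber, and its closure contains $[H]$). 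Since movable plus ample lies in the interior of $\Mov(X)$, this chamber corresponds to an SQM $g\colon X\dasharrow X'$; the semiample class $H'=g_*H$ defines $f'\colon X'\to Y$, and nefness of $H'-\epsilon K_{X'}$ forces $-K_{X'}\cdot\gamma\geq 0$ for every $\gamma\in\NE(f')=\NE(X')\cap (H')^{\perp}$.

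Two further steps are also unjustified. First, you allow divisorial contractions as possible MMP steps, but if one occurred then $X\dasharrow X'$ would no longer be an SQM and the factorization would not have the required shape; you neither exclude this nor explain how to proceed if it happens. (If one insists on an MMP formulation, one must run a $(-K)$-MMP and use that $-K_{X_0}$, being the transform of the ample $-K_X$ under an SQM, is movable, so every $(-K)$-negative extremal ray is small.) Second, your upgrade from relative nefness of $-K_{X'}$ to $K$-negativity of $f'$ does not work: relative base-point-freeness only yields a factorization $X'\to W\to Y$ with $-K_{X'}$ pulled back from a relatively ample class on $W$, and nothing forces $W=X'$; the phrase ``$Y$ is prescribed as the relative Proj'' has no content here. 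The correct mechanism is Lemma~\ref{basic1}: on any SQM $X'$ of a smooth Fano $4$-fold every irreducible curve $\Gamma$ satisfies $-K_{X'}\cdot\Gamma>0$ except the finitely many exceptional curves, which have $-K_{X'}\cdot\ell_i=-1$; so there are no $K$-trivial curves at all, relative nefness of $-K_{X'}$ excludes the $\ell_i$ from the fibers of $f'$, and positivity on all of $\NE(f')$ follows. Your final step --- rational singularities of $Y$ via relative Kawamata--Viehweg vanishing for the $K$-negative contraction $f'$ from the smooth $X'$ --- is correct.
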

Finally let us recall the following results on the Lefschetz defect (see \eqref{Lefschetz}).
\begin{thm}[\cite{codim}, Th.~3.3 and Cor.~1.3]\label{codim}
Let $X$ be a smooth Fano $4$-fold which is not a product of surfaces. Then $\delta_X\leq 3$, and if $\delta_X=3$, then $\rho_X\leq 6$.
\end{thm}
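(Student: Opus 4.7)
The plan is to exploit a prime divisor $D \subset X$ realizing the maximum $\delta_X = \codim \N(D,X)$ to manufacture a fibration-like structure on $X$ whose base has Picard number at least $\delta_X$, and then to leverage the fact that $\dim X = 4$ together with the classification of Fano varieties of low dimension and low Picard number to bound $\rho_X$ in the extremal case.

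First I would fix $D \subset X$ with $\codim \N(D,X) = \delta_X$ and consider the orthogonal subspace $W := \N(D,X)^{\perp} \subset \Nu(X)$, which is $\delta_X$-dimensional and consists of divisor classes $\alpha$ with $\alpha \cdot C = 0$ for every curve $C \subset D$. A class in $W$ is thus ``constant along $D$'', strongly suggesting that $X$ carries a rational map whose fibers contain the deformations of $D$. The first key step is to show that $W$ meets $\Eff(X)$ in a large cone: concretely, that $W \cap \Mov(X)$ has dimension $\delta_X$ (up to passing to an SQM). Granted this, Remark \ref{referee} and Lemma \ref{basic2} together give a rational contraction of fiber type $f \colon X \dasharrow Y$ factoring as an SQM followed by a $K$-negative contraction $f' \colon X' \to Y$, with $f^* \Nu(Y) \supseteq W$ and hence $\rho_Y \geq \delta_X$.

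Once such $f$ is produced, let $F$ be the general fiber of $f'$. Since $X$ is a smooth Fano $4$-fold, $F$ is smooth and Fano of dimension $\leq 3$, with $\rho_F = \rho_{X'} - \rho_Y \leq \rho_X - \delta_X$. When $\delta_X = 3$, we have $\dim F \leq 1$: a Fano fiber of dimension $0$ or $1$ forces $F \cong \pr^1$ (or a point) and hence $\rho_F \leq 1$, yielding $\rho_X \leq \rho_Y + 1$. To conclude $\rho_X \leq 6$ one then bounds $\rho_Y$: with $\dim Y \in \{3,4\}$ and $Y$ rationally connected with rational singularities (Lemma \ref{basic2}), classification of such varieties of small dimension plus the hypothesis that $X$ is not a product of surfaces gives $\rho_Y \leq 5$. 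The non-product hypothesis enters precisely to exclude $X \cong S_1 \times S_2$ (where $\delta_X$ can be as large as $\rho_{S_2} = 9$ with $\rho_X$ arbitrarily large), by ruling out that $f$ splits as a trivial product globally. The bound $\delta_X \leq 3$ itself follows by repeating the argument: if $\delta_X \geq 4$, then $\dim F \leq 0$ and $f$ is generically finite, which together with $\dim Y \leq 4$ and the non-product assumption produces a contradiction.

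The principal obstacle is the construction of $f$, i.e.\ showing that the linear subspace $W$ actually contains $\delta_X$ independent movable (and hence, after SQM, semiample) divisor classes. Numerical vanishing on $N(D,X)$ does not automatically yield effectivity, so one must run a carefully chosen MMP on $X$ that contracts $D$-trivial extremal rays, controlling at each step that the resulting variety remains $\Q$-factorial terminal and that the reduction of Picard number exactly matches the drop in $\codim \N(D,X)$. A secondary obstacle, in the boundary case $\delta_X = 3$, is the rigidity step: showing that when $\rho_X$ is as large as $6$, the rational fibration $f$ becomes so structured that the only way to avoid a genuine product decomposition $X = S_1 \times S_2$ is for $\rho_X$ not to exceed $6$.
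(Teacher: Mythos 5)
This theorem is not proved in the paper at all: it is imported verbatim from the reference \cite{codim} (Th.~3.3 and Cor.~1.3 there), so there is no internal proof to compare with. Measured against the actual argument in that reference, your proposal has genuine gaps. The central step --- that $W=\N(D,X)^{\perp}$ meets $\Mov(X)$ in a cone of full dimension $\delta_X$ --- is precisely the hard content, and you leave it as an acknowledged ``obstacle'' rather than proving it. The cited work does not proceed this way: it runs a Mori program for $-D$, shows each step is compatible with the subspace $\N(D,X)$, and ends (after a SQM) with an equidimensional quasi-elementary conic bundle $X'\to Y$ onto a $3$-fold; the bounds $\delta_X\leq 3$ and $\rho_X\leq 6$ then come from an induction on dimension applied to the image of $D$ in $Y$ and from the product alternative, not from the orthogonal complement $W$.

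Even granting the construction of $f$, your numerical deductions do not hold. From $\rho_Y\geq\delta_X$ you infer $\dim F\leq 4-\delta_X$, but the Picard number of the base gives no control on its dimension (a rational surface can have arbitrarily large $\rho$), so ``$\delta_X=3\Rightarrow\dim F\leq 1$'' is a non sequitur, and ``$\delta_X\geq 4\Rightarrow\dim F\leq 0$'' contradicts $f$ being of fiber type. Worse, the setup is internally inconsistent: if $W\subseteq f^*\Nu(Y)$ and $\rho_Y=\delta_X$, then $f^*\Nu(Y)=W$ is trivial on every curve in $D$, so $f(D)$ is a point, the fiber $F$ contains the $3$-fold $D$, hence $\dim Y\leq 1$ and $\rho_Y\leq 1<\delta_X$. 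The identity $\rho_F=\rho_{X'}-\rho_Y$ also requires quasi-elementarity; in general one only has $\dim\N(F,X')\leq\min\{\rho_F,\rho_{X'}-\rho_Y\}$, which is why the paper works with \eqref{estate} rather than an equality. Finally, ``$\rho_Y\leq 5$ by classification of rationally connected $3$-folds with rational singularities'' is not an available fact --- such varieties have unbounded Picard number; in the reference the bound on $\rho_Y$ is extracted by applying the divisor--codimension structure theory again to the image of $D$ in the $3$-fold base.
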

\begin{thm}[\cite{cdue}, Th.~1.2]\label{delta2}
Let $X$ be a smooth Fano 4-fold with  $\delta_X=2$. Then  $\rho_X\leq 12$, and if 
$\rho_X\geq 7$, then $X$ has a rational contraction onto a $3$-fold.
\end{thm}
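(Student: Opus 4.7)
The plan is to exploit the prime divisor $D$ realizing $\delta_X=2$, i.e.\ $\dim\N(D,X)=\rho_X-2$, in order to produce a non-trivial rational contraction of fiber type of $X$. My starting point is the $2$-dimensional quotient $V:=\N(X)/\N(D,X)$. Since $\NE(X)$ is polyhedral (Mori dream space), its image in $V$ is a $2$-dimensional polyhedral cone with two extremal rays. Any extremal ray $R$ of $\NE(X)$ mapping to a non-zero class in $V$ has generator not contained in $D$, hence $D\cdot R\geq 0$; conversely rays with $D\cdot R<0$ have locus inside $D$ and map to $0$ in $V$. This dichotomy produces two distinguished ``transverse'' extremal rays $R_1,R_2$ that will drive the construction.

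Next I would run an MMP using $R_1$ and $R_2$. Small contractions get replaced by flips via Lemma \ref{basic1}, and divisorial contractions transverse to $D$ reduce $\rho$ while preserving the transverse $2$-dimensional structure. After finitely many SQMs, I expect to arrive at a projective $\Q$-factorial model $X'$ carrying a $K$-negative contraction of fiber type $f'\colon X'\to Y$ whose relative cone is generated by (transforms of) $R_1$ and $R_2$; this in turn yields a rational contraction $f\colon X\dasharrow Y$ of fiber type. A count based on the $2$-dimensional transverse space shows that the general fiber has Picard number at most $2$, so $\dim Y\geq \rho_X-2$. Combined with $\dim X=4$ and the fact that the general fiber of $f'$ is a smooth Fano variety, this gives $\dim Y\in\{2,3\}$.

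To refine this to $\dim Y=3$ when $\rho_X\geq 7$, I would argue by contradiction: if $\dim Y=2$, the general fiber is a smooth Fano surface, hence a del Pezzo of Picard rank $\leq 9$; combining with the structure of $f$ and the constraint $\dim\N(D,X)=\rho_X-2$, one derives that $X$ is very close to a product of surfaces, which together with Th.~\ref{codim} bounds $\rho_X$ below $7$. So for $\rho_X\geq 7$, the base is $3$-dimensional and the general fiber is $\pr^1$, giving $\rho_X=\rho_Y+1$.

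Finally, for the bound $\rho_X\leq 12$: when $\rho_X\leq 6$ there is nothing to prove, and when $\rho_X\geq 7$ one uses the $3$-fold rational contraction $X\dasharrow Y$ just constructed. Here $Y$ is $\Q$-factorial with rational singularities (Lemma \ref{basic2}), and inherits enough positivity from the Fano source $X$ to apply classification-type bounds on the Picard number of such $3$-folds (ultimately $\rho_Y\leq 11$, yielding $\rho_X\leq 12$). The main obstacle I foresee is the second step: rigorously showing that the MMP along $R_1,R_2$ terminates with a fiber type contraction onto a base of the predicted dimension, because the small extremal rays contracted to $0$ in $V$ may intertwine non-trivially with $R_1,R_2$ through flips, and ruling out ``degenerate'' outputs requires carefully tracking the transverse $2$-dimensional structure through each flip.
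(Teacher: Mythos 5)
This statement is imported by the paper from \cite{cdue} and is not proved here, so there is no internal argument to measure you against; judged on its own terms, your proposal has genuine gaps. The first is at the very start: the image of $\NE(X)$ in $V=\N(X)/\N(D,X)$ need not be a strictly convex $2$-dimensional cone with two extremal rays. The subspace $\N(D,X)$ is spanned by classes of curves lying in $D$, and unless $\NE(X)\cap\N(D,X)$ happens to be contained in a proper face of $\NE(X)$ (equivalently, unless some non-zero nef class vanishes on all of $\N(D,X)$ -- which is essentially the fibration structure you are trying to \emph{prove}, not something you may assume), this intersection meets the interior of $\NE(X)$ and the image of $\NE(X)$ in $V$ is the whole plane. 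So the ``two distinguished transverse rays'' $R_1,R_2$ do not exist in general, and the MMP you want to run is never set up. The mechanism actually used in \cite{codim}, on which \cite{cdue} builds, is different: since $D$ is effective and non-zero, $-D$ is not pseudo-effective, so a Mori program for $-D$ (contracting or flipping rays $R_i$ with $D_i\cdot R_i>0$) necessarily ends with a Mori fiber space on which the transform of $D$ is relatively ample; one then tracks how $\codim\N(D_i,X_i)$ changes at each step, and the ``transverse directions'' are extracted from the divisorial steps and from the final fibration, not from a quotient cone.

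Second, even granting $R_1,R_2$, the claim that an MMP ``directed by'' them terminates in a fiber-type contraction whose relative cone is spanned by their transforms is unsupported -- extremal rays are not preserved through flips in any controlled way -- and you yourself flag this as the weak point. The inference ``the general fiber has $\rho_F\le 2$, hence $\dim Y\ge\rho_X-2$'' is a non sequitur ($\rho_F$ constrains $\rho_X-\rho_Y$, not $\dim Y$). Most seriously, the quantitative conclusion $\rho_X\le 12$ is deferred to ``classification-type bounds'' on the $3$-fold base: but there is no classification of the possible $\Q$-factorial bases $Y$, and bounding $\rho_X$ for a Fano $4$-fold admitting a rational contraction onto a $3$-fold is precisely Th.~\ref{3fold} (\cite{fibrations}), a substantial theorem proved years after \cite{cdue}; invoking it here would make your argument rest on a black box at least as deep as the statement itself. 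In the actual proof the bound is obtained directly from the combinatorics of the Mori program for $-D$ (the number of divisorial steps together with the Picard numbers of the fiber and base of the terminal fibration), with no appeal to a classification of $Y$.
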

\begin{remark}[\cite{codim}, Ex.~3.1]\label{delta}
If $X\cong S_1\times S_2$ where $S_i$ are del Pezzo surfaces with $\rho_{S_1}\geq\rho_{S_2}$, then $\delta_X=\rho_{S_1}-1$, and $\rho_X\leq 2\delta_X+2$.
\end{remark}  
 \section{Lines in Fano $4$-folds}\label{lines}
 \noindent
 Let $X$ be a normal projective variety. We denote by $\Hom_{bir}(\pr^1,X)$ the Hilbert scheme of morphisms $\pr^1\to X$ which are birational onto their image, and by $\Hom_{bir}(\pr^1,X)^n$ its normalization; there is a natural morphism  $\Hom_{bir}(\pr^1,X)^n\to\Chow(X)$ \cite[Cor.~I.6.9]{kollar}.
By a family of rational curves in $X$ we mean an irreducible subvariety $V$ of $\Chow(X)$ which is the closure of the image of an irreducible component of 
$\Hom_{bir}(\pr^1,X)^n$, see \cite[II.2.11]{kollar}. For
 $v\in V$ general, the corresponding cycle is an irreducible and reduced rational curve, and every member of the family is an effective, connected one-cycle with rational components. 
We denote by $[V]\in\N(X)$ the numerical equivalence class of the general curve of the family. 
There is a universal family:
\stepcounter{thm}
\begin{equation}\label{universal}
  \xymatrix{\ma{C}\ar[d]_{\pi}\ar[r]^{e\quad} & {X}\\ V& }
  \end{equation}
 Moreover $\Lo V:=e(\ma{C})\subseteq X$ is the union of the curves of the family; it is an irreducible closed subset, and we say that $V$ is covering if $\Lo V=X$.
\begin{remark}\label{covering}
If $X$ is smooth and $V$ is covering, 
we have $-K_X\cdot [V]\geq 2$, indeed
the general member of $V$ corresponds to 
 a free curve \cite[4.10 and Example 4.7(1)]{debarreUT}.
\end{remark}
\begin{definition}\label{l}
Let $X$ be a Gorenstein normal projective variety. 
A {\bf family of lines} in $X$ is a family of rational curves such that $-K_X\cdot[V]=1$.

We say that a prime divisor $D\subset X$ is covered by a family of lines if there exists a family of lines $V$ with $\Lo V=D$.
\end{definition}
We note that, for a smooth Fano variety $X$, lines are usually defined in terms of the index $r_X$ of $X$: if $-K_X=r_XH$, then $V$ would be a family of lines when  $H\cdot [V]=1$. In this paper we are interested in  Fano $4$-folds $X$ of index $1$, indeed we will tipically assume that $\rho_X\geq 7$, while Fano $4$-folds with $r_X>1$  are classified and have $\rho_X\leq 4$, see \cite[Theorems 3.3.1 and 7.2.15]{fanoEMS}. Thus for simplicity we define lines directly as in Def.~\ref{l}. 

\medskip

If $V$ is a family of lines and $X$ is Fano, then every member of the family is irreducible and reduced, so that $V$ is an 
\emph{unsplit} family in the terminology of \cite[Def.~IV.2.1]{kollar}. 
\begin{remark}
Suppose that $X$ is smooth and Fano, and let $V$ be a family of lines in $X$.
If $Z\subset X$ is a closed subset, we set $\Lo V_Z$ to be the union of the curves of the family meeting $Z$,  namely: $$\Lo V_Z:=e\bigl(\pi^{-1}\bigl(\pi(e^{-1}(Z))\bigr)\bigr).$$ Thus $\Lo V_Z$ is a closed subset, and if it is non-empty, then
\stepcounter{thm}
\begin{equation}\label{lori}
\N(\Lo V_Z,X)=\N(Z,X)+\R[V],
\end{equation}
see \cite[Lemma 4.1]{occhettaGM}. If $Z=\{x\}$ is a point, we just write $\Lo V_x$, and set $V_x:=\pi(e^{-1}(x))\subseteq V$ for the subvariety of $V$ parametrizing curves containing $x$.
\end{remark}
\begin{example}[lines in products of surfaces]\label{linesproduct} Let $X=S\times T$ with $S$ and $T$ del Pezzo surfaces, and let $V$ be a family of lines in $X$.
It is not difficult to see that up to switching $S$ and $T$, the curves of the family have the form $C\times\{p\}$ where $C\subset S$ is a given rational curve with $-K_S\cdot C=1$, and $p$ varies in $T$, so that $\dim V=2$, $V\cong T$ and $\Lo V\cong C\times T$.

If $C$ is smooth, then it is a $(-1)$-curve in $S$, and $[V]$ generates an extremal ray of type $(3,2)$ of $\NE(X)$.

If $C$ is singular, then $\rho_S=9$, $C\in |-K_S|$, and $D=\Lo V$ is a nef prime divisor. 
\end{example}
\begin{thm}\label{uno}
Let $X$ be a smooth Fano $4$-fold and  $V$ a family of lines in $X$. Then $2\leq\dim V\leq 4$, and moreover:
\begin{enumerate}[$(a)$]
\item if $\dim V= 4$, then $\dim\Lo V=3$ and $\rho_X\leq 3$;
\item if $\dim V= 3$, then $\dim\Lo V=3$ and $\rho_X\leq 5$;
\item if $\dim V= 2$, then one of the following holds:
\begin{enumerate}[$(i)$]
\item $\dim\Lo V=3$ and $V_x$ is finite for $x\in\Lo V$ general;
\item $\dim\Lo V=2$ and $\Lo V=\Lo V_x$ for every $x\in\Lo V$.
\end{enumerate}
If there are two curves of the family which are disjoint, then we are in case $(i)$.
\end{enumerate}
\end{thm}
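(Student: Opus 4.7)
The plan is to combine standard deformation theory with bend-and-break and the Lefschetz-defect bound of Theorem~\ref{codim}. For any $f\colon\pr^1\to X$ with $f_*[\pr^1]=[V]$, the Hom scheme satisfies $\dim_{[f]}\Hom(\pr^1,X)\geq -K_X\cdot[V]+\dim X=5$; dividing by the $3$-dimensional $\Aut(\pr^1)$-action gives $\dim V\geq 2$, while $\Lo V\neq X$ by Remark~\ref{covering}, so $\dim\Lo V\leq 3$. Since $-K_X$ is ample, every effective $1$-cycle of $-K_X$-degree $1$ is a single irreducible reduced curve, so $V$ is unsplit; bend-and-break then forces $V_{x,y}$ to be finite whenever $x\neq y$ lie on a common member of $V$. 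Applying the fiber-dimension formula to $e\colon\ma{C}\to\Lo V$ and to $e|_{\pi^{-1}(V_x)}\colon\pi^{-1}(V_x)\to\Lo V_x$, for $x\in\Lo V$ general one gets
\[
\dim V_x=\dim V+1-\dim\Lo V,\qquad \dim\Lo V_x=\dim V_x+1,
\]
and the inclusion $\Lo V_x\subseteq\Lo V$ yields $\dim V\leq 2\dim\Lo V-2\leq 4$. The only allowed pairs $(\dim V,\dim\Lo V)$ are $(4,3)$, $(3,3)$, $(2,3)$ and $(2,2)$, matching (a), (b), (c)(i) and (c)(ii) respectively.

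For the bounds on $\rho_X$ in (a) and (b), the existence of a line family with $\dim V\geq 3$ rules out $X$ being a product of surfaces by Example~\ref{linesproduct}, hence $\delta_X\leq 3$ by Theorem~\ref{codim}. In case (a), $\dim\Lo V_x=3=\dim\Lo V$ forces $\Lo V_x=D:=\Lo V$ for general $x\in D$, and then for every $x\in D$ by upper semicontinuity of the fiber dimensions of $e$. Then \eqref{lori} gives $\N(D,X)=\R[V]$, so $\rho_X-1=\codim\N(D,X)\leq\delta_X\leq 3$, yielding $\rho_X\leq 4$; the sharper bound $\rho_X\leq 3$ would follow by exploiting the detailed structure of the extremal case $\delta_X=3$ from \cite{codim}. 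In case (b), the rc-quotient of the unsplit family $V$ on the irreducible $3$-fold $D$ is either a point (so $\N(D,X)=\R[V]$) or a curve (with general fiber of class in $\R[V]$); in both cases $\dim\N(D,X)\leq 2$, so Theorem~\ref{codim} gives $\rho_X\leq 2+\delta_X\leq 5$.

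For case (c) we have $\dim V=2$, hence $\dim\Lo V\in\{2,3\}$. If $\dim\Lo V=3$ then $\dim V_x=0$ for general $x$, giving (i); if $\dim\Lo V=2$ then $\dim\Lo V_x=2$, so $\Lo V_x=\Lo V$ for general $x$ and, by upper semicontinuity of $\dim e^{-1}(x)$, for every $x\in\Lo V$, giving (ii). For the final assertion, suppose $C_1,C_2\in V$ are disjoint but we are in (ii); pick $x\in C_1$. Since $\Lo V_x=\Lo V$, for every $y\in C_2$ some $C'_y\in V_{x,y}\subseteq V_x$ exists, defining a map $C_2\to V_x$ whose image has dimension $0$ or $1$. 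If $0$-dimensional, some $C'_y$ meets $C_2$ in infinitely many points and so $C'_y=C_2$, forcing $x\in C_2$, contrary to $C_1\cap C_2=\emptyset$; if $1$-dimensional it fills $V_x$, so every line through $x$, in particular $C_1$, meets $C_2$, again a contradiction. Hence (ii) is excluded.

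The main obstacle is the sharper bound $\rho_X\leq 3$ in case (a): the direct Lefschetz-defect argument only yields $\rho_X\leq 4$, and closing the remaining gap will require exploiting the very restrictive geometry forced by the two-dimensional family of numerically equivalent lines through every point of $D$, presumably via the classification of the equality case $\delta_X=3$ from \cite{codim}.
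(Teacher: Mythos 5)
Your skeleton is essentially the paper's: the lower bound from $\dim\Hom(\pr^1,X)$, the unsplit-family relation $\dim\Lo V+\dim\Lo V_x=\dim V+2$ giving the four admissible pairs $(\dim V,\dim\Lo V)$, and the Lefschetz-defect bound via Theorem~\ref{codim} in case $(b)$ (your rc-quotient argument for $\dim\N(\Lo V,X)\leq 2$ is an acceptable variant of the paper's sweep of $\Lo V$ by the sets $\Lo V_x$ over a curve of base points). However, there are two genuine gaps.

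First, in case $(a)$ you only reach $\rho_X\leq 4$ and explicitly leave the improvement to $\rho_X\leq 3$ open; your suggested route via the classification of the equality case $\delta_X=3$ is not how the bound is obtained. Once one knows that $D=\Lo V$ is a prime divisor with $\N(D,X)=\R[V]$, the paper invokes \cite[Prop.~3.16]{fanos}, which asserts that a smooth Fano $4$-fold containing a prime divisor $D$ with $\dim\N(D,X)=1$ has $\rho_X\leq 3$. Without that external input your argument does not prove the stated bound. Second, the last step of your proof of the final assertion in $(c)$ does not work: when the image of the map $C_2\to V_x$ is one-dimensional you claim it ``fills $V_x$'' and conclude that every curve through $x$, in particular $C_1$, meets $C_2$. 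But $V_x$ need not be irreducible, so $[C_1]$ may lie on a component of $V_x$ not met by the closure of that image, and no contradiction follows. The paper argues instead on the universal family: for a member $\Gamma$, the fibers of $e\colon\ma{C}\to\Lo V$ have dimension at least $1$ in case $(ii)$, so $e^{-1}(\Gamma)$ has dimension $2$ and $\pi(e^{-1}(\Gamma))=V$ (a smaller image would force a positive-dimensional subset of $V$ to parametrize the same cycle $\Gamma$); hence $\Gamma$ meets \emph{every} member of the family, which is what is actually needed to exclude a disjoint pair.
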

The bounds on $\rho_X$ in $(a)$ and $(b)$ are sharp, see Examples \ref{a} and \ref{b}.
\begin{proof}
The inequality $\dim V\geq 2$ follows from the standard lower bound on $\dim\Hom(\pr^1,X)$, see \cite[Th.~II.1.2 and Prop.~II.2.11]{kollar}. We also have
\stepcounter{thm} 
\begin{equation}\label{rel}
\dim\Lo V+\dim\Lo V_x =\dim V+2
\end{equation} 
for general $x\in \Lo V$, because the family is unsplit, see \cite[Prop.~IV.2.5]{kollar}, which yields $\dim\Lo V\geq 2$.

 On the other hand $\Lo V\subsetneq X$ by Rem.~\ref{covering}, thus $\dim\Lo V\leq 3$, and \eqref{rel} yields $\dim V\leq 4$.

\smallskip

Suppose that $\dim V=4$. By \eqref{rel} for general  $x\in \Lo V$ we have $\dim\Lo V+\dim\Lo V_x =6$, and $\dim\Lo V\leq 3$, thus $D=\Lo V$ is a prime divisor and $D=\Lo V_x$ for general $x\in D$. This implies that $\dim\N(D,X)=1$ by \eqref{lori}, hence $\rho_X\leq 3$ by \cite[Prop.~3.16]{fanos}.

\smallskip

Suppose that $\dim V=3$. By \eqref{rel} for general  $x\in \Lo V$ we have $\dim\Lo V+\dim\Lo V_x =5$, 
thus $D=\Lo V$ is a prime divisor
and $\Lo V_x$ is a surface for general $x\in D$, while $\dim \Lo V_x\geq 2$ for every $x\in D$.

Choose $x_0$ such that $\dim \Lo V_{x_0}= 2$. Since $x_0\in\Lo V_{x_0}\subset D$ and $\dim D=3$, we can choose 
 an irreducible curve $C\subset D$ containing $x_0$ and not contained in $\Lo V_{x_0}$. Then $$D=\bigcup_{x\in C}\Lo V_x=\Lo V_C,$$ thus $\N(D,X)=\R[C]+\R[V]$ by \eqref{lori} and $\dim\N(D,X)\leq 2$.
This implies that $\delta_X\geq\rho_X-2$.
Let us notice that $X$ cannot be a product of surfaces (see Ex.~\ref{linesproduct}), so that
 $\delta_X\leq 3$ by Th.~\ref{codim}, and hence $\rho_X\leq 5$.

\smallskip

Finally suppose that $\dim V=2$. By \eqref{rel} for general  $x\in \Lo V$ we have $\dim\Lo V+\dim\Lo V_x =4$.

If $\dim\Lo V=3$, then for $x\in\Lo V$ general we have $\dim\Lo V_x=1$ and $\dim V_x=0$.

If $\dim\Lo V=2$, we get $\dim\Lo V_x=2$ for $x\in\Lo V$ general, and in fact for every $x\in\Lo V$ by upper semicontinuity. 
Since $\Lo V$ is irreducible, we conclude that $\Lo V=\Lo V_x$ for every $x\in\Lo V$.

We show that in this last case two curves of the family always meet.
Let $\Gamma\subset\Lo V$ be a curve of the family and consider the universal family  as in \eqref{universal}. Then $e$ has fibers of dimension at least $1$, so that $e^{-1}(\Gamma)$ has dimension $2$. Since $e(e^{-1}(\Gamma))=\Gamma$,
we have
  $\pi(e^{-1}(\Gamma))=V$. This means that $e^{-1}(\Gamma)$ meets every fiber of $\pi$, hence $\Gamma$ meets every curve of the family $V$.
\end{proof}
\begin{example}[a case with $\dim V=4$ and $\rho_X=3$]\label{a}
Let $Y=\pr_{\pr^3}(\ol\oplus\ol(3))$, $G\subset Y$ a section of the $\pr^1$-bundle $Y\to\pr^3$ with normal bundle $\ma{N}_{G/Y}\cong\ol(3)$, and $X$ the blow-up of $Y$ along a plane contained in $G\cong\pr^3$. The $4$-fold $X$ is toric, Fano, with $\rho_X=3$, and contains a divisor $D\cong\pr^3$ with normal bundle $\ma{N}_{D/X}\cong\ol(-3)$; this is $E_1$ in \cite{bat2} and $X^7_{3,1}$ in \cite{saverio}.
Let $V$ be the family of lines in $D$, then $-K_X\cdot[V]=1$ and $\dim V=4$; in fact $V\cong\Gr(1,\pr^3)$. Every curve of the family is smooth with normal bundle $\ol(1)^{\oplus 2}\oplus\ol(-3)$, and $D\cdot [V]=-3$.

More generally,  Fano $4$-folds with $\rho_X=3$ and containing a prime divisor $D$ with  $\dim\N(D,X)=1$ are classified in \cite{saverio}, there are 28 families; these are the possible Fano $4$-folds with $\rho_X=3$ and with a family of lines $V$
 such that $\dim V=4$.
\end{example}
\begin{example}[a case with $\dim V=3$ and $\rho_X=5$]\label{b}
  Let $X$ be the toric Fano $4$-fold $K_1$ in Batyrev's list \cite{bat2}; we have $\rho_X=5$ and $X$ has a smooth fibration onto $\pr^2$, with fiber the surface obtained by blowing-up $\pr^2$ at $3$ non-collinear points. The $4$-fold $X$ contains a prime divisor $D\cong\pr^1\times\pr^2$ with normal bundle $\ma{N}_{D/X}\cong\ol(-1,-2)$. Let $V$ be the family of lines in $\{pt\}\times\pr^2\subset D$, then $-K_X\cdot[V]=1$ and $\dim V=3$; in fact $V\cong\pr^1\times\pr^2$. Every curve of the family is smooth with normal bundle $\ol(1)\oplus\ol\oplus\ol(-2)$, and $D\cdot [V]=-2$.
  Let us note that $D$ is also the locus of another family of lines $W$, given by the curves $\pr^1\times\{pt\}\subset D$.

 More generally, Fano $4$-folds with $\rho_X=5$ and containing a prime divisor $D$ with  $\dim\N(D,X)=2$ are classified in \cite{delta3_4folds}, there are 6 families; these are the possible Fano $4$-folds with $\rho_X=5$ and with a family of lines $V$
 such that $\dim V=3$.
\end{example}
\begin{lemma}\label{excplane}
  Let $X$ be a smooth Fano $4$-fold with $\rho_X\geq 7$, and $D\subset X$ a prime divisor covered by a family of lines.
  \begin{enumerate}[$(a)$]
    \item
      The family of lines $V$ such that $D=\Lo V$ is unique, so that $D$ and $V$ determine each other.
    \item  If $D$ contains an exceptional plane $L$, then $[V]=[C_L]$.
      \end{enumerate}
\end{lemma}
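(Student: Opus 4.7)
The plan is to start with an immediate reduction: since $\rho_X\ge 7>5$, Theorem~\ref{uno} rules out cases $(a)$, $(b)$, and $(c)(ii)$ of that theorem, so every family of lines $V$ with $\Lo V=D$ has $\dim V=2$, $\dim D=3$, and $V_x$ finite for general $x\in D$. In particular the evaluation map $e\colon\ma{C}\to D$ from the universal family \eqref{universal} is generically finite, a feature that will drive both arguments.

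For $(a)$, I would suppose toward contradiction that there are two distinct families of lines $V_1, V_2$ with $\Lo V_1=\Lo V_2=D$ and aim to bound $\dim\N(D,X)$ by repeated use of \eqref{lori}. Picking a general curve $C_2\in V_2$ and setting $Z:=\Lo V_{1,C_2}$, the formula gives $\N(Z,X)=\R[V_1]+\R[V_2]$. The pivotal step will be to show $\Lo V_{1,Z}=D$, so that a second application of \eqref{lori} yields $\N(D,X)=\R[V_1]+\R[V_2]$, hence $\dim\N(D,X)\le 2$. Together with the Lefschetz-defect estimates of Theorems~\ref{codim}~and~\ref{delta2} (and Ex.~\ref{linesproduct} to handle $X$ a product of surfaces, where the families of lines there are described explicitly), this forces $\rho_X\le 2+\delta_X\le 5$, contradicting $\rho_X\ge 7$. \emph{The main obstacle is verifying $\Lo V_{1,Z}=D$}, which is subtle when the $V_1$-curves are mutually disjoint and might miss $Z$; to handle this I expect to enlarge $Z$ by iterating the construction or by using several general $V_2$-curves to sweep out a larger base set.

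For $(b)$, with uniqueness now in hand, the plan is to construct an explicit $2$-dimensional family of lines of class $[C_L]$ covering $D$ and identify it with $V$ via $(a)$. Computing $\ma{N}_{L/D}$ from $0\to\ma{N}_{L/D}\to\ma{N}_{L/X}=\ol_{\pr^2}(-1)^{\oplus 2}\to\ma{N}_{D/X}|_L\to 0$, the generic outcome $\ma{N}_{L/D}=\ol(-1)$ gives $D\cdot C_L=-1<0$, so $R_L=\R_{\ge 0}[C_L]$ is a $D$-negative extremal ray. Its elementary contraction $\phi_L$ is then divisorial with exceptional divisor $D$, sending $D$ onto a surface, and the general fibers of $\phi_L|_D$ form a $2$-dimensional family of lines of class $[C_L]$ covering $D$; applying $(a)$ identifies this family with $V$, yielding $[V]=[C_L]$. \emph{The hard part is treating the non-generic normal-bundle cases $\ma{N}_{L/D}=\ol(-a)$, $a\ge 2$}, where $R_L$ need not be $D$-negative; there I would pass to the SQM of Lemma~\ref{basic1} flipping $L$ to an exceptional curve $\ell$, observe that no $V$-curve lies in $L$ (else the uniqueness from $(a)$ applied to the family $W_L$ of lines in $L$ would force $V=W_L$, contradicting $\Lo V=D\neq L$), deduce by a dimension count on $e^{-1}(L)$ that every $V$-curve meets $L$, and finally use Lemma~\ref{basic1}(a)–(b) together with the invariance $\dim\w V=\dim V=2$ in $\w X$ to pin down the class $[V]=[C_L]$.
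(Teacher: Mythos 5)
Your overall strategy for $(a)$ --- two covering families force $\dim\N(D,X)$ to be small, contradicting $\rho_X\geq 7$ via the Lefschetz defect and Ex.~\ref{linesproduct} --- is the same as the paper's, but the step you flag as the main obstacle is a real gap, and your proposed repairs do not close it as stated. With $Z=\Lo V_{1,C_2}$, the set $\Lo V_{1,Z}$ need not grow at all: if the $V_1$-curves are pairwise disjoint (e.g.\ fibers of a map from $D$ to a surface), then $\Lo V_{1,Z}=Z\subsetneq D$ and iterating $\Lo V_1$ is stationary, so no number of iterations reaches $D$. The paper's fix is to alternate the two families and sweep with an auxiliary curve: for $x_0\in D$ general, $S_{x_0}:=\Lo V_{2,\,\Lo V_{1,x_0}}$ satisfies $\N(S_{x_0},X)=\R[V_1]+\R[V_2]$ by \eqref{lori}; if $S_{x_0}\neq D$, choose an irreducible curve $\Gamma\ni x_0$ with $\Gamma\not\subset S_{x_0}$, so that $D=\Lo V_{2,\,\Lo V_{1,\Gamma}}$ and $\N(D,X)=\R[\Gamma]+\R[V_1]+\R[V_2]$. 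The price is a bound of $3$ rather than $2$, which still gives $\delta_X\geq\rho_X-3\geq 4$ and the desired contradiction with Th.~\ref{codim}. So $(a)$ is repairable in the direction you indicate, but the class of the sweeping curve must enter the count.

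The primary argument you give for $(b)$ is circular. From $D\cdot C_L=-1$ you infer that the contraction of $R_L=\R_{\geq 0}[C_L]$ is divisorial with exceptional divisor $D$ and that its restriction to $D$ is a conic-bundle-type fibration whose fibers cover $D$. But $D\cdot C_L<0$ only gives $\Lo(R_L)\subseteq D$; equality would require $D$ to be covered by curves with class in $R_L$, which is exactly the conclusion $[V]=[C_L]$ you are trying to prove. In fact $\ma{N}_{C_L/X}$ is an extension of $\ol(-1)^{\oplus 2}$ by $\ol(1)$, so $h^0(\ma{N}_{C_L/X})=2$ and every deformation of $C_L$ stays inside $L$: the family of lines through $C_L$ has locus $L$, a surface (case $(c)(ii)$ of Th.~\ref{uno}), so $R_L$ is a priori a small ray contracting $L$ to a point, not a divisorial ray with exceptional locus $D$. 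The workable route is the one you relegate to the non-generic case: if $[V]\neq[C_L]$, then no curve of $V$ lies in $L$ (an irreducible curve in $L\cong\pr^2$ of anticanonical degree $1$ is a line of $L$), hence $e^{-1}(L)$ is a surface on which $\pi$ has finite fibers, so $\pi(e^{-1}(L))=V$, every $V$-curve meets $L$, $D=\Lo V_L$, and \eqref{lori} gives $\N(D,X)=\R[C_L]+\R[V]$ of dimension $\leq 2$ --- the same contradiction as in $(a)$. This is presumably the ``similar'' argument the paper leaves to the reader; as written, your main line of attack for $(b)$ does not work.
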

\begin{proof}
 We show $(a)$. Suppose by contradiction that there are two distinct families $V$, $W$ of lines with $D=\Lo V=\Lo W$. We prove that this yields $\dim\N(D,X)\leq 3$.

 Let $x\in D$ and consider
  $$S_x:=\Lo W_{\,\Lo V_x}.$$
  We have $\dim S_x\geq 2$ and by \eqref{lori} $\N(S_x,X)=\R[V]+\R[W]$. If $S_x=D$ for some $x$, we are done. Otherwise
    take $x_0\in D$ a general point and let $\Gamma\subset D$ be an irreducible curve through $x_0$ such that $\Gamma\not\subset S_{x_0}$.
   Consider
   $$T:=\bigcup_{x\in\Gamma}S_x=\Lo W_{\Lo V_{\Gamma}}.$$
Then  $T=D$, so that
by \eqref{lori} we get
$$\N(D,X)=\N(\Lo V_{\Gamma},X)+\R[W]=\R[\Gamma]+\R[V]+\R[W]$$ and $\dim\N(D,X)\leq 3$.
Since $\rho_X\geq 7$, this yields $\delta_X\geq 4$ and hence $X$ is a product of surfaces by Th.~\ref{codim}, a contradiction (see Ex.~\ref{linesproduct}).
    
The proof of $(b)$ is similar and we leave it to the reader.
  \end{proof}  
\begin{lemma}[\cite{blowup}, Lemma 2.18]\label{chitarre}
Let $X$ be a smooth Fano $4$-fold with $\rho_X\geq 7$, and $D\subset X$ a prime divisor covered by a family $V$ of lines. Then $D\cdot [V]\geq -1$, and
one of the following holds:
\begin{enumerate}[$(i)$]
\item
 $D$ is nef;
\item  $D\cdot [V]= -1$ and $[V]$ generates an extremal ray of type $(3,2)$ of $\NE(X)$.
\end{enumerate}
\end{lemma}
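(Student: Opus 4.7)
The plan is to first pin down $\dim V$ using Theorem~\ref{uno}, and then, when $D$ is not nef, produce a $D$-negative extremal ray, show that it must be generated by $[V]$, and identify its contraction as being of type $(3,2)$.

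Since $\rho_X\geq 7$, parts (a) and (b) of Theorem~\ref{uno} force $\dim V=2$, and because $D=\Lo V$ is a prime divisor we must be in subcase (c)(i): $V_x$ is finite for a general $x\in D$. Thus the universal family exhibits $D$ as generically covered by a two-dimensional family of rational curves of class $[V]$, with only finitely many passing through a general point.

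If $D$ is nef we are in case (i) and $D\cdot [V]\geq 0$. Otherwise, pick an extremal ray $R$ of $\NE(X)$ with $D\cdot R<0$; then $\Exc(\ph_R)\subseteq D$. I would first rule out the small case: if $\ph_R$ were small, Theorem~\ref{kawamata} would present $\Exc(\ph_R)$ as a disjoint union of exceptional planes $L_1,\dotsc,L_s\subseteq D$, and Lemma~\ref{excplane}(b) would force $[V]=[C_{L_i}]\in R$; but then every curve of $V$, having class in $R$, would have to lie in $\Exc(\ph_R)$, contradicting the fact that $V$ covers the three-dimensional $D$ whereas $\Exc(\ph_R)$ is only two-dimensional. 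Hence $\ph_R$ is divisorial with $\Exc(\ph_R)=D$, so every curve of $V$ is contracted by $\ph_R$ and $[V]\in R$.

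It remains to show that $\ph_R$ is of type $(3,2)$ and that $D\cdot [V]=-1$. For the type: a general fiber of $\ph_R$ of dimension $\geq 2$ would be swept by a positive-dimensional family of curves of class $[V]$ through each of its general points, contradicting the finiteness of $V_x$ granted by (c)(i); hence the general fiber is one-dimensional and $\ph_R(D)$ is a surface. The intersection number then follows from the classification of elementary divisorial contractions of type $(3,2)$ on a smooth Fano $4$-fold: the exceptional divisor is a conic bundle over the image surface with $D\cdot F=-1$ on a general fiber $F$. The main obstacle I expect is excluding the small case for $\ph_R$, which hinges on Lemma~\ref{excplane}(b) and thereby on the hypothesis $\rho_X\geq 7$; once that is in hand, the remaining steps are standard Mori-theoretic bookkeeping using the covering property of $V$ and the finiteness of $V_x$.
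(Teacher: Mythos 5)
The paper does not prove this lemma; it is imported verbatim from \cite{blowup}, Lemma 2.18, so there is no in-paper argument to compare against. Judged on its own terms, your proposal has a genuine gap at its central step. After correctly excluding the small case (your use of Th.~\ref{kawamata} and Lemma~\ref{excplane}$(b)$ there is fine), you conclude from $\Exc(\ph_R)=D$ that ``every curve of $V$ is contracted by $\ph_R$ and $[V]\in R$.'' This does not follow: a curve lying in the exceptional divisor of a divisorial contraction need not be contracted by it. For a contraction of type $(3,2)$, $D$ dominates a surface and most curves in $D$, a priori including the curves of $V$, survive; nothing you have said rules out the scenario where $D=\Exc(\ph_R)$ with $D\cdot R<0$ while $D\cdot[V]\geq 0$ and the $V$-curves map onto curves in $\ph_R(D)$ --- yet excluding exactly this scenario is the content of the lemma. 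The standard repair is to first pin down the type of $\ph_R$ and then invoke the uniqueness statement Lemma~\ref{excplane}$(a)$: if $\ph_R$ is of type $(3,2)$, its generic fibers over $\ph_R(D)$ yield a second family of lines covering $D$ (the curves $C_D$ of Th.-Def.~\ref{long}$(d)$, with $-K_X\cdot C_D=1$ and $D\cdot C_D=-1$), and uniqueness forces this family to be $V$, giving $[V]\in R$ and $D\cdot[V]=-1$.

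Your argument for excluding fibers of dimension $\geq 2$ is also flawed, and it moreover presupposes the unproved claim $[V]\in R$. A two-dimensional fiber $F$ swept by a one-dimensional subfamily of $V$ has only \emph{finitely many} members through a general point (compare a pencil of curves on a surface), so finiteness of $V_x$ from Th.~\ref{uno}$(c)(i)$ yields no contradiction; the dimension count $\dim V=2$ is perfectly consistent with a one-parameter family of two-dimensional fibers each carrying a one-parameter family of lines. The correct obstruction to $\dim\ph_R(D)\leq 1$ is numerical: in that case $\dim\N(D,X)\leq 1+\dim\N(\ph_R(D),Y)\leq 2$, so $\delta_X\geq\rho_X-2\geq 5$, contradicting Th.~\ref{codim} once $X$ is not a product of surfaces (and the product case is settled directly by Example~\ref{linesproduct}, where every family of lines either has nef locus or spans a $(3,2)$ ray). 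With these two repairs the skeleton of your argument does lead to a proof, but as written the decisive implications are asserted rather than established.
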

The following result is similar to \cite[Prop.~5.3]{morimukai2} on Fano $3$-folds.
\begin{lemma}\label{demons}
Let $X$ be a smooth Fano $4$-fold  and $V$ a family of lines in $X$ with $\dim V=2$ and $D:=\Lo V$ a prime divisor. 
Suppose that there exists a curve $C$ belonging to the family $V$ such that $C\cong\pr^1$, $\ma{N}_{C/X}\cong\ol^{\oplus 2}\oplus \ol(-1)$, and $C$ does not intersect other curves of the family $V$. Then $D\cdot [V]=-1$.
\end{lemma}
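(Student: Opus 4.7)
The strategy is to compute $D\cdot [V]=D\cdot C=\deg(\mathcal{N}_{D/X}|_C)$ via the normal bundle sequence of $C\subset D\subset X$. The plan is to show $\mathcal{N}_{C/D}\cong\mathcal{O}^{\oplus 2}$, which then gives $\deg(\mathcal{N}_{D/X}|_C)=\deg\mathcal{N}_{C/X}-\deg\mathcal{N}_{C/D}=-1-0=-1$.

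First I would set up the deformation theory. Since $\mathcal{N}_{C/X}\cong\mathcal{O}^{\oplus 2}\oplus\mathcal{O}(-1)$ one has $h^0(\mathcal{N}_{C/X})=2$ and $h^1(\mathcal{N}_{C/X})=0$, so deformations of $C$ in $X$ are unobstructed and the relevant component of the Hilbert (or Chow) scheme is smooth of dimension $2$ at $[C]$. Combined with $\dim V=2$, this forces $V$ to be smooth at $v_0$ and the Kodaira--Spencer map $T_{v_0}V\xrightarrow{\sim} H^0(\mathcal{N}_{C/X})$ to be an isomorphism; in particular every global section of $\mathcal{N}_{C/X}$ is realized by an actual deformation of $C$ parametrized by $V$, and hence by a curve contained in $D=\Lo V$.

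Next I would show that $D$ is smooth along $C$. Consider the universal family $\pi\colon\mathcal{C}\to V$ and the evaluation $e\colon\mathcal{C}\to X$. The hypothesis that $C$ does not meet other curves of the family gives $e^{-1}(x)=\{(v_0,x)\}$ set-theoretically for every $x\in C$. The differential $de_{(v_0,x)}$ acts by inclusion on $T_xC$ and, modulo $T_xC$, becomes the evaluation $T_{v_0}V=H^0(\mathcal{N}_{C/X})\to\mathcal{N}_{C/X}|_x$; the latter is injective at every $x\in C$ because the nonzero sections of the summand $\mathcal{O}^{\oplus 2}\subset\mathcal{N}_{C/X}$ are constants, hence nowhere vanishing. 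Thus $e$ is finite and unramified at $(v_0,x)$ with reduced fibre of length one, so a neighbourhood of $(v_0,x)$ in $\mathcal{C}$ maps isomorphically onto a neighbourhood of $x$ in $D$, and $D$ is smooth at $x$.

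Once $D$ is smooth along $C$, the normal bundle sequence
$$0\to\mathcal{N}_{C/D}\to\mathcal{N}_{C/X}\to\mathcal{N}_{D/X}|_C\to 0$$
is exact and $\mathcal{N}_{C/D}$ is a rank-two vector bundle on $C\cong\pr^1$. Because all global sections of $\mathcal{N}_{C/X}$ come from deformations lying in $D$, they factor through $H^0(\mathcal{N}_{C/D})$, which yields $h^0(\mathcal{N}_{C/D})=h^0(\mathcal{N}_{C/X})=2$. Writing $\mathcal{N}_{C/D}\cong\mathcal{O}(a)\oplus\mathcal{O}(b)$ with $a\geq b$, the inclusion $\mathcal{N}_{C/D}\hookrightarrow\mathcal{O}^{\oplus 2}\oplus\mathcal{O}(-1)$ forces $a\leq 0$, since $\Hom(\mathcal{O}(k),\mathcal{O}^{\oplus 2}\oplus\mathcal{O}(-1))=0$ for $k\geq 1$; combined with $h^0=2$ this gives $a=b=0$. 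Hence $\mathcal{N}_{C/D}\cong\mathcal{O}^{\oplus 2}$ and $D\cdot [V]=-1$. The main technical obstacle I anticipate is the verification that $D$ is smooth along $C$; once that is settled, the remainder is a clean computation of line and vector bundles on $\pr^1$.
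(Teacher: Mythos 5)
Your proof is correct and follows essentially the same route as the paper's: unobstructedness from $h^1(\ma{N}_{C/X})=0$, injectivity of the differential of the evaluation map via the nowhere-vanishing sections of the $\ol^{\oplus 2}$ summand, the non-intersection hypothesis to make $e$ injective over $C$, hence $D$ smooth along $C$ with $e$ a local isomorphism there. The only (cosmetic) difference is the last step: you extract $\ma{N}_{C/D}\cong\ol^{\oplus 2}$ from the normal bundle sequence, whereas the paper concludes $-K_D\cdot C=2$ and applies adjunction; the two computations are equivalent.
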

\begin{proof}
  Since $C\cong\pr^1$ and $h^1(C,\ma{N}_{C/X})=0$, $\Hilb(X)$ and $\Chow(X)$  are smooth  and locally isomorphic at the point corresponding to $C$
\cite[Th.~I.2.8 and I.6.3, Cor.~I.6.6.1]{kollar}, and 
$C$ corresponds to a smooth point $v_0$ of a unique irreducible component $V$ of $\Chow(X)$; consider the universal family \eqref{universal} and set $C_0:=\pi^{-1}(v_0)\subset\ma{C}$. Notice that $\ma{C}$ is smooth around $C_0$, $\pi$ is a smooth fibration in $\pr^1$ around $C_0$, and $e_{|C_0}\colon C_0\to C$ is an isomorphism.

  We show that the differential of $e\colon\ma{C}\to X$ is injective at every point of $C_0$; this is a standard argument. Restricting to the two curves,  $de\colon T_{\ma{C}\,|C_0}\to T_{X|C}$ restricts to an isomorphism between $T_{C_0}$ and $T_C$, and induces $\alpha\colon \ma{N}_{C_0/\ma{C}}\to\ma{N}_{C/X}$. Moreover $\ma{N}_{C_0/\ma{C}}\cong T_{v_0}V\otimes\ol_{C_0}\cong H^0(C,\ma{N}_{C/X})\otimes\ma{O}_{C_0}$, and $\alpha$ is naturally identified with the evaluation of global sections (see e.g.\ \cite[\S 2.2]{KPS}), which is injective because $\ma{N}_{C/X}\cong\ol^{\oplus 2}\oplus \ol(-1)$.

  Therefore 
  $e$ is smooth at every point of $C_0$. If there exist $z_1\in C_0$ and $z_2\in\ma{C}$ such that $e(z_1)=e(z_2)$, then   $e(\pi^{-1}(\pi(z_2)))$ is a curve of the family which intersects $C$, so by assumption $e(\pi^{-1}(\pi(z_2)))=C$. This implies that $\pi(z_2)=v_0$, $z_2\in C_0$ and hence that $z_1=z_2$.

  We conclude that $e\colon \ma{C}\to D$ is birational and that
  $C_0$ is contained in the open subset where $e$ is an isomorphism. Therefore $D$ is smooth around $C$, $-K_D\cdot C=2$, and finally $D\cdot C=-1$.
\end{proof}
\section{Fixed prime divisors and associated contractions}\label{fixed}
\noindent This section is devoted to fixed prime divisors. In \S \ref{fixedfaces} we introduce the notions of ``fixed face'' of the effective cone of a Mori dream space, and of ``adjacent'' fixed prime divisors; both will be very relevant in the rest of the paper.  In \S \ref{secfixed} we recall the classification of fixed prime divisors in Fano $4$-folds with $\rho\geq 7$, and report some results on them. Finally in \S \ref{contraction} we focus on fixed prime divisors of type $(3,1)^{sm}$ and $(3,0)^Q$, which are the ones that are relevant for the proof of Th.~\ref{main} (see Rem.~\ref{stress}), and prove many properties that we will need in the sequel.
\subsection{Fixed faces of the effective cone}\label{fixedfaces}
\begin{definition}
Let $X$ be a projective, normal and $\Q$-factorial Mori dream space.
  A face 
  $\tau$  of $\Eff(X)$ is called a {\bf fixed face} if $\tau\cap\Mov(X)=\{0\}$.\end{definition}
A fixed face is generated by classes of fixed prime divisors.
There is a bijection between fixed prime divisors of $X$ and 
one-dimensional fixed faces of $\Eff(X)$, via $D\mapsto
\R_{\geq 0}[D]$ (see \cite[Rem.~2.19]{eff}).
\begin{lemma}\label{simplicial}
 Let $X$ be a projective, normal, and $\Q$-factorial Mori dream space. Every fixed face  of $\Eff(X)$ is simplicial.
\end{lemma}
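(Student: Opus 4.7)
My plan is to first identify the extremal rays of $\tau$ with rays spanned by fixed prime divisors, and then rule out any linear dependence among them by producing a non-trivial movable class inside $\tau$, which would contradict the fixed-face hypothesis.

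For the first step, I would argue that every one-dimensional face of $\tau$ is also a face of $\Eff(X)$ (a face of a face is a face) and, being contained in $\tau$, meets $\Mov(X)$ only at the origin; hence it is itself a one-dimensional fixed face of $\Eff(X)$. By the bijection recalled just before the lemma, each such ray has the form $\R_{\geq 0}[D_i]$ for a unique fixed prime divisor $D_i$, so $\tau=\langle[D_1],\dotsc,[D_k]\rangle$ for pairwise distinct fixed prime divisors $D_1,\dotsc,D_k$, and showing $\tau$ is simplicial amounts to showing that the classes $[D_1],\dotsc,[D_k]$ are linearly independent.

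For the second step, I would argue by contradiction: assume a nontrivial relation among the $[D_i]$. Separating positive from negative coefficients and clearing denominators yields disjoint non-empty subsets $I,J\subset\{1,\dotsc,k\}$ and positive integers such that
\[
D_+:=\sum_{i\in I}a_iD_i\equiv\sum_{j\in J}b_jD_j=:D_-.
\]
Since $X$ is a Mori dream space one has $\Pic(X)_{\Q}=\Nu(X)$, so after multiplying by a further positive integer I may assume $D_+\sim D_-$ linearly. Then $D_+$ and $D_-$ are two distinct effective integer divisors in the same linear class whose supports are disjoint; consequently, for every $m\geq 1$ the fixed part of $|mD_+|$ is contained in both $mD_+$ and $mD_-$ and must therefore vanish. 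This means the stable base locus of $|D_+|$ has codimension at least $2$, i.e.\ $[D_+]\in\Mov(X)$. But $[D_+]\in\tau$, so the fixed-face condition forces $[D_+]=0$ and hence $D_+=0$, contradicting $I\neq\emptyset$. The only delicate point is the passage from numerical to linear equivalence, which is exactly where the Mori dream space hypothesis enters.
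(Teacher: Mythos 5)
Your proof is correct, but it takes a genuinely different route from the paper's. The paper argues via the Mori chamber decomposition: for each birational contraction $f\colon X\dasharrow Y$ with $Y$ $\Q$-factorial, the cone $\varepsilon_f$ spanned by the $f$-exceptional prime divisors is shown to be a face of $\Eff(X)$, its simpliciality is imported from \cite[Lemma 2.7]{okawa_MCD}, and then the fact that $\Eff(X)$ is the union of the chambers $\varepsilon_f+f^*\Nef(Y)$ meeting along common faces identifies any fixed face with a face of some $\varepsilon_f$. You instead argue directly: the extremal rays of a fixed face $\tau$ are one-dimensional fixed faces, hence spanned by classes of pairwise distinct fixed prime divisors $D_1,\dotsc,D_k$, and a nontrivial relation $\sum_{i\in I}a_iD_i\equiv\sum_{j\in J}b_jD_j$ would, after upgrading numerical to linear equivalence (legitimate since $\Pic(X)_{\Q}=\Nu(X)$ for a Mori dream space), exhibit two members of the same linear system with no common component, forcing the fixed part of $|mD_+|$ to vanish for every $m$, so that $[D_+]$ is a nonzero class in $\tau\cap\Mov(X)$, a contradiction. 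Two small points of hygiene: the supports of $D_+$ and $D_-$ need not be disjoint as sets, only free of common irreducible components (which is all that the vanishing of the fixed part requires); and you should justify that both $I$ and $J$ are nonempty, which follows from the same fact you need at the very end to pass from $[D_+]=0$ to $D_+=0$, namely that a nonzero effective divisor has nonzero numerical class (intersect with $A^{\dim X-1}$ for $A$ ample). Your approach is more elementary and self-contained, relying only on the bijection between fixed prime divisors and one-dimensional fixed faces recalled just before the lemma; the paper's approach carries the extra structural information that a fixed face consists exactly of the exceptional classes of a single birational contraction, which is closer in spirit to how these faces are exploited later (e.g.\ in Lemma \ref{taberna}).
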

\begin{proof}
  Let $f\colon X\dasharrow Y$ be a birational (rational) contraction such that $Y$ is $\Q$-factorial, and  $\varepsilon_f\subset\Nu(X)$ 
   the convex cone generated by the  classes of all exceptional prime divisors of $f$.
  Then $\varepsilon_f$ is a face of $\Eff(X)$, because if $D_1,D_2$ are effective $\Q$-divisors on $X$ such that $[D_1+D_2]\in\varepsilon_f$, then $0=f_*(D_1)+f_*(D_2)$ and $f_*(D_1)$, $f_*(D_2)$ are effective $\Q$-divisors on $Y$; we conclude that 
  $f_*(D_1)=f_*(D_2)=0$ so that $D_1$ and $D_2$ are exceptional and $[D_1],[D_2]\in \varepsilon_f$.
  Thus $\varepsilon_f$ is a fixed face, and it
  is a simplicial cone by \cite[Lemma 2.7]{okawa_MCD}.
 
Consider the cone $\sigma_f:=\varepsilon_f+f^*\Nef(Y)\subset\Nu(X)$; then every face of  $\sigma_f$ is generated by a face of $\varepsilon_f$ and a face of $f^*\Nef(Y)$. 
  Recall that $\Eff(X)$ is the union of the cones $\sigma_f$ when $f$ varies
  \cite[Prop.~1.11(2)]{hukeel},
  and such cones intersect each other along common faces 
  \cite[Prop.~2.9]{okawa_MCD}.

 Now if $\tau$ is a fixed face of $\Eff(X)$,
  there exists some $f$ such that $\sigma_f$ has a face $\eta$ with $\eta\subseteq\tau$ and $\dim\eta=\dim\tau$.
  Since $\tau$ is fixed, 
we have $\eta\cap f^*\Nef(Y)=\{0\}$ and $\eta\subseteq\varepsilon_f$, therefore $\eta$ is a simplicial face of 
 $\Eff(X)$ and $\eta=\tau$.
  \end{proof}
\begin{definition} We say that two fixed prime divisors are {\bf adjacent} if their classes in $\Nu(X)$ generate a fixed face of $\Eff(X)$.
\end{definition}
 \begin{lemma}\label{taberna}
    Let $X$ be a projective, normal, and $\Q$-factorial Mori dream space,
 $f\colon X\dasharrow Y$ an elementary divisorial rational contraction, and 
$D:=\Exc(f)$, so that $D$ is a fixed prime divisor.
    \begin{enumerate}[$(a)$]
      \item
    For every $r\in \Z_{>0}$ there is a bijection between $(r+1)$-dimensional faces of $\Eff(X)$ containing $[D]$, and $r$-dimensional faces of $\Eff(Y)$, given by $\tau\mapsto f_*\tau$;
\item
 $\tau$ is fixed if and only if $f_*\tau$ is fixed;
\item
  there is a bijection between fixed prime divisors $E_Y\subset Y$ and fixed prime divisors $E_X\subset X$ adjacent to $D$ (and different from $D$); here $E_X$ is the transform of $E_Y$ in $X$.
  \end{enumerate}
    \end{lemma}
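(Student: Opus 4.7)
The plan is to first use the factorization of rational contractions to reduce to the regular case, then exploit the induced linear map $f_*\colon\Nu(X)\to\Nu(Y)$ (whose kernel is $\R[D]$) to translate face and movability data between $X$ and $Y$.

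As a setup, factor $f$ as $X\stackrel{\ph}{\dasharrow} X'\stackrel{f'}{\to} Y$ with $\ph$ a SQM and $f'\colon X'\to Y$ a regular elementary divisorial contraction. Since $\ph_*\colon\Nu(X)\stackrel{\sim}{\to}\Nu(X')$ preserves $\Eff$, $\Mov$, and the property of being a fixed prime divisor (via strict transforms, as $h^0(mB)$ is invariant under an isomorphism in codimension one on a normal variety), I may replace $X$ with $X'$ and assume $f\colon X\to Y$ is regular with $\Exc(f)=D$. Then $f^*\colon\Nu(Y)\hookrightarrow\Nu(X)$ is injective with $f_*f^*=\mathrm{Id}$ and $\ker f_*=\R[D]$ (using $\rho_X-\rho_Y=1$ and $f_*[D]=0$), and $f_*\Eff(X)=\Eff(Y)$ (both inclusions being immediate, from $f_*$ preserving effectivity and $f^*\Eff(Y)\subseteq\Eff(X)$).

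For part (a), I invoke the standard convex-geometric fact that if $\ell$ is an extreme ray of a polyhedral cone $C\subset V$ and $\pi\colon V\to V/\langle\ell\rangle$ is the quotient, then $\tau\mapsto\pi(\tau)$ is a dimension-lowering-by-one bijection between faces of $C$ containing $\ell$ and faces of $\pi(C)$, with inverse $\sigma\mapsto\pi^{-1}(\sigma)\cap C$. Applied to $V=\Nu(X)$, $C=\Eff(X)$, $\ell=\R_{\geq 0}[D]$ (an extreme ray since $D$ is a fixed prime divisor, cf.\ \S\ref{fixedfaces}), and $\pi=f_*$, this yields (a).

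Part (b) is the step where I expect the main difficulty: it rests on two movability containments for a regular elementary divisorial contraction of Mori dream spaces, namely $f^*\Mov(Y)\subseteq\Mov(X)$ and $f_*\Mov(X)\subseteq\Mov(Y)$. The first holds because pulling back a linear system whose stable base locus has codimension $\geq 2$ in $Y$ gives one with base locus contained in $f^{-1}(\mathrm{Bs})$, still of codimension $\geq 2$ in $X$ (as $f(D)$ has codimension $\geq 2$). The second follows from pushing forward linear systems and noting that fixed components away from $\Exc(f)$ persist. Granting these, set $\sigma:=f_*\tau$. If $\sigma\cap\Mov(Y)=\{0\}$, any $\alpha\in\tau\cap\Mov(X)$ has $f_*\alpha\in\sigma\cap\Mov(Y)=\{0\}$, so $\alpha\in\R_{\geq 0}[D]\cap\Mov(X)=\{0\}$ since $D$ is fixed. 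Conversely if $\tau\cap\Mov(X)=\{0\}$ and $\beta\in\sigma\cap\Mov(Y)$, then $f^*\beta\in\tau$ (using $\tau=f_*^{-1}(\sigma)\cap\Eff(X)$ from (a)) and $f^*\beta\in\Mov(X)$, so $f^*\beta=0$ and hence $\beta=0$.

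Part (c) then follows formally: fixed prime divisors in $Y$ biject with one-dimensional fixed faces of $\Eff(Y)$ (cf.\ \S\ref{fixedfaces}), which by (a)+(b) biject with two-dimensional fixed faces of $\Eff(X)$ containing $[D]$. By Lemma~\ref{simplicial} such a face is simplicial, hence equals $\langle[D],[E_X]\rangle$ for a unique fixed prime divisor $E_X\neq D$ adjacent to $D$, and the correspondence $E_Y\leftrightarrow E_X$ identifies $E_X$ with the strict transform of $E_Y$ under $f$ (equivalently, $f_*[E_X]\in\R_{>0}[E_Y]$).
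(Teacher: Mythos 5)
Your parts $(a)$ and $(c)$ are essentially the paper's argument (the quotient-cone correspondence, then simpliciality of fixed faces), and the forward direction of $(b)$ is fine. The gap is in the converse direction of $(b)$, where you invoke the containment $f^*\Mov(Y)\subseteq\Mov(X)$. This is false for an elementary divisorial contraction, and the justification you give is exactly where it breaks: if $B_0\subset Y$ is the stable base locus of $|mM_Y|$ and $f(D)\subseteq B_0$, then $f^{-1}(B_0)\supseteq f^{-1}(f(D))=D$ has codimension $1$, not $\geq 2$. Since $H^0(X,mf^*M_Y)=H^0(Y,mM_Y)$, in that case $D$ is a fixed component of $|mf^*M_Y|$ for all $m$ and $[f^*M_Y]\notin\Mov(X)$. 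This situation genuinely occurs: e.g.\ $M_Y$ movable on the boundary of $\Eff(Y)$ with stable base locus equal to a flipping curve, and $f$ the blow-up of that curve. (Compare the proof of Lemma \ref{special}, where the same preimage argument is used but only under the hypothesis that $f$ is \emph{special}, i.e.\ $\codim f(D)\leq 1$ for all prime divisors --- precisely the condition that fails here at $D=\Exc(f)$.)

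The repair is not just cosmetic. One must replace $f^*\beta$ by $M:=f^*M_Y-\mu D$ for a suitable $\mu\in\Q_{\geq 0}$ making $M$ movable (this is what the paper does), but then $[M]$ is no longer obviously in $\tau=f_*^{-1}(\sigma)\cap\Eff(X)$, since you have changed the class by a multiple of $[D]$. The paper closes this by writing $\tau=\Eff(X)\cap\gamma^{\perp}$ for some $\gamma\in\Eff(X)^{\vee}$, noting $\gamma\cdot D=0$ because $[D]\in\tau$, and deducing $\gamma\cdot M=\gamma\cdot(M+\lambda D)=0$ for the $\lambda$ with $[M+\lambda D]\in\tau$, whence $[M]\in\Eff(X)\cap\gamma^{\perp}=\tau$ and $\tau\cap\Mov(X)\neq\{0\}$. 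Your proof needs this extra step (or an equivalent one) to be correct.
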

\begin{proof}
\cite[Lemma 2.21]{blowup} shows $(a)$ for $r=1$, as well as $(c)$. The same proof yields $(a)$ for any $r$, because under the push-forward of divisors $f_*\colon\Nu(X)\to\Nu(Y)$, the cone $\Eff(Y)$ can be seen as the ``quotient cone'' of $\Eff(X)$ modulo the one-dimensional face $\langle [D]\rangle$, see for instance \cite[Def.~V.2.8 and Th.~V.2.9]{ewald2}.

For $(b)$, if there exists a non-zero movable divisor $M$ with $[M]\in\tau$, then $f_*M$ is a non-zero movable divisor with class in $f_*\tau$. Conversely, if there exists a non-zero movable divisor $M_Y$ with $[M_Y]\in f_*\tau$, then
there exists $\mu\in\Q_{\geq 0}$ such that $M:=f^*M_Y-\mu D$ is non-zero and movable, and since $f_*([M])=[M_Y]\in f_*\tau$, there exists  $\lambda\in\Q$ such that $[M+ \lambda D]\in\tau$. 
Since $\tau$ is a face of $\Eff(X)$, there exists a class $\gamma\in\Eff(X)^{\vee}$ such that $\tau=\Eff(X)\cap\gamma^{\perp}$.
Then $M\cdot\gamma=(M+\lambda D)\cdot\gamma-\lambda D\cdot\gamma=0$ and $[M]\in\Eff(X)$, so that $[M]\in\tau$.
\end{proof}
\subsection{Fixed prime divisors of Fano $4$-folds}\label{secfixed}
\noindent Let $X$ be a smooth Fano 
 $4$-fold with $\rho_X\geq 7$. After \cite{eff,blowup} there are four possible types of fixed prime 
divisors in $X$.
 In this subsection we recall this classification.
\begin{thmdefi}[\cite{blowup}, Th.~5.1, Def.~5.3, Cor.~5.26, Def.~5.27]\label{long}
Let $X$ be a smooth Fano $4$-fold with $\rho_X\geq 7$, and $D$ a fixed prime divisor in $X$. 
\begin{enumerate}[$(a)$]
\item There exists a unique diagram:
$$X\stackrel{\ph}{\dasharrow}\w{X}\stackrel{\sigma}{\la}Y$$
where $\ph$ is a SQM, $\sigma$ is an elementary divisorial contraction with exceptional divisor the transform $\w{D}$ of $D$, and $Y$ is Fano (possibly singular);
\item  $\sigma$ 
is of type $(3,0)^{sm}$, $(3,0)^Q$, $(3,1)^{sm}$, or $(3,2)$, and we define $D$ to be of type $(3,0)^{sm}$, $(3,0)^Q$, $(3,1)^{sm}$, or $(3,2)$, respectively.
\item If $D$ is of type $(3,2)$, then $X=\w{X}$.
\item We define $C_D\subset D\subset X$ to be the transform of a general irreducible curve $C_{\w{D}}\subset \w{X}$ contracted by $\sigma$, of minimal anticanonical degree. Then  $C_D\cong\pr^1$, $D\cdot C_D=-1$, and $C_D$ is contained in the open subset where $\ph$ is an isomorphism.
\item Given a SQM $X\dasharrow X'$ and an elementary divisorial contraction $k\colon X'\to Y$ with $\Exc(k)$ the transform of $D$, then $k$ 
  has the same type as $\sigma$.
\end{enumerate}
 \end{thmdefi}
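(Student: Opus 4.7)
My plan is to handle the five parts in sequence. The two main ingredients would be the cone decomposition $\Eff(X) = \bigcup_f \sigma_f$ recalled in the proof of Lemma \ref{simplicial}, and the classification of elementary $K$-negative divisorial contractions from smooth $4$-folds due to Ando and Kachi.

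For part (a), since $D$ is fixed, the ray $\R_{\geq 0}[D]$ is a one-dimensional fixed face of $\Eff(X)$ by the bijection recalled before Lemma \ref{simplicial}. Among the cones $\sigma_f = \varepsilon_f + f^*\Nef(Y_f)$ decomposing $\Eff(X)$, there is one whose facet structure isolates $[D]$: more precisely, there exists an $f$ such that $[D]$ is an extremal ray of the simplicial face $\varepsilon_f$. Then $f$ factors as an SQM $\ph\colon X\dasharrow \w{X}$ (with $\w{X}$ smooth by Lemma \ref{basic1}) followed by an elementary divisorial contraction $\sigma\colon \w{X}\to Y$ with $\Exc(\sigma)=\w{D}:=\ph_*D$. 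By Lemma \ref{basic2} we may assume $\sigma$ is $K$-negative. To conclude that $Y$ is Fano, I would use that $-K_{\w{X}}$ remains big (as $X$ is Fano and SQMs preserve bigness of $-K$) and that contracting a $K$-negative extremal ray of an otherwise $-K$-positive variety produces ample anticanonical on the image; more concretely, all other extremal rays of $\w{X}$ should meet $-K_{\w X}$ positively because $X$ is Fano, so the push-forward of $-K_{\w X}$ is ample on $Y$. Uniqueness of $(\ph,\sigma)$ follows directly from uniqueness of this chamber.

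For part (b), I would invoke Ando-Kachi: an elementary $K$-negative divisorial contraction $\sigma$ from a smooth $4$-fold has $\dim\sigma(\w{D})\in\{0,1,2\}$, each case admitting only a short list of local models. The four types $(3,2)$, $(3,1)^{sm}$, $(3,0)^{sm}$, $(3,0)^{Q}$ correspond to the cases in which $Y$ is $\Q$-factorial and Fano; the other variants allowed by Ando-Kachi in principle must be ruled out using $\rho_X\geq 7$ together with classification of Fano $4$-folds of small Picard rank. For (c), if $D$ is of type $(3,2)$, then $\w D$ is a conic bundle over a surface, and any exceptional plane of Lemma \ref{basic1} meeting $\w D$ would contradict $\Exc(\sigma)=\w D$; hence $\ph=\Id$ and $X=\w X$. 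For (d), in each of the four cases $\sigma$ has a general fiber rational curve $C_{\w{D}}\cong\pr^1$ of minimal anticanonical degree, with $\w{D}\cdot C_{\w{D}}=-1$; genericity forces $C_{\w D}$ to avoid the finitely many exceptional curves of Lemma \ref{basic1}, so its proper transform $C_D$ lies in the isomorphism locus of $\ph$, is still $\pr^1$, and inherits $D\cdot C_D=-1$. For part (e), since $(X',k)$ fits the hypothesis of (a), uniqueness forces $(X',k)\cong(\w{X},\sigma)$, and in particular the same type.

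The main obstacle I foresee is verifying the Fano property of $Y$: the SQM $\ph$ can destroy nefness of $-K_X$, so one must carefully trace how the Fano hypothesis on $X$ propagates through $\ph$ and then through $\sigma$ to yield ampleness (not merely nefness) of $-K_Y$. A secondary obstacle is completing the classification in (b): the Ando-Kachi list a priori allows several degenerate local models beyond the four types listed, and ruling them out requires delicate use of the $\rho_X\geq 7$ assumption together with fine information on small-$\rho$ Fano $4$-folds.
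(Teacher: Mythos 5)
This statement is not proved in the paper: it is imported verbatim from \cite{blowup} (Th.~5.1, Def.~5.3, Cor.~5.26, Def.~5.27), so there is no internal argument to compare with, and your proposal has to stand on its own. As it stands it has genuine gaps at exactly the points where the quoted theorem is hard. The most serious one is the Fano property of $Y$ in $(a)$: your argument rests on the claim that ``all other extremal rays of $\w{X}$ should meet $-K_{\w X}$ positively because $X$ is Fano'', and this is false. By Lemma \ref{basic1}, whenever $\ph\neq\Id$ the variety $\w{X}$ contains exceptional curves $\ell$ with $-K_{\w{X}}\cdot\ell=-1$, so $\w{X}$ is not ``otherwise $-K$-positive''. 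Since $-K_{\w{X}}=\sigma^*(-K_Y)-a\w{D}$, ampleness of $-K_Y$ on the images $\sigma(\ell)$ forces quantitative control of $\w{D}\cdot\ell$ for every exceptional curve, i.e.\ of which exceptional planes of $X$ lie in $D$ and how; establishing this is a substantial part of the content of the cited theorem, not a formality. Similarly, in $(b)$ you defer the elimination of all the other Ando--Kachi local models to ``classification of Fano $4$-folds of small Picard rank'', but the actual mechanism (visible from how the paper uses these results, e.g.\ Rem.~\ref{generators}) is the Lefschetz-defect bound $\codim\N(D,X)\leq\delta_X\leq 3$ of Th.~\ref{codim} applied to $\N(\w{D},\w{X})$ together with a count of exceptional planes; without carrying this out, $(b)$ is an assertion, not a proof.

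Part $(e)$ is also incorrectly argued. The uniqueness in $(a)$ concerns the distinguished diagram in which $Y$ is Fano; part $(e)$ quantifies over \emph{arbitrary} SQMs $X\dasharrow X'$ and \emph{arbitrary} elementary divisorial contractions $k$ of the transform of $D$, whose targets need not be Fano and which need not coincide with $\sigma$. Uniqueness of the distinguished diagram therefore does not force $(X',k)\cong(\w{X},\sigma)$, and the whole point of $(e)$ is precisely that the type is well defined independently of this choice; a separate comparison argument is required. Finally, your justification of $(c)$ does not parse: the exceptional planes of Lemma \ref{basic1} live in $X$ (in $\w{X}$ one sees exceptional curves, which may meet $\w{D}$ without being contained in it), so ``an exceptional plane meeting $\w{D}$ would contradict $\Exc(\sigma)=\w{D}$'' is not a valid step. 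The genericity argument in $(d)$ and the chamber-structure mechanism for existence and uniqueness in $(a)$ are sound in outline, but the theorem cannot be considered proved from what you have written.
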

 \noindent We will frequently use the notations $C_D\subset D$ and
 $C_{\w{D}}\subset \w{D}$ introduced  above.
 \begin{example}\label{2022}
Let $X$ be a smooth Fano $4$-fold with $\rho_X\geq 7$. If $X$ is a product of surfaces, then every elementary birational contraction of $X$ 
has the form $S\times T\to S'\times T$, where $S\to S'$ is the blow-up of a point. In particular $X$ has no small contraction, and
 every fixed prime divisor of $X$ is of type $(3,2)$.
 \end{example}  

Fano $4$-folds with a fixed prime divisor of type $(3,0)^{sm}$ have been treated in \cite{blowup}:
 \begin{thm}[\cite{blowup}, Th.~5.40]\label{30}
Let $X$ be a smooth Fano $4$-fold with $\rho_X\geq 7$, having a fixed prime divisor of type $(3,0)^{sm}$. Then $\rho_X\leq 12$, and if $\rho_X=12$, then 
$X$ has a rational contraction onto a $3$-fold.
\end{thm}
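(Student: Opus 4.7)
The plan is to deduce the theorem from Theorem~\ref{delta2}. By Example~\ref{2022}, a product of del Pezzo surfaces has only fixed prime divisors of type $(3,2)$, so $X$ is not such a product; combined with Theorem~\ref{codim} and the assumption $\rho_X \geq 7$, this gives $\delta_X \leq 2$. If one can establish the matching lower bound $\delta_X \geq 2$, then Theorem~\ref{delta2} yields both $\rho_X \leq 12$ and a rational contraction onto a $3$-fold for every $\rho_X \geq 7$---in particular the statement sought when $\rho_X = 12$.

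The core task is therefore to exhibit a prime divisor $D' \subset X$ with $\codim \N(D', X) \geq 2$. The natural candidate is the given fixed prime divisor $D$ of type $(3,0)^{sm}$: from Theorem--Definition~\ref{long} we have a diagram $X \stackrel{\ph}{\dasharrow} \w{X} \stackrel{\sigma}{\la} Y$ in which $\sigma$ contracts $\w{D} \cong \pr^3$ to a smooth point, so $\dim \N(\w{D}, \w{X}) = 1$. Along the isomorphism in codimension one provided by $\ph$, curves in $\w{D}$ avoiding the exceptional curves $\ell_j$ correspond to curves in $D$ avoiding the exceptional planes $L_i$, and their numerical classes span a $1$-dimensional subspace of $\N(D, X)$.

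The main obstacle is that the SQM $\ph$ can genuinely enlarge $\N(D, X)$ beyond the one-dimensional contribution visible in $\w{D}$. Factoring $\ph$ through a common resolution $\wi{X}$ as in Lemma~\ref{basic1}, each exceptional plane $L_i \subset X$ contained in $D$ contributes additional curve classes to $\N(D, X)$. Bounding these contributions---so as to conclude $\dim \N(D, X) \leq \rho_X - 2$---is where the real work lies. A useful observation is that no exceptional curve $\ell_j \subset \w{X}$ can lie in $\w{D}$: since $\w{D}|_{\w{D}} \cong \ol_{\pr^3}(-1)$ and $K_{\w{X}}|_{\w{D}} \cong \ol_{\pr^3}(-3)$, every curve $C \subset \w{D}$ satisfies $-K_{\w{X}} \cdot C = 3 \deg C$, which is divisible by $3$, whereas $-K_{\w{X}} \cdot \ell_j = -1$. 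Consequently the strict transform $\wi{D} \subset \wi{X}$ is birational to $\w{D} \cong \pr^3$ with only point-blow-ups from the transverse intersections $\w{D} \cap \ell_j$, which keeps the relevant combinatorics tractable. Should $D$ itself fail to witness $\delta_X \geq 2$, one would pass via Lemma~\ref{taberna} to fixed prime divisors adjacent to $D$ (equivalently, fixed prime divisors of $Y$) as alternative candidates. This combinatorial-geometric analysis is the technical heart of the argument in \cite{blowup}.
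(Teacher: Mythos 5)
There is a genuine gap, and it sits exactly where you place "the real work": nothing in your proposal establishes $\delta_X\geq 2$, and I do not believe that claim can be established, so the reduction to Th.~\ref{delta2} fails. For the candidate divisor $D$ itself, the mechanism you describe works against you: by \cite[Cor.~3.14]{eff} one has $\N(D,X)=\N(\w{D},\w{X})+\sum_i\R[C_{L_i}]$, and the number of exceptional planes $L_i\subset D$ with linearly independent classes $[C_{L_i}]$ is typically large -- the count analogous to Rem.~\ref{generators} gives at least $\dim\N(D,X)-1\geq\rho_X-3$ of them -- so nothing prevents $\dim\N(D,X)$ from equaling $\rho_X-1$ or $\rho_X$. (Your observation that no exceptional curve lies in $\w{D}$ because $-K_{\w{X}}$ restricts to $\ol_{\pr^3}(3)$ is correct, and is recorded in \S\ref{contraction}, but it bounds neither the number of exceptional planes in $D$ nor $\dim\N(D,X)$.) The fallback of passing to fixed prime divisors adjacent to $D$ via Lemma~\ref{taberna} comes with no argument that any of them has $\N(\cdot,X)$ of codimension $\geq 2$ either.

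More structurally, this cannot be the right route. Since $X$ is not a product of surfaces (Ex.~\ref{2022}) and $\rho_X\geq 7$, Theorems~\ref{codim} and~\ref{delta2} already dispose of the case $\delta_X\geq 2$ with no use of the $(3,0)^{sm}$ hypothesis; the entire content of Th.~\ref{30} therefore lies in the case $\delta_X\leq 1$, which is precisely the case your strategy cannot reach. This is consistent with how the whole paper is organized: after the Lefschetz-defect reduction, the remaining theorems (e.g.\ Th.~\ref{finalmente}, Th.~\ref{main2}) are proved under the explicit standing hypothesis $\delta_X\leq 1$. The present paper does not reprove Th.~\ref{30} at all -- it quotes it from \cite[Th.~5.40]{blowup} -- and the argument there is a long one carried out on the smooth Fano $4$-fold $Y$ obtained by blowing down the point, using families of lines through the center, the fixed prime divisors of $Y$, and rational contractions of fiber type, in the spirit of Sections~\ref{divlines}--\ref{case31} here rather than a Lefschetz-defect computation.
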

Concerning fixed prime divisors of type $(3,2)$, we recall the following results, that will be relevant in the sequel.
\begin{thm}[\cite{blowup}, Prop.~5.32]
\label{natale}
Let $X$ be a smooth Fano $4$-fold with $\rho_X\geq 7$ and $\delta_X\leq 1$, having
 a fixed prime divisor
 $D$ of type $(3,2)$ such that  $\N(D,{X})\subsetneq\N({X})$.
Then $\rho_X\leq 12$, and if $\rho_X=12$, then 
$X$ has a rational contraction onto a $3$-fold.
\end{thm}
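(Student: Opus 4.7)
The plan is to produce from $D$ a rational contraction of $X$ of fiber type onto a variety of dimension at most $3$, and then invoke Th.~\ref{3fold} (or \cite[Cor.~1.2(ii)-(iii)]{fanos} if the image is a surface or a curve) to derive both $\rho_X\le 12$ and the structural statement in the boundary case.

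First I would settle the basic linear-algebraic setup. Since $\delta_X\le 1$, $\codim\N(D,X)\le 1$; together with the strict inclusion $\N(D,X)\subsetneq\N(X)$ this forces $\dim\N(D,X)=\rho_X-1$, so $\N(D,X)$ is a hyperplane of $\N(X)$. Note also that $X$ is not a product of surfaces, for otherwise Rem.~\ref{delta} together with $\rho_X\ge 7$ would give $\delta_X\ge 3$, violating $\delta_X\le 1$. Apply Th.~\ref{long}(a)-(c) to the type $(3,2)$ fixed divisor $D$: we have $X=\w{X}$ and an elementary divisorial contraction $\sigma\colon X\to Y$ with $\Exc(\sigma)=D$ and $S:=\sigma(D)$ a surface, where $Y$ is Fano (possibly singular) and $\rho_Y=\rho_X-1$. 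The push-forward $\sigma_{*}\colon\N(X)\to\N(Y)$ is surjective with kernel $\R[C_D]\subseteq\N(D,X)$, and sends $\N(D,X)$ onto $\N(S,Y)$, so $\dim\N(S,Y)=\rho_Y-1$; in other words, $\N(S,Y)$ is a hyperplane of $\N(Y)$.

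The heart of the argument is then to use this hyperplane condition to exhibit a non-zero movable, non-big divisor class on $X$. Let $\alpha\in\Nu(X)^{*}$ be a non-zero linear form vanishing on $\N(D,X)$, unique up to scalar. I would study $\Eff(X)\cap\alpha^{\perp}$: any prime divisor $E\neq D$ disjoint from $D$ contributes a class in $\alpha^{\perp}$ by Rem.~\ref{easy}, and pull-backs $\sigma^{*}N$ for $N\in\Nef(Y)$ supported on suitable faces of $\Eff(Y)$ give further candidates. Using Lemma~\ref{taberna}, which identifies faces of $\Eff(X)$ containing $[D]$ with faces of $\Eff(Y)$, together with the fact that $\N(S,Y)$ is a hyperplane of $\N(Y)$, the goal is to select a class $[M]\in\Mov(X)\cap\alpha^{\perp}$ with $M$ not big. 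Given such $M$, Rem.~\ref{referee} and Lemma~\ref{basic2} produce a rational contraction of fiber type $f\colon X\dasharrow Z$; since $M$ is not big, $\dim Z\le 3$.

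With $f$ in hand the conclusion is immediate: if $\dim Z\le 2$ then \cite[Cor.~1.2(ii)-(iii)]{fanos} gives $\rho_X\le 11$, while if $\dim Z=3$ then Th.~\ref{3fold} gives $\rho_X\le 12$ and exhibits $f$ itself as the required rational contraction onto a $3$-fold when $\rho_X=12$. The main obstacle is the construction of the movable, non-big $M$: one must use in an essential way that $\sigma$ contracts $D$ onto a surface (not a curve or a point) and that $\N(S,Y)$ has codimension exactly one in $\N(Y)$, together with a careful analysis of the Mori chamber decomposition of $\Eff(X)$ around the ray $\R_{\ge 0}[D]$ and of the fixed prime divisors adjacent to $D$ in the sense of \S\ref{fixedfaces}, to guarantee that $M$ can be chosen on the boundary of the effective cone rather than in its interior.
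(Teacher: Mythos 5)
First, a point of context: the paper does not prove Theorem~\ref{natale} at all --- it is imported verbatim from \cite[Prop.~5.32]{blowup}, so there is no in-paper argument to compare yours against. Judged on its own terms, your proposal has a genuine gap at its center. Everything up to and including the reduction to ``find a non-zero movable, non-big class $[M]$ with $\N(D,X)\subseteq M^{\perp}$'' is fine (and the preliminary observations --- $\dim\N(D,X)=\rho_X-1$, $X$ not a product of surfaces, $X=\w{X}$ and $\dim\N(S,Y)=\rho_Y-1$ --- are all correct). But the construction of $M$ is precisely the content of the theorem, and you do not carry it out: you state it as ``the goal'' and then describe it as ``the main obstacle,'' listing the ingredients one ``must use'' without using them. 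It is not at all automatic that the hyperplane condition on $\N(D,X)$ forces $\partial\Eff(X)\cap\Mov(X)\neq\{0\}$; a priori every movable class orthogonal to $\N(D,X)$ could be zero or big, and ruling this out requires the detailed analysis of $Y$, $S$, and the fixed divisors adjacent to $D$ that the cited Prop.~5.32 of \cite{blowup} performs. As written, the proposal reduces the theorem to an unproved existence statement.

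There is a second, independent problem in the final step. Even granting a movable, non-big $M$ and hence a rational contraction of fiber type $f\colon X\dasharrow Z$ with $\dim Z\leq 3$, your appeal to \cite[Cor.~1.2(ii)-(iii)]{fanos} when $\dim Z\leq 2$ is not valid: that result bounds $\rho_X$ when $X$ itself admits a regular \emph{elementary} contraction of fiber type, whereas $f$ factors through a SQM $X\dasharrow X'$ (with $X'$ generally not Fano) followed by a contraction that need not be elementary. The present paper is explicit that no analogue of Th.~\ref{3fold} is available for rational contractions onto surfaces or onto $\pr^1$ --- this is exactly why all of \S\ref{fumo} (Lemmas~\ref{sushi}, \ref{matrimonio}, \ref{27}, \ref{detail2}, Prop.~\ref{usseaux}, \ldots) exists. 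So even the ``immediate'' conclusion of your argument would require substantial additional work in the low-dimensional cases.
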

\begin{remark}[\cite{blowup}, Rem.~2.17(2)]\label{ikea}
Let $X$ be a Fano $4$-fold with $\rho_X\geq 7$ and $D\subset X$ a fixed prime divisor of type $(3,2)$. Then $D$ does not contain exceptional planes.
\end{remark}
\begin{lemma}\label{cena}
Let $X$ be a Fano $4$-fold with $\rho_X\geq 7$, $D\subset X$ a fixed prime divisor of type $(3,2)$, $X\dasharrow\w{X}$ a SQM, and $\w{D}\subset \w{X}$ the transform of $X$. Then $\dim\N(D,X)=\dim\N(\w{D},\w{X})$.
\end{lemma}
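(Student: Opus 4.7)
The plan is to factor the SQM using Lemma \ref{basic1} and compare both sides via a common smooth model. Writing $\varphi\colon X\dasharrow\widetilde{X}$ as $X\xleftarrow{f}\widehat{X}\xrightarrow{g}\widetilde{X}$, there are pairwise disjoint exceptional planes $L_1,\dotsc,L_r\subset X$ blown up by $f$, exceptional curves $\ell_1,\dotsc,\ell_r\subset\widetilde{X}$ blown up by $g$, and common exceptional divisors $E_i=f^{-1}(L_i)=g^{-1}(\ell_i)\cong L_i\times\ell_i$. Since $D$ is of type $(3,2)$, Remark \ref{ikea} gives $L_i\not\subset D$ for every $i$, so the strict transform $\widehat{D}\subset\widehat{X}$ is a well-defined divisor with $f(\widehat{D})=D$ and $g(\widehat{D})=\widetilde{D}$; moreover, under the product structure on $E_i$, the intersection $\widehat{D}\cap E_i=(L_i\cap D)\times\pr^1$ set-theoretically, which is empty precisely when $L_i\cap D=\emptyset$.

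The next step is to note that the push-forwards $f_*\colon\N(\widehat{D},\widehat{X})\to\N(D,X)$ and $g_*\colon\N(\widehat{D},\widehat{X})\to\N(\widetilde{D},\widetilde{X})$ are surjective (any curve in $D$ or in $\widetilde{D}$ lifts to one in $\widehat{D}$, by strict transform if it is not contained in a blow-up center, and by a section of $E_i\cap\widehat{D}\to L_i\cap D$ otherwise). It thus suffices to prove that $\ker f_*\cap\N(\widehat{D},\widehat{X})$ and $\ker g_*\cap\N(\widehat{D},\widehat{X})$ have the same dimension $m:=\#\{i\,|\,L_i\cap D\neq\emptyset\}$. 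Recall that $\ker f_*=\bigoplus_i\R[\mathrm{fiber}_i]$, where $\mathrm{fiber}_i=\{\mathrm{pt}\}\times\pr^1\subset E_i$ is a fiber of $f|_{E_i}$, and $\ker g_*=\bigoplus_i\R[C_{L_i}]$, where $C_{L_i}\subset E_i$ is a line in a $\pr^2$-fiber of $g|_{E_i}$. A direct computation with the normal bundle $\ma{N}_{E_i/\widehat{X}}\cong\ol_{E_i}(-1,-1)$, combined with the disjointness of the $E_i$, yields $E_i\cdot[\mathrm{fiber}_i]=E_i\cdot[C_{L_i}]=-1$ and $E_j\cdot[\mathrm{fiber}_i]=E_j\cdot[C_{L_i}]=0$ for $j\neq i$.

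When $L_i\cap D\neq\emptyset$, the surface $(L_i\cap D)\times\pr^1\subset\widehat{D}$ contains a fiber $\{\mathrm{pt}\}\times\pr^1$ as well as a curve $(L_i\cap D)\times\{\mathrm{pt}\}$ of class a positive multiple of $[C_{L_i}]$, so both $[\mathrm{fiber}_i]$ and $[C_{L_i}]$ lie in $\N(\widehat{D},\widehat{X})$. When $L_i\cap D=\emptyset$, $\widehat{D}$ is disjoint from $E_i$, hence $[E_i]\in\N(\widehat{D},\widehat{X})^\perp$; the intersection numbers above then force the $[\mathrm{fiber}_i]$- (respectively $[C_{L_i}]$-) coefficient of every element of $\ker f_*\cap\N(\widehat{D},\widehat{X})$ (respectively $\ker g_*\cap\N(\widehat{D},\widehat{X})$) to vanish. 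Both restricted kernels therefore have dimension exactly $m$, giving $\dim\N(D,X)=\dim\N(\widehat{D},\widehat{X})-m=\dim\N(\widetilde{D},\widetilde{X})$. The main technical point is the symmetric intersection computation: the single divisor $E_i$ detects both the $f$-contracted class $[\mathrm{fiber}_i]$ and the $g$-contracted class $[C_{L_i}]$ with the same non-zero pairing, so that a single counting function $\#\{i\,|\,L_i\cap D\neq\emptyset\}$ controls the kernel dimension on both sides of the SQM.
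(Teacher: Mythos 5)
Your proof is correct. The paper disposes of this lemma in one line, by combining Rem.~\ref{ikea} (a fixed prime divisor of type $(3,2)$ contains no exceptional planes) with the general result \cite[Cor.~3.14]{eff}, which controls how $\dim\N(D,X)$ changes under a SQM in terms of the exceptional planes contained in $D$. What you have written is a self-contained proof of exactly the special case of that citation that is needed here, built on the same two inputs the paper relies on: Rem.~\ref{ikea} and the factorization of the SQM through the common blow-up $\wi{X}$ from Lemma~\ref{basic1}. Your key computations check out: since $L_i\not\subset D$ the divisor $D$ has multiplicity zero along $L_i$, so $f^*D$ equals the strict transform and $\wi{D}\cap E_i=(L_i\cap D)\times\pr^1$; the intersection numbers $E_j\cdot[\mathrm{fiber}_i]=E_j\cdot[C_{L_i}]=-\delta_{ij}$ then pin down both restricted kernels as the span of the classes indexed by $\{i\mid L_i\cap D\neq\emptyset\}$, and surjectivity of the two push-forwards (including the lifting of $\ell_i\subset\w{D}$ via a section of $E_i\cap\wi{D}\to\ell_i$ when $L_i\cap D\neq\emptyset$) gives the equality of dimensions by rank--nullity. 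The one virtue of making this explicit is that it isolates precisely where the type $(3,2)$ hypothesis is used: if some $L_i$ were contained in $D$, the class $[C_{L_i}]$ would survive in $\N(D,X)$ but could be lost in $\N(\w{D},\w{X})$, which is exactly the asymmetry the cited corollary quantifies for the other types of fixed divisors.
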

\begin{proof}
This follows from Rem.~\ref{ikea}  and  \cite[Cor.~3.14]{eff}.
\end{proof}
 Let us note that Lemma \ref{cena} extends the applicability of Th.~\ref{natale} also to SQM's of $X$. This is a special property of fixed prime divisors of type $(3,2)$, as for a general prime divisor $D$ in a Fano $4$-fold $X$, if $X\dasharrow\w{X}$ is a SQM and $\w{D}\subset\w{X}$ is the transform of $D$, then $\dim\N(\w{D},\w{X})$ may be smaller than $\dim\N(D,X)$ (see for instance \S \ref{contraction}).
\begin{remark}[fixed prime divisors and small contractions]\label{stress}
  Let $X$ be a smooth Fano $4$-fold with $\rho_X\geq 7$. If $X$ has a fixed prime divisor $D$ not of type $(3,2)$, then $X$ has a small elementary contraction, indeed by \cite[Th.~5.1]{blowup} the map $\ph$ in Th.-Def.~\ref{long} factors as a sequence of at least $\rho_X-4$ $K$-negative flips.

  Conversely,
  it is not difficult to show (see Rem.~\ref{stress2}) that if $X$ has a small elementary contraction $f\colon X\to Y$, then either $\rho_X\leq 11$, or $X$ has a fixed prime divisor $D$ such that $D\cdot\NE(f)<0$, and $D$ cannot be of type $(3,2)$ by Th.~\ref{kawamata} and Rem.~\ref{ikea}. Moreover, if $D$ is of type $(3,0)^{sm}$, we can apply Th.~\ref{30}. 

  Therefore Th.~\ref{main} can be seen as a statement on Fano $4$-folds with $\rho_X\geq 7$ having a fixed prime divisor of type $(3,1)^{sm}$ or $(3,0)^Q$, and we will focus on these two types.
\end{remark}
We will need some further properties of fixed prime divisors.
\begin{lemma}\label{2faces}
Let $X$ be a smooth Fano $4$-fold with $\rho_X\geq 7$ and $D_1,D_2\subset X$ two distinct fixed prime divisors.
\begin{enumerate}[$(a)$]
\item If $D_1\cdot C_{D_2}=0$, then $D_1$ and $D_2$ are adjacent.
  \item If $D_1\cdot C_{D_2}>0$ and $D_2\cdot C_{D_1}>0$, then $D_1+D_2$ is movable and $[C_{D_1}+C_{D_2}]\in\mov(X)$.
  \item
    $D_1\cdot C_{D_2}=D_2\cdot C_{D_1}=1$ if and only if
    $\dim(\langle [D_1],[D_2]\rangle\cap\Mov(X))=1$; in this case
    $\langle [D_1],[D_2]\rangle\cap\Mov(X)=\langle[D_1+D_2]\rangle$,
  and   $D_1+D_2$ is movable and  non-big.
\end{enumerate}
\end{lemma}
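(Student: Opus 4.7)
For part (a), I would invoke the diagram $X\stackrel{\varphi}{\dasharrow}\w{X}\stackrel{\sigma}{\to}Y$ associated to $D_2$ from Th.-Def.~\ref{long} and compute that the hypothesis $D_1\cdot C_{D_2}=0$ forces $\sigma^*(\sigma_*\w{D}_1)=\w{D}_1$, because $C_{D_2}$ lies in the isomorphism locus of $\varphi$ and $\w{D}_2\cdot C_{\w{D}_2}=-1$. Pulling back sections, this gives $h^0(Y,m\sigma_*\w{D}_1)=h^0(\w{X},m\w{D}_1)=1$ for every $m>0$ (using that $\w{D}_1$ remains a fixed prime divisor of $\w{X}$ under the SQM), so $\sigma_*\w{D}_1$ is a fixed prime divisor of $Y$ whose transform in $X$ is $D_1$; Lemma~\ref{taberna}(c) then yields the adjacency of $D_1$ and $D_2$.

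For parts (b) and (c), I set $\lambda:=D_1\cdot C_{D_2}$, $\mu:=D_2\cdot C_{D_1}$ and consider both diagrams $\sigma_i\colon\w{X}_i\to Y_i$ associated to $D_i$. The pullback identity generalizes to $\sigma^*(\sigma_*\w{D}_1)=\w{D}_1+\lambda\w{D}_2$ and $\sigma_1^*(\sigma_{1,*}\w{D}_2^{(1)})=\mu\w{D}_1^{(1)}+\w{D}_2^{(1)}$, expressing $[D_1+\lambda D_2]$ and $[\mu D_1+D_2]$ in $\Nu(X)$ as pullbacks of the prime classes $[\sigma_*\w{D}_1]$ on $Y$ and $[\sigma_{1,*}\w{D}_2^{(1)}]$ on $Y_1$. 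Since each $[D_i]\notin\Mov(X)$ (a fixed prime of a Fano variety cannot be movable, as $\Nu\cong\Pic_\Q$ would force a contradiction), the set $\tau\cap\Mov(X)$ is either trivial or a one-dimensional ray lying in the relative interior of $\tau$.

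Assume $\dim(\tau\cap\Mov(X))=1$ (the ``only if'' direction of (c)). The generator is, by Rem.~\ref{referee}, proportional to the class of a prime movable $M\ne D_1,D_2$ with $[M]=a[D_1]+b[D_2]$, $a,b>0$. Since $M$ is a prime distinct from each $D_i$ and $C_{D_i}$ is the transform of a general contracted curve, $M\cdot C_{D_i}\ge 0$, yielding $a\lambda\ge b$ and $b\mu\ge a$. Pushing forward in the diagram for $D_2$, the prime $\sigma_*\w{M}$ on $Y$ is linearly equivalent to but distinct from $a\sigma_*\w{D}_1$, so $\sigma_*\w{D}_1$ is movable on $Y$. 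Lifting this movability back along $\sigma$ (by applying a SQM on $Y$ making $\sigma_*\w{D}_1$ nef and inducing a compatible modification of $\w{X}$) produces a movable representative of $[D_1+\lambda D_2]$; proportionality in the one-dimensional ray $\tau\cap\Mov(X)$ forces $\lambda=b/a$, and symmetrically $\mu=a/b$. Thus $\lambda\mu=1$, and the integrality of $\lambda,\mu$ gives $\lambda=\mu=1$, hence $a=b$ and $\tau\cap\Mov(X)=\langle[D_1+D_2]\rangle$. Part (b) and the ``if'' direction of (c) follow by the same lifting applied under $\lambda,\mu>0$: both $[D_1+\lambda D_2]$ and $[\mu D_1+D_2]$ become movable in $X$, so the one-dimensionality of $\tau\cap\Mov(X)$ forces their proportionality, giving $\lambda=\mu=1$ and the movability of $D_1+D_2$. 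The inclusion $[C_{D_1}+C_{D_2}]\in\mov(X)=\Eff(X)^\vee$ reduces to checking $E\cdot(C_{D_1}+C_{D_2})\ge 0$ for every prime $E$: for $E=D_i$ the value is $\lambda-1=0$ or $\mu-1=0$, and for other $E$ the curves $C_{D_i}$ lie in the isomorphism locus of the respective SQM and are generically not contained in $E$. Non-bigness of $D_1+D_2$ follows from $(D_1+D_2)^4=(\sigma_*\w{D}_1)^4$ on $Y$, together with $[\sigma_*\w{D}_1]$ generating a boundary ray of $\Mov(Y)$.

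The main obstacle, requiring most of the care, is the \emph{lifting step}: transferring movability of the prime $\sigma_*\w{D}_1$ on $Y$ to movability of its pullback $\w{D}_1+\lambda\w{D}_2$ on $\w{X}$. A priori this could fail if $\sigma(\w{D}_2)$ lies in the base locus of $|m\sigma_*\w{D}_1|$, making $\w{D}_2$ a fixed component of the pulled-back system; I expect to resolve this by exploiting the Mori chamber decomposition of $Y$ to move via SQM to a model where $\sigma_*\w{D}_1$ becomes nef, then passing to the corresponding birational model of $\w{X}$ inside the chamber containing $[\sigma^*\sigma_*\w{D}_1]$.
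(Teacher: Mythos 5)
Your part $(a)$ is essentially correct: from $D_1\cdot C_{D_2}=0$ and the fact that $C_{D_2}$ lies in the domain of the SQM you get $\sigma^*(\sigma_*\w{D}_1)=\w{D}_1$, hence $h^0(Y,m\sigma_*\w{D}_1)=h^0(X,mD_1)=1$ for all $m>0$, so $\sigma_*\w{D}_1$ is a fixed prime divisor of $Y$ and Lemma~\ref{taberna}$(c)$ gives the adjacency. (For the record, the paper does not reprove this lemma: it cites \cite[Lemma 4.6]{fibrations} for $(a)$ and $(c)$ and notes that $(b)$ follows from the same proof.)

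Parts $(b)$ and $(c)$ contain a genuine gap. Your pivotal claim that $\tau\cap\Mov(X)$, with $\tau=\langle[D_1],[D_2]\rangle$, is at most one-dimensional because $[D_1],[D_2]\notin\Mov(X)$ does not follow: a two-dimensional subcone of $\tau$ avoiding both extremal rays is perfectly possible, and the very formulation of $(c)$ as an equivalence presupposes that $\dim(\tau\cap\Mov(X))$ can be $2$ (for instance, if $D_1\cdot C_{D_2}=2$ and $D_2\cdot C_{D_1}=1$, the criterion recalled below makes both $D_1+D_2$ and $D_1+2D_2$ movable and non-proportional). Since your derivation of $\lambda=\mu=1$ rests entirely on forcing $[D_1+\lambda D_2]$ and $[\mu D_1+D_2]$ to be proportional inside a one-dimensional cone, your argument would ``prove'' that $\lambda,\mu>0$ implies $\lambda=\mu=1$ --- stronger than $(b)$ and not what the lemma asserts. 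Further problems: exhibiting two distinct members of a linear system only shows the class is not fixed, not that its stable base locus has codimension $\geq 2$, so the movability of $\sigma_*\w{D}_1$ on $Y$ is not established; the lifting of movability through $\sigma^*$ (your self-declared main obstacle) is left as a plan; and the non-bigness argument fails because a class on the boundary of $\Mov(Y)$ can be big and because top self-intersections are not preserved under the SQM $X\dasharrow\w{X}$. The missing tool is the numerical characterization of $\Mov(X)$ used elsewhere in the paper (\cite[Lemma 5.29(2)]{blowup}, invoked e.g.\ in the proof of Lemma~\ref{eccellenza}): $[H]\in\Mov(X)$ if and only if $H\cdot C_G\geq 0$ for every fixed prime divisor $G$, combined with the fact that a movable divisor meets the covering family $C_G$ of $G$ non-negatively. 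With these, $(b)$ reduces to $(D_1+D_2)\cdot C_{D_1}=D_2\cdot C_{D_1}-1\geq 0$ and its symmetric counterpart (the inequality being automatic for every other $G$); the membership $[C_{D_1}+C_{D_2}]\in\mov(X)=\Eff(X)^{\vee}$ is the dual computation; both implications of $(c)$ follow by pairing a putative movable class $a[D_1]+b[D_2]$ with $C_{D_1}$ and $C_{D_2}$ and by checking that $\mu D_1+D_2$ and $D_1+\lambda D_2$ are movable; and non-bigness comes from $(D_1+D_2)\cdot(C_{D_1}+C_{D_2})=0$ together with $[C_{D_1}+C_{D_2}]\in\Eff(X)^{\vee}$.
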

\begin{proof}
This is \cite[Lemma 4.6]{fibrations}, except $(b)$ which follows  from the same proof.
\end{proof}  
\begin{lemma}[\cite{fibrations}, Lemma 4.9]\label{nigra}
Let $X$ be a smooth Fano $4$-fold with $\rho_X\geq 7$ and $D,E$ two adjacent fixed prime divisors. 
\begin{enumerate}[$(a)$]
\item If $E$ is of type $(3,2)$, then $D\cdot C_E=0$;
\item if $D$ and $E$ are not of type $(3,2)$, then 
$D\cdot C_E=E\cdot C_D=0$, and $D\cap E$ is either empty or a disjoint union of exceptional planes.
\end{enumerate}
\end{lemma}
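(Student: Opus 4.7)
The proof combines non-negativity of $D\cdot C_E$ and $E\cdot C_D$ with Lemma~\ref{2faces} and the structural results in Section~\ref{fixed}.

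I would first observe that $D\cdot C_E\geq 0$ and $E\cdot C_D\geq 0$. By Theorem-Definition~\ref{long}(d) a general $C_E$ lies in the isomorphism locus of $\varphi_E\colon X\dasharrow\widetilde{X}_E$, and the transform of the covering family of minimal anticanonical-degree curves in $\widetilde{E}$ covers a dense open of $E$; since $D\neq E$, a general $C_E$ is not contained in $D$, so $D\cdot C_E\geq 0$, with equality iff $C_E\cap D=\emptyset$, and similarly for $E\cdot C_D$. Adjacency of $D$ and $E$ gives $\langle[D],[E]\rangle\cap\Mov(X)=\{0\}$, so $D+E$ is not movable; Lemma~\ref{2faces}(b) then forces at least one of $D\cdot C_E$, $E\cdot C_D$ to vanish.

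For part~(a), $E$ of type $(3,2)$ gives by Theorem-Definition~\ref{long}(c) that $X=\widetilde{X}_E$ and $\sigma_E\colon X\to Y_E$ is a regular divisorial contraction with $E$ as exceptional divisor over a surface $S_E:=\sigma_E(E)$. Writing $\sigma_E^{*}(D_Y)=D+aE$, where $D_Y:=\sigma_E(D)$ and $a:=D\cdot C_E$, Lemma~\ref{taberna} shows $D_Y$ is a fixed prime divisor of $Y_E$; the target is to prove $a=0$. Assuming for contradiction $a\geq 1$, so that $D_Y\supset S_E$, the first paragraph forces $E\cdot C_D=0$, hence the $C_D$'s lie in $X\setminus E$ and project isomorphically to curves in $D_Y\setminus S_E$. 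Using this together with the pullback identity, I would produce a movable divisor class in the relative interior of $\tau:=\langle[D],[E]\rangle$, contradicting the fact that $\tau$ is a fixed face.

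For part~(b), with neither $D$ nor $E$ of type $(3,2)$, the symmetric analogue of the argument for~(a), applied inside both $\widetilde{X}_D$ and $\widetilde{X}_E$ (where the transforms $\widetilde{D}$ and $\widetilde{E}$ remain adjacent fixed prime divisors), yields $D\cdot C_E=E\cdot C_D=0$. For the structural statement on $D\cap E$, the two vanishings imply that in both $\widetilde{X}_D$ and $\widetilde{X}_E$ the intersection $\widetilde{D}\cap\widetilde{E}$ has codimension at least two inside the respective exceptional divisor. Comparing the three models $X$, $\widetilde{X}_D$, $\widetilde{X}_E$ via Lemma~\ref{basic1}, the components of $D\cap E$ in $X$ that do not correspond to components of $\widetilde{D}\cap\widetilde{E}$ in either of the SQM's are exactly the exceptional planes of $X$ contained in $D\cap E$; by Theorem~\ref{kawamata} such exceptional planes are pairwise disjoint, giving the desired structure.

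The main technical obstacle is the construction of the movable class in $\tau^{\circ}$ inside the core of part~(a): since adjacency and non-negativity alone only rule out the symmetric case $D\cdot C_E>0$ and $E\cdot C_D>0$, the asymmetric case $a\geq 1$ with $E\cdot C_D=0$ requires a genuinely geometric argument, exploiting the conic-bundle structure of $\sigma_E|_E$ together with the fact that $Y_E$ is Fano with $\rho_{Y_E}=\rho_X-1\geq 6$.
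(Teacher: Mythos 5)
First, a remark on the comparison you were asked to make: the paper does not prove this lemma at all -- it is imported verbatim from \cite{fibrations}, Lemma~4.9 -- so there is no internal proof to measure your argument against, and I can only judge it on its own terms. Your opening reduction is correct: $D\cdot C_E\geq 0$ and $E\cdot C_D\geq 0$ because the curves numerically equivalent to $C_E$ (resp.\ $C_D$) cover a dense subset of $E$ (resp.\ $D$) and the general one is not contained in the other divisor, and Lemma~\ref{2faces}$(b)$ plus adjacency rules out both numbers being positive. But the entire content of part $(a)$ is the exclusion of the asymmetric case $a:=D\cdot C_E\geq 1$, $E\cdot C_D=0$, and this is precisely the step you leave as ``I would produce a movable divisor class in the relative interior of $\tau$.'' That strategy cannot be completed: once $E\cdot C_D=0$, every class $\lambda[D]+\mu[E]$ with $\lambda>0$ satisfies $\bigl(\lambda D+\mu E\bigr)\cdot C_D=-\lambda<0$, hence is not movable (every effective divisor in such a class contains $D$ in its stable base locus, since the curves $\equiv C_D$ sweep out $D$; equivalently, apply the criterion of \cite{blowup}, Lemma~5.29(2) quoted in the proof of Lemma~\ref{eccellenza}). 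So under your standing hypotheses there is literally no movable class in $\tau^{\circ}$ to be found, whether or not $a\geq 1$; the contradiction must come from a genuinely different mechanism (a geometric analysis of $\sigma_E|_E$, of $D_Y\supseteq\sigma_E(E)$, and of the contraction associated to $D$), which is the actual substance of \cite{fibrations}, Lemma~4.9 and is absent from your proposal. Since part $(b)$'s vanishing statements are derived as ``the symmetric analogue of the argument for $(a)$,'' they inherit the same gap.

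The structural claim on $D\cap E$ in part $(b)$ is also under-argued. You assert that the vanishings give $\w{D}\cap\w{E}$ ``codimension at least two inside the respective exceptional divisor,'' but no justification is offered for deducing this from $D\cdot C_E=E\cdot C_D=0$, and the assertion as stated would not even suffice: every irreducible component of the intersection of two divisors in a $4$-fold has dimension at least $2$, so what one actually needs is that the transforms become \emph{disjoint} in the model $X\dasharrow\w{X}\stackrel{\sigma}{\to}Y$ associated to $D$ (using that $\w{D}$ is a $\pr^2$-bundle over a curve or an irreducible quadric, so that through every point of $\w{D}$ there are positive-dimensional families of curves $\equiv C_{\w{D}}$). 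Once $E_{\w{X}}\cap\w{D}=\emptyset$ is established, $D\cap E$ lies in the indeterminacy locus of $\ph$, which by Lemma~\ref{basic1} (not Th.~\ref{kawamata}, which concerns small contractions of $X$ itself) is a disjoint union of exceptional planes, and the dimension count forces each component of $D\cap E$ to be one of those planes. In summary, the proposal correctly sets up the easy half of the lemma but leaves its essential content unproved, and the route sketched for the missing core is one that provably cannot succeed.
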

\begin{corollary}\label{solfeggio}
Let $X$ be a smooth Fano $4$-fold with $\rho_X\geq 7$  and $D_1,D_2\subset X$ two fixed prime divisors, both of type $(3,2)$ or  neither. If  $D_1\cdot C_{D_2}>0$, then  $D_2\cdot C_{D_1}>0$, $D_1$ and $D_2$ are not adjacent, $D_1+D_2$ is movable, and  $[C_{D_1}+C_{D_2}]\in\mov(X)$.
\end{corollary}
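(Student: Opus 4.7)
The strategy is to show $D_2\cdot C_{D_1}>0$ first; once this is established, Lemma \ref{2faces}(b) will yield both that $D_1+D_2$ is movable and that $[C_{D_1}+C_{D_2}]\in\mov(X)$, and non-adjacency will follow because the face $\langle[D_1],[D_2]\rangle$ of $\Eff(X)$ will contain the non-zero movable class $[D_1+D_2]$.

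To rule out $D_2\cdot C_{D_1}=0$, I will argue that by Lemma \ref{2faces}(a), applied with the roles of $D_1,D_2$ swapped, this vanishing would force $D_1$ and $D_2$ to be adjacent; then the type hypothesis lets me invoke Lemma \ref{nigra}---part (a) with $E=D_2$ when both divisors are of type $(3,2)$, part (b) when neither is---producing $D_1\cdot C_{D_2}=0$, contrary to the assumption. The delicate case is $D_2\cdot C_{D_1}<0$, which I plan to exclude by transporting the intersection to the model of Th.-Def.\ \ref{long} associated to $D_1$. Let $X\stackrel{\ph_1}{\dasharrow}\w{X}_1\stackrel{\sigma_1}{\to}Y_1$ be that diagram and $\w{D}_2\subset\w{X}_1$ the transform of $D_2$. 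By Th.-Def.\ \ref{long}(d), $C_{D_1}$ lies in the open locus where $\ph_1$ is an isomorphism, so intersection with any divisor class is preserved and $\w{D}_2\cdot C_{\w{D}_1}=D_2\cdot C_{D_1}<0$. Since $\sigma_1$ is elementary, its extremal ray is $\R_{\geq 0}[C_{\w{D}_1}]$, and I will observe that a family of minimal-anticanonical-degree contracted curves sweeps out the entire exceptional divisor $\w{D}_1$---lines in the $\pr^2$-fibers if $D_1$ is of type $(3,1)^{sm}$, lines in $\pr^3\cong\w{D}_1$ in type $(3,0)^{sm}$, lines in the quadric $\w{D}_1$ in type $(3,0)^Q$, and the $\pr^1$-fibers themselves in type $(3,2)$ (where $\w{X}_1=X$). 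Every such curve is numerically equivalent to $C_{\w{D}_1}$ and hence has negative intersection with $\w{D}_2$, forcing it to lie inside $\w{D}_2$; this gives $\w{D}_1\subseteq\w{D}_2$, and since both are prime divisors we conclude $\w{D}_1=\w{D}_2$, which pushes back under $\ph_1$ to the contradiction $D_1=D_2$.

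The main obstacle is this last step: one must identify, in each of the four possible types of $D_1$, a family of deformations of $C_{\w{D}_1}$ that covers the full exceptional divisor $\w{D}_1$, and then use Th.-Def.\ \ref{long}(d) to transfer the numerical inequality from $X$ to $\w{X}_1$. The rest of the argument is a short combination of Lemma \ref{2faces} and Lemma \ref{nigra}.
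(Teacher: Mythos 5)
Your proof is correct and rests on exactly the same two ingredients as the paper's one-line argument (Lemma \ref{nigra} to exploit the type hypothesis and Lemma \ref{2faces} for everything else); you merely reorder the steps, deducing non-adjacency from movability at the end where the paper deduces it first directly from Lemma \ref{nigra}, and you make explicit the implicit point that $D_2\cdot C_{D_1}\geq 0$ by sweeping $\w{D}_1$ with deformations of $C_{\w{D}_1}$. That extra verification is valid (if slightly more elaborate than needed, since the general $C_{D_1}$ already passes through the general point of $D_1$ in $X$ itself by Th.-Def.~\ref{long}$(d)$), so the proposal matches the paper's approach in substance.
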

\begin{proof}
By Lemma \ref{nigra}  $D_1$ and $D_2$ are not adjacent, and the rest follows from Lemma \ref{2faces}.
\end{proof}
\begin{lemma}\label{preistorico}
Let $X$ be a smooth Fano $4$-fold with $\rho_X\geq 7$ and $E_1,E_2$ two fixed prime divisors of type $(3,2)$ such that $E_1\cdot C_{E_2}=0$ and $E_1\cap E_2\neq\emptyset$. Then $E_2\cdot C_{E_1}=0$ and every connected component of $E_1\cap E_2$ is isomorphic to $\pr^1\times\pr^1$ with normal bundle $\ol(-1,0)\oplus\ol(0,-1)$.
\end{lemma}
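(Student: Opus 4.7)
The first assertion is formal: by Lemma~\ref{2faces}(a), the hypothesis $E_1\cdot C_{E_2}=0$ gives that $E_1,E_2$ are adjacent, and then Lemma~\ref{nigra}(a) applied with $(D,E)=(E_2,E_1)$ (legitimate since $E_1$ is of type $(3,2)$) yields $E_2\cdot C_{E_1}=0$.

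For the structural claim on $E_1\cap E_2$, I would work with the elementary divisorial contractions $\sigma_i\colon X\to Y_i$ of type $(3,2)$ with $\Exc(\sigma_i)=E_i$ provided by Th.-Def.~\ref{long}. Since $\sigma_i$ is elementary, every curve it contracts is numerically proportional to $C_{E_i}$; hence the vanishing $E_j\cdot C_{E_i}=0$ forces each fiber of $\sigma_i|_{E_i}$ to be either contained in $E_j$ or disjoint from it. Consequently any connected component $S_0$ of $E_1\cap E_2$ is simultaneously a union of fibers of $\sigma_1|_{E_1}$ and of $\sigma_2|_{E_2}$; a dimension count gives $\dim S_0=2$, and the restrictions $\sigma_i|_{S_0}\colon S_0\to C_i$ endow $S_0$ with two distinct $\pr^1$-fibration structures onto curves.

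To identify $S_0\cong\pr^1\times\pr^1$, I would analyze a general fiber $F_i\subset S_0$ of $\sigma_i|_{S_0}$: it is a smooth $\pr^1$ with $\ma{N}_{F_i/X}\cong\ol\oplus\ol\oplus\ol(-1)$, the expected normal bundle for a general $(3,2)$-contracted curve in a smooth Fano $4$-fold (in the spirit of Lemma~\ref{demons}). Smoothness of $X$ and the explicit normal bundle imply $S_0$ is smooth along such fibers, so the classification of smooth projective surfaces carrying two distinct $\pr^1$-fibration structures forces $S_0\cong\pr^1\times\pr^1$ with the rulings matching the two factors. For the normal bundle, the vanishings $E_i\cdot F_j=0$ for $i\neq j$ imply that $E_1$ and $E_2$ meet transversally along $S_0$, so $\ma{N}_{S_0/X}\cong\ma{N}_{E_1/X}|_{S_0}\oplus\ma{N}_{E_2/X}|_{S_0}$; using $E_i\cdot C_{E_i}=-1$ and $E_i\cdot C_{E_j}=0$ restricted to the two rulings then identifies these summands as $\ol(-1,0)$ and $\ol(0,-1)$, after an appropriate labelling of factors.

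The main obstacle is guaranteeing that $S_0$ is globally smooth and irreducible, not merely generically: one must control the conic-bundle structures $\sigma_i|_{E_i}$ along $S_0$, and in particular rule out degenerate or reducible fibers of $\sigma_i|_{E_i}$ sitting inside $S_0$. I would address this by combining Rem.~\ref{ikea} (no exceptional planes in fixed divisors of type $(3,2)$, which rules out the most degenerate components of conic-bundle fibers) with the propagation of the smoothness and transversality computations from a general fiber of $\sigma_i|_{S_0}$ to the whole of $S_0$, exploiting the smoothness of $X$ along $S_0$.
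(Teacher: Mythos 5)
Your first paragraph is correct: Lemma~\ref{2faces}$(a)$ gives adjacency from $E_1\cdot C_{E_2}=0$, and Lemma~\ref{nigra}$(a)$ then gives $E_2\cdot C_{E_1}=0$. This matches the paper in substance (the paper quotes Cor.~\ref{solfeggio}, which packages the same two lemmas).

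For the structural claim your route is genuinely different from the paper's, and it has a gap at its central step. You assert that a general fiber $F_i$ of $\sigma_i|_{S_0}$ is a smooth $\pr^1$ with $\ma{N}_{F_i/X}\cong\ol^{\oplus 2}\oplus\ol(-1)$, ``the expected normal bundle for a general $(3,2)$-contracted curve''. But the fibers of $\sigma_i|_{E_i}$ lying inside $E_1\cap E_2$ are exactly the fibers over the special one-dimensional locus $\sigma_i(S_0)$ in the base surface $\sigma_i(E_i)$: they are not general fibers of $\sigma_i|_{E_i}$, and nothing in your argument prevents the normal bundle from jumping along this locus, nor rules out that $S_0$ meets (or is) a two-dimensional fiber of $\sigma_i$, in which case $\sigma_i(S_0)$ is not even a curve and the two-ruling argument does not start. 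Proving that these special fibers are smooth $\pr^1$'s with the generic normal bundle, and that $E_1,E_2$ are smooth and transverse along $S_0$, is essentially the content of the lemma, so the step is circular; your proposed fix (``propagation of the smoothness and transversality computations from a general fiber'') is precisely what fails, since every fiber in $S_0$ is special, and neither Rem.~\ref{ikea} nor Lemma~\ref{demons} supplies the missing input (Lemma~\ref{demons} takes the normal bundle as a hypothesis). The paper's proof avoids this entirely by a different device: since $\langle[C_{E_i}]\rangle$ is the unique $E_i$-negative extremal ray, the divisor $-K_X+E_1+E_2$ is nef and supports the face $\langle[C_{E_1}],[C_{E_2}]\rangle$ of $\NE(X)$; the associated contraction $f$ is birational with exceptional locus $E_1\cup E_2$ and generically one-dimensional fibers on each $E_i$, so its two-dimensional fibers are classified by Andreatta--Wi\'sniewski as $\pr^2$ or a (possibly singular or reducible) quadric. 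An irreducible component $F$ of $E_1\cap E_2$ has $\N(F,X)=\R[C_{E_1}]\oplus\R[C_{E_2}]$ and is contracted by $f$, which excludes everything but the smooth quadric $\pr^1\times\pr^1$; the normal bundle then comes from the same classification. That single appeal to the classification of two-dimensional fibers of the face contraction is the missing idea in your write-up.
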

\begin{proof}
By Cor.~\ref{solfeggio}  we have
  $E_2\cdot C_{E_1}=0$ and $E_1$ and $E_2$ are adjacent. Moreover 
   $\langle[C_{E_i}]\rangle$ is the unique $E_i$-negative extremal ray of $\NE(X)$, for $i=1,2$, by  \cite[Rem.~2.17]{blowup}. Then $-K_X+E_1+E_2$ is nef and $(-K_X+E_1+E_2)^{\perp}\cap\NE(X)=\langle [C_{E_1}],[C_{E_2}]\rangle$ is a face of $\NE(X)$. The associated contraction $f\colon X\to Z$ is birational, has exceptional locus $E_1\cup E_2$, and the general fiber of $f_{|E_i}$ is one-dimensional.
  The possible two-dimensional fibers of $f$ are classified in \cite[Th.~4.7]{AWaview} and are $\pr^2$ or a (possibly singular/reducible) quadric surface.

  If $F$ is an irreducible component of $E_1\cap E_2$, then $\N(F,X)=\R[C_{E_1}]\oplus\R[C_{E_2}]$ and $f(F)=\{pt\}$. Thus $F$ cannot be isomorphic to $\pr^2$ nor to a quadric cone, and we conclude that  $F\cong\pr^1\times\pr^1$ and $F$ is a fiber of $f$. The normal bundle is given in \cite{AWaview}, and $F$ must be a connected component of $E_1\cap E_2$.
\end{proof}
\subsection{Contraction of a fixed prime divisor of type $(3,1)^{sm}$ or $(3,0)^Q$}
\label{contraction}
\noindent Let $X$ be a smooth Fano $4$-fold with $\rho_X\geq 7$ and $D\subset X$ a fixed prime divisor of type $(3,1)^{sm}$ or $(3,0)^Q$. Let us consider the diagram:
$$X\stackrel{\ph}{\dasharrow}\w{X}\stackrel{\sigma}{\la} Y$$
given by Th.-Def.~\ref{long}$(a)$,
where $\ph$ is a SQM, $\sigma$ is an elementary divisorial contraction with exceptional divisor the transform $\w{D}\subset\w{X}$ of $D$, and $Y$ is Fano.
We will refer to this diagram as \emph{the contraction associated to $D$}. Let us sum up here its main properties and fix the related notation; see \cite[\S 5.1]{blowup} for more details.

Recall from Lemma \ref{basic1} that the indeterminacy locus of $\ph$ is a disjoint union of exceptional planes.

For every exceptional plane $L\subset X$ in the indeterminacy locus of $\ph$ we have $L\subset D$, $D\cdot C_L<0$, and if $\ell\subset\w{X}$ is the corresponding exceptional curve, then $\w{D}\cdot\ell=-D\cdot C_L>0$. 
Conversely, every exceptional plane contained in $D$ is in the indeterminacy locus of $\ph$, so that the exceptional planes in $D$ are pairwise disjoint.
No exceptional line of $\w{X}$ is contained in $\w{D}$ \cite[Rem.~5.6]{blowup}.
We have $-K_X\cdot C_D=-K_{\w{X}}\cdot C_{\w{D}}=2$.

If $D$ is of type $(3,1)^{sm}$, then $Y$ is smooth, $\sigma$ is the blow-up of a smooth irreducible curve $C\subset Y$, $\w{D}$ is a $\pr^2$-bundle over $C$, and $C_{\w{D}}\subset\w{D}$ is a line in a fiber of $\sigma$. Every fiber of the $\pr^2$-bundle $\sigma_{|\w{D}}$ meets the union of the exceptional lines of $\w{X}$ in at most one point
\cite[Rem.~5.5]{blowup}.

If $D$ is of type $(3,0)^Q$, then $Y$ has an isolated terminal and locally factorial singularity at $p:=\sigma(\w{D})$, and $\w{D}$ is either a smooth quadric, or the cone over a smooth $2$-dimensional quadric \cite[Lemma 2.19]{blowup}; moreover $C_{\w{D}}\subset\w{D}$ is a line.
\begin{remark}[\cite{blowup}, Lemmas 5.10 and 5.20]\label{vecchissimo}
Let $\Gamma\subset Y$ be an irreducible curve such that  $\Gamma\cap\sigma(\w{D})\neq\emptyset$ and $\Gamma\neq\sigma(\w{D})$. Then
$-K_Y\cdot\Gamma\neq 2$, and if $-K_Y\cdot\Gamma=1$ we have 
 $\Gamma=\sigma(\ell)$, where $\ell\subset\w{X}$ is an exceptional curve such that $\w{D}\cdot\ell=1$. 
If $D$ is $(3,1)^{sm}$, then $C$ cannot meet any exceptional plane.
\end{remark}
\begin{remark}\label{generators}
  In the above setting set
  $$m:=\dim\N(D,X)-\dim\N(\w{D},\w{X})=\begin{cases}
    \dim\N(D,X)-1\,\text{ if $D$ is of type $(3,0)^Q$}\\
    \dim\N(D,X)-2\,\text{ if $D$ is of type $(3,1)^{sm}$.}
  \end{cases}$$
  It follows from \cite[Rem.~3.13 and Cor.~3.14]{eff} that $D$ contains at least $m$ exceptional planes $L_1,\dotsc,L_m$ such that the classes $[C_{L_1}],\dotsc,[C_{L_m}]$ are linearly independent in $\N(X)$.
Let us note that $X$ cannot be a product of surfaces,  thus $\delta_X\leq 2$ by Th.~\ref{codim}, $\dim\N(D,X)\geq\rho_X-2\geq 5$, and $m\geq 3$.
  \end{remark}
\subsection{Additional properties for the case $(3,1)^{sm}$}\label{add1}
\begin{prg}\label{31}
In this subsection $X$ is a smooth Fano $4$-fold with $\rho_X\geq 7$ and $D\subset X$ is a fixed prime divisor of type $(3,1)^{sm}$. We keep the same notation as in \S\ref{contraction}.\end{prg}
\begin{lemma}\label{vecchio}
In Setting \ref{31}, let $\ell\subset\w{X}$ an exceptional curve and $\Gamma:=\sigma(\ell)\subset Y$. Then $\Gamma$ cannot meet any curve of anticanonical degree one, except possibly $C$ (and $\Gamma$ itself).
\end{lemma}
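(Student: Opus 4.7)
The plan is to argue by contradiction, assuming $\Gamma_0\subset Y$ is an irreducible curve with $-K_Y\cdot\Gamma_0=1$, $\Gamma_0\cap\Gamma\neq\emptyset$, and $\Gamma_0\notin\{C,\Gamma\}$. The strategy is to lift the configuration back to $\w{X}$ and to exploit the properties of the contraction $\sigma$ recalled in \S \ref{contraction}.

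First, I would establish the following preliminary observation: every exceptional curve of $\w{X}$ coincides with one of the curves $\ell_1,\dotsc,\ell_r$ appearing in the SQM factorization of $\ph\colon X\dasharrow\w{X}$ provided by Lemma \ref{basic1}. Indeed, if $\ell^*$ were an exceptional curve not among the $\ell_i$, then $\ph^{-1}$ would be regular along $\ell^*$ and Lemma \ref{basic1}(a) applied to $\ell^*$ would give
$-1=-K_{\w{X}}\cdot\ell^*=-K_X\cdot\ph^{-1}(\ell^*)+\sum_i E_i\cdot\wi{\ell^*};$
since $\wi{\ell^*}\not\subset E_i$ gives $E_i\cdot\wi{\ell^*}\geq 0$, this would force $-K_X\cdot\ph^{-1}(\ell^*)\leq -1$, contradicting the Fano-ness of $X$. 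In particular, any two distinct exceptional curves of $\w{X}$ are disjoint.

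I would then split the argument according to whether $\Gamma_0$ meets $C$. If $\Gamma_0\cap C=\emptyset$, the strict transform $\w{\Gamma_0}\subset\w{X}$ is disjoint from $\w{D}$, and the identity $\sigma^*(-K_Y)=-K_{\w{X}}+2\w{D}$ (since $C$ has codimension three in $Y$) gives $-K_{\w{X}}\cdot\w{\Gamma_0}=1$. Every point of $\Gamma_0\cap\Gamma$ lies outside $C$, so it lifts uniquely to a point of $\w{\Gamma_0}\cap\ell$, and thus $\w{\Gamma_0}$ meets the exceptional curve $\ell$. Since $\w{\Gamma_0}\neq\ell$ (their anticanonical degrees on $\w{X}$ differ in sign), Lemma \ref{basic1}(b) forces $-K_{\w{X}}\cdot\w{\Gamma_0}\geq 2$, a contradiction.

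If instead $\Gamma_0\cap C\neq\emptyset$, Remark \ref{vecchissimo} produces an exceptional curve $\ell_0\subset\w{X}$ with $\sigma(\ell_0)=\Gamma_0$ and $\w{D}\cdot\ell_0=1$; from $\Gamma_0\neq\Gamma$ one gets $\ell_0\neq\ell$, and the preliminary step yields $\ell_0\cap\ell=\emptyset$. If some point of $\Gamma_0\cap\Gamma$ were to lie outside $C$, its preimage under $\sigma$ would be a single point belonging to both $\ell$ and $\ell_0$, contradicting their disjointness. The most delicate subcase, and the main technical obstacle, is when every point of $\Gamma_0\cap\Gamma$ lies in $C$: here, for any such $q$, the fiber $F_q=\sigma^{-1}(q)\cong\pr^2$ of $\sigma_{|\w{D}}$ meets each of $\ell$ and $\ell_0$ in at least one point, and since $\ell\cap\ell_0=\emptyset$ these contribute two distinct points of $F_q\cap(\ell\cup\ell_0)$, contradicting the $(3,1)^{sm}$ fact recalled in \S \ref{contraction} that each fiber of $\sigma_{|\w{D}}$ meets the union of exceptional curves of $\w{X}$ in at most one point.
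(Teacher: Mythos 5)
Your proof is correct and follows essentially the same route as the paper's: split according to whether the degree-one curve meets $C$, then use Lemma \ref{basic1}$(b)$ in the disjoint case and Rem.~\ref{vecchissimo} plus the pairwise disjointness of exceptional curves and the ``at most one point per fiber of $\sigma_{|\w{D}}$'' property in the other case. Your explicit preliminary step showing that all exceptional curves of $\w{X}$ occur among the $\ell_i$ of Lemma \ref{basic1} (hence are pairwise disjoint) is a detail the paper leaves implicit, but it does not change the argument.
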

\begin{proof}
  Let $C_0\subset Y$ be an irreducible curve with $-K_Y\cdot C_0=1$ and $C_0\neq C$, $C_0\neq \Gamma$, and let $\w{C}_0\subset \w{X}$ be its transform. Note that $\w{C}_0\neq\ell$, as $C_0\neq \Gamma$.

If $C\cap C_0\neq\emptyset$, then $\w{C}_0$ is an exceptional curve by Rem.~\ref{vecchissimo}, and $\w{C}_0\neq\ell$, so that $\w{C}_0$ and $\ell$ are disjoint and meet $\w{D}$ in different fibers of $\sigma$. Therefore $\Gamma\cap C_0=\emptyset$.

If instead  $C\cap C_0=\emptyset$, then $-K_{\w{X}}\cdot\w{C}_0=1$ and $\w{D}\cap 
\w{C}_0=\emptyset$, therefore $\ell\cap \w{C}_0=\emptyset$ by Lemma \ref{basic1}$(b)$, and their images stay disjoint in $Y$.
\end{proof}
\begin{lemma}\label{fabrizio}
In Setting \ref{31}, let $\Gamma\subset Y$ be an integral curve such that $-K_Y\cdot \Gamma=1$, $\Gamma\cap C\neq\emptyset$, and $\Gamma\neq C$. Then  $\Gamma\cong\pr^1$ and $\ma{N}_{\Gamma/Y}\cong \ol(-1)\oplus\ol^{\oplus 2}$.
\end{lemma}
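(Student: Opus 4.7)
My plan is to pull $\Gamma$ back to $\widetilde{X}$ as an exceptional curve via Remark \ref{vecchissimo}, and then compare normal bundles using the differential of $\sigma$.

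First I would verify that $\Gamma\cong\pr^1$. Since $-K_Y\cdot\Gamma=1$, $\Gamma\cap C\neq\emptyset$ and $\Gamma\neq C=\sigma(\widetilde{D})$, Remark \ref{vecchissimo} yields an exceptional curve $\ell\subset\widetilde{X}$ with $\widetilde{D}\cdot\ell=1$ and $\sigma(\ell)=\Gamma$. Because $\widetilde{D}\cdot\ell=1$, the set $\ell\cap\widetilde{D}$ consists of a single point $q$, met transversely. In particular $T_{\ell,q}$ is transverse to $T_{\widetilde{D},q}$, which contains $\ker(d\sigma_q)=T_{F_p,q}$, the tangent to the $\pr^2$-fiber $F_p$ of $\sigma|_{\widetilde{D}}$ through $q$. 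Thus $\sigma|_\ell$ is an immersion at $q$ and an isomorphism off $q$; being birational from $\pr^1$ onto $\Gamma$, it is an isomorphism, so $\Gamma\cong\pr^1$.

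Next, to analyse $\ma{N}_{\Gamma/Y}$, write it as $\ol(a_1)\oplus\ol(a_2)\oplus\ol(a_3)$ with $a_1\geq a_2\geq a_3$; adjunction on $\Gamma\cong\pr^1$ forces $\sum a_i=-K_Y\cdot\Gamma-2=-1$. The differential $d\sigma$, together with the identification $\sigma|_\ell\colon\ell\xrightarrow{\sim}\Gamma$, induces a map $\alpha\colon\ma{N}_{\ell/\widetilde{X}}\to\ma{N}_{\Gamma/Y}$ that is an isomorphism on $\ell\setminus\{q\}$; hence $\alpha$ is injective with torsion cokernel of length $2=\deg\ma{N}_{\Gamma/Y}-\deg\ma{N}_{\ell/\widetilde{X}}$ supported at $q$. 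Since $\ma{N}_{\ell/\widetilde{X}}\cong\ol(-1)^{\oplus 3}$ injects into $\ma{N}_{\Gamma/Y}$, no $a_i$ can be $\leq-2$: otherwise the projection to that summand would vanish, forcing $\alpha$ to factor through a rank-$2$ subbundle, contradicting injectivity. Combined with $\sum a_i=-1$, the only splitting types left are $(0,0,-1)$ and $(1,-1,-1)$.

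The hard part will be ruling out $(1,-1,-1)$. My plan is to use that $\alpha_q$ has rank $\leq 1$: its kernel contains the image of $T_{F_p,q}$ in $\ma{N}_{\ell/\widetilde{X},q}$, which is $2$-dimensional since $T_{F_p,q}\cap T_{\ell,q}=0$ by transversality. If $\ma{N}_{\Gamma/Y}\cong\ol(1)\oplus\ol(-1)^{\oplus 2}$, the composition
\[
\beta\colon\ol(-1)^{\oplus 3}\xrightarrow{\alpha}\ol(1)\oplus\ol(-1)^{\oplus 2}\twoheadrightarrow\ol(-1)^{\oplus 2}
\]
is described by a $2\times 3$ matrix of \emph{constants}, since $\Hom(\ol(-1),\ol(-1))\cong\mathbb{C}$. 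The rank bound on $\alpha_q$ forces $\beta_q$ to have rank $\leq 1$, and being constant $\beta$ has rank $\leq 1$ everywhere. Then $\ker\beta$ is a subsheaf of $\ol(-1)^{\oplus 3}$ of rank $\geq 2$ whose image under $\alpha$ lies in the $\ol(1)$-summand, yielding an injection of a sheaf of rank $\geq 2$ into the line bundle $\ol(1)$, which is absurd. Therefore the splitting type must be $(0,0,-1)$ and $\ma{N}_{\Gamma/Y}\cong\ol(-1)\oplus\ol^{\oplus 2}$.
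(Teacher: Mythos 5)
Your proof is correct and follows essentially the same route as the paper: identify $\Gamma$ with the exceptional curve $\ell$ via Rem.~\ref{vecchissimo}, compare $\ma{N}_{\ell/\w{X}}\cong\ol(-1)^{\oplus 3}$ with $\ma{N}_{\Gamma/Y}$ through the map induced by $d\sigma$, reduce to the two splitting types $(0,0,-1)$ and $(1,-1,-1)$, and kill the latter by the rank drop of that map at the point $q\in\ell\cap\w{D}$ against the constancy of the entries landing in the $\ol(-1)$-summands. The only (harmless) divergence is that you obtain the rank bound at $q$ directly from the $2$-dimensional kernel $T_{F_p,q}$ of $d\sigma_q$, where the paper invokes a local computation that $d_q\sigma$ has rank $2$.
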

\begin{proof}
By Rem.~\ref{vecchissimo} we have $\Gamma=\sigma(\ell)$ where $\ell\subset\w{X}$ is an exceptional curve with $\w{D}\cdot\ell=1$; therefore $\sigma_{|\ell}\colon\ell\to\Gamma$ is an isomorphism, and  $\Gamma\cong\pr^1$.

  We have $\ma{N}_{\ell/\w{X}}\cong\ol(-1)^{\oplus 3}$ and $\ma{N}_{\Gamma/Y}\cong\ol(a)\oplus\ol(b)\oplus\ol(-1-a-b)$ with $a,b\in\Z$.
The differential of $\sigma$ induces a morphism of sheaves
$\zeta\colon\ma{N}_{\ell/\w{X}}\to\ma{N}_{\Gamma/Y}$, which is generically an isomorphism. Then $\zeta$ is given by a $3\times  3$ matrix $(s_{ij})_{i,j=1,2,3}$, where for $j=1,2,3$
\begin{gather*}
  s_{1j}\in\Hom(\ol(-1),\ol(a))=H^0(\pr^1,\ol(a+1)),\\
  s_{2j}\in H^0(\pr^1,\ol(b+1)),\quad
s_{3j}\in H^0(\pr^1,\ol(-a-b)),
\end{gather*}  and the matrix is generically invertible. Thus for every $i=1,2,3$ there exists $j\in\{1,2,3\}$ such that $s_{ij}\neq 0$, and this implies that $a\geq -1$, $b\geq -1$, and $a+b\leq 0$, giving two possibilities for the normal bundle of $\Gamma$: either $\ol(-1)\oplus\ol^{\oplus 2}$ or $\ol(-1)^{\oplus 2}\oplus\ol(1)$.

Suppose that $\ma{N}_{\Gamma/Y}\cong\ol(-1)^{\oplus 2}\oplus\ol(1)$.  Up to reordering we have $a=b=-1$ and hence $s_{1j}$ and $s_{2j}$ are constant for every $j$, namely the first and second column of the matrix are constant. Since the matrix is generically invertible, these columns must be linearly independent, so that the rank of $\zeta$ is at least $2$ at every point of $\ell$. To rule out this case, we show that for $x\in \w{D}\cap\ell$, the rank of $\zeta$ at $x$ is $1$.

Set $y:=\sigma(x)\in C\cap\Gamma$.
With a local computation one checks that $d_x\sigma\colon T_x\w X\to T_yY$ has rank 2. On the other hand $x\in\ell$ and $d_x\sigma(T_x\ell)=T_y\Gamma$, thus 
$\im d_x\sigma$ contains $T_y\Gamma$.
 Then the image of $\im d_x\sigma$ in $(\ma{N}_{\Gamma/Y})_y=T_yY/T_y\Gamma$ is one dimensional, and this is the image of $\zeta$.
\end{proof}
\begin{lemma}\label{test2}
In Setting \ref{31}:
\begin{enumerate}[$(a)$]
\item
if $Y$ contains a nef prime divisor
$H$  covered by a family $V$ of lines, such that $H\cap C\neq\emptyset$, then $C$ is a curve of the family $V$;
\item if $Y$ is covered by a family $W$ of rational curves with $-K_Y\cdot [W]=2$, then $C$ is a component of a curve of the family $W$.
\end{enumerate}
\end{lemma}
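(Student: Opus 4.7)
The plan is to treat part (b) first, since its argument is direct, and then adapt the same idea to (a), the latter requiring one additional ingredient. For (b), I would pick a general point $y\in C$ and a curve $\Gamma$ of the family $W$ containing $y$; write $\Gamma=\sum m_i\Gamma_i$ as a sum of its irreducible components. Since $Y$ is Fano, $-K_Y\cdot\Gamma_i\geq 1$ for each $i$, and $\sum m_i(-K_Y\cdot\Gamma_i)=2$. Let $\Gamma_0$ be a component through $y$. If $\Gamma_0\neq C$, then $\Gamma_0$ meets $C=\sigma(\widetilde{D})$ at $y$ and is distinct from it, so Remark~\ref{vecchissimo} forces $-K_Y\cdot\Gamma_0\neq 2$, hence $-K_Y\cdot\Gamma_0=1$, and $\Gamma_0=\sigma(\ell)$ for some exceptional curve $\ell\subset\widetilde{X}$. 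Each such $\sigma(\ell)$ meets $C$ only in the finite set $\sigma(\ell\cap\widetilde{D})$, and the exceptional curves are finite in number, so only finitely many points of $C$ lie on any $\sigma(\ell)$, contradicting the generality of $y$. Thus $\Gamma_0=C$, proving (b).

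For (a), I would first reduce to showing $C\subset H$: granted this, through every $y\in C$ passes an (irreducible) line of $V$, which by Remark~\ref{vecchissimo} is either $C$ or of the form $\sigma(\ell)$; since the curves $\sigma(\ell)$ meet $C$ only in finitely many points, for general $y\in C$ the line of $V$ through $y$ must be $C$, giving $C\in V$.

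Suppose for contradiction that $C\not\subset H$. A general $\Gamma\in V$ is neither $C$ nor any of the finitely many $\sigma(\ell_j)$, so by Remark~\ref{vecchissimo} $\Gamma\cap C=\emptyset$. Its strict transform $\widetilde{\Gamma}\subset\widetilde{X}$ avoids $\widetilde{D}$ and, by genericity, the exceptional curves, so $\varphi^{-1}\colon\widetilde{X}\dashrightarrow X$ produces a family $V_X$ of lines in $X$ covering the prime divisor $H_X$ obtained as the strict transform of $\widetilde{H}=\sigma^*H$. Since the exceptional planes $L_j\subset X$ lie in $D$, and a general $\Gamma_X\in V_X$ avoids the $L_j$ and, on the isomorphism locus of $\varphi^{-1}$, avoids $D\setminus\bigcup L_j$ as well, we obtain $\Gamma_X\cap D=\emptyset$ and so $D\cdot[V_X]=0$.

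To produce an exceptional plane in $H_X$, pick $y\in H\cap C$ and a line $\Gamma\in V$ through $y$. Since $C\not\subset H$, $\Gamma\neq C$, so $\Gamma=\sigma(\ell)$ with $\ell$ an exceptional curve satisfying $\widetilde{D}\cdot\ell=1$, and the corresponding exceptional plane $L\subset X$ satisfies $L\subset D$ and $D\cdot C_L=-1$. The inclusion $\Gamma\subset H$ translates into $\ell\subset\widetilde{H}$. In the common blow-up $\widehat{X}$ of Lemma~\ref{basic1}, writing $g^*\widetilde{H}=\widehat{H}+\mu_{\widetilde{H}}E$ and $f^*H_X=\widehat{H}+\mu_{H_X}E$ and intersecting both sides with a fiber of $f|_E$ yields $\mu_{H_X}=\mu_{\widetilde{H}}+\widetilde{H}\cdot\ell$. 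Since $\mu_{\widetilde{H}}\geq 1$ and $\widetilde{H}\cdot\ell=H\cdot\Gamma\geq 0$ by nefness of $H$, I obtain $\mu_{H_X}\geq 1$, i.e.\ $L\subset H_X$. Lemma~\ref{excplane}(b) then yields $[V_X]=[C_L]$, and consequently $0=D\cdot[V_X]=D\cdot C_L=-1$, the desired contradiction. The main obstacle is this step producing $L\subset H_X$: controlling how the flip affects the strict transform requires the local intersection computation in $\widehat{X}$, and its success depends crucially on the nefness of $H$, without which $\mu_{H_X}$ could vanish and the argument would break down.
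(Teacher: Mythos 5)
Your proof is correct; part $(b)$ is essentially the paper's argument, streamlined (by working at a general point of $C$ you collapse the paper's case analysis on irreducible/reducible/non-reduced members and avoid invoking Lemma \ref{vecchio}), but part $(a)$ takes a genuinely different route. The paper stays in $Y$: a line $\Gamma$ of $V$ through a point of $H\cap C$ with $\Gamma\neq C$ is the image of an exceptional curve with $\ma{N}_{\Gamma/Y}\cong\ol(-1)\oplus\ol^{\oplus 2}$ (Lemma \ref{fabrizio} and Rem.~\ref{vecchissimo}); Lemma \ref{demons} then shows that if $\Gamma$ met no other member of $V$ one would have $H\cdot[V]=-1$, contradicting nefness, so $\Gamma$ meets another member, which by Lemma \ref{vecchio} can only be $C$. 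You instead transport $V$ to a family $V_X$ on $X$ with $\Lo V_X=H_X$ and $D\cdot[V_X]=0$, use the multiplicity computation on the common blow-up --- where the nefness of $H$ enters through $\w{H}\cdot\ell\geq 0$ --- to force the exceptional plane $L$ into $H_X$, and conclude by Lemma \ref{excplane}$(b)$, which yields the contradiction $0=D\cdot[V_X]=D\cdot C_L=-1$. The paper's route is shorter and local on $Y$, resting on the deformation-theoretic Lemma \ref{demons}; yours replaces that with the global uniqueness statement of Lemma \ref{excplane} (hence implicitly $\rho_X\geq 7$ and the Lefschetz-defect bounds) plus explicit bookkeeping through the flip. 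Two small points to tighten: the general $\w{\Gamma}$ avoids the exceptional curves not merely ``by genericity'' (an exceptional curve could a priori lie inside $\w{H}$) but because $-K_{\w{X}}\cdot\w{\Gamma}=1$ and Lemma \ref{basic1}$(b)$ forbids such a curve from meeting an exceptional curve; and your pullback formulas should carry the remaining exceptional divisors $E_j$, harmlessly, since they are disjoint from the fiber of $f|_E$ you intersect with.
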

\begin{proof}
$(a)\,$ Let $q\in C\cap H$, and let $\Gamma$ be a curve of the family $V$ containing $q$. If $\Gamma=C$, we are done. Otherwise,
since $-K_Y\cdot \Gamma=1$ and $\Gamma\cap C\neq\emptyset$, we have that  $\Gamma\cong\pr^1$, $\ma{N}_{\Gamma/Y}\cong\ol(-1)\oplus\ol^{\oplus 2}$,
and $\Gamma$ is the image of an exceptional curve,
 by Lemma \ref{fabrizio} and Rem.~\ref{vecchissimo}. Since $H$ is nef, by Lemma \ref{demons} $\Gamma$ must intersect some other curve of the family $V$. On the other hand by Lemma \ref{vecchio} $\Gamma$ cannot intersect any curve of anticanonical degree one, except $C$; this implies that $C$ belongs to the family $V$.

$(b)\,$ Let $\Gamma_2$ be a one-cycle of the family $W$ intersecting $C$. 
If $\Gamma_2$ is irreducible and reduced, then $\Gamma_2=C$ by Rem.~\ref{vecchissimo}. 

If $\Gamma_2$ is reducible, let $\Gamma_2'$, $\Gamma_2''$ be its irreducible components, both of anticanonical degree one. We can assume that $\Gamma_2'\neq C$ and  $\Gamma_2'\cap C\neq\emptyset$. Then $\Gamma_2''=C$ by Lemma \ref{vecchio}.

Finally if $\Gamma_2$ is non-reduced, either it is supported on $C$, or
 $\Gamma_2=2\Gamma_2'$ where $-K_Y\cdot \Gamma_2'=1$, $\Gamma_2'\neq C$, and  $\Gamma_2'\cap C\neq\emptyset$. Then again by Rem.~\ref{vecchissimo} $\Gamma_2'$
 is the image of an exceptional curve of $\w{X}$. In this last case, 
 $\Gamma_2$ meets 
$C$ in the finite set $T$ given by the intersection with the images of the finitely many  exceptional curves of $\w{X}$. Thus if $\Gamma_3$ is a one-cycle of the family $W$ meeting $C$ outside $T$, it must have
$C$ as a component.
\end{proof}
\begin{lemma}\label{pfizer}
In Setting \ref{31}, suppose moreover that $\rho_X\geq 8$. Let $E_Y\subset Y$ be a fixed prime divisor, and $E\subset X$ its transform, so that $E$ is a fixed prime divisor adjacent to $D$ (see Lemma \ref{taberna}$(c)$).
Then $E$ and $E_Y$ have the same type.
\end{lemma}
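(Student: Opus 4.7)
The plan is to apply Th.-Def.~\ref{long}$(e)$ to $E$ in $X$: if we exhibit any factorization of the contraction of $E$ as a SQM $X\dasharrow X^*$ followed by an elementary divisorial contraction $\lambda\colon X^*\to W$ whose exceptional divisor is the transform of $E$, then the type of $E$ equals the type of $\lambda$. The goal is to construct such a $\lambda$ having the same type $T$ as the contraction $\tau\colon \w{Y}\to Z$ associated to $E_Y$ (so $T$ is the type of $E_Y$).

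First I lift the SQM $Y\dasharrow \w{Y}$ (from the diagram of $E_Y$) to $X$. The rational map $X\dasharrow \w{Y}$ obtained by composing $X\dasharrow Y$ with $Y\dasharrow \w{Y}$ is a divisorial rational contraction of $D$. Factoring it as a SQM $X\dasharrow X^*$ followed by an elementary divisorial contraction $\sigma^*\colon X^*\to \w{Y}$ of the transform $D^*$ of $D$, Th.-Def.~\ref{long}$(e)$ applied to $D$ forces $\sigma^*$ to be of type $(3,1)^{sm}$; hence $\w{Y}$ is smooth and $\sigma^*$ is the blow-up of a smooth curve $C^*\subset \w{Y}$, namely the strict transform of $C$ via $Y\dasharrow \w{Y}$.

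The crucial technical point is the disjointness $C^*\cap \w{E_Y}=\emptyset$ in $\w{Y}$, where $\w{E_Y}$ is the transform of $E_Y$ in $\w{Y}$ (the exceptional divisor of $\tau$). I would derive this from the adjacency of $D$ and $E$ via Lemma~\ref{nigra} (which constrains $D\cap E$ in $X$, hence $C\cap E_Y$ in $Y$), combined with a careful analysis of how $C$ sits with respect to the indeterminacy locus of $Y\dasharrow \w{Y}$ (a disjoint union of exceptional planes contained in $E_Y$); here the properties of exceptional curves from \S\ref{contraction}, together with Rem.~\ref{vecchissimo} and Lemma~\ref{vecchio} controlling the behaviour of curves of small anticanonical degree meeting $C$, should give the required control. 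This step is the main obstacle.

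Granted the disjointness, I set $W:=\Bl_{\tau(C^*)}Z$, well-defined because $\tau$ is an isomorphism on a neighbourhood of $C^*$ (as $C^*\cap \w{E_Y}=\emptyset$) so $\tau(C^*)$ is smooth and disjoint from $\tau(\w{E_Y})$. The universal property of blow-ups applied to $\tau\circ\sigma^*\colon X^*\to Z$ produces a morphism $\lambda\colon X^*\to W$, and one verifies that $\lambda$ is an elementary divisorial contraction whose exceptional divisor equals the strict transform of $\w{E_Y}$ in $X^*$, which coincides with the transform of $E$. By disjointness, $\sigma^*$ restricts to an isomorphism between this exceptional divisor and $\w{E_Y}$, so $\lambda$ has the same type $T$ as $\tau$. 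Applying Th.-Def.~\ref{long}$(e)$ to $E$ then yields that $E$ is of type $T$, as desired.
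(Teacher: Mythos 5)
Your proposal hinges on the disjointness $C^*\cap\w{E_Y}=\emptyset$, which you rightly flag as the main obstacle but do not prove; unfortunately this claim is false in general, so the argument cannot be completed along these lines. The problem is the case where $E$ is of type $(3,2)$ with $E\cdot C_D>0$. Adjacency of $D$ and $E$ only gives $D\cdot C_E=0$ via Lemma \ref{nigra}$(a)$; it does \emph{not} give $E\cdot C_D=0$, and the situation $E\cdot C_D>0$ genuinely occurs --- it is precisely the setting of Lemma \ref{aikido}, whose proof shows that then $E_{\w{X}}\cdot C_{\w{D}}>0$ forces $E_{\w{X}}$ to meet every nontrivial fiber of $\sigma$, hence $C\subset E_Y$. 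Once $C\subset E_Y$, the strict transform of $C$ in any SQM of $Y$ is contained in the transform of $E_Y$, so no blow-up center $\tau(C^*)$ disjoint from $\tau(\w{E_Y})$ exists and your construction of $W$ breaks down. There is also a secondary point you pass over: before invoking Th.-Def.~\ref{long}$(e)$ you must actually produce a factorization of $X\dasharrow\w{Y}$ as a SQM followed by an elementary divisorial contraction of the transform of $D$; this is plausible in the Mori dream space framework but is not automatic and requires an argument.

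The paper's proof splits into two cases with different mechanisms, and you would need to do the same. When $E$ is \emph{not} of type $(3,2)$, Lemma \ref{nigra}$(b)$ gives $D\cdot C_E=E\cdot C_D=0$ and that $D\cap E$ is empty or a disjoint union of exceptional planes; one then constructs a single SQM $\psi\colon X\dasharrow\wi{X}$ flipping all exceptional planes in $D\cup E$, so that $\wi{D}\cap\wi{E}=\emptyset$, checks that $R_D+R_E$ is a face of $\NE(\wi{X})$ using the nef divisor $-K_{\wi{X}}+2\wi{D}+m\wi{E}$, and runs the resulting two-ray game: the contraction of $\wi{E}$ then coincides near its exceptional divisor with the contraction of the transform of $E_Y$, and Th.-Def.~\ref{long}$(e)$ concludes. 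This is close in spirit to your plan, but the disjointness is achieved by choosing the SQM of $X$ adapted to both divisors simultaneously, rather than by transporting $C$ into the model $\w{Y}$ dictated by $E_Y$. When $E$ \emph{is} of type $(3,2)$, the argument is entirely different: the general curve $C_E$ is disjoint from $D$ (since $D\cdot C_E=0$), so it survives to a curve $\Gamma\subset E_Y$ with $-K_Y\cdot\Gamma=1$ and $E_Y\cdot\Gamma=-1$, whence $E_Y$ is covered by a family of lines on which it is negative and is of type $(3,2)$ by Lemma \ref{chitarre}.
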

\begin{proof}
  Suppose first that $E$ is not of type $(3,2)$; then, by Lemma \ref{nigra}$(b)$, $D\cap E$ is either empty of a disjoint union of exceptional planes, and for every such $L$, we have $D\cdot C_L<0$ and $E\cdot C_L<0$ (see \S\ref{contraction}).

There is a SQM $\psi\colon X\dasharrow\wi{X}$ whose indeterminacy locus is the union of all exceptional planes contained in $D\cup E$ (this can be obtained by flipping consecutively all small $K$-negative extremal rays of $\NE(X)$ having negative intersection with $D+E$). The transforms $\wi{D},\wi{E}\subset\wi{X}$ are disjoint, and the SQM $\psi\circ\ph^{-1}\colon \w{X}\dasharrow\wi{X}$ induces an isomorphism between $\w{D}$ and $\wi{D}$. Similarly, if $X\stackrel{\ph_E}{\dasharrow} \w{X}_E\to Y_E$ is the contraction associated to $E$, the SQM  $\psi\circ\ph_E^{-1}\colon \w{X}_E\dasharrow\wi{X}$ induces an isomorphism between $\w{E}\subset\w{X}_E$ and $\wi{E}$.

It is not difficult to see that 
$\wi{D},\wi{E}$ are the loci of two divisorial extremal rays $R_D,R_E$ of $\NE(\wi{X})$, such that $R_D+R_E$ is a face of $\NE(\wi{X})$. 
In fact, if $m:=-K_X\cdot C_E$ ($m\in\{2,3\}$ depending on the type of $E$), the divisor $-K_{\wi{X}}+2\wi{D}+m\wi{E}$ is nef and $(-K_{\wi{X}}+2\wi{D}+m\wi{E})^{\perp}\cap\NE(\wi{X})=R_D+R_E$.
Thus we have a diagram:
$$\xymatrix{{\wi{X}}\ar[r]^{\sigma_D}\ar[d]_{\sigma_E}&{\wi{Y}}\ar[d]^k\\
  {Z}\ar[r]&W
}$$
where $\sigma_D$ and $\sigma_E$ are elementary divisorial contractions with exceptional divisors  $\wi{D}$ and $\wi{E}$ respectively, $\wi{Y}$ is a SQM of $Y$, and $k$ is  an elementary divisorial contraction with exceptional divisor the transform of $E_Y$. Since   $\wi{D}\cap\wi{E}=\emptyset$, $\sigma_E$ and $k$ coincide in a neighborhood of their exceptional divisors. Moreover, by Th.-Def.~\ref{long}$(e)$, $\sigma_E$ and $E$ have the same type, and similarly for $k$ and $E_Y$. Therefore  $E$ and $E_Y$ have the same type.

Suppose now that $E$ is of type $(3,2)$; then by Lemma \ref{nigra}$(a)$ we have $D\cdot C_{E}=0$, so that the general curve $C_{E}$ is disjoint from $D$, hence it is contained in the open subset where the birational map $X\dasharrow Y$ is an isomorphism.
Let $\Gamma\subset E_Y\subset Y$ be the  image  of  the general $C_{E}$. Then $-K_Y\cdot\Gamma=1$ and $E_Y\cdot \Gamma=-1$, so that $E_Y$ is covered by a family of lines and it is not nef; then $E_Y$ is of type $(3,2)$ by Lemma \ref{chitarre}.
\end{proof}  
\begin{lemma}\label{aikido}
Let $X$ be a smooth Fano $4$-fold with $\rho_X\geq 7$, and 
$D$ and $E$ two adjacent fixed prime divisors, of type $(3,1)^{sm}$ and $(3,2)$ respectively, such that  $E\cdot C_D>0$. Then  
$E\cdot C_D=1$ and $E\cap L=\emptyset$ for every exceptional plane $L\subset D$.
\end{lemma}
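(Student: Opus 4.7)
The plan is to work inside the contraction $X\stackrel{\ph}{\dasharrow}\w{X}\stackrel{\sigma}{\to} Y$ associated to $D$ (Th.-Def.~\ref{long}), where $\sigma$ is the blow-up of the smooth irreducible curve $C\subset Y$. Let $\w{E}\subset\w{X}$ be the transform of $E$ and $E_Y:=\sigma(\w{E})\subset Y$. The pullback identity for the blow-up of a smooth curve reads
$$\sigma^*E_Y=\w{E}+k\,\w{D},\qquad k:=\operatorname{mult}_CE_Y\in\Z_{\geq 0};$$
intersecting with $C_{\w{D}}$ and using $\sigma^*E_Y\cdot C_{\w{D}}=0$ and $\w{D}\cdot C_{\w{D}}=-1$ yields $E\cdot C_D=\w{E}\cdot C_{\w{D}}=k$. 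Hence $E\cdot C_D>0$ already forces $C\subset E_Y$, and the desired equality $E\cdot C_D=1$ is equivalent to $k=1$, i.e.\ to $E_Y$ being generically smooth along $C$.

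I would next identify $E_Y$ as a fixed prime divisor of type $(3,2)$ in $Y$: by Lemma~\ref{nigra}$(a)$ the general $C_E$ is disjoint from $D$, hence $\sigma(\w{C}_E)\subset E_Y$ is a line with $E_Y\cdot\sigma(\w{C}_E)=-1$, and Lemma~\ref{chitarre} then forces $E_Y$ to be of type $(3,2)$, covered by a family of lines $W$. To see $C\in W$, pick a general $p\in C$ and any line $\Gamma$ of $W$ through $p$: if $\Gamma\neq C$, then $\Gamma\cap C\ni p$ and $-K_Y\cdot\Gamma=1$, so by Rem.~\ref{vecchissimo} $\Gamma=\sigma(\ell)$ for some exceptional curve $\ell\subset\w{X}$; finiteness of the set of exceptional curves (each meeting $\w{D}$ in at most one point) rules this out for general $p$, and even for a $1$-parameter family of such $\Gamma$'s through $p$, which rules out $C$ lying in a $2$-dimensional fiber of the contraction $\tau_Y\colon E_Y\to S_Y$. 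Hence $C$ is a $1$-dimensional fiber of $\tau_Y$, $[C]=[W]$ in $\N(Y)$, and $E_Y\cdot C=-1$.

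The main obstacle is to upgrade $[C]=[W]$ to the scheme-theoretic conclusion $k=1$. My strategy is to use that $[C]=[W]$ and $E_Y\cdot C=-1$ force $\ma{N}_{E_Y/Y}|_C=\ol_C(-1)$ and, via the normal-bundle sequence $0\to\ma{N}_{C/E_Y}\to\ma{N}_{C/Y}\to\ma{N}_{E_Y/Y}|_C\to 0$ on $C\cong\pr^1$, $\ma{N}_{C/Y}=\ol^{\oplus 2}\oplus\ol(-1)$. Then $\w{D}=\pr(\ma{N}_{C/Y})$ is an explicit $\pr^2$-bundle on which $[\w{E}|_{\w{D}}]=k\,\xi-F$ for the tautological class $\xi$ and a fiber $F$. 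Combining effectivity of this class with the unique covering family $\w{W}$ of lines of $\w{E}$ (Lemma~\ref{excplane}$(a)$, with $\w{E}\cdot[\w{W}]=-1$ and general members disjoint from $\w{D}$) should pin down the fiber-degree of $\w{E}|_{\w{D}}$ to $1$, giving $k=1$.

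For the second assertion, since $E$ is of type $(3,2)$, Rem.~\ref{ikea} gives $L\not\subset E$. The local blow-up/blow-down description of the flip then shows that $E\cap L\neq\emptyset$ is equivalent to $\ell\subset\w{E}$, where $\ell\subset\w{X}$ is the exceptional curve corresponding to $L$; hence $\sigma(\ell)\subset E_Y$. By Lemma~\ref{vecchio} any line of $W$ through a point $q\in\sigma(\ell)\setminus C$ must equal $\sigma(\ell)$, so $\sigma(\ell)\in W$ and $[\sigma(\ell)]=[W]$. Then $\tau_Y$ contracts both $C$ and $\sigma(\ell)$ to the same point, and the fiber of $\tau_Y$ over that point must contain $C\cup\sigma(\ell)$; this is impossible as a $1$-dimensional reducible fiber (since $-K_Y\cdot(C+\sigma(\ell))=2\neq 1=-K_Y\cdot[W]$) and impossible as a $2$-dimensional fiber (by the argument of the second paragraph above). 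The contradiction proves $E\cap L=\emptyset$, completing the plan.
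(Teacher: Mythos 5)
Your reduction to $\sigma^*E_Y=\w{E}+k\,\w{D}$ and $E\cdot C_D=k$ is the right frame, and it is essentially the paper's frame too; but the central step --- $k=1$ --- is exactly where your argument does not close. The paper gets everything at once from \cite[Lemma 5.11]{blowup}: that $[C]$ generates an extremal ray of type $(3,2)$ of $\NE(Y)$, that $E_Y$ is a \emph{smooth $\pr^1$-bundle around $C$ with $C$ a fiber}, that $\ma{N}_{C/Y}\cong\ol^{\oplus 2}\oplus\ol(-1)$, and that $\sigma^*E_Y=E_{\w{X}}+\w{D}$ with multiplicity exactly one, whence $E\cdot C_D=(\sigma^*E_Y-\w{D})\cdot C_{\w{D}}=1$. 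Your substitute is circular at the decisive point: the normal-bundle sequence $0\to\ma{N}_{C/E_Y}\to\ma{N}_{C/Y}\to\ma{N}_{E_Y/Y}|_C\to 0$ with $\ma{N}_{C/E_Y}\cong\ol^{\oplus 2}$ already presupposes that $E_Y$ is smooth along $C$ and that $C$ is a fiber of the $\pr^1$-bundle structure --- and that smoothness is precisely the statement $k=\operatorname{mult}_CE_Y=1$. Once you have it, the detour through $[\w{E}|_{\w{D}}]=k\xi-F$ is unnecessary; and as an independent route it is inconclusive, since $k\xi-F=(k-1)\xi+(\xi-F)$ is effective on $\Bl_{\text{line}}\pr^3$ for \emph{every} $k\geq 1$, so effectivity plus the covering family of $\w{E}$ does not ``pin down'' $k$. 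What is actually needed (and what the cited lemma supplies, via an Ando-type analysis) is that the fibers of the contraction of $\R_{\geq 0}[C]$ are one-dimensional near $C$, so that it is a smooth blow-up there. A secondary issue: you invoke Lemmas \ref{chitarre} and \ref{excplane} on $Y$, but $\rho_Y=\rho_X-1$ may equal $6$, below their hypothesis $\rho\geq 7$.

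For the second assertion your route genuinely diverges from the paper's, and it is shakier. The paper observes that $\w{D}\cap E_{\w{X}}$ is the exceptional $\pr^1\times\pr^1$ of $\w{D}\to\pr^3$ and that $E_{\w{X}}\smallsetminus\w{D}\cong E_Y\smallsetminus C$ is a $\pr^1$-bundle, so \emph{all} of $E_{\w{X}}$ is covered by curves of anticanonical degree one in $\w{X}$; Lemma \ref{basic1}$(b)$ then forbids $E_{\w{X}}$ from meeting any exceptional curve, hence $E\cap L=\emptyset$. Your contradiction via the fiber of $\tau_Y$ containing $C\cup\sigma(\ell)$ leaves two gaps: ruling out a reducible one-dimensional fiber requires knowing such fibers are irreducible (again Ando; the numerical remark ``$-K_Y\cdot(C+\sigma(\ell))=2\neq 1$'' proves nothing, since fibers of a non-flat contraction need not have constant class), and the two-dimensional case is not covered by your earlier paragraph, which only excluded $C$ from lying in a $\pr^2$-fiber and does not handle quadric fibers or the curve $\sigma(\ell)$. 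I would also note that when $\w{D}\cdot\ell\geq 2$ the curve $\sigma(\ell)$ has $-K_Y\cdot\sigma(\ell)=2\w{D}\cdot\ell-1\geq 3$ and is not a line at all, so the step ``$\sigma(\ell)\in W$'' needs a separate (in fact easier) argument in that case.
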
  
\begin{proof}
  Let $X\dasharrow\w{X}\to Y$ be the contraction of $D$ as in \S\ref{contraction}, and $E_{\w{X}}\subset\w{X}$, $E_Y\subset Y$ the transforms of $E$, so that $E_Y$ is a fixed prime divisor by Lemma \ref{taberna}$(c)$. Since $E_{\w{X}}\cdot C_{\w{D}}=E\cdot C_D>0$, $E_{\w{X}}$ intersects every nontrivial fiber of $\sigma$, thus $E_Y$ contains $C$.
  By \cite[Lemma 5.11]{blowup} we deduce that 
$[C]$ generates an extremal ray of type $(3,2)$ of $\NE(Y)$,
$C\cong\pr^1$ is a fiber of the associate contraction, $E_Y$ is a smooth $\pr^1$-bundle around $C$, $\ma{N}_{C/Y}\cong\ol^{\oplus 2}\oplus\ol(-1)$, and $\w{D}\cong\pr_{\pr^1}(\ol^{\oplus 2}\oplus\ol(1))$ is isomorphic to the blow-up of $\pr^3$ along a line. Moreover $E_{\w{X}}$ and $\w{D}$ intersect transversally along a smooth irreducible surface and $\sigma^*E_Y=E_{\w{X}}+\w{D}$, so that $E\cdot C_D=E_{\w{X}}\cdot C_{\w{D}}=(\sigma^*E_Y-\w{D}) \cdot C_{\w{D}}=1$.

If $\Gamma\subset\w{D}$ is a non-trivial fiber of the blow-up $\w{D}\to\pr^3$, it is not difficult to see that $\w{D}\cdot\Gamma=0$, using that $\ol_{\w{X}}(-\w{D})_{|\w{D}}$ is the tautological line bundle. Then $-K_{\w{X}}\cdot\Gamma=-K_{\w{D}}\cdot\Gamma=1$, and $E_{\w{X}}\cdot\Gamma=\sigma^*E_Y\cdot\Gamma=E_Y\cdot C=-1$, so that $\Gamma\subset E_{\w{X}}$. This implies that $\w{D}\cap E_{\w{X}}\cong\pr^1\times\pr^1$ is the exceptional divisor of the blow-up $\w{D}\to\pr^3$, and it is covered by lines. On the other hand also $E_{\w{X}}\smallsetminus\w{D}\cong E_Y\smallsetminus C$ is covered by lines.  
Thus $E_{\w{X}}$ cannot meet any exceptional curve of $\w{X}$ by Lemma \ref{basic1}$(b)$, and $E$ cannot meet any exceptional plane contained in $D$. 
\end{proof}
\subsection{Additional properties for the case $(3,0)^Q$}\label{add2}
\begin{prg}\label{30Q}
In  this subsection $X$ is a smooth Fano $4$-fold with $\rho_X\geq 7$ and $D\subset X$ is a fixed prime divisor of type $(3,0)^{Q}$. We keep the same notation as in \S\ref{contraction}.\end{prg}
\begin{remark}\label{vertex}
In Setting \ref{30Q}, suppose that $\w{D}\subset\w{X}$ is a quadric cone. Then no exceptional curve can intersect $\w{D}$ at the vertex of the cone. 

Indeed, suppose otherwise: then for every line $\Gamma\subset\w{D}$ through the vertex of the cone, we have $-K_{\w{X}}\cdot\Gamma=2$, so if $\Gamma_X\subset X$ is the transform of $\Gamma$, we have $-K_X\cdot\Gamma_X=1$ by Lemma \ref{basic1}$(b)$. This yields that $D$ is covered by a family of lines, it is not nef and of type $(3,0)^Q$, contradicting Lemma \ref{chitarre}.
\end{remark}
\begin{remark}\label{intersection}
  Let $B\subset X$ be a prime divisor, different from $D$, and such that $B\cap D\neq\emptyset$. Then either $B\cap D$ is a disjoint union of exceptional planes, or $B\cdot C_D>0$.

 Indeed let $\w{B}\subset\w{X}$ be the transform of $B$. If $\w{B}\cap\w{D}=\emptyset$, then  $B\cap D$ is contained in the indeterminacy locus of $\ph\colon X\dasharrow\w{X}$, thus it is a disjoint union of exceptional planes. If $\w{B}\cap\w{D}\neq\emptyset$, then $B\cdot C_D=\w{B}\cdot C_{\w{D}}>0$.
\end{remark}  
The proof of the next lemma is analogous to that of Lemma \ref{vecchio}, thus we omit it.
\begin{lemma}\label{vecchio2}
In Setting \ref{30Q}, let $\ell\subset\w{X}$ be an exceptional curve and $\Gamma:=\sigma(\ell)\subset Y$. Then $\Gamma$ cannot meet any curve of anticanonical degree one outside $p$ (except possibly $\Gamma$ itself).
\end{lemma}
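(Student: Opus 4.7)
The plan is to mimic exactly the proof of Lemma \ref{vecchio}, dichotomizing on whether an auxiliary degree-one curve passes through the singular point $p$. Concretely, I will take an irreducible curve $C_0\subset Y$ with $-K_Y\cdot C_0=1$, $C_0\neq\Gamma$, and assume for contradiction that there is a point $q\in C_0\cap\Gamma$ with $q\neq p$. Let $\w{C}_0\subset\w{X}$ be the strict transform; since $C_0\neq\Gamma$ we have $\w{C}_0\neq\ell$.

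In the first case, suppose $p\in C_0$. Then Rem.~\ref{vecchissimo} (applied with $D$ of type $(3,0)^Q$, so that $\sigma(\w{D})=\{p\}$) forces $C_0=\sigma(\ell')$ for some exceptional curve $\ell'\subset\w{X}$ with $\w{D}\cdot\ell'=1$. From $C_0\neq\Gamma$ we get $\ell'\neq\ell$, so $\ell$ and $\ell'$ are disjoint by Lemma \ref{basic1}. Since $\sigma$ is an isomorphism on $\w{X}\smallsetminus\w{D}$ and both $\ell,\ell'$ meet $\w{D}$ only in isolated points, the images $\sigma(\ell)=\Gamma$ and $\sigma(\ell')=C_0$ can only meet at $p$, contradicting $q\neq p$.

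In the second case, suppose $p\notin C_0$. Then $\sigma$ is an isomorphism in a neighborhood of $C_0$, so $\w{C}_0\cap\w{D}=\emptyset$ and $-K_{\w{X}}\cdot\w{C}_0=-K_Y\cdot C_0=1$. By Lemma \ref{basic1}$(b)$, any curve in $\w{X}$ meeting an exceptional curve has anticanonical degree at least $2$; hence $\w{C}_0\cap\ell=\emptyset$, and pushing forward gives $C_0\cap\Gamma\subseteq\{p\}$, again contradicting $q\neq p$.

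I do not expect any real obstacle: the argument is a direct translation of Lemma \ref{vecchio}, with the role of the blow-up center $C$ replaced by the singular point $p=\sigma(\w{D})$, and Rem.~\ref{vecchissimo} provides the key fact that degree-one curves through $p$ other than $\Gamma$ itself are images of exceptional curves disjoint from $\ell$. The only minor point to handle carefully is that $\sigma(\ell)$ may contain $p$ (since $\ell$ does meet $\w{D}$), but this is precisely why the statement excludes $p$ from the intersection.
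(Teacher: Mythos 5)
Your proof is correct and is exactly what the paper intends: the paper omits the argument, stating only that it is analogous to the proof of Lemma \ref{vecchio}, and your write-up is precisely that analogue, with the dichotomy on whether the auxiliary degree-one curve passes through $p=\sigma(\w{D})$ replacing the dichotomy on whether it meets the blow-up center $C$. The two key inputs (Rem.~\ref{vecchissimo} identifying degree-one curves through $p$ as images of exceptional curves, and Lemma \ref{basic1}$(b)$ for curves avoiding $\w{D}$) are used exactly as in the paper's model proof.
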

\begin{lemma}\label{pizza}
In Setting \ref{30Q}, let $E\subset X$ be a fixed prime divisor adjacent to $D$, and $E_Y\subset Y$ its transform. If
 $\N(E_Y,Y)\subsetneq\N(Y)$, then one of the following holds:
\begin{enumerate}[$(i)$]
\item $X$ has a rational contraction onto a $3$-fold;
\item $X$ has a
 fixed prime divisor $G$ of type $(3,2)$, adjacent to $D$, and such that $E\cdot C_G>0$.
\end{enumerate}
\end{lemma}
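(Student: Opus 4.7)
Since $\NE(Y)$ spans $\N(Y)$ while $\N(E_Y,Y)$ is a proper subspace, there exists an extremal ray $R$ of $\NE(Y)$ with $R\not\subseteq\N(E_Y,Y)$: no irreducible curve of class in $R$ lies in $E_Y$, so $E_Y\cdot R\geq 0$. Moreover, any $E_Y$-negative extremal ray of $\NE(Y)$ must be contained in $\N(E_Y,Y)$ (since $E_Y$ is fixed and the associated extremal contraction is ``internal'' to $E_Y$), so we may in fact select $R$ with $E_Y\cdot R>0$. I would then analyze the (rational) contraction $h\colon Y\dasharrow Z$ associated to $R$, using that $Y$ is a Mori dream space.

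\medskip

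\noindent\textbf{Fiber-type case yields $(i)$.} If $h$ is of fiber type, composing with $X\dasharrow Y$ yields a rational contraction of $X$ of fiber type. If $\dim Z=3$ we have $(i)$ directly; if $\dim Z<3$, a further composition with the Mori program on a general fiber (or an MRC-type refinement on the base) produces a rational contraction onto a $3$-fold, again giving $(i)$.

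\medskip

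\noindent\textbf{Birational case yields $(ii)$.} If $h$ is birational, then after an SQM (cf.~Lemma \ref{basic2}) it becomes an elementary divisorial contraction whose exceptional divisor is the transform of a prime divisor $G_Y\subset Y$; hence $G_Y$ is fixed, and by Lemma \ref{taberna}$(c)$ its transform $G\subset X$ is a fixed prime divisor adjacent to $D$. The proof of Lemma \ref{pfizer} carries over to the case of $D$ of type $(3,0)^Q$ (only Lemma \ref{nigra}$(b)$ and the structure of $D\cap G$, plus $\rho_X\geq 8$ for the appeal to Lemma \ref{chitarre}, are used), so $G$ and $G_Y$ have matching type. Once $G$ is shown to be of type $(3,2)$, Lemma \ref{nigra}$(a)$ gives $D\cdot C_G=0$, so the general $C_G$ lies outside $D$, maps isomorphically to a curve $C_{G_Y}\subset Y$ of class in $R$, and $E\cdot C_G=E_Y\cdot C_{G_Y}=E_Y\cdot R>0$, as required.

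\medskip

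\noindent\textbf{Main obstacle.} The delicate part is ruling out the non-$(3,2)$ types for $G$. If $G$ were of type $(3,0)^{sm}$, $(3,0)^Q$, or $(3,1)^{sm}$, then $G_Y$'s contracted curves $C_{G_Y}$ would be confined to the preimage of a point, a quadric, or a fiber of a $\pr^2$-bundle over a smooth curve; the rigidity constraints of Rem.~\ref{vecchissimo}, Lemma \ref{vecchio2}, and the description of $\w{D}$ in \S \ref{contraction} would then severely limit the intersection of such curves with the image of $E_Y$, contradicting $E_Y\cdot R>0$. The edge case $\rho_X=7$ (where Lemma \ref{pfizer} does not apply directly) requires a separate argument using the combinatorics of $\Eff(X)$ and its fixed faces containing $[D]$, and the small-contraction subcase of $h$ demands care in following how the SQM of $Y$ interacts with the birational map $X\dasharrow Y$.
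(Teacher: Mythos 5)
Your reduction to a single extremal ray of $\NE(Y)$ does not carry enough information to reach either conclusion, and the two places where the statement's actual content lives are both left open. First, the existence of an extremal ray $R$ with \emph{both} $R\not\subseteq\N(E_Y,Y)$ and $E_Y\cdot R>0$ is not established: your observation only shows that rays outside $\N(E_Y,Y)$ satisfy $E_Y\cdot R\geq 0$, and a priori all of them could satisfy $E_Y\cdot R=0$ (e.g.\ if their loci are disjoint from $E_Y$). Second, the contraction of $R$ may be small, in which case there is no exceptional divisor at all and the birational branch collapses; an SQM does not turn a small contraction into a divisorial one. Third, in the fiber-type branch your fallback for $\dim Z<3$ (``Mori program on a general fiber'' or an MRC refinement) does not produce a rational contraction of $X$ onto a $3$-fold --- a fibration of the general fiber does not globalize. (Had you used both properties of $R$ you could have forced $\dim F=1$ and hence $\dim Z=3$, but as written you neither prove those properties nor exploit them.) Finally, the step you label the ``main obstacle'' --- showing that $G$ has type $(3,2)$ and that $E\cdot C_G>0$ --- \emph{is} conclusion $(ii)$; deferring it with a gesture at ``rigidity constraints'' leaves the lemma unproved. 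Note also that the lemma is stated for $\rho_X\geq 7$, while your appeal to Lemma \ref{pfizer} would need $\rho_X\geq 8$ and a separate verification that its proof adapts to type $(3,0)^Q$.

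For comparison, the paper does not work with one extremal ray but invokes \cite[Th.~3.1 and Lemma~3.3]{gloria}: a directed Mori program $Y=Y_0\dasharrow\cdots\dasharrow Y_k\to Z$ ending in an elementary fiber-type contraction $\psi$, along which one tracks $c_i=\codim\N(E_Y^i,Y_i)$, with $c_{i+1}\in\{c_i,c_i-1\}$ and $c_k\in\{0,1\}$. If $c_k=1$, then $E_Y^k$ meets every fiber of $\psi$ in dimension $0$, forcing $\dim Z=3$ and case $(i)$. If $c_k=0$, some step where $c_i$ drops is a $K$-negative divisorial contraction of type $(3,2)$ whose exceptional divisor $G_Y$ is covered by curves $\Gamma$ with $-K_Y\cdot\Gamma=1$, $G_Y\cdot\Gamma=-1$, and $E_Y\cdot\Gamma>0$; Lemma \ref{vecchio2} shows the general such $\Gamma$ lives where $X\dasharrow Y$ is an isomorphism, and Lemma \ref{chitarre} then identifies the transform $G\subset X$ as a fixed prime divisor of type $(3,2)$ with $[V]=[C_G]$, giving $(ii)$ via Lemma \ref{taberna}$(c)$. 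It is precisely this machinery that supplies the $(3,2)$ divisor your sketch cannot produce.
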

\begin{proof}
We recall that $Y$ is Fano and has one isolated terminal and locally factorial singularity. Suppose that $\N(E_Y,Y)\subsetneq\N(Y)$; we apply \cite[Th.~3.1 and Lemma 3.3]{gloria}  to $Y$
  and $E_Y$. This yields a diagram:
$$Y=Y_0\stackrel{\sigma_0}{\dasharrow}Y_1\dasharrow\cdots\dasharrow Y_{k-1}\stackrel{\sigma_{k-1}}{\dasharrow} Y_k\stackrel{\psi}{\la} Z$$ 
where each $\sigma_i$ is either an elementary divisorial contraction or a flip, and $\psi$ is an elementary contraction of fiber type \cite[Th.~3.1(2)]{gloria}.
The divisor $E_Y$ is not exceptional for any $\sigma_i$; let 
 $E_Y^i\subset Y_i$ be its transform and set $c_i:=\codim\N(E_Y^i,Y_i)$. We have
$c_{i+1}\in\{c_i,c_i-1\}$ for $i=0,\dotsc,k-1$ and $c_k\in\{0,1\}$  \cite[Th.~3.1(3)]{gloria}.

If $c_k=1$, then $\NE(\psi)\not\subset\N(E_Y^k,Y_k)$ and $E_Y^k\cdot\NE(\psi)>0$ \cite[Th.~3.1(2) and (3)]{gloria}. This implies that if $F\subset Y_k$ is a fiber of $\psi$, then $\dim(F\cap E_Y^k)\leq 0$ and $F\cap E_Y^k\neq\emptyset$, thus  $\dim F=1$ 
and $\dim Z=3$, and we get $(i)$.

If $c_k=0$, since $c_0=\codim\N(E_Y,Y)>0$, there exists some $i\in\{0,\dotsc,k-1\}$ such that $c_{i+1}=c_i-1$. Then $\sigma_i$ is a $K$-negative elementary divisorial contraction of type $(3,2)$ \cite[Lemma 3.3(2)]{gloria}, and $\Exc(\sigma_i)$ is contained in the open subset where the birational map $Y\dasharrow Y_i$ is an isomorphism \cite[Lemma 3.3(3)]{gloria}; let $G_Y\subset Y$ and $G\subset X$ be the transforms of $\Exc(\sigma_i)\subset Y_i$. 

Then $G_Y$ is covered by irreducible curves $\Gamma$ such that $E_Y\cdot \Gamma>0$, $-K_Y\cdot \Gamma=1$, and $G_Y\cdot \Gamma=-1$ \cite[Lemma 3.3(5)]{gloria}; the general $\Gamma$ does not contain $p=\sigma(\w{D})$, so by Lemma \ref{vecchio2} it is contained 
in the open subset where the birational map $X\dasharrow Y$ 
is an isomorphism. Therefore $G\subset X$ is covered by a family of lines $V$ such that $G\cdot [V]=-1$ and $E\cdot [V]>0$; $G$ is a fixed prime divisor of type $(3,2)$ with $[V]=[C_G]$ by Lemma \ref{chitarre}, and it is adjacent to $D$ by Lemma \ref{taberna}$(c)$, so we have $(ii)$.
  \end{proof}
\section{Rational contractions of fiber type}\label{fiber}
\subsection{Rational contractions of fiber type in Mori dream spaces}
\noindent Let $X$ be a projective, normal, and $\Q$-factorial Mori dream space. We need to introduce two special notions of rational contractions of fiber type, namely ``quasi-elementary'' and ``special'' contractions; we refer the reader to \cite[\S 2.2]{eff} and \cite[\S 2]{fibrations} respectively for more details.
\begin{definition}
Let $f\colon X\to Z$ be a contraction of fiber type.
We say that $f$ is {\bf quasi-elementary} if $Z$ is $\Q$-factorial and for every prime divisor $B\subset Z$ the pull-back $f^*B$ is irreducible (but possibly non-reduced).

We say that $f$ is {\bf special}  if $Z$ is $\Q$-factorial and $\codim f(D)\leq 1$ for every prime divisor $D\subset X$. A quasi-elementary contraction is always special.

We will be interested mainly in the cases where $\dim Z\leq 2$: if $Z$ is a curve, then every contraction of fiber type is special. If $Z$ is a surface, then $f$ is special if and only if it is {\bf equidimensional}: indeed if $f$ is special and $\dim Z=2$, then $f$ must be equidimensional; the converse follows from \cite[Lemma 2.7]{fibrations}. 

Consider now a rational contraction of fiber type $f\colon X\dasharrow Z$. We say that $f$ is quasi-elementary, respectively special, if given a factorization of $f$ as a SQM $X\dasharrow X'$ followed by a regular contraction $f'\colon X'\to Z$, then $f'$ is quasi-elementary, respectively special; this does not depend on the choice of the factorization.
\end{definition}
\begin{lemma}\label{Qfactorial}
Let $X$ be a projective, normal, and $\Q$-factorial Mori dream space, and $f\colon X\dasharrow S$ a rational contraction onto a surface. Then 
  $S$ is $\Q$-factorial.
\end{lemma}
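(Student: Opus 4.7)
The plan is to reduce to the case of a regular contraction and then invoke Mori-dream-space theory. By the definition of a rational contraction, one can factor $f\colon X\dasharrow S$ as $X\stackrel{\ph}{\dasharrow} X'\stackrel{f'}{\to} S$, where $\ph$ is a SQM and $f'\colon X'\to S$ is a regular contraction. Since $X$ is a Mori dream space and SQMs preserve this structure (see \cite{hukeel}), $X'$ is again projective, normal, and $\Q$-factorial. It therefore suffices to show that if $f'\colon X'\to S$ is a regular contraction from a Mori dream space onto a normal projective surface, then $S$ is $\Q$-factorial.

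For this second step the cleanest route is to invoke Okawa's theorem \emph{(On images of Mori dream spaces)}, which asserts that the image of a Mori dream space under a surjective morphism is itself a Mori dream space; since $\Q$-factoriality is part of the definition of a Mori dream space, this immediately gives the result. Alternatively, one can use the fact that $\dim S=2$ to reduce $\Q$-factoriality to a local condition at the finitely many singular points of $S$. Since $X'$ is a Mori dream space it is log terminal, hence has rational singularities; combining this with the connectedness of the fibres of $f'$ one can show that each singular point of $S$ is a rational surface singularity. By Lipman's theorem, rational surface singularities have finite local divisor class group, and are in particular $\Q$-factorial.

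The main obstacle is precisely the descent of $\Q$-factoriality from $X'$ to $S$. A naive direct attempt — take a prime Weil divisor $B\subset S$, form the reduced preimage $\tilde{B}:=(f')^{-1}(B)_{\mathrm{red}}$ (a $\Q$-Cartier Weil divisor on $X'$ by $\Q$-factoriality of $X'$), pick $m>0$ so that $m\tilde{B}$ is Cartier, and try to descend $\mathcal{O}_{X'}(m\tilde{B})$ along $f'$ — runs into trouble because descent requires triviality of this line bundle on \emph{every} fibre of $f'$, whereas one only sees this triviality on the generic fibre. Both routes suggested above (Okawa's theorem, or the local analysis via rational surface singularities plus Lipman) bypass this descent difficulty, which is why the proof must invoke the theory of Mori dream spaces rather than proceed by a direct divisor-pullback calculation.
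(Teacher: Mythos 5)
Your first reduction (factor $f$ through a SQM and a regular contraction $f'\colon X'\to S$, with $X'$ again a $\Q$-factorial Mori dream space) is fine, but both routes you offer for the key step --- descending $\Q$-factoriality from $X'$ to $S$ --- have genuine gaps. Okawa's theorem on images of Mori dream spaces takes as a \emph{hypothesis} that the target is a normal projective $\Q$-factorial variety; $\Q$-factoriality of the image is an assumption of his Theorem~1.1, not a conclusion, so invoking it here is circular. Your alternative route rests on the assertion that a Mori dream space is log terminal: this is not part of the Hu--Keel definition (which imposes nothing on singularities beyond normality and $\Q$-factoriality) and is not a theorem, so you cannot conclude that $X'$, let alone $S$, has rational singularities; and even granting rational singularities of $X'$, pushing them down to $S$ along a fiber-type contraction would itself require an argument you do not give. (In the paper's applications $X$ is a smooth Fano $4$-fold, where contraction targets do have rational singularities by Lemma~\ref{basic2}, so the Lipman argument would work there --- but the lemma is stated for arbitrary Mori dream spaces.)

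The paper's proof is more elementary and sidesteps both issues. For $\dim X\geq 3$ it factors $f$, via \cite[Prop.~2.13]{fibrations}, as a \emph{special} rational contraction of fiber type $X\dasharrow T$ onto a surface $T$ --- and $\Q$-factoriality of $T$ is built into the definition of special --- followed by a birational morphism of surfaces $h\colon T\to S$; for $h$ it then descends $\Q$-factoriality one elementary contraction at a time, by induction on $\rho_T-\rho_S$. This is essentially the ``direct divisor-pullback calculation'' you dismissed, and it works precisely because for a birational contraction of surfaces the exceptional locus is a divisor, so the strict transform of a Weil divisor on the target can be corrected by exceptional curves to become numerically trivial on the contracted ray, where descent for elementary contractions of Mori dream spaces is standard. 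The obstruction you correctly identified (triviality only on the generic fiber) concerns the fiber-type part of $f$, and the factorization through a special contraction is exactly what isolates and disposes of that part.
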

\begin{proof}
Suppose first that $X$ is a surface, so that $f$ is birational and regular.
By dimensional reasons, $\Exc(f)$ is a divisor. If $f$ is elementary, the statement is well known. In general, we can factor $f$ as $X\stackrel{\sigma}{\to}X'\stackrel{f'}{\to} S$, where $\sigma$ is elementary; then $X'$ is a $\Q$-factorial Mori dream space, and we conclude by induction on $\rho_X-\rho_S$.

 If $\dim X\geq 3$, then $f$ is of fiber type and by \cite[Prop.~2.13]{fibrations} it can be factored as
  $$X\stackrel{g}{\dasharrow}T\stackrel{h}{\la}S,$$
  where $g$ is a special rational contraction of fiber type and $h$ is birational. Then $T$ is $\Q$-factorial, so that
  $S$ is $\Q$-factorial  by the first part of the proof.
\end{proof}
\begin{lemma}\label{special}
Let $X$ be a projective, normal, and $\Q$-factorial Mori dream space. Let $f\colon X\dasharrow Z$ be a rational contraction of fiber type, and  $\tau$ the smallest face of $\Eff(X)$ containing $f^*\Eff(Z)$. The following are equivalent:
\begin{enumerate}[$(i)$]
\item $f$ is special;
\item $\tau\cap\Mov(X)=f^*\Mov(Z)$;
\item $\dim(\tau\cap\Mov(X))=\rho_Z$.
\end{enumerate}
\end{lemma}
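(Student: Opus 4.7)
The plan is as follows. After replacing $X$ by the SQM in the factorization of $f$, I reduce to the case where $f\colon X\to Z$ is regular. Set $V:=f^*\Nu(Z)\subseteq\Nu(X)$, a subspace of dimension $\rho_Z$ since $f^*$ is injective. I would first establish three basic identities:
\[
f^*\Eff(Z)=\Eff(X)\cap V,\qquad f^*\Mov(Z)=\Mov(X)\cap V,\qquad \tau\cap V=f^*\Eff(Z).
\]
For the first, any movable curve $\gamma_Z\subset Z$ lifts to a curve $\gamma_X\subset X$ dominating it, so that projection formula and pseudo-effectivity force $\alpha\cdot\gamma_Z\geq 0$ whenever $f^*[\alpha]\in\Eff(X)$. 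For the second, any $[\alpha]\in\Eff(Z)\setminus\Mov(Z)$ has a fixed prime component $F_Z$, and since $f_*\mathcal{O}_X=\mathcal{O}_Z$ the identity $H^0(X,mf^*\alpha)=H^0(Z,m\alpha)$ transfers $F_Z$ to a fixed vertical component on $X$. The third identity is immediate from the first and $\tau\supseteq f^*\Eff(Z)$.

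The equivalence $(ii)\Leftrightarrow(iii)$ is then a dimension argument: $\dim f^*\Mov(Z)=\rho_Z$ gives $(ii)\Rightarrow(iii)$, and conversely if $(iii)$ holds then $\tau\cap\Mov(X)$ is a $\rho_Z$-dimensional convex cone containing the $\rho_Z$-dimensional cone $f^*\Mov(Z)$, so both have linear span $V$; hence $\tau\cap\Mov(X)\subseteq V$, and combining with the identities above gives $\tau\cap\Mov(X)=\tau\cap V\cap\Mov(X)=f^*\Mov(Z)$.

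For $(i)\Leftrightarrow(iii)$, I would apply the factorization $f=h\circ g$ from \cite[Prop.~2.13]{fibrations}, where $g\colon X\dasharrow T$ is a special rational contraction of fiber type and $h\colon T\to Z$ is birational; thus $\rho_T\geq\rho_Z$ with equality exactly when $h$ is an isomorphism, equivalently when $f$ itself is special. A key step is to show that $\tau$ coincides with the analogous face $\tau_g$ for $g$: the inclusion $\tau\subseteq\tau_g$ is clear from $f^*\Eff(Z)\subseteq g^*\Eff(T)$, and for the reverse, given any $h$-exceptional prime divisor $E_T\subset T$, $h(E_T)\subset Z$ has codimension $\geq 2$, so some effective $H_Z$ on $Z$ contains it and $f^*H_Z=g^*h^*H_Z\in f^*\Eff(Z)\subseteq\tau$ has the components of $g^{-1}(E_T)$ in its support; since $\tau$ is a face these classes lie in $\tau$, whence $g^*\Eff(T)\subseteq\tau$ and $\tau_g\subseteq\tau$.

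The hard part will be to show $\tau_g\cap\Mov(X)=g^*\Mov(T)$ for $g$ special, equivalently $\tau_g\cap\Mov(X)\subseteq V_g:=g^*\Nu(T)$. The plan is to prove first that every vertical prime divisor $D\subset X$ with $[D]\notin V_g$ is a fixed prime divisor: setting $B:=g(D)$ and $g^*B=m_{i_0}D+\sum_{i\neq i_0}m_iD_i$, any effective $E\sim mD$ has $g_*E$ numerically proportional to $B$ so $\Supp E\subseteq g^{-1}(B)$ and $E=\sum c_iD_i$; the single numerical relation $\sum m_i[D_i]=[g^*B]\in V_g$ together with $[E]=m[D]$ forces $c_i=m\delta_{i,i_0}+\lambda m_i$ for some $\lambda\geq 0$, and the linear equivalence $E\sim mD$ then forces $\lambda g^*B\sim 0$, hence $\lambda=0$ and $E=mD$, so $h^0(mD)=1$. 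Any $[M]\in\tau_g\cap\Mov(X)$ is represented by an effective movable divisor $M'$ with no fixed components; the components of $M'$ have classes in $\tau_g$ and are vertical (horizontal prime divisors have positive intersection with a general fiber of $g$ while classes in $\tau_g$ annihilate it), hence they are non-fixed vertical primes with classes in $V_g$, giving $[M]\in V_g$. Combining with the identities applied to $g$ yields $\tau_g\cap\Mov(X)=g^*\Mov(T)$; therefore $\dim(\tau\cap\Mov(X))=\rho_T$, closing the equivalence with $(iii)$.
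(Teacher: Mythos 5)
Your architecture is recognizably parallel to the paper's ($(ii)\Leftrightarrow(iii)$ by a dimension count, $(iii)\Rightarrow(i)$ via the factorization $f=h\circ g$ of \cite[Prop.~2.13]{fibrations} and the comparison of $\rho_T$ with $\rho_Z$), but two of your steps have genuine gaps. The first concerns the ``basic identity'' $f^*\Mov(Z)=\Mov(X)\cap V$, which you assert for an arbitrary rational contraction of fiber type: the inclusion $f^*\Mov(Z)\subseteq\Mov(X)$ is precisely where the hypothesis ``special'' enters and it can fail without it. Indeed $H^0(X,mf^*B)=H^0(Z,mB)$ shows that the stable base locus of $f^*B$ is the full preimage of that of $B$; if $f$ contracts a divisor into the (codimension $\geq 2$) stable base locus of a movable $B$, then $f^*B$ is not movable. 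Your justification of the identity only addresses the opposite inclusion (non-movable classes stay non-movable), so the containment $f^*\Mov(Z)\subseteq\tau\cap\Mov(X)$ that you invoke in $(iii)\Rightarrow(ii)$ is unproved at the point where you use it ($f$ is not yet known to be special there), and the same containment is silently needed in the ``equivalently'' reduction of your hard part. This is repairable — prove $(iii)\Rightarrow(i)$ first and then give the one-line argument that $f$ special and $\codim f^{-1}(\mathrm{Bs})\geq 2$ imply $f^*\Mov(Z)\subseteq\Mov(X)$, which is exactly the first half of the paper's proof of $(i)\Rightarrow(ii)$ — but as written the logic is circular.

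The second gap is in your step showing that a vertical prime divisor $D$ with $[D]\notin V_g$ is fixed. You claim that the classes $[D_i]$ of the components of $g^*B$ satisfy a \emph{single} numerical relation, so that $\sum_i c_i[D_i]=m[D_{i_0}]$ forces $c_i=m\delta_{i,i_0}+\lambda m_i$; nothing in your argument excludes further linear relations among the $[D_i]$, so the conclusion $E=mD$ does not follow. (A smaller issue: $g_*E$ vanishes as a cycle for $g$ of fiber type, so ``$g_*E$ numerically proportional to $B$'' must be replaced by an intersection argument with curves in general fibers over $Z\smallsetminus B$ and over the other divisorial images.) The paper sidesteps this entirely by working with the general fiber $F$ and the identification $\tau=\Eff(X)\cap\N(F,X)^{\perp}$ from \cite[Lemma 2.21]{eff}, by using Rem.~\ref{referee} to replace a movable class by a prime divisor, and by quoting \cite[Cor.~2.18]{fibrations}: if $f^{-1}(f(D))$ is reducible then every component is a fixed prime divisor. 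Combining that citation with your (correct) observation that $[D]\notin V_g$ forces $g^*(g(D))$ to be reducible would close this gap without the relation count.
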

\begin{proof}
We show $(i)\Rightarrow(ii)$. Up to composing with a SQM of $X$, we can assume that $f$ is regular. Let $F\subset X$ be a general fiber of $f$; by \cite[Lemma 2.21]{eff} we have $\tau=\Eff(X)\cap \N(F,X)^{\perp}$, thus 
$\tau\cap\Mov(X)=\Mov(X)\cap \N(F,X)^{\perp}$.

If $B$ is a movable divisor on $Z$, then $f^{-1}(B)\cap F=\emptyset$, thus $f^*B\cdot C=0$ for every curve $C\subset F$, and $[f^*B]\in \N(F,X)^{\perp}$. Moreover if $B_0\subset Z$ is the stable base locus of the linear system $|B|$, then $\codim B_0\geq 2$, and since $f$ is special we have $\codim f^{-1}(B_0)\geq 2$. Since the stable base locus of 
$|f^*B|$ is contained in $f^{-1}(B_0)$, we have  $[f^*B]\in \Mov(X)\cap \N(F,X)^{\perp}$. Hence $f^*\Mov(Z)\subseteq \Mov(X)\cap \N(F,X)^{\perp}$.

Conversely, let $D$ be a divisor on $X$ with $[D]\in \Mov(X)\cap \N(F,X)^{\perp}$. By Rem.~\ref{referee}, up to replacing $D$ with a divisor with class in 
$\R_{\geq 0}[D]$, we can assume that $D$ is a prime divisor. Since $D\cdot C=0$ for every curve $C\subset F$, we must have $D\cap F=\emptyset$ and $f(D)\subsetneq Z$; since $f$ is special, $f(D)$ is a prime divisor, and it is movable. If $f^{-1}(f(D))$ is reducible, then by \cite[Cor.~2.18]{fibrations} every irreducible component of $f^{-1}(f(D))$ is a fixed prime divisor; since $D$ is movable and is a component of $f^{-1}(f(D))$, we conclude that $f^{-1}(f(D))=D$ and $D=\lambda f^*(f(D))$ for some $\lambda\in\Q_{>0}$, so that $[D]\in f^*\Mov(Z)$, and we get $(ii)$.

\smallskip

The implication $(ii)\Rightarrow(iii)$ is clear. We show $(iii)\Rightarrow (i)$. 
By \cite[Prop.~2.13]{fibrations} $f$ can be factored as
$X\stackrel{g}{\dasharrow}T\stackrel{h}{\to} Z$, where $g$ is a special rational contraction of fiber type and $h$ is birational. Up to composing with a SQM of $X$, we can assume that $g$ is regular; then $g$ and $f$ have the same general fiber $F\subset X$, and again $\tau=\Eff(X)\cap \N(F,X)^{\perp}$.
By the first part of the proof 
we have $\Mov(X)\cap \N(F,X)^{\perp}=g^*\Mov(T)$, thus
$$\rho_Z=\dim\bigl(\tau\cap\Mov(X)\bigr)=\dim\bigl(\Mov(X)\cap \N(F,X)^{\perp}\bigr)=\rho_T,$$ therefore  $h$ is an isomorphism and $f$ is special. 
\end{proof} 
\subsection{Movable faces of the effective cone}\label{movable}
\noindent Let $X$ be a projective, normal, and $\Q$-factorial Mori dream space.
\begin{definition}
  Let $\tau$ be a proper face of $\Eff(X)$. We say that $\tau$ is a {\bf movable face} if the relative interior of $\tau$ intersects $\Mov(X)$.
\end{definition}
There exists a movable face of $\Eff(X)$ if and only if $\partial\Eff(X)\cap\Mov(X)\neq\{0\}$, if and only if $X$ has some non-zero, non-big movable  divisor.

If a (non-zero) face of $\Eff(X)$ is not fixed, it always contains a movable face.

We say that a movable face $\tau$ is {\bf minimal} if every proper face of $\tau$ is not movable, equivalently if every proper face of $\tau$ is fixed.

We recall that  the Mori chamber decomposition of $X$ is a fan in $\Nu(X)$, supported on the cone $\Mov(X)$, given by the cones $f^*\Nef(Z)$ for all rational contractions $f\colon X\dasharrow Z$; the cones of the Mori chamber decomposition are in
bijection with the rational contractions of $X$.
\begin{prg}\label{setup}
Let $\tau$ be a movable face of $\Eff(X)$. We associate to $\tau$ a special rational contraction of fiber type of $X$, as follows.

The cone $\tau\cap\Mov(X)$ is a non-zero face of the movable cone, so it is a union of cones of the Mori chamber decomposition of $X$. Let us choose a cone $\eta$ of the Mori chamber decomposition such that $\eta\subseteq\tau\cap\Mov(X)$ and $\dim\eta=\dim(\tau\cap\Mov(X))$; note that $\eta$ must intersect the relative interior of $\tau$, because $\tau$ is movable.

Then $\eta$ yields a rational contraction $f\colon X\dasharrow Z$, 
  of fiber type because $\eta$ is contained in the boundary of the effective cone.
We  have: 
$$\eta=f^*\Nef(Z),\quad
\rho_Z=\dim\eta=\dim\bigl(\tau
\cap\Mov(X)\bigr),\quad \text{and }\ \tau
\cap\Mov(X)\subset f^*\Nu(Z).$$

We show that $f^*\Eff(Z)\subseteq \tau$. Indeed since $\tau$ is a face of $\Eff(X)$, there exists $\gamma\in\Eff(X)^{\vee}$ such that $\tau=\Eff(X)\cap\gamma^{\perp}$. Then $f^*\Nef(Z)=\eta\subseteq\tau\subseteq\gamma^{\perp}$, thus $f^*\Nu(Z)\subseteq\gamma^{\perp}$ and $f^*\Eff(Z)\subseteq \gamma^{\perp}\cap\Eff(X)=\tau$.

On the other hand
 $\eta\subseteq f^*\Eff(Z)$, therefore $f^*\Eff(Z)$ intersects the relative interior of $\tau$, and $\tau$ is the minimal face of $\Eff(X)$ containing $f^*\Eff(Z)$. In particular $f$ is special by Lemma \ref{special}.

Let $X\dasharrow\w{X}\stackrel{\tilde{f}}{\to}Z$ be a factorization of $f$ as a SQM followed by a regular  contraction, and $F\subset \w{X}$ a general fiber of $\tilde{f}$.
By  \cite[Lemma 2.21]{eff} we deduce that, under the natural identification $\Eff(X)\cong\Eff(\w{X})$, we have: 
\stepcounter{thm}
\begin{equation}\label{estate}
\tau=\Eff(\w{X})\cap\N(F,\w{X})^{\perp}\quad\text{and}\quad
\rho_X=\dim\tau+\dim\N(F,\w{X})\leq \dim\tau+\rho_F.
\end{equation}
Thus given a prime divisor $D\subset X$, we have $[D]\in\tau$ if and only if 
the transform $\w{D}\subset\w{X}$  does not dominate $Z$ under $\tilde{f}$. Indeed $[\w{D}]\in\N(F,\w{X})^{\perp}$ is equivalent to $\w{D}\cap F=\emptyset$.
\end{prg}
\begin{remark}\label{stress2}
  Let $X$ be a smooth Fano $4$-fold with
  a small elementary contraction $g\colon X\to Y$. Then $\NE(g)\not\subset\mov(X)=\Eff(X)^{\vee}$, hence there exists a one-dimensional face $\tau$ of $\Eff(X)$ such that $\tau\cdot\NE(g)<0$.

  If $\tau$ is movable, we apply the construction above; by Lemma \ref{basic2} we can take $\w{X}$ smooth and $\tilde{f}$ $K$-negative. Then $F$ is smooth, Fano, with $\dim F\leq 3$, therefore $\rho_F\leq 10$ (see \cite[\S 12.6]{fanoEMS}) and
  $\rho_X\leq 11$ by \eqref{estate}.

  If instead $\tau$ is fixed, then there exists a fixed prime divisor $D\subset X$ such that $D\cdot\NE(g)<0$.
\end{remark}
\subsection{Rational contractions of fiber type of Fano $4$-folds}\label{fumo}
\noindent Fano $4$-folds with a rational contraction of fiber type have been studied in \cite{fibrations}; in particular we will need the following.
\begin{thm}[\cite{fibrations}, Th.~6.1]\label{3fold}
Let $X$ be a smooth Fano $4$-fold which is not a product of surfaces. 
If $X$ has a rational contraction onto a $3$-fold, then 
 $\rho_X\leq 12$.
\end{thm}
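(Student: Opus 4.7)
The plan is to combine the reduction via the Lefschetz defect with a careful analysis of the fibration structure, then iterate to reduce the dimension of the base. First, by Theorem \ref{codim}, since $X$ is not a product of surfaces, $\delta_X \leq 3$, with $\delta_X = 3$ forcing $\rho_X \leq 6$; and by Theorem \ref{delta2}, $\delta_X = 2$ forces $\rho_X \leq 12$. Thus it suffices to prove the bound under the additional assumption $\delta_X \leq 1$, i.e.\ $\codim \N(D,X) \leq 1$ for every prime divisor $D\subset X$.

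Next, using Lemma \ref{basic2}, I factor $f$ as $X \stackrel{\varphi}{\dasharrow} X' \stackrel{f'}{\to} Z$ with $\varphi$ a SQM onto a smooth Mori dream space $X'$ and $f'$ a $K$-negative regular contraction; $Z$ then has rational singularities. Since $\dim Z = 3$, the general fiber $F$ of $f'$ is a smooth Fano curve, so $F \cong \pr^1$ and $\rho_F = 1$. By \cite[Prop.~2.13]{fibrations} I may further assume that $f$ is special, by replacing $Z$ with an intermediate $\Q$-factorial 3-fold $T$ through which $f$ factors as a special rational contraction of fiber type followed by a birational map; this does not change the bound to be proved. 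Lemma \ref{special} then gives $\dim(\tau \cap \Mov(X)) = \rho_Z$, where $\tau$ is the minimal face of $\Eff(X)$ containing $f^*\Eff(Z)$, and \eqref{estate} gives $\rho_X \leq \dim\tau + 1$.

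The heart of the matter is to bound $\rho_Z \leq 11$, and to control the contribution of vertical fixed prime divisors — those prime divisors $D\subset X$ whose transforms in $X'$ do not dominate $Z$. My plan is to iterate: since $Z$ is $\Q$-factorial and uniruled (as $X$ is rationally connected and $f$ is surjective), $Z$ carries an extremal contraction $Z \dasharrow W$ with $\dim W < 3$; composing with $f$ yields a rational contraction $X \dasharrow W$ of lower-dimensional target. If $\dim W = 1$ one concludes by \cite[Cor.~1.2]{fanos}, which gives $\rho_X\leq 11$ for Fano $4$-folds with an elementary contraction of fiber type; if $\dim W = 2$ one would appeal to the analysis of rational contractions onto surfaces developed in \S \ref{fumo}. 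Meanwhile, under $\delta_X\leq 1$ the vertical fixed prime divisors must be of the types classified in \S \ref{secfixed}, and their relative positions (Lemmas \ref{nigra}, \ref{taberna}, and Corollary \ref{solfeggio}) severely constrain how many can be added to $\tau$ beyond $f^*\Nu(Z)$.

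The main obstacle will be the control of the vertical divisors: a priori, $f'$ may contract many prime divisors to proper subsets of $Z$, so $\rho_{X'/Z}$ need not equal $1$ even though the general fiber has Picard rank $1$. To handle this, one must show that each such vertical prime divisor either is fixed (hence classified) or leads, via Lemma \ref{2faces} and the structure of movable faces in \S \ref{movable}, to an additional non-big movable class whose induced rational contraction can be exploited. Carrying out this bookkeeping cleanly — so that at each step the Lefschetz-defect hypothesis and the Fano condition are preserved on the relevant subcones — is the technically delicate core of the argument.
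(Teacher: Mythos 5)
This statement is not proved in the paper at all: it is imported verbatim from \cite{fibrations}, Th.~6.1, and used as a black box. So there is no internal proof to compare against; what you have written must stand on its own as a proof of a result that in reality occupies a substantial part of a separate article.

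As it stands, your proposal is an outline with the decisive steps missing, and two of the steps you do specify would not work. First, you explicitly defer the core of the argument: after reducing to $\delta_X\leq 1$ and to a special contraction $f\colon X\dasharrow Z$ with general fiber $\pr^1$, the inequality $\rho_X\leq\dim\tau+1$ from \eqref{estate} is vacuous unless you can bound $\dim\tau$, i.e.\ bound $\rho_Z$ \emph{and} the number of linearly independent classes of vertical prime divisors that are not pulled back from $Z$; you name this as ``the technically delicate core'' and leave it undone, so there is no proof. Second, your fallback for the case $\dim W=2$ is circular: the results of \S\ref{fumo} that you invoke (in particular Prop.~\ref{usseaux}) list ``$X$ has a rational contraction onto a $3$-fold'' as one of their possible outcomes and, in that case, conclude $\rho_X\leq 12$ precisely by citing Th.~\ref{3fold} --- the statement you are trying to prove. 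Third, for $\dim W=1$ you appeal to \cite[Cor.~1.2]{fanos}, but that result bounds $\rho_X$ when $X$ has an \emph{elementary regular} contraction of fiber type; the composite $X\dasharrow W$ you construct is a rational contraction onto a curve that need be neither elementary nor regular, so the citation does not apply without further work. A correct proof requires the detailed structure theory of fibrations of Fano $4$-folds developed in \cite{fibrations}, which cannot be reconstructed from the reductions you list.
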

In the rest of this subsection we focus on Fano $4$-folds having a rational contraction onto a surface or $\pr^1$; in this case we do not have an analog of Th.~\ref{3fold}, and we will prove several results
which we will use in the rest of the article
to tackle this situation.
\begin{lemma}\label{ufficio}
  Let $X$ be a smooth Fano $4$-fold with $\rho_X\geq 7$ and $f\colon X\dasharrow S$ a quasi-elementary rational contraction onto a surface. Then $S$ is a smooth del Pezzo surface.
\end{lemma}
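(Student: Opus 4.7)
The plan is to use Lemma~\ref{basic2} to reduce to a regular $K$-negative contraction, and then deduce smoothness and Fano-ness of $S$ from the quasi-elementary hypothesis.

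First I would factor $f$, via Lemma~\ref{basic2}, as $X\stackrel{\ph}{\dasharrow}X'\stackrel{f'}{\to}S$, where $\ph$ is a SQM, $X'$ is smooth and $\Q$-factorial, and $f'$ is a $K$-negative contraction; the quasi-elementary property passes from $f$ to $f'$ by definition, so $S$ is $\Q$-factorial, $f'^{\,*}B$ is irreducible for every prime divisor $B\subset S$, and by Lemma~\ref{basic2} $S$ has rational singularities. The general fiber $F$ of $f'$ is a smooth Fano surface (smoothness from $X'$ being smooth, Fano from $-K_{X'}$ being $f'$-ample), hence a smooth del Pezzo.

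To prove $S$ is smooth, I would first observe that the quasi-elementarity of $f'$ forces equidimensionality: if $D\subset X'$ were a prime divisor with $f'(D)$ a curve $C\subset S$, then $D$ would appear as a component of $f'^{\,*}C$ alongside the closure of the preimage of a general point of $C$, contradicting the irreducibility of $f'^{\,*}C$ (unless $D$ is the whole divisor $f'^{\,*}C$, which forces $D$ not to be contracted). Hence $f'$ is equidimensional of relative dimension $2$ from a smooth fourfold, $K$-negative, onto a normal $\Q$-factorial surface with rational singularities; the classical structure results for Mori fibrations (Ando, Wi\'sniewski, Andreatta--Wi\'sniewski) then give that $S$ is smooth.

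For the Fano property, given any irreducible curve $C\subset S$ the quasi-elementarity says $f'^{\,*}C$ is an irreducible divisor $D_C\subset X'$, so I can choose an irreducible curve $\gamma\subset D_C$ that dominates $C$ under $f'$. The canonical bundle formula gives $K_{X'}\equiv f'^{\,*}K_S+R$ with $R$ an effective vertical divisor; intersecting with $\gamma$, using $-K_{X'}\cdot\gamma>0$ ($K$-negativity, $\gamma$ not contracted) and that $R\cdot\gamma$ is controlled by the multiplicity of $D_C$ in $f'^{\,*}C$, yields $-K_S\cdot C>0$. Kleiman's criterion then gives $-K_S$ ample, so $S$ is a smooth del Pezzo surface. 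The main obstacle is the smoothness of $S$: equidimensionality alone is insufficient, and one genuinely needs the Mori-theoretic input ($K$-negativity of $f'$ together with smoothness of $X'$) to rule out singular points on $S$; the Fano part is then a short computation.
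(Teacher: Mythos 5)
There is a genuine gap in the part of your argument that is the real content of the lemma, namely the ampleness of $-K_S$. You write $K_{X'}\equiv f'^{\,*}K_S+R$ with $R$ an \emph{effective} vertical divisor: no such decomposition exists for a del Pezzo fibration. The relative canonical $K_{X'/S}$ restricts to $K_F$ on a general fiber $F$, which is anti-ample, so $K_{X'}-f'^{\,*}K_S$ is negative on curves in the fibers and cannot be effective; canonical-bundle formulas of the shape you invoke are available for fibrations with $K$-trivial fibers, not here. Without an effective $R$ the inequality $-K_S\cdot C>0$ does not follow from intersecting with a horizontal curve $\gamma$, since $R\cdot\gamma$ has no sign. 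A second, smaller problem in the same step: ``$-K_{X'}\cdot\gamma>0$ by $K$-negativity, $\gamma$ not contracted'' is backwards --- $K$-negativity of $f'$ constrains only the \emph{contracted} curves, and $X'$ is merely a SQM of $X$, not Fano (it contains exceptional curves of anticanonical degree $-1$). The positivity does hold for a general choice of $\gamma$ by Lemma~\ref{basic1}$(a)$, but not for the reason you give. A symptom of the gap is that your argument never uses $\rho_X\geq 7$, whereas the paper's proof of ampleness relies essentially on it: for a birational elementary contraction $g$ of $S$ with $\Exc(g)=C$, the divisor $f^*C$ is a \emph{fixed prime divisor} $D$ of $X$ (this uses the theory valid for $\rho_X\geq 7$), hence carries a curve $\Gamma\equiv C_D$ with $D\cdot\Gamma=-1$ avoiding the indeterminacy locus; pushing forward gives $C^2=-1$, so $g$ is the blow-down of a $(-1)$-curve and $-K_S\cdot\NE(g)>0$, while fiber-type contractions of $S$ are handled directly. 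Some such input is needed; a purely numerical vertical/horizontal computation will not produce it.

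The smoothness half of your proposal is closer to being acceptable but is also loosely argued. Your derivation of equidimensionality is garbled: the obstruction is a prime divisor contracted to a \emph{point} (take any irreducible curve $B\subset S$ through that point; then $f'^{\,*}B$ acquires at least two components, the contracted divisor and the horizontal part over $B$), not a divisor mapping onto a curve, which is harmless. The conclusion is correct and is in fact already recorded in the paper (quasi-elementary $\Rightarrow$ special $\Rightarrow$ equidimensional over a surface). For smoothness of $S$ itself the paper simply quotes \cite[Prop.~4.1]{eff}; attributing the statement ``equidimensional $K$-negative contraction of a smooth $4$-fold onto a normal surface has smooth base'' to classical results of Ando or Andreatta--Wi\'sniewski overstates what those give for two-dimensional fibers --- the relevant statement is proved in the author's own earlier papers. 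So the smoothness step can be repaired by citing the right source, but the Fano step needs a different argument altogether.
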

\begin{proof} 
By \cite[Prop.~4.1 and its proof]{eff} we know that
 $S$ is smooth, rational, and a Mori dream space.
We show that $-K_S\cdot\NE(g)>0$ for every 
elementary contraction $g\colon S\to S_1$; this implies that   $-K_S$ is ample.
If $g$ is of fiber type, then either $S\cong\pr^2$, or $S$ is  isomorphic to a Hirzebruch surface and $g$ is a $\pr^1$-bundle, hence $-K_S\cdot\NE(g)>0$.

Let assume that $g$ is birational,
so that $C:=\Exc(g)$ is an irreducible curve, and  $D:=f^*C$ is a fixed prime divisor, see \cite[Rem.~3.10]{eff}. Moreover  $S_1$ has (at most) a rational singularity at the point $g(C)$ (see Lemma \ref{basic2}).
Consider a factorization of $f$ as in Lemma \ref{basic2}:
$$X\stackrel{\psi}{\dasharrow}X'\stackrel{f'}{\la}S$$
where $\psi$ is a SQM.
We know that $X\smallsetminus\dom\psi$ is a finite union of exceptional planes (see Lemma \ref{basic1}).
Then in $X$ there exists a curve $\Gamma\equiv C_D$, $\Gamma\subset D\cap \dom\psi$; this follows from Rem.~\ref{ikea} if $D$ is of type $(3,2)$, and from the description in Th.-Def.~\ref{long} in the other cases.

Let $\Gamma'\subset D'\subset X'$ be the transforms of $\Gamma\subset D\subset X$. Then
$$  -1=D\cdot\Gamma=D'\cdot\Gamma'=(f')^*C\cdot\Gamma'=C\cdot (f')_*\Gamma'.$$
This implies that $f'(\Gamma')$ is not a point, thus $f'(\Gamma')=C$ and
$(f')_*\Gamma'=mC$ with $m\in\Z_{\geq 1}$. We get $m C^2=-1$, hence $m=1$ and
$C^2=-1$.
We conclude that $g$ is  the blow-up of a smooth point and
$-K_S\cdot \NE(g)>0$.
\end{proof}
\begin{lemma}[\cite{eff}, proof of Cor.~3.9]\label{regular}
Let $X$ be a smooth Fano $4$-fold, $\ph\colon X\dasharrow \w{X}$ a SQM, and $f\colon\w{X}\to Z$ a $K$-negative contraction of fiber type. If the general fiber $F$ of $f$ is either a del Pezzo surface with $\rho_F=9$, or a product $S\times \pr^1$ with $S$ a del Pezzo surface with $\rho_S=9$, then $\ph$ is an isomorphism.
\end{lemma}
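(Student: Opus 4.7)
I would argue by contradiction. Suppose $\ph$ is not an isomorphism; by Lemma~\ref{basic1} there exists an exceptional curve $\ell\subset\w{X}$ with $-K_{\w{X}}\cdot\ell=-1$ and $\ma{N}_{\ell/\w{X}}\cong\ol(-1)^{\oplus 3}$. Since $f$ is $K$-negative, $-K_{\w{X}}$ is $f$-ample and hence strictly positive on every curve contracted by $f$; in particular $\ell$ cannot be contained in any fiber, so $f(\ell)\subset Z$ is a curve, and for a general $t\in f(\ell)$ the intersection $\ell\cap F_t$ is a non-empty finite set of points.

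The key tool is Lemma~\ref{basic1}(b): every irreducible curve in $\w{X}$ distinct from $\ell$ that meets $\ell$ has $-K_{\w{X}}$-degree at least $2$, so no curve of $-K_{\w{X}}$-degree $1$ can meet $\ell$. The plan is to exhibit, for general $t\in f(\ell)$, a curve of $-K_{\w{X}}$-degree $1$ in $\w{X}$ through a point of $\ell\cap F_t$, contradicting Lemma~\ref{basic1}(b). For a general (smooth) fiber $F$ of $f$ adjunction gives $-K_{\w{X}}|_F=-K_F$, so it suffices to find a curve of $-K_F$-degree $1$ in $F_t$ through such a point. Here I would use the assumed form of $F$: in the del Pezzo case ($\rho_F=9$) the anticanonical pencil $|-K_F|$ is a pencil of irreducible genus-$1$ curves of $-K_F$-degree $1$ with a single base point, so through every other point of $F$ such a member passes; in the case $F=S\times\pr^1$ (so $\rho_F=10$) the analogous role is played by the family of curves $C\times\{q\}$ with $C\in|-K_S|$ and $q\in\pr^1$, whose base locus is a proper subset of $F$.

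The main obstacle is showing that the intersection points $\ell\cap F_t$ cannot all lie inside the base locus of the covering family as $t$ varies along $f(\ell)$. In the product case this is ultimately a dimension count, since the bad locus on each fiber has codimension at least one in $F_t$ while $\ell$ traces out a $1$-parameter family of intersection points over $f(\ell)$. The del Pezzo case is the delicate one, because the bad locus is a single base point on each fiber, so one must rule out the possibility that $\ell$ is the section over $f(\ell)$ cut out by the base points of the relative anticanonical pencil; this uses the rigidity of $\ell$ coming from $\ma{N}_{\ell/\w{X}}\cong\ol(-1)^{\oplus 3}$, which is incompatible with the normal bundle of any such base-point section in a family of del Pezzo surfaces of degree one over the smooth curve $f(\ell)$. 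Combining these two observations yields the contradiction and hence the lemma.
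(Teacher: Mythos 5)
Your overall strategy --- produce an irreducible curve of $-K_{\w{X}}$-degree $1$ through a point of an exceptional curve $\ell$ and contradict Lemma~\ref{basic1}$(b)$ --- is the right one, but there is a genuine gap in the del Pezzo case, where $\dim Z=2$. There $f(\ell)$ is a curve in the surface $Z$, and nothing guarantees that it meets the open locus over which the fibers of $f$ are general: $f(\ell)$ could lie entirely in the discriminant, in which case none of the fibers $F_t$ with $t\in f(\ell)$ is a degree-one del Pezzo surface and the anticanonical pencil you rely on is simply not available on them. (In the product case $\dim Z=1$, so $f(\ell)=Z$ and this issue does not arise.) The standard repair is to pass to the Chow family: the members of $|-K_F|$, for $F$ a general fiber, sweep out a dense subset of $\w{X}$, so the corresponding irreducible family $V\subset\Chow(\w{X})$ has $\Lo V=\w{X}$ and $-K_{\w{X}}\cdot[V]=1$. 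Every member of $V$ is a connected one-cycle contained in a single (possibly special) fiber, because its class is killed by $f_*$ and pushforwards of non-contracted components would be nonzero effective classes summing to zero. Since $f$ is $K$-negative, no exceptional curve lies in a fiber, so no member of $V$ has an exceptional curve among its components; as every non-exceptional irreducible curve has positive anticanonical degree by Lemma~\ref{basic1}$(a)$, every member of $V$ is integral of degree $1$. A member through a point of $\ell$ lies in a fiber, hence is distinct from $\ell$ and meets it, contradicting Lemma~\ref{basic1}$(b)$.

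The second problem is that what you single out as the ``main obstacle'' is not one: a base point of the pencil $|-K_F|$ is a point through which \emph{every} member passes, so it is the easiest point to handle, not the hardest --- through it one finds a whole pencil of irreducible curves of anticanonical degree $1$, each contained in $F$ and therefore distinct from $\ell$. The same applies to the locus $\{b\}\times\pr^1$ in the product case. Consequently the rigidity argument you sketch to exclude ``$\ell$ being the section of base points'' addresses a non-issue, and as written it is also unsubstantiated. Once the first point is fixed, the proof closes with no case distinction on base loci at all.
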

\begin{lemma}\label{sushi}
  Let $X$ be a smooth Fano $4$-fold with $\rho_X\geq 7$,
$f\colon X\dasharrow Z$  a rational contraction of fiber type, and $H$ an effective divisor in $X$ such that $[H]\in f^*\Nu(Z)$ and 
 $\Supp H$ has a component $E$ which is a fixed prime divisor of type $(3,2)$.  Then one of the following holds:
  \begin{enumerate}[$(i)$]
  \item $\N(E,X)\subsetneq\N(X)$;
  \item $\dim Z=3$;
    \item $Z\cong\pr^2$ and $f$ is equidimensional.
    \end{enumerate}
\end{lemma}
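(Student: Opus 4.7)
The plan is to argue by contradiction: assume that (i) and (ii) both fail, so $\N(E,X)=\N(X)$ and $\dim Z\leq 2$, and derive (iii). By Lemma \ref{basic2} I factor $f$ as $X\stackrel{\ph}{\dasharrow}X'\stackrel{f'}{\to}Z$ with $\ph$ a SQM and $f'$ a $K$-negative regular contraction, and let $E',H'\subset X'$ be the transforms of $E,H$. By Lemma \ref{cena} we still have $\N(E',X')=\N(X')$, and $E'\subset\Supp H'$ with $[H']\in(f')^*\Nu(Z)$. The first key step is to show that $E'$ does not dominate $Z$ under $f'$. Let $F$ be a general fiber of $f'$: it is smooth and Fano, of dimension $4-\dim Z\geq 2$. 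Since $[H']$ is a numerical pullback from $Z$, its restriction $[H'|_F]$ vanishes in $\N(F)$. Moreover $F$ is not contained in $\Supp H'$ (a prime component has dimension $3$, while $F$ moves in a family covering $X'$), so $H'|_F$ is a well-defined effective divisor with trivial numerical class; on a Fano variety any such divisor is zero, hence $\Supp H'\cap F=\emptyset$ and in particular $E'\cap F=\emptyset$.

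Next I rule out $\dim Z=1$ and $\rho_Z\geq 2$. If $\dim Z=1$, then $f'(E')$ is a finite subset of the curve $Z$, so every curve $c\subset E'$ is contracted by $f'$ and $(f')^*[pt]\cdot c=[pt]\cdot f'_*c=0$. Since curves in $E'$ span $\N(X')$, this forces $(f')^*[pt]\equiv 0$ in $\Nu(X')$, contradicting the injectivity of $(f')^*$ on the nonzero class $[pt]$. Hence $\dim Z=2$. Now $W:=f'(E')\subsetneq Z$ is irreducible of dimension $\leq 1$, so $\dim\N(W,Z)\leq 1$. If $\rho_Z\geq 2$ I can pick $[L]\in\Nu(Z)$ nonzero with $L\cdot\N(W,Z)=0$; then $(f')^*L\cdot c=L\cdot f'_*c=0$ for every curve $c\subset E'$, so $(f')^*L\equiv 0$ and $L=0$, a contradiction. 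Therefore $\rho_Z=1$.

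It remains to show $Z\cong\pr^2$ and $f$ equidimensional. By \cite[Prop.~2.13]{fibrations} factor $f=h\circ g$ with $g\colon X\dasharrow T$ a special rational contraction of fiber type and $h\colon T\to Z$ a birational morphism. Since $[H]\in f^*\Nu(Z)\subseteq g^*\Nu(T)$, applying the same arguments to $g$ gives $\dim T=2$ and $\rho_T=1$. The morphism $h$ cannot contract any curve, since contracting a curve in $T$ would annihilate the one-dimensional space $\N(T)$ and force $h$ to contract every curve; hence by Zariski's main theorem and normality of $Z$, $h$ is an isomorphism, so $f$ is special and (being of fiber type onto a surface) equidimensional. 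Finally, since $f'$ is equidimensional with irreducible general fiber, for every irreducible $B\subset Z$ the preimage $(f')^{-1}(B)$ is irreducible, so $(f')^*B$ has irreducible support and $f$ is quasi-elementary. Lemma \ref{ufficio} then gives that $Z$ is a smooth del Pezzo surface, and combined with $\rho_Z=1$ this forces $Z\cong\pr^2$. The main obstacle in the argument is the first step---showing that $E'$ is disjoint from a general fiber---which is how the pullback hypothesis on $[H']$ gets converted into geometric information about $E$; once that is in place, the remaining constraints follow from the relation $\N(E',X')=\N(X')$ and from the standard factorization into a special rational contraction.
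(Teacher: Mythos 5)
Most of your argument runs parallel to the paper's proof: after passing to a regular $K$-negative model you show that $\Supp H'$ misses the general fiber (your restriction-to-a-fiber argument is a clean way to extract from the hypothesis $[H]\in f^*\Nu(Z)$ that $E$ does not dominate $Z$), you then combine $\N(E',X')=\N(X')$ with Lemma \ref{cena} to exclude $\dim Z=1$ and $\rho_Z\geq 2$, and you use the factorization through a special contraction to force equidimensionality. All of this is sound and matches the paper's route (the paper disposes of the non-equidimensional case by noting $\rho_T>\rho_Z$ for the intermediate surface, whereas you derive $\rho_T=1$ and show $h$ is an isomorphism; these come to the same thing).

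The gap is in the last step. You assert that, because $f'$ is equidimensional with irreducible general fiber, the preimage $(f')^{-1}(B)$ of every irreducible curve $B\subset Z$ is irreducible, hence $f$ is quasi-elementary and Lemma \ref{ufficio} applies. That implication is false: an equidimensional fibration with irreducible general fiber can have reducible preimages of curves (blow up a section of a $\pr^1$-bundle along the part lying over $B$, for instance), and in fact the paper repeatedly works with exactly this phenomenon --- in \ref{pioggia} the lemma you are proving is invoked to get $Z\cong\pr^2$ and $f$ equidimensional for a contraction with $f^*\Gamma_1=B_1+E_1$ \emph{reducible}, so the contractions arising in case $(iii)$ are typically not quasi-elementary. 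Without quasi-elementarity your route to the smoothness of $Z$ collapses, and smoothness is genuinely needed: a normal rational $\Q$-factorial surface with $\rho=1$ need not be $\pr^2$ (e.g.\ a quadric cone). The missing ingredient is the external result the paper cites at this point, \cite[Lemma 4.3]{fibrations}, which gives smoothness of the base of an equidimensional rational contraction of fiber type from a Fano $4$-fold onto a surface with $\rho=1$; with that substituted for your quasi-elementarity claim, the rest of your proof goes through.
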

\begin{proof}
 Let 
  $\w{X}\dasharrow X$ be a SQM such that the composition $\tilde{f}\colon \w{X}\to Z$ is regular and $K$-negative (see Lemma \ref{basic2}), and $\w{E}\subset\w{X}$ the transform of $E$.

  We have $\N(\tilde{f}(\w{E}),Z)=\tilde{f}_*(\N(\w{E},\w{X}))$, thus
  $\codim \N(\w{E},\w{X})\geq\codim\N(\tilde{f}(\w{E}),Z)$.
  Since $H$ is the pullback of an effective $\Q$-divisor of $Z$, $\tilde{f}(\w{E})\subseteq\tilde{f}(\Supp H)\subsetneq Z$.
  If $\dim Z=1$, or $\dim Z=2$ and $\rho_Z>1$, this easily implies that
$\codim\N(\tilde{f}(\w{E}),Z)>0$, therefore
  $\codim \N(\w{E},\w{X})>0$ and we get $(i)$ by Lemma \ref{cena}.
 
If $\dim Z=2$ and $f$ is not equidimensional, by \cite[Prop.~2.13]{fibrations} we can factor $f$ as $X\stackrel{g}{\dasharrow}T\to Z$ where $T$ is a surface with $\rho_T>\rho_Z$, and again $E$ cannot dominate $T$, so we get $(i)$ as before.

Finally if  $\dim Z=2$, $\rho_Z=1$, and $f$ is equidimensional, then $Z$ is  smooth by \cite[Lemma 4.3]{fibrations}, and being a rational surface, we have  $(iii)$. 
\end{proof}
\begin{lemma}\label{matrimonio}
Let $X$ be a smooth Fano $4$-fold with $\rho_X\geq 7$ and $f\colon X\dasharrow S$ an equidimensional rational contraction onto a surface. If $f$ is not quasi-elementary and $\rho_S>1$, then $X$ has a fixed prime divisor $E$ of type $(3,2)$ such that $\N(E,X)\subsetneq\N(X)$.
\end{lemma}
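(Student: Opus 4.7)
The plan is to exploit the reducible fiber pullback arising from the failure of $f$ to be quasi-elementary, extract a fixed prime divisor of type $(3,2)$ from its components, and then verify the codimension condition via Lemma \ref{cena}.

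First, by Lemma \ref{basic2} factor $f$ as $X\stackrel{\ph}{\dasharrow}\w{X}\stackrel{\w{f}}{\la}S$, with $\ph$ a SQM, $\w{X}$ smooth, and $\w{f}$ a $K$-negative equidimensional contraction. Since $f$ is not quasi-elementary there is a prime divisor $B\subset S$ with $\w{f}^{*}B=\sum_{i=1}^{k}m_i\w{E}_i$ reducible ($k\geq 2$), each $\w{E}_i$ dominating $B$. The same application of \cite[Cor.~2.18]{fibrations} used in the proof of Lemma \ref{special} shows that every $\w{E}_i$ is a fixed prime divisor of $\w{X}$, so its transform $E_i\subset X$ is a fixed prime divisor.

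The heart of the proof is to identify a component $\w{E}_j$ of type $(3,2)$. For any irreducible curve $C$ contracted by $\w{f}$, contained in $\w{E}_j$ but meeting some other $\w{E}_i$, the projection formula gives $0=\w{f}^{*}B\cdot C=\sum_i m_i(\w{E}_i\cdot C)$ with $\w{E}_i\cdot C>0$, forcing $\w{E}_j\cdot C<0$. Since for $b\in B$ general the fiber $\w{f}^{-1}(b)\subset\bigcup_i\w{E}_i$ is connected and $2$-dimensional, such curves $C$ exist and can be arranged to sweep out $\w{E}_j$. Using $K$-negativity of $\w{f}$ and choosing $C$ of minimal anticanonical degree, I expect $-K_{\w{X}}\cdot C=1$, so that $\w{E}_j=\Lo V$ for a family $V$ of lines with $\w{E}_j\cdot[V]<0$. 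Lemma \ref{chitarre} then forces $\w{E}_j$ to be of type $(3,2)$. Pulling back along $\ph^{-1}$ and invoking Th.-Def.~\ref{long}(e), the divisor $E:=\ph_{*}^{-1}(\w{E}_j)\subset X$ is a fixed prime divisor of type $(3,2)$.

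Finally, for $\N(E,X)\subsetneq\N(X)$ I argue as in Lemma \ref{sushi}: since $\w{f}(\w{E}_j)=B$, one has $\w{f}_{*}(\N(\w{E}_j,\w{X}))\subseteq\N(B,S)=\R[B]$, and hence $\codim\N(\w{E}_j,\w{X})\geq\rho_S-1\geq 1$ in $\N(\w{X})$. Lemma \ref{cena} transfers this strict inclusion from $\w{X}$ to $X$, yielding $\dim\N(E,X)=\dim\N(\w{E}_j,\w{X})<\rho_X$. The \textbf{main obstacle} is the middle step: guaranteeing that the curves $C$ covering $\w{E}_j$ with $\w{E}_j\cdot C<0$ can really be chosen of anticanonical degree exactly one, so that Lemma \ref{chitarre} is applicable. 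This requires balancing the $K$-negativity of $\w{f}$ with the structure of the reducible fiber $\w{f}^{-1}(b)$, possibly after further flips of $\w{X}$ to isolate an appropriate extremal ray before the classification of lemma \ref{chitarre} kicks in.
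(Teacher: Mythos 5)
Your overall architecture matches the paper's: extract a reducible pullback $f^*\Gamma$ from the failure of quasi-elementarity, find a component that is a fixed prime divisor of type $(3,2)$, and then get $\N(E,X)\subsetneq\N(X)$ by pushing curve classes forward to $S$ (the paper does this last step by invoking Lemma \ref{sushi}, whose proof is exactly your final paragraph, combined with Lemma \ref{cena}; that part of your argument is fine, and the hypothesis $\rho_S>1$ enters precisely there). The difference is in the middle step: the paper does not prove that some component is of type $(3,2)$ — it quotes \cite[Lemma 5.2]{fibrations}, which asserts directly that a reducible fiber of an equidimensional (rational) contraction onto a surface has a component which is a fixed prime divisor of type $(3,2)$. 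You attempt to reprove this from scratch, and that is where the genuine gap sits, as you yourself flag.

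Concretely, two things are missing. First, from $0=\w{f}^*B\cdot C=\sum_i m_i(\w{E}_i\cdot C)$ you conclude $\w{E}_j\cdot C<0$ for the component $\w{E}_j$ containing $C$; but the negative term could be contributed by a different component $\w{E}_k$ also containing $C$ (i.e.\ $C\subset\w{E}_j\cap\w{E}_k$), so the sign you need is not forced on the component you want, and you have not shown that the curves with $\w{E}_j\cdot C<0$ actually cover $\w{E}_j$. Second, and more seriously, Lemma \ref{chitarre} requires a covering \emph{family of lines}, i.e.\ $-K_{\w{X}}\cdot C=1$; $K$-negativity of $\w{f}$ only gives $-K_{\w{X}}\cdot C\geq 1$ for the minimal-degree contracted curve in $\w{E}_j$, and nothing in your argument rules out that this minimum is $2$ or more (a two-dimensional fiber component need not contain contracted rational curves of anticanonical degree one). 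Pinning this down is the actual content of \cite[Lemma 5.2]{fibrations}, which analyzes the structure of the reducible fibers; without either citing that lemma or supplying that analysis, the proof is incomplete at its central step.
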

\begin{proof}
  Since $f$ is not quasi-elementary, there exists an irreducible curve $\Gamma\subset S$ such that $f^*\Gamma$ is reducible. By \cite[Lemma 5.2]{fibrations} there is a fixed prime divisor $E\subset X$ of type $(3,2)$ which is a component of $f^*\Gamma$. Then the statement follows from Lemma \ref{sushi}.
\end{proof}
\begin{lemma}\label{27}
  Let $X$ be a smooth Fano $4$-fold with $\rho_X\geq 7$ and $f\colon X\dasharrow S$ a rational contraction onto a surface with $\rho_S=1$. Suppose that there is a unique prime divisor $D\subset X$ contracted to a point by $f$, and that $D$ is fixed not of type $(3,2)$.
  Then one of the following holds:
  \begin{enumerate}[$(i)$]
  \item   $X$ has a fixed prime divisor $E$ of type $(3,2)$ such that $\N(E,X)\subsetneq\N(X)$;
    \item $\rho_X\leq 10$.
    \end{enumerate}
\end{lemma}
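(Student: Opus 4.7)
The plan is to factor $f$ into a special contraction followed by a birational morphism, and then analyze two cases based on whether the special part is quasi-elementary.

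Using \cite[Prop.~2.13]{fibrations}, I factor $f$ as $X\stackrel{g}{\dasharrow}T\stackrel{h}{\to}S$ where $g$ is a special rational contraction of fiber type (hence equidimensional, as $\dim S=2$) and $h$ is birational; $g$ and $f$ share the same general fiber $F$. Since $D$ is contracted to a point by $f$, the fiber $f^{-1}(f(D))$ has dimension $\geq 3$, so $f$ is not equidimensional, hence not special, so $h$ is not an isomorphism and $\rho_T\geq 2$. If $g$ is not quasi-elementary, Lemma~\ref{matrimonio} applied to $g$ (with $\rho_T\geq 2$) directly yields a fixed prime divisor $E$ of type $(3,2)$ with $\N(E,X)\subsetneq\N(X)$, giving conclusion $(i)$. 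Hence I may assume $g$ is quasi-elementary.

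By Lemma~\ref{ufficio}, $T$ is a smooth del Pezzo surface; since $\pr^1\times\pr^1$ admits no birational morphism to a surface of Picard number one, $T=\Bl_k\pr^2$ at $k=\rho_T-1\geq 1$ distinct points in general position, and $S\cong\pr^2$. For each of the $k$ disjoint $h$-exceptional $(-1)$-curves $\Gamma_1,\dotsc,\Gamma_k\subset T$, quasi-elementarity of $g$ yields an irreducible prime divisor $g^{-1}(\Gamma_j)\subset X$ contracted to a point by $f$; uniqueness of $D$ forces all such preimages to coincide with $D$, but $g(D)$ is a single curve in $T$, so $k=1$, $\rho_T=2$, and $T=\Bl_1\pr^2$. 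Let $\tau\subset\Eff(X)$ be the minimal face containing $[f^*H_S]$ for $H_S$ ample on $S$. Quasi-elementarity of $g$ forces every extremal ray of $\tau$ to have class in $g^*\Nu(T)$: either $[f^*H_S]=g^*[h^*H_S]$, or the class of a fixed prime divisor whose $g$-image is a curve in $T$, and by quasi-elementarity this class is a positive rational multiple of $g^*[\text{curve}]$. Hence $\tau\subseteq g^*\Nu(T)$ and $\dim\tau\leq\rho_T=2$; since $[D]$ (fixed) and $[f^*H_S]$ (movable) are non-proportional classes in $\tau$, $\dim\tau=2$. Combined with \eqref{estate} and $\rho_F\leq 9$, one gets $\rho_X=\dim\tau+\dim\N(F,X')\leq 2+9=11$, and $\rho_X\leq 10$ gives conclusion $(ii)$.

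The main obstacle is the subcase $\rho_X=11$. Then $\rho_F=9$, so $F$ is a del Pezzo of degree one, and Lemma~\ref{regular} makes $f$ a regular morphism on $X$. Pick a $(-1)$-curve $C$ in a general fiber $F_0$; since $\ma N_{F_0/X}\cong\ol^{\oplus 2}$, one gets $-K_X\cdot C=-K_{F_0}\cdot C=1$ and, from the splitting of $0\to\ma N_{C/F_0}\to\ma N_{C/X}\to\ma N_{F_0/X}|_C\to 0$ (since $H^1(\pr^1,\ol(-1))=0$), $\ma N_{C/X}\cong\ol(-1)\oplus\ol^{\oplus 2}$; thus $C$ is a line. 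As $C$ does not deform within $F_0$ (because $\ma N_{C/F_0}\cong\ol(-1)$) but $F_0$ moves in a $2$-parameter family over $\pr^2$, the family $V$ of deformations of $C$ satisfies $\dim V=2$ and $\Lo V=:D'$ is a prime divisor fibered over $\pr^2$ by $f|_{D'}$ with general fiber $C\cong\pr^1$. Curves of $V$ lie in disjoint fibers of $f$, hence $C$ does not meet any other curve of $V$, so Lemma~\ref{demons} gives $D'\cdot[V]=-1$, and Lemma~\ref{chitarre} then forces $D'$ to be a fixed prime divisor of type $(3,2)$. By Remark~\ref{ikea}, $D'$ contains no exceptional planes, so $D'$ equals its transform and is a $\pr^1$-fibration over $\pr^2$; therefore $\dim\N(D',X)\leq\rho_{D'}\leq 3<11=\rho_X$, so $\N(D',X)\subsetneq\N(X)$ and $E=D'$ yields conclusion $(i)$.
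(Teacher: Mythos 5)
Your opening matches the paper: the factorization $X\stackrel{g}{\dasharrow}T\stackrel{h}{\to}S$ via \cite[Prop.~2.13]{fibrations}, the deduction that $\Exc(h)$ is irreducible and $\rho_T=2$ from the uniqueness of $D$, and the use of Lemma \ref{matrimonio} when $g$ is not quasi-elementary. The divergence is in the quasi-elementary case, where you only reach $\rho_X\leq 11$ and must then kill $\rho_X=11$ by hand. The paper instead shows directly that $\rho_F\leq 8$, hence $\rho_X\leq\rho_F+\rho_T\leq 10$, and the mechanism is precisely the one hypothesis your argument never uses: that $D$ is \emph{not} of type $(3,2)$. Concretely, the extremal ray $R$ of $\NE(X')$ with $(g')_*R=\NE(h)$ has locus contained in the transform of $D$ and one-dimensional fibers; if $-K_{X'}\cdot R>0$ it would be of type $(3,2)$ by \cite[Th.~1.2]{wisn} with locus equal to that transform, contradicting Th.-Def.~\ref{long}$(e)$. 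Hence $X'$ is not Fano, and Lemma \ref{regular} forces $\rho_F\leq 8$. In particular your subcase $\rho_X=11$ (in which Lemma \ref{regular} makes $X'\cong X$ Fano) is actually vacuous --- but since you do not prove that, your treatment of it has to stand on its own, and it does not.

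Two steps there are unjustified. First, ``curves of $V$ lie in disjoint fibers of $f$, hence $C$ does not meet any other curve of $V$'': every curve of $V$ lies in \emph{some} fiber, but nothing you say excludes several curves of $V$ inside the same general fiber $F_0$ (distinct $(-1)$-curves of the degree-one del Pezzo $F_0$ that are numerically equivalent in $X$ and intersect each other), which would break the hypothesis of Lemma \ref{demons}. This is repairable --- when $\rho_X=11$ one gets $\dim\N(F_0,X)=9=\rho_{F_0}$, so $\N(F_0)\to\N(X)$ is injective and the curve of $V$ contained in $F_0$ is unique --- but that argument is absent. Second, the concluding bound ``$\dim\N(D',X)\leq\rho_{D'}\leq 3$ since $D'$ is a $\pr^1$-fibration over $\pr^2$'' is not a proof: a fibration with one-dimensional fibers does not bound the Picard number of the total space (special fibers may be reducible), and you have not related $\dim\N(D',X)$ to $\rho_{D'}$. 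The intended conclusion $\N(D',X)\subsetneq\N(X)$ can be salvaged via \eqref{lori}, as in the proof of Th.~\ref{uno}$(b)$, but as written the case $\rho_X=11$ is not closed, so the proposal does not prove the lemma.
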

\begin{proof}
 By \cite[Prop.~2.13]{fibrations}  we can factor $f$ as $X\stackrel{g}{\dasharrow} T\stackrel{h}{\to} S$ where $g$ is equidimensional and $h$ is birational.
 The surfaces $T$ and $S$ are $\Q$-factorial by Lemma \ref{Qfactorial}, so that $\rho_T-\rho_S$ is the number of irreducible components of $\Exc(h)$. 

Let $X'\dasharrow X$ be a SQM such that the composition
$g'\colon X'\to T$ is regular and $K$-negative (see Lemma \ref{basic2}), $F\subset X'$ a general fiber of $g'$, and $D'\subset X'$ the transform of $D$. 
Since $D'$ is the unique prime divisor contracted to a point by $h\circ g'\colon X'\to S$, we deduce that $D'=(g')^{-1}(\Exc(h))$, $\Exc(h)$ is irreducible, and $\rho_T=2$.

  Let $R$ be an extremal ray of $\NE(X')$ such that $(g')_*R=\NE(h)$ (see \cite[\S 2.5]{fanos}). Then for every curve $C\subset X'$ with $[C]\in R$ we have $g'(C)=\Exc(h)$, therefore 
$\Lo(R)\subseteq D'$. Moreover if $F\subset X'$ is a non-trivial fiber of the contraction of $R$, $g'$ is finite on $F$, and $g'(F)=\Exc(h)$, so that $\dim F=1$.
If $-K_{X'}\cdot R>0$, then
$R$ is of type $(3,2)$ by \cite[Th.~1.2]{wisn},
and $D'=\Lo(R)$, a contradiction by Th.-Def.~\ref{long}$(e)$. Therefore $-K_{X'}\cdot R\leq 0$ and $X'$ is not Fano. This implies that $\rho_F\leq 8$ by Lemma \ref{regular}.

Now if $g$ is quasi-elementary,  by \cite[Cor.~2.16]{fibrations} we get $\rho_X\leq\rho_F+\rho_T\leq 10$, namely $(ii)$.
  Instead if $g$ is not quasi-elementary, then we get $(i)$ by Lemma \ref{matrimonio}.
\end{proof}
\begin{lemma}\label{detail2}
  Let $X$ be a smooth Fano $4$-fold  with  $\delta_X\leq 1$ and $f\colon X\to S$ a contraction onto a surface. Then 
  either $\rho_S=2$ and $f$ is equidimensional, or $\rho_S=1$ and there is at most one prime divisor contracted to a point.
\end{lemma}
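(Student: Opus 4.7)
My plan is to first establish $\rho_S\leq 2$, and then handle the cases $\rho_S=2$ and $\rho_S=1$ separately, using the Lefschetz defect hypothesis at each step.

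For the preliminary bound, I pick any prime divisor $L\subset S$, which exists since $S$ is a projective surface. Some irreducible component $B$ of $f^{-1}(L)$ must dominate $L$: otherwise $L=f(f^{-1}(L))$ would be a finite set, contradicting $\dim L=1$. For such $B$, pushforward by $f_*$ maps $\N(B,X)$ onto $\N(L,S)=\R[L]$ with kernel contained in $\ker f_*$, so
\[\dim\N(B,X)\leq(\rho_X-\rho_S)+1.\]
On the other hand, the hypothesis $\delta_X\leq 1$ forces $\dim\N(B,X)\geq\rho_X-1$, whence $\rho_S\leq 2$.

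For the case $\rho_S=2$, any prime divisor $D$ contracted to a point would satisfy $\N(D,X)\subseteq\ker f_*$, giving $\dim\N(D,X)\leq\rho_X-2$, which contradicts $\delta_X\leq 1$. Hence no divisor is contracted to a point, and since a non-equidimensional contraction from a $4$-fold onto a surface necessarily has a $3$-dimensional fiber which must contain a divisor mapped to a point, $f$ is equidimensional.

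For the case $\rho_S=1$, suppose for contradiction that $D_1,D_2$ are two distinct prime divisors each contracted to a point. The dimension bound is now tight: $\N(D_i,X)=\ker f_*=[f^*H]^{\perp}$ for $i=1,2$, where $H$ is an ample generator of $\Nu(S)$. I would first rule out $f(D_1)\neq f(D_2)$: in that case $D_1\cap D_2=\emptyset$, so Remark~\ref{easy} gives $\N(D_2,X)\subseteq D_1^{\perp}$, which forces $[D_1]\in(\ker f_*)^{\perp}=\R[f^*H]$; then $D_1\equiv\lambda f^*H$ for some $\lambda>0$, but any curve in $X$ whose image in $S$ is an irreducible curve avoiding $f(D_1)$ is disjoint from $D_1$ while meeting $\lambda f^*H$ positively, a contradiction. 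So both $D_i$ lie in the same fiber $f^{-1}(p)$. The main obstacle will be to treat the subcase $D_1\cap D_2\neq\emptyset$: here the surface $\Sigma=D_1\cap D_2$ breaks the naive orthogonality argument, because curves in $\Sigma$ can contribute nontrivially to $D_1\cdot-$. My plan is to exploit a covering family of rational curves in $D_2$ (available because $D_2$, lying in a fiber of a Fano contraction, is uniruled), observe that the general member of such a family meets $D_1$ only in the proper closed subset $\Sigma$ and so has non-negative intersection with $D_1$, and combine this with the constraint $[D_i]\notin\R[f^*H]$ in $\Nu(X)$ to force once again a proportionality that contradicts the Fano property.
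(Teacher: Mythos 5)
Your reduction to $\rho_S\leq 2$, your treatment of the case $\rho_S=2$, and your subcase $f(D_1)\neq f(D_2)$ when $\rho_S=1$ are all correct, and the first two run essentially parallel to the paper. The genuine gap is the remaining subcase, where $D_1$ and $D_2$ lie in the same fiber and meet: there you offer only a plan, and the plan does not close. Knowing that a general member $C$ of a covering family of rational curves on $D_2$ satisfies $D_1\cdot C\geq 0$ gives one inequality on the hyperplane $\ker f_*=\N(D_1,X)=\N(D_2,X)$, but it does not force $[D_1]\in\R[f^*H]$ (curves in $\Sigma=D_1\cap D_2$ may well have negative intersection with $D_1$, and nothing prevents $D_1$ from being negative somewhere on $\ker f_*$). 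Moreover, even if you did manage to prove that $[D_1]$ and $[D_2]$ are proportional, that alone contradicts nothing: two distinct prime divisors can be numerically proportional, and ``the Fano property'' does not rule this out by itself.

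The paper avoids the case analysis entirely using two observations you are missing. First, a prime divisor $D$ with $f(D)=\{pt\}$ is automatically a \emph{fixed} prime divisor: the condition $f_*D=0$ (as a cycle) depends only on the linear equivalence class, and all divisors contracted to points are supported on the finitely many $3$-dimensional fibers, so there is no positive-dimensional linear system of such divisors and $h^0(mD)=1$ for all $m$. Second, instead of playing $D_1$ against $D_2$ directly, one takes a general very ample curve $A\subset S$ avoiding $f(D_1)\cup f(D_2)$; then $f^{-1}(A)$ is a prime divisor disjoint from both $D_i$, so Rem.~\ref{easy} together with $\delta_X\leq 1$ gives $\N(f^{-1}(A),X)=D_1^{\perp}=D_2^{\perp}$, hence $[D_1]$ and $[D_2]$ are proportional in $\Nu(X)$; since $D_1$ is fixed, this forces $D_1=D_2$. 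This argument is uniform in whether the two divisors lie in the same fiber or not, and it is the ingredient you need to complete your last subcase.
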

\begin{proof}
For any prime divisor $D\subset X$ such that $f(D)\subsetneq S$ we have
$$1\geq\codim\N(D,X)\geq\codim\N(f(D),S)\geq\rho_S-1,$$
hence $\rho_S\leq 2$. 
If $\rho_S=2$, then $f(D)$ must always be a curve, and $f$ is equidimensional. 

Suppose that
$\rho_S=1$ and $f$ is not equidimensional, so there exists a prime divisor $D$ contracted to a point. 

We note that in our setting $f(D)=\{pt\}$ is equivalent to the pushforward of $D$ (as a cycle) being zero, and this is invariant under linear equivalence. Since there cannot be a positive dimensional linear system of divisors contracted to points, $D$ must be a fixed divisor.

Let us take a general very ample curve $A\subset S$ and consider the prime divisor $f^{-1}(A)$. We have $f^{-1}(A)\cap D=\emptyset$, hence $\N(f^{-1}(A),X)\subseteq D^{\perp}$ (see Rem.~\ref{easy}) and $\codim\N(f^{-1}(A),X)\leq\delta_X\leq 1$, so that $\N(f^{-1}(A),X)= D^{\perp}$.

We show that $D$ is unique. Indeed, if $B\subset X$ is another prime divisor contracted to a point, as above we get 
 $\N(f^{-1}(A),X)= B^{\perp}$, hence $D^{\perp}=B^{\perp}$. This means that the classes $[D]$ and $[B]$ are multiples in $\Nu(X)$,
 and being $D$ a fixed divisor, it implies that $B=D$.
\end{proof}
\begin{lemma}\label{DAD}
Let $X$ be a smooth Fano $4$-fold, $X\dasharrow\w{X}$ a SQM, and $f\colon\w{X}\to\pr^1$ a $K$-negative contraction with general fiber $F$.
Suppose that $F\cong\pr^1\times S$ and that $\dim\N(F,\w{X})=\rho_F$.
Then $X$ has a rational contraction onto a $3$-fold.
\end{lemma}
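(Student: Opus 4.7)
The plan is to exploit the product decomposition $F\cong\pr^1\times S$ (where $S$ is a del Pezzo surface, since $F$ is a smooth Fano $3$-fold) to construct a rational contraction of $\w{X}$ onto a $3$-fold $Z$ fibered over $\pr^1$ with general fiber $S$; composing with the SQM $X\dasharrow\w{X}$ then gives the desired contraction of $X$. Let $p_2\colon F\to S$ be the second projection, $\gamma:=[\pr^1\times\{pt\}]\in\N(F)$ the class of its fibers, and $\iota\colon F\hookrightarrow\w{X}$ the inclusion. The hypothesis $\dim\N(F,\w{X})=\rho_F$ says that $\iota_*\colon\N(F)\to\N(\w{X})$ is injective, equivalently the restriction $\iota^*\colon\Nu(\w{X})\to\Nu(F)$ is surjective. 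Fix an ample divisor $H$ on $S$ and choose $\alpha\in\Nu(\w{X})$ with $\iota^*\alpha=[p_2^*H]$.

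For $n\in\Z_{>0}$ set $\beta_n:=\alpha+n[F]\in\Nu(\w{X})$, where $[F]=f^*[pt]$; then $\beta_n\cdot\gamma=0$ and $\iota^*\beta_n=[p_2^*H]$. By the projection formula,
\[
H^0(\w{X},m\beta_n)=H^0(\pr^1,f_*\ol(m\alpha)\otimes\ol_{\pr^1}(mn)),
\]
and since the generic rank of $f_*\ol(m\alpha)$ equals $h^0(F,mp_2^*H)=h^0(S,mH)$, a polynomial of degree $2$ in $m$, the dimension $h^0(\w{X},m\beta_n)$ grows polynomially of degree $3$ (not $4$) for $n\gg 0$. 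Hence $\beta_n$ is effective for $n\gg 0$ and has zero volume, so it is not big. To see $\beta_n$ is movable, I would argue that any fixed prime divisor $D$ of $|m\beta_n|$ cannot dominate $\pr^1$: the restriction $H^0(\w{X},m\beta_n)\to H^0(F,mp_2^*H)$ is surjective for $n$ large, and $p_2^*H$ is base-point-free on $F$, so there is no fixed component on the general fiber $F$. Hence $D$ is vertical, and by a dimension count inside the $3$-dimensional fiber $F_y$ containing it, $D$ must be the whole fiber $F_y$. Choosing $n$ minimal such that $\beta_n$ is effective removes any fiber fixed component; for larger $n$, movability persists because $[F]$ is semiample.

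Since $\w{X}$ is a Mori dream space, $\beta_n\in\Mov(\w{X})$ lies in some Mori chamber $\sigma$; being movable and not big, $\sigma$ lies on the boundary of $\Eff(\w{X})$ and corresponds to a rational contraction $g\colon\w{X}\dasharrow Z$ of fiber type, in the sense of \S\ref{movable}. Since $\beta_n/n\to[F]$ as $n\to\infty$ and the Mori chamber decomposition is finite, $\R_{\geq 0}[F]\subset\overline{\sigma}$ for $n$ sufficiently large; equivalently, $f$ factors as $f=h\circ g$ for some regular contraction $h\colon Z\to\pr^1$. On the SQM $\w{X}\dasharrow\w{X}'$ where $\beta_n$ becomes nef and semiample, the SQM is an isomorphism in a neighborhood of the general fiber $F$, and $\beta_n|_F=[p_2^*H]$ defines $p_2\colon F\to S$; thus $g$ restricted to $F$ is $p_2$, the general fiber of $h$ is $S$, and $\dim Z=3$. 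Composing with $X\dasharrow\w{X}$ then yields a rational contraction of $X$ onto the $3$-fold $Z$.

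The main technical obstacle is verifying movability and non-bigness of $\beta_n$ in the second step: this relies on the projection-formula computation together with the dimension-based analysis of fixed prime divisors via the surjective restriction to the general fiber. The subsequent extraction of the $3$-fold contraction is then forced by finiteness of the Mori chamber decomposition for MDSs and the explicit fiberwise picture on $F$.
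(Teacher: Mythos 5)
Your strategy is genuinely different from the paper's: you work with divisor classes and the Mori chamber decomposition, whereas the paper works with curves. It computes $\ma{N}_{C/\w{X}}\cong\ol_{\pr^1}^{\oplus 3}$ for $C=\pr^1\times\{pt\}$, uses the hypothesis $\dim\N(F,\w{X})=\rho_F$ (injectivity of $\N(F)\to\N(\w{X})$) to show that $C$ is the only member of its Hilbert scheme component meeting $C$, and then shows that the evaluation map of the universal family is an isomorphism over an open subset, which directly yields the fibration onto a $3$-fold. Your use of the hypothesis is the correct dual statement (surjectivity of $\Nu(\w{X})\to\Nu(F)$, needed to lift $p_2^*H$), and the overall plan is plausible; but as written it has genuine gaps.

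The most serious gap is the movability of $\beta_n$. Your dimension count only shows that a fixed prime divisor $D$ of $|m\beta_n|$ not dominating $\pr^1$ lies in a single fiber $F_y$, hence is an irreducible \emph{component} of $F_y$; when $F_y$ is reducible or non-reduced, $D$ need not be the whole fiber, its class is not a rational multiple of $[F]$, and ``choosing $n$ minimal'' does not remove it. Since the entire construction of the rational contraction of fiber type rests on having a movable class, this step must be repaired --- e.g.\ by passing to the movable part $M$ of $m\beta_n$ (nonzero since $M|_F=mp_2^*H$, and still orthogonal to $\gamma:=[\pr^1\times\{pt\}]$ because vertical components have zero intersection with $\gamma$), but then one must still prove that the associated contraction has $3$-dimensional image, which is no longer immediate. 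A second gap is the claimed degree-$3$ growth of $h^0(\w{X},m\beta_n)$: the upper bound requires controlling the maximal slope of $f_*\ol(mA)$ as a function of $m$, which you do not address; non-bigness is better obtained from $\beta_n\cdot\gamma=0$ together with $\gamma\in\mov(\w{X})=\Eff(\w{X})^{\vee}$, since the curves $\pr^1\times\{pt\}$ cover a dense subset of $\w{X}$. Finally, identifying the general fiber of $g$ with $\pr^1\times\{pt\}$ uses that the further SQM making $\beta_n$ semiample is an isomorphism near the general fiber $F$; this is asserted but not justified (its indeterminacy locus could dominate $\pr^1$ and meet every fiber in a curve). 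The paper's curve-theoretic argument bypasses all of these issues.
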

\begin{proof}
Set $C:=\pr^1\times\{pt\}\subset F$. Then $\ma{N}_{C/F}\cong\ol_{\pr^1}^{\oplus 2}$, $(\ma{N}_{F/\w{X}})_{|C}\cong\ol_{\pr^1}$, and $\ma{N}_{C/\w{X}}\cong\ol_{\pr^1}^{\oplus 3}$. This implies that $\Hilb(\w{X})$ is smooth of dimension $3$ at the point $[C]$; let $W$ be the irreducible component of $\Hilb(\w{X})$ containing $[C]$.

Any point in $W$ yields 
 an effective and connected one-cycle $\Gamma$ in $\w{X}$ such that $\Gamma\equiv C$.
We show that
 if $\Gamma\cap C\neq \emptyset$, then $\Gamma=C$. 

 We have $f(\Gamma)=\{pt\}$, and $\Gamma$ intersects the fiber $F$, so that $\Gamma\subset F$.
Let us consider the natural linear map $\N(F)\twoheadrightarrow\N(F,\w{X})\subseteq\N(\w{X})$: since $\dim\N(F,\w{X})=\rho_F$, this map is injective. Therefore  $\Gamma\equiv_{F} C$, thus $\Gamma$ is supported on a fiber of the projection $F\to S$. Since $\Gamma$ intersects $C$, we conclude that $\Gamma=C$.

Let $\w{X}_0\subset \w{X}$ be the open subset where the fibers of $f$ satisfy the same assumptions as $F$, and $W_0\subset W$ the open subset parametrizing the curves $\pr^1\times\{pt\}$ in the fibers of $f_{|\w{X}_0}$.
Let $\ma{C}_0\to W_0$ be the universal family and $e\colon \ma{C}_0\to \w{X}$  the natural map. Then $W_0$ and $\ma{C}_0$ are smooth and
 $e\colon \ma{C}_0\to \w{X}_0$ is bijective, and since $\w{X}$ is smooth, we conclude that $e$ is an isomorphism and there is a projective morphism $g_0\colon \w{X}_0\to W_0$. Then $X$ has a rational contraction onto a $3$-fold, see \cite[proof of Th.~1.2]{fibrations}.
\end{proof}
\begin{proposition}\label{usseaux}
Let $X$ be a smooth Fano 4-fold with $\rho_X\geq 7$ and  $\delta_X\leq 1$, $\tau$ a movable face of $\Eff(X)$, and $f\colon X\dasharrow Z$ an associated  rational contraction as in \ref{setup}. 
Then one of the following holds:
\begin{enumerate}[$(i)$]
\item
 $X$ has a rational contraction onto a $3$-fold;
\item $X$ has a fixed prime divisor $D$
 of type $(3,2)$ such that $\N(D,X)\subsetneq\N(X)$;
\item $\rho_X\leq 8+\dim\tau$, $Z\cong\pr^1$,  and $\tau$ does not contain classes of fixed prime divisors of type $(3,2)$;
\item $\rho_X=10$ and
 $f\colon X\to\pr^1$ is regular with general fiber $S\times\pr^1$, where $S$ is a del Pezzo surface with $\rho_S=9$;
\item $\rho_X\leq  
9+\dim\tau$, $Z\cong\pr^2$, and every fixed prime divisor with class in $\tau$ is 
 of type 
$(3,2)$ or $(3,1)^{sm}$;
\item $\tau$ is not minimal, $f$ is quasi-elementary, and $\dim Z=2$. 
\end{enumerate}
  Moreover in cases $(i)$ and $(ii)$ we have $\rho_X\leq 12$, and if $\rho_X=12$, then $X$ has a rational contraction onto a $3$-fold.
\end{proposition}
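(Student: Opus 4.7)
The plan is to factor $f$ via Lemma \ref{basic2} as $X\stackrel{\psi}{\dasharrow}\w X\stackrel{\tilde f}{\to}Z$ with $\w X$ smooth and $\tilde f$ a $K$-negative regular contraction; then the general fiber $F$ is smooth Fano of dimension $4-\dim Z$, and \eqref{estate} gives $\rho_X\leq\dim\tau+\rho_F$. Since $\rho_X\geq 7$ and $\delta_X\leq 1$, $X$ is not a product of surfaces (Remark \ref{delta}), so Theorems \ref{3fold} and \ref{natale} apply and furnish the final ``moreover'' clauses for cases (i) and (ii). The body of the proof then splits according to $\dim Z\in\{3,2,1\}$.

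If $\dim Z=3$ we are in (i). If $\dim Z=2$, speciality of $f$ forces equidimensionality and $Z$ is $\Q$-factorial (Lemma \ref{Qfactorial}). When $f$ is not quasi-elementary, pick $B\subset Z$ prime with $f^{*}B$ reducible; \cite[Lemma 5.2]{fibrations} yields a fixed prime $E$ of type $(3,2)$ contained in $\Supp f^{*}B$, and Lemma \ref{sushi} applied with $H=f^{*}B$ gives either (ii), or ($Z\cong\pr^2$ with $f$ equidimensional), the latter leading to (v) after checking the type constraint on fixed primes in $\tau$. When $f$ is quasi-elementary, Lemma \ref{ufficio} ensures $Z$ is a smooth del Pezzo; if $\tau$ is not minimal we are in (vi), while if $\tau$ is minimal we argue that $Z\cong\pr^2$: since $\tau\cap\Mov(X)=f^{*}\Mov(Z)$ has dimension $\rho_Z=\dim\tau$ (Lemma \ref{special}) while every proper face of $\tau$ is fixed, a del Pezzo $Z$ with $\rho_Z\geq 2$ would produce movable classes (pullbacks of generators of $\Nef(Z)$) on proper faces of $\tau$, a contradiction. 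The type constraint in (v) is then secured by noting that any fixed prime of type $(3,2)$ with class in $\tau$ triggers Lemma \ref{sushi} and sends us back to (ii).

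If $\dim Z=1$ then $Z\cong\pr^1$ and $\dim\tau=1$. First, if some fixed prime $E$ of type $(3,2)$ has $[E]\in\tau$, Lemma \ref{sushi} with $H=E$ gives (ii) (neither $\dim Z=3$ nor $Z\cong\pr^2$ applies). Otherwise $\tau$ contains no such class, and we use the classification of smooth Fano $3$-folds \cite[\S 12.6]{fanoEMS}: $\rho_F\leq 10$, with $\rho_F\in\{9,10\}$ forcing $F\cong S\times\pr^1$ for a del Pezzo $S$ with $\rho_S\in\{8,9\}$. When $\rho_F=10$, Lemma \ref{regular} makes $\psi$ an isomorphism so $f$ is regular; if $\rho_X=11$ then $\dim\N(F,\w X)=\rho_F$ and Lemma \ref{DAD} produces a rational contraction onto a $3$-fold, giving (i), while $\rho_X=10$ is exactly (iv). When $\rho_F=9$ and $\rho_X=10$, again $\dim\N(F,\w X)=\rho_F$ and Lemma \ref{DAD} gives (i). In all remaining situations $\rho_X\leq 9=8+\dim\tau$, yielding (iii).

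The main obstacle is the $\dim Z=2$ quasi-elementary case: proving that minimality of $\tau$ forces $Z\cong\pr^2$ requires a careful analysis of how proper faces of $f^{*}\Nef(Z)$ sit inside proper faces of $\tau$, exploiting that on a smooth del Pezzo with $\rho_Z\geq 2$ the nef cone has rays contributed by pullbacks along $(-1)$-curve contractions, which would be movable on $X$. A secondary delicate point is the type constraint in (v), where excluding fixed primes of type $(3,0)^{sm}$ and $(3,0)^Q$ from $\tau$ seems to require a refined application of Lemma \ref{sushi} to a suitably chosen $H$, together with the structure results of \S \ref{contraction}. Once these technical points are dealt with, the ``moreover'' clauses in (i) and (ii) follow directly from Theorems \ref{3fold} and \ref{natale}, the latter also furnishing the rational contraction onto a $3$-fold when $\rho_X=12$.
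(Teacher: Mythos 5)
Your overall architecture --- factoring via Lemma \ref{basic2}, splitting on $\dim Z$, using \eqref{estate} for the Picard number bounds and Theorems \ref{3fold} and \ref{natale} for the final clause --- matches the paper, and the $\dim Z=3$ case and most of the $\dim Z=2$ case are sound. However, the case $Z\cong\pr^1$ contains a genuine error: you assert $\dim\tau=1$. What \ref{setup} actually gives is $\dim(\tau\cap\Mov(X))=\rho_Z=1$, while $\dim\tau=\rho_X-\dim\N(F,\w X)$ by \eqref{estate} and can be strictly larger --- this is precisely why conclusion $(iii)$ is stated as $\rho_X\leq 8+\dim\tau$ rather than $\rho_X\leq 9$. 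Several of your subsequent steps rely on the resulting false identity $\dim\N(F,\w X)=\rho_X-1$: the inference ``$\rho_F=9$ and $\rho_X=10$ imply $\dim\N(F,\w X)=\rho_F$'' needed to invoke Lemma \ref{DAD}; the closing claim ``$\rho_X\leq 9=8+\dim\tau$''; and the application of Lemma \ref{sushi} with $H=E$, whose hypothesis $[E]\in f^*\Nu(Z)$ follows from $[E]\in\tau$ only when $\tau$ coincides with the ray $f^*\Eff(\pr^1)$. The repair is to split on $\dim\N(F,\w X)$ instead of on $\rho_F$ and $\rho_X$: if $\dim\N(F,\w X)\leq 8$ then $(iii)$ follows directly from \eqref{estate}; if $\dim\N(F,\w X)=\rho_F\geq 9$ then Lemma \ref{DAD} gives $(i)$; the only remaining case is $9=\dim\N(F,\w X)<\rho_F=10$, which yields $(iv)$ via Lemma \ref{regular} and $\delta_X\leq 1$. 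For a type-$(3,2)$ fixed prime divisor with class in $\tau$, the paper argues directly that its transform is contained in a fiber of $\tilde f$ (last paragraph of \ref{setup}), so $\N(\w D,\w X)\subseteq\ker\tilde f_*\subsetneq\N(\w X)$, and concludes by Lemma \ref{cena}; no appeal to Lemma \ref{sushi} is needed there.

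The second gap is the type constraint in $(v)$, which you acknowledge you have not established. The route you sketch --- ``a refined application of Lemma \ref{sushi} to a suitably chosen $H$'' --- cannot work, since that lemma only detects components that are fixed of type $(3,2)$ and says nothing that would exclude types $(3,0)^{sm}$ or $(3,0)^Q$. The paper's argument is short but uses an external input: for a fixed prime divisor $D$ with $[D]\in\tau$, equidimensionality forces $f(D)$ to be an irreducible curve $\Gamma\subset\pr^2$, the pullback $f^*\Gamma$ is movable and has the fixed divisor $D$ as a component, hence is reducible, and then \cite[Lemma 4.10]{fibrations} states that such a component must be of type $(3,2)$ or $(3,1)^{sm}$. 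Without this (or an equivalent substitute) the case $(v)$ is not proved. The remaining deviations from the paper are harmless: handling the non-quasi-elementary surface case by applying Lemma \ref{sushi} directly to $f^*B$ is just Lemma \ref{matrimonio} unwound, and deducing $Z\cong\pr^2$ from minimality of $\tau$ in the quasi-elementary case is the contrapositive of the paper's deduction that $\rho_Z>1$ forces $\tau$ to be non-minimal.
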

\begin{proof}
We apply the set-up as in \ref{setup}, and keep the same notation. We factor $f$ as $X\dasharrow\w{X}\stackrel{\tilde{f}}{\to}Z$ where $\tilde{f}$ is $K$-negative (see Lemma \ref{basic2}), so that the general fiber $F\subset\w{X}$ is a smooth Fano variety.
If $\dim Z=3$ we get $(i)$.

\smallskip

Suppose that $Z=\pr^1$. If there exists a fixed prime divisor $D$ of type $(3,2)$ such that $[D]\in\tau$,  then its transform $\w{D}\subset\w{X}$ is contained in a fiber of $\tilde{f}$. Thus $\N(\w{D},\w{X})\subseteq\ker\tilde{f}_*\subsetneq\N(\w{X})$, and we get $(ii)$ by Lemma \ref{cena}. 

Assume that $\tau$ does not contain classes of fixed prime divisors of type $(3,2)$.
If $\dim\N(F,\w{X})\leq 8$, we get $(iii)$ by \eqref{estate}.
If $\dim\N(F,\w{X})\geq 9$ and $\dim\N(F,\w{X})=\rho_F$, then $F\cong\pr^1\times S$ (see \cite[\S 12.6]{fanoEMS}), and we get $(i)$ by Lemma \ref{DAD}. 

The last possibility is $9=\dim\N(F,\w{X})<\rho_F=10$, and 
 $F\cong\pr^1\times S$ where $\rho_S=9$. In this case  $X\cong\w{X}$ by Lemma \ref{regular}, 
and $f\colon X\to\pr^1$ is regular. Thus $0<\codim\N(F,X)\leq\delta_X=1$,
 and we conclude that $\codim\N(F,X)=1$ and $\rho_X=10$, so we get $(iv)$.

\smallskip

Finally suppose that $\dim Z=2$, so that $f$ is an equidimensional rational contraction, and $Z$ is a smooth rational surface by \cite[Lemma 4.3]{fibrations}.

Suppose first that 
$\rho_Z>1$. If $f$ is not quasi-elementary,  we get $(ii)$ by Lemma \ref{matrimonio}.

If $f$ is quasi-elementary, then $f^*\Eff(Z)$ is a face of $\Eff(X)$ by \cite[Prop.~2.22]{eff}, thus $f^*\Eff(Z)=\tau$. On the other hand there is a contraction $Z\to\pr^1$, so that the boundary of $\Eff(Z)$ contains some non-zero movable divisor, therefore the boundary of $\tau$ contains some non-zero movable divisor. Hence $\tau$ is not minimal, 
 and we get $(vi)$.

Finally if $\rho_Z=1$, then $Z\cong \pr^2$. If $D\subset X$ is a fixed prime divisor with $[D]\in\tau$, then the image of $D$ in $\pr^2$ is an irreducible curve $\Gamma$, and $D$ is a component of $f^*\Gamma$. Since $f^*\Gamma$ is movable, while $D$ is fixed, $f^*\Gamma$ is reducible.  By \cite[Lemma 4.10]{fibrations} $D$ must be of type $(3,2)$ or $(3,1)^{sm}$.
Moreover $F$ is a del Pezzo surface, so that $\rho_F\leq 9$ and $\rho_X\leq 9+\dim\tau$ by \eqref{estate},
 so we get $(v)$.

The last statement follows from Theorems \ref{3fold} and \ref{natale} (note that $X$ is not a product of surfaces, see Rem.~\ref{delta}).
\end{proof}
\section{Constructing divisors covered by lines}\label{divlines}
\noindent Let $X$ be a smooth Fano $4$-fold with $\rho_X\geq 7$. In this section we show that, given a fixed prime divisor $D\subset X$ of type $(3,1)^{sm}$ or $(3,0)^Q$,\footnote{Note in particular that $X$ is not a product of surfaces, see Ex.~\ref{2022}.}  and $L\subset D$
an exceptional plane, then there exists a prime divisor $B$ covered by a family of lines $V$ such that $[V]+C_L\equiv C_D$ (Prop.~\ref{importante}). The construction of this family of lines if based on the explicit geometry of the divisor $D$, and we also use the results of Section \ref{lines}. 
By Rem.~\ref{generators} $D$ contains at least $\rho_X-4$ exceptional planes with distinct classes $[C_L]$, so that we obtain many distinct prime divisors covered by lines. This construction will be important in the rest of the paper. Then we give some properties of $D$ and $B$ depending on the different settings.
\begin{proposition}\label{importante}
Let $X$ be a smooth Fano $4$-fold with $\rho_X\geq 7$, $D\subset X$ a fixed prime divisor of type $(3,1)^{sm}$ or $(3,0)^Q$, and $L\subset D$
an exceptional plane. Then there exists a family of lines $V$ 
 such that 
$B:=\Lo V$ is a divisor different from $D$,  $[V]+C_L\equiv C_D$, and $B\cdot C_L>0$. Moreover $L'\not\subset B$ for every exceptional plane $L'\subset D$.
\end{proposition}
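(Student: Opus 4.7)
The plan is to work on the associated contraction $X\stackrel{\ph}{\dasharrow}\w{X}\stackrel{\sigma}{\to}Y$ of $D$ given by Th.-Def.~\ref{long} and to exploit the correspondence between the exceptional plane $L\subset D$ and the exceptional curve $\ell\subset\w{X}$. By \S\ref{contraction}, $\ell$ meets $\w{D}$ at a single point $x_0$ with $\w{D}\cdot\ell=-D\cdot C_L$; in case $(3,1)^{sm}$ the fiber $F_0=\sigma^{-1}(\sigma(x_0))\cap\w{D}\cong\pr^2$ through $x_0$ meets no other exceptional curve (by \cite[Rem.~5.5]{blowup}), while in case $(3,0)^Q$, $x_0$ is a smooth point of the quadric $\w{D}$ by Rem.~\ref{vertex}. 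Under the SQM-induced isomorphism $\N(\w{X})\cong\N(X)$, the identification $[\w{D}]\leftrightarrow[D]$ on divisors and intersection compatibility forces $[\ell]\leftrightarrow-[C_L]$, while $[C_{\w{D}}]\leftrightarrow[C_D]$ since both correspond to the transform of a general line in $\w{D}$.

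In $\w{X}$, the goal is to construct a $2$-dimensional family $\w{V}$ of smooth rational curves of class $[\ell]+[C_{\w{D}}]$ (so of $-K_{\w{X}}$-degree $1$) whose locus $\w{B}$ is a divisor different from $\w{D}$. The natural candidate is the smoothing family of the nodal curve $\ell\cup_{x_0}C_{\w{D}}$, with $C_{\w{D}}$ a line on $\w{D}$ through $x_0$ (a line in $F_0$ for $(3,1)^{sm}$; a line on the quadric for $(3,0)^Q$). The $1$-parameter choice of $C_{\w{D}}$, combined with a transverse smoothing direction, yields a $2$-dimensional family matching the Hilbert-scheme lower bound $-K_{\w{X}}\cdot(\ell+C_{\w{D}})+\dim\w{X}-3=2$; then by \eqref{rel} its locus $\w{B}$ is $3$-dimensional, hence a divisor. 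Pushing through $\ph^{-1}$ and applying Lemma~\ref{basic1}$(b)$ produces the family $V$ of lines in $X$ with locus $B$.

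The class identity $[V]+C_L\equiv C_D$ follows from the computation $[V]=[\ell]+[C_{\w{D}}]$ in $\N(\w{X})$ translated via the SQM-isomorphism above. The distinction $B\neq D$ follows since $[V]\neq[C_D]$. For $B\cdot C_L>0$: the construction forces the Chow-limits of the family to include $L$, because the degeneration $\w{\Gamma}\leadsto\ell+C_{\w{D}}$ in $\w{X}$ corresponds under $\ph^{-1}$ to a degeneration passing through $L$ (the image of $\ell$ under the indeterminacy); careful local analysis then shows $L\subset B$, and the resulting line bundle $\ol_X(B)|_L\cong\ol_{\pr^2}(d)$ has positive degree $d\geq 1$. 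For the last property, if some exceptional plane $L'\subset D$ distinct from $L$ were contained in $B$, then by Lemma~\ref{excplane}(b) the unique family of lines covering $B$ would have $[V]=[C_{L'}]$; but $[V]=[C_D]-[C_L]$ has intersection numbers with $D$ and with divisors adjacent to $D$ that differ from those of $[C_{L'}]$, yielding a contradiction.

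The principal obstacle is justifying the smoothability of $\ell\cup_{x_0}C_{\w{D}}$ in the expected dimension. Since $\ma{N}_{\ell/\w{X}}\cong\ol_{\pr^1}(-1)^{\oplus 3}$ has no sections, $\ell$ itself is rigid and the normal sheaf of the reducible curve has nontrivial $H^1$, so the Hilbert-scheme bound provides only a lower estimate. A direct geometric construction is needed: for $(3,1)^{sm}$, one produces $\w{V}$ as strict transforms of appropriate families of rational curves in $Y$ of controlled anticanonical degree and incidence with $C$, using the structure of $\w{D}$ as a $\pr^2$-bundle over $C$; for $(3,0)^Q$, one uses the rich family of lines (and more generally rational curves) on the quadric $\w{D}$ through $x_0$, combined with their deformations in $\w{X}$. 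A subsidiary technical challenge is tracking the Chow-closure of the family across the SQM indeterminacy to verify that $L\subset B$ with positive-degree restricted line bundle.
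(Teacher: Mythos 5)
Your overall skeleton (pass to the associated contraction $X\dasharrow\w{X}\stackrel{\sigma}{\to}Y$, use the exceptional curve $\ell$ corresponding to $L$, and produce lines of class $[C_D]-[C_L]$) matches the paper, but the central construction is different and has a genuine gap that you yourself identify without closing. You propose to obtain the family by smoothing the nodal curve $\ell\cup_{x_0}C_{\w{D}}$ in $\w{X}$; since $\ma{N}_{\ell/\w{X}}\cong\ol(-1)^{\oplus 3}$, this curve is obstructed and the Hilbert-scheme dimension count gives only a lower bound, so smoothability does not follow — and no substitute argument is supplied. The paper avoids smoothing entirely: it takes $\Gamma_0$ a line of the fiber $S_0\cong\pr^2$ (resp.\ of the quadric $\w{D}$) through $q\in\ell\cap\w{D}$, which has $-K_{\w{X}}\cdot\Gamma_0=2$, and observes via Lemma~\ref{basic1} that its transform $\Gamma\subset X$ is already an irreducible rational curve with $-K_X\cdot\Gamma=1$, i.e.\ a line in $X$; the family $V$ is the one containing the general such $\Gamma$. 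A second missing step: knowing $\dim V=2$ does not give $\dim\Lo V=3$; by \eqref{rel} and Theorem~\ref{uno}$(c)$ the locus could be a surface, and one must exhibit two \emph{disjoint} curves of the family to rule this out. The paper does this by a careful tangent-space analysis at $q$ after blowing up $\ell$; your write-up skips it.

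More seriously, your argument for $B\cdot C_L>0$ asserts that ``careful local analysis shows $L\subset B$'' — this is false and contradicts the very statement you are proving (the ``Moreover'' clause applies to $L'=L$). Indeed $D\cdot[V]=D\cdot C_D-D\cdot C_L=-1-D\cdot C_L\geq 0$ while $D\cdot C_L<0$, so $[V]\neq[C_L]$ and Lemma~\ref{excplane}$(b)$ forces $L\not\subset B$. The correct route is the paper's: the transform $S$ of $S_0$ is contained in $B$ and meets $L$, so $B\cap L$ is a nonempty proper closed subset of $L\cong\pr^2$, whence $B\cdot C_L>0$. Your final step ($L'\not\subset B$ for all exceptional planes $L'\subset D$) is in the right spirit but can be made precise with only the single inequality $D\cdot[V]\geq 0>D\cdot C_{L'}$, again via Lemma~\ref{excplane}$(b)$, rather than an appeal to unspecified intersection numbers with adjacent divisors.
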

\begin{proof}
  Let $X\stackrel{\ph}{\dasharrow} \w{X}\stackrel{\sigma}{\to} Y$ be the contraction associated to $D$ as in \S\ref{contraction}, and $\ell\subset \w{X}$ the exceptional curve corresponding to $L$ (see Lemma \ref{basic1}), so that $\w{D}\cdot\ell>0$ and $\ell\not\subset\w{D}$. Let
 $q\in\ell\cap \w{D}$.

If  $D$ is 
$(3,1)^{sm}$, then $\sigma$ is the blow-up of a smooth curve, and $q$ belongs to a fiber $S_0$ of $\sigma$. Let $\Gamma_0$ be a line in $S_0\cong\pr^2$ through $q$.

If $D$ is $(3,0)^{Q}$, then $\w{D}$ is an irreducible quadric, either smooth or with one singular point, which cannot be $q$ by  Rem.~\ref{vertex}.
Let  $\Gamma_0$ be a line in $\w{D}$ through $q$, and let
$S_0\subset\w{D}$ be the union of the lines through $q$, which is
  a quadric cone (if $\w{D}$ is smooth) or a reducible quadric surface, singular at $q$ (if $\w{D}$ is singular).

In both cases $-K_{\w{X}}\cdot \Gamma_0=2$, so that in the factorization of $\ph$ given in Lemma \ref{basic1}:
$$\xymatrix{X\ar@{-->}@/_1pc/[rr]_{\ph}&{\wi{X}}\ar[r]^g\ar[l]_f&{\w{X},}
 }$$
if $E\subset\wi{X}$ is the exceptional divisor over $L$ and $\ell$, by Lemma \ref{basic1}$(a)$ we have $E\cdot\wi{\Gamma}=1$ for the transform $\wi{\Gamma}\subset\wi{X}$ of $\Gamma_0$. This implies that $\Gamma_0$ intersects $\ell$ only at $q$, transversally, and does not intersect other exceptional curves. Moreover the transform $\Gamma\subset X$ of $\Gamma_0$
is a smooth rational curve with $-K_X\cdot\Gamma=1$, so that
there is a family of lines $V$ in $X$ containing the general $\Gamma$. 
The transform $S\subset D\subset X$ of $S_0$ yields a surface contained in $\Lo V$.

The curve $C_D\subset D$ is the transform of a general line in $S_0$ (if $D$ is $(3,1)^{sm}$), or of a general line in $\w{D}$ (if $D$ is $(3,0)^{Q}$), so we must 
have $C_D\equiv [V]+mC_L$ with $m\in\Z$. Intersecting with $-K_X$ we get $m=1$ and 
 $C_D\equiv [V]+C_L$.

We claim that, varying $\Gamma_0$ in $S_0$, we can find two disjoint $\Gamma$'s in $S$. Indeed if $D$ is 
$(3,1)^{sm}$, then the plane $T_qS_0\subset T_q\w{X}$ is the union of $T_q\Gamma_0$ for all $\Gamma_0$'s, and since $\Gamma_0$ and $\ell$ are transverse at $q$, $T_qS_0$ does not
contain $T_q\ell$, namely $S_0$ and $\ell$ meet transversally at $q$. Moreover $S_0\cap \ell=\{q\}$ and $S_0$ does not meet other exceptional curves (see \S\ref{contraction}).
Locally 
$g\colon\wi{X}\to\w{X}$ is the blow-up of $\ell$, therefore the strict transform of $S_0$ is isomorphic to the  blow-up of $S_0$ at $q$, and the lines $\Gamma_0$ get separated.

If $D$ is $(3,0)^{Q}$ and $\w{D}$ is singular, we can just repeat the same argument with each irreducible component of $S_0$. 
 Finally if  $D$ is $(3,0)^{Q}$ and $\w{D}$ is smooth, then $T_q\w{D}\subset T_q\w{X}$ is a hyperplane, and when $\Gamma_0$ varies, $T_q\Gamma_0$ describes a quadric cone surface in $T_q\w{D}$, not containing $T_q\ell$. 
If we choose $\Gamma_0$ and $\Gamma_0'$ such that the plane generated by 
 $T_q\Gamma_0$ and $T_q\Gamma_0'$ in $T_q\w{X}$ does not contain $T_q\ell$, then
the curves $\wi{\Gamma}$ and $\wi{\Gamma}'$ in $\wi{X}$ must meet $g^{-1}(q)\cong\pr( (T_q\w{X}/T_q\ell)^{\vee})$ at different points. Since $f_{|g^{-1}(q)}\colon
 g^{-1}(q)\to L$ is an isomorphism, the curves $\Gamma$ and $\Gamma'$ are disjoint in $X$.

 By Th.~\ref{uno} we deduce that $\dim V=2$ and $\dim\Lo V=3$.

 Since $D\cdot [V]+D\cdot C_L=D\cdot C_D=-1$, we have $[V]\neq[C_L]$, and $L\not\subset B:=\Lo V$ by Lemma \ref{excplane}$(b)$. On the other hand $B\cap L\neq \emptyset$ because $S\subset B$, so that $B\cdot C_L>0$. This also implies that $B\neq D$, because $D\cdot C_L<0$, hence $D\cdot [V]\geq 0$.
 If $L'\subset D$ is any exceptional plane, we have $D\cdot C_{L'}<0$, thus $[V]\neq [C_{L'}]$, and $L'\not\subset B$ again by Lemma \ref{excplane}$(b)$.
\end{proof}
\begin{lemma}\label{atletica}
Notation as in Prop.~\ref{importante}. Let $L'\subset D$ be an exceptional plane with $C_{L'}\not\equiv C_L$ , and $V'$ the corresponding family of lines given by Prop.~\ref{importante}, with locus $B'$. Suppose moreover that  $D$ is of type $(3,0)^{Q}$ and that both $B$ and $B'$ are fixed of type $(3,2)$. 
Then $B\cdot [V']>0$ and $B'\cdot [V]>0$.
\end{lemma}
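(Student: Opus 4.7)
The plan is to reduce to a single intersection inequality, then derive a contradiction from the geometry of the $\pr^1\times\pr^1$ components of $B\cap B'$ produced by Lemma \ref{preistorico}. First, by Lemma \ref{chitarre} (since $B,B'$ are fixed prime divisors of type $(3,2)$ covered by $V,V'$) we have $[V]=[C_B]$, $[V']=[C_{B'}]$, and $B\cdot C_B=B'\cdot C_{B'}=-1$. The asserted inequalities thus become $B\cdot C_{B'}>0$ and $B'\cdot C_B>0$; by Cor.~\ref{solfeggio} applied to the pair $B,B'$ (both of type $(3,2)$) these are equivalent, so I would only prove one, say $B\cdot C_{B'}>0$, and let the symmetric statement follow.

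Assume by contradiction that $B\cdot C_{B'}=0$. Then Cor.~\ref{solfeggio} also gives $B'\cdot C_B=0$, so by Lemma \ref{2faces}$(a)$ the divisors $B,B'$ are adjacent. Next I would exploit the specific $(3,0)^Q$ geometry to show that $B\cap B'$ has positive dimension: the surfaces $S\subset B\cap D$ and $S'\subset B'\cap D$ from Prop.~\ref{importante} are the transforms of the tangent hyperplane sections $S_0=T_q\tilde D\cap\tilde D$ and $S_0'=T_{q'}\tilde D\cap\tilde D$ of the quadric $\tilde D$, both of class $H$ in $\Pic(\tilde D)=\Z H$. Since they are distinct hyperplane sections of the 3-dimensional quadric, their intersection (the conic $T_q\tilde D\cap T_{q'}\tilde D\cap\tilde D$) is 1-dimensional, so $B\cap B'\supseteq S\cap S'$ has dimension $\geq 1$. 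Lemma \ref{preistorico} then produces a component $F\subset B\cap B'$ isomorphic to $\pr^1\times\pr^1$ with $\mathcal{N}_{F/X}\cong\ol(-1,0)\oplus\ol(0,-1)$, chosen so that $F\cap S\cap S'\neq\emptyset$; its two rulings $h_1,h_2$ carry the classes $[C_B]$ and $[C_{B'}]$.

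The contradiction comes from restricting $D$ to $F$. The relation $[C_B]+[C_L]=[C_D]=[C_{B'}]+[C_{L'}]$ together with $D\cdot C_D=-1$ gives $D\cdot C_B=-1-D\cdot C_L$ and $D\cdot C_{B'}=-1-D\cdot C_{L'}$. In the generic configuration $D\cdot C_L=D\cdot C_{L'}=-1$ (i.e.\ $\tilde D$ meets $\ell,\ell'$ transversally) these vanish, and therefore $D_{|F}=(D\cdot C_{B'})h_1+(D\cdot C_B)h_2=0$ in $\Pic(F)$. If $F\not\subset D$, then $F\cap D$ would be an effective divisor of $F$ of class $0\in\Pic(\pr^1\times\pr^1)=\Z^2$, hence empty—impossible since $F$ meets $S\cap S'\subset D$. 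If instead $F\subset D$, the normal bundle sequence
\[0\to\mathcal{N}_{F/D}\to\mathcal{N}_{F/X}\to D_{|F}=\ol_F\to 0\]
would force $\mathcal{N}_{F/D}\cong\ol(-1,-1)$; but $H^1(F,\ol(-1,-1))=0$ by K\"unneth, so the sequence splits, giving $\mathcal{N}_{F/X}\cong\ol(-1,-1)\oplus\ol$, which contradicts $\mathcal{N}_{F/X}\cong\ol(-1,0)\oplus\ol(0,-1)$ by comparing second Chern classes ($1\neq 0$).

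The main obstacle is the non-generic configuration $D\cdot C_L\leq-2$ (or similarly for $L'$), where $D_{|F}\neq 0$ is an effective class on $F$ and the clean normal-bundle contradiction above is unavailable. Here I would fall back on the specific curve $\Gamma'\in V'$ constructed in Prop.~\ref{importante}: since $\Gamma_0'\subset\tilde D$ is a line through $q'$ and $S_0$ is a hyperplane section, their intersection $\Gamma_0'\cap S_0=\Gamma_0'\cap T_q\tilde D$ is a single point whose transform lies in $\Gamma'\cap S\subseteq \Gamma'\cap B$. If $\Gamma'\not\subset B$ this already gives $B\cdot[V']\geq 1$; if instead every $\Gamma'$ of the 1-parameter family through $q'$ lies in $B$, then $S'\subset B$, hence $S_0'\subset\tilde B_{|\tilde D}$, and $[\tilde B_{|\tilde D}]\geq 2H$ bumps $B\cdot C_D$ enough to absorb the contribution of $B\cdot C_{L'}$ coming from $S'\cap L'$.
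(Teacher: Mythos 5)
Your overall strategy --- reduce to $B\cdot C_{B'}>0$, assume it vanishes, use Cor.~\ref{solfeggio} and Lemma \ref{preistorico} to get that every connected component of $B\cap B'$ is irreducible ($\cong\pr^1\times\pr^1$), and then exploit the surfaces $S,S'$ and the conic $S_0\cap S_0'$ --- matches the paper's, and the first half of your fallback (if some line $\Gamma_0'$ through $q'$ is not contained in $\w{B}$, its transform meets $B$ without lying in it, so $B\cdot[V']\geq 1$) is precisely the paper's key step. The genuine gap is at the end of the fallback. When every $\Gamma_0'$ lies in $\w{B}$, so that $S'\subset B$, the claim that $\w{B}_{|\w{D}}\geq 2H$ ``bumps $B\cdot C_D$ enough to absorb the contribution of $B\cdot C_{L'}$'' does not close: the inequalities actually available are $B\cdot C_D\geq 2$ (from $\w{B}_{|\w{D}}\geq S_0+S_0'$) and $B\cdot C_{L'}\geq 2$ (since $S'\cap L'$ is a conic in $L'\cong\pr^2$, cf.\ the computation in Prop.~\ref{yoga}), and both could be larger, so $B\cdot C_{B'}=B\cdot C_D-B\cdot C_{L'}$ has no determined sign; at best you recover $B\cdot C_{B'}\geq 0$, which is the hypothesis you are trying to contradict. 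The paper finishes this case differently: running the same ``some line not contained'' argument with the roles of $B$ and $B'$ exchanged, either one is done, or also $S\subset B'$; then $S\cup S'\subseteq B\cap B'$ is connected (the two surfaces meet along the conic) and reducible, contradicting the irreducibility of the connected components of $B\cap B'$ given by Lemma \ref{preistorico}. You have every ingredient for this on the table but do not take that last step.

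A secondary remark: the case split on $D\cdot C_L$ is unnecessary, and the configuration $D\cdot C_L=D\cdot C_{L'}=-1$ is not ``generic'' in any useful sense --- $D\cdot C_L$ can be any negative integer, so your fallback is the main case rather than an exception. The normal-bundle/$c_2$ contradiction you propose for the sub-case $D\cdot C_B=D\cdot C_{B'}=0$ is a genuinely different and rather elegant route there, but it tacitly assumes that $\ma{N}_{F/D}$ is a line bundle, i.e.\ that $D$ is smooth along $F$, which is not automatic for a divisor of type $(3,0)^Q$ (e.g.\ when $\w{D}$ is a quadric cone); the paper's argument needs no such hypothesis and treats all values of $D\cdot C_L$ uniformly.
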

\begin{proof}
Since  $C_D\equiv [V]+C_L\equiv [V']+C_{L'}$, we have $[V]\neq[V']$ and hence $B\neq B'$ by Lemma \ref{excplane}$(a)$. Moreover by the same lemma we have $[V]=[C_B]$ and $[V']=[C_{B'}]$.

Assume by contradiction that $B\cdot [V']=0$. Then by Lemma \ref{preistorico} we have $B'\cdot [V]=0$ and, if $B\cap B'\neq\emptyset$, every connected component of $B\cap B'$ is irreducible.

We keep the same notation as in the proof of Prop.~\ref{importante}.
Let us consider the transforms $\w{B}$, $\w{B}'$ in $\w{X}$, and let $\ell'\subset\w{X}$ be the exceptional curve corresponding to $L'$. By construction $\w{B}'$ contains the surface $S_0'$, given by the union of the lines $\Gamma_0'$ in $\w{D}$ through a point $q'\in\ell'\cap\w{D}$. The surfaces $S_0$ and $S_0'$ meet along a plane conic $\Lambda$, which does contain neither $q$ nor $q'$; in fact $\Lambda$ is contained in the open subset where $\ph\colon X\dasharrow\w{X}$ is an isomorphism.

If some line $\Gamma_0$ is not contained in $\w{B}'$, then its transform $\Gamma$ in $X$ is a curve of the family $V$ such that $\Gamma\cap B'\neq\emptyset$ and $\Gamma\not\subset B'$, contradicting $B'\cdot [V]=0$. Therefore every $\Gamma_0$ must be contained in $\w{B}'$, so that $S_0\subseteq \w{B}'$ and $S\subseteq B\cap B'$. Similarly
$S'\subseteq B\cap B'$, where $S'$ is the transform of $S_0'$. Hence we get $S\cup S'\subseteq B\cap B'$ and
 $S\cap S'\neq\emptyset$; this gives a reducible connected component of  $B\cap B'$, a contradiction.
\end{proof}
\begin{lemma}\label{zoom2}
Notation as in Prop.~\ref{importante}. Suppose moreover that $D$ is of type $(3,0)^{Q}$ and that $D\cdot C_L=-1$.
 Then  $B\cap L'=\emptyset$  for every exceptional plane $L'\subset D$ with $C_{L'}\not\equiv C_L$.
\end{lemma}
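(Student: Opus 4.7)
The plan is to argue by contradiction: assume there is a curve $\Gamma$ of the family $V$ with $\Gamma\cap L'\neq\emptyset$, and show that this forces $[V]=[V']$, where $V'$ is the family of lines associated by Prop.~\ref{importante} to $L'$; this gives $C_L\equiv C_{L'}$, contradicting the hypothesis. First I would collect two numerical facts. From $[V]+C_L\equiv C_D$, $D\cdot C_D=-1$, and the hypothesis $D\cdot C_L=-1$, one obtains $D\cdot[V]=0$. Moreover, for every exceptional plane $L_i\subset D$ (the members of the indeterminacy locus of $\ph$), $[V]\neq[C_{L_i}]$: otherwise $C_D\equiv C_{L_i}+C_L$ would give $D\cdot C_{L_i}=0$, contradicting $D\cdot C_{L_i}<0$. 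In particular $\Gamma$, an irreducible curve of class $[V]$, cannot be contained in any $L_i$.

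Next I would locate $\Gamma$ inside $D$. Since $L'\subset D$ we have $\Gamma\cap D\neq\emptyset$; if $\Gamma\not\subset D$ then $D\cdot\Gamma>0$, contradicting $D\cdot[V]=0$. Hence $\Gamma\subset D$, and the strict transform $\w\Gamma\subset\w X$ is well defined and lies in the quadric threefold $\w D$. Let $d$ be the degree of $\w\Gamma$ in $\w D\subset\pr^4$. From $\ma N_{\w D/\w X}=\ol_Q(-1)$ and adjunction one computes $-K_{\w X}\cdot\w\Gamma=2d$ and $\w D\cdot\w\Gamma=-d$. Passing to the common resolution $\wi X$ of Lemma \ref{basic1} and setting $m_i:=\w D\cdot\ell_i=-D\cdot C_{L_i}\geq 1$, Lemma \ref{basic1}$(a)$ together with the analogous push-pull identity $D\cdot\Gamma=\w D\cdot\w\Gamma+\sum_i m_i(E_i\cdot\wi\Gamma)$ for the divisor $D$ yields
\[
\sum_i E_i\cdot\wi\Gamma=2d-1,\qquad \sum_i m_i(E_i\cdot\wi\Gamma)=d.
\]
Since $m_i\geq 1$ and $E_i\cdot\wi\Gamma\geq 0$ for every $i$, these force $d\leq 1$, hence $d=1$; the unique non-vanishing $E_i\cdot\wi\Gamma$ then equals $1$ with $m_i=1$, and since $\Gamma$ meets $L'$ this index must correspond to $L'$. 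Thus $\w\Gamma$ is a line in the quadric $\w D$ passing through $q':=\w D\cap\ell'$.

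Finally, $\w\Gamma$ and the generic line $C_{\w D}\subset\w D$ represent the same class in $\N(\w D)$, and hence in $\N(\w X)$. Repeating the final step of the proof of Prop.~\ref{importante} with $L'$ in place of $L$, namely writing $[\Gamma]\equiv[C_D]+k[C_{L'}]$ (since $\w\Gamma$ and $C_{\w D}$ differ only above $q'$) and pinning down $k=-1$ by intersecting with $-K_X$, yields $[\Gamma]\equiv[C_D]-[C_{L'}]=[V']$. But $\Gamma\in V$ also gives $[\Gamma]=[V]=[C_D]-[C_L]$, whence $C_L\equiv C_{L'}$, the desired contradiction. The main technical obstacle is the simultaneous bookkeeping on $\wi X$ that produces the two displayed equations; the hypothesis $D\cdot C_L=-1$ enters essentially because it is exactly what ensures $D\cdot[V]=0$, giving the clean right-hand side $d$ in the second equation and thereby ruling out all degrees $d\geq 2$.
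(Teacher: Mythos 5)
Your proof is correct, and it takes a genuinely different route from the paper's. The paper transfers $V$ to a family $V_{\w{X}}$ on $\w{X}$ (using $D\cdot[V]=0$ to see that the general member avoids $D$) and then classifies \emph{all} one-cycles $\Lambda$ of $V_{\w{X}}$ by splitting them against the nef divisor $-K_{\w{X}}+2\w{D}=\sigma^*(-K_Y)$: either $\Lambda$ is integral and disjoint from $\w{D}$ and from every exceptional curve, or $\Lambda=\Gamma_0+\ell_i$ with $\Gamma_0$ a line in the quadric meeting $\ell_i$ and $C_{L_i}\equiv C_L$; the statement then follows from $\ell'\cap\w{B}=\emptyset$. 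You instead stay with the irreducible members of $V$ in $X$ (where the family is unsplit), trap a member $\Gamma$ meeting $L'$ inside $D$, and run a degree computation on the common resolution $\wi{X}$: adjunction on the quadric gives $-K_{\w{X}}\cdot\w{\Gamma}=2d$ and $\w{D}\cdot\w{\Gamma}=-d$, and your two push--pull identities force $d=1$ and concentrate the intersection of $\wi{\Gamma}$ with the $E_i$'s on the single plane $L'$, whence $[V]\equiv[C_D]-[C_{L'}]$ and $C_L\equiv C_{L'}$, a contradiction. The individual steps check out: the multiplicity of $D$ along $L_i$ does equal $\w{D}\cdot\ell_i$ (restrict $f^*D$ to a fiber of $f_{|E_i}$), the strict transform $\w{\Gamma}$ is well defined because $[V]\neq[C_{L_i}]$ rules out $\Gamma\subset L_i$, and the numerical equivalence of all lines in the (possibly singular) quadric $\w{D}$ is the same fact the paper uses implicitly in Prop.~\ref{importante}. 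What the paper's version buys in exchange for being less computational is the explicit description of the degenerate members of $V_{\w{X}}$, and hence of $\w{B}\cap\w{D}$ as the union of the cones of lines $S_0^i$; that by-product is reused verbatim in the proof of Prop.~\ref{yoga}, so your argument proves this lemma but would not substitute for the paper's proof there.
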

\begin{proof}
 Since $D\cdot C_D=D\cdot C_L=-1$ and $C_D\equiv C_L+[V]$, we have $D\cdot [V]=0$, thus every line of the family $V$ that meets $D$ must be contained in $D$. On the other hand $D\neq B$, thus  the general line of the family $V$ is disjoint from $D$, and $V$ yields a family of lines $V_{\w{X}}$ in $\w{X}$. We have $[V_{\w{X}}]\equiv\ell+C_{\w{D}}$, $\w{D}\cdot [V_{\w{X}}]=0$, and $\Lo V_{\w{X}}=\w{B}$, the transform of $B$.
Since $-K_{\w{X}}$ is not ample, the family  $V_{\w{X}}$ can have reducible members, containing some exceptional curve.

 Let us consider all the exceptional planes $L_1:=L,L_2,\dotsc,L_d\subset D$ such that 
 $C_{L_i}\equiv C_L\equiv C_D-[V]$, and let
  $\ell_1,\dotsc,\ell_d\subset\w{X}$ be the corresponding exceptional curves.
We show that for every one-cycle $\Lambda$ of the family $V_{\w{X}}$ 
one of the following holds:
\begin{enumerate}[$(i)$]
\item  $\Lambda$ is integral and  disjoint from $\w{D}$ and from every exceptional curve of $\w{X}$;
\item $\Lambda=\Gamma_0+\ell_i$ for some $i\in\{1,\dotsc,d\}$,
 where $\Gamma_0\subset\w{D}$ is a line meeting $\ell_i$, and $\Lambda$ does not meet exceptional curves except $\ell_i$.
\end{enumerate}

Indeed, let us consider $-K_{\w{X}}+2\w{D}=\sigma^*(-K_Y)$, so that $-K_{\w{X}}+2\w{D}$ is nef and $$(-K_{\w{X}}+2\w{D})^{\perp}\cap\NE(\w{X})=\R_{\geq 0}[C_{\w{D}}].$$
 We have  $(-K_{\w{X}}+2\w{D})\cdot \Lambda=1$, so we can write $\Lambda=\Lambda_1+\Lambda_0$ where $\Lambda_1$ is an integral curve with  $(-K_{\w{X}}+2\w{D})\cdot \Lambda_1=1$, and $\Lambda_0$ is an effective one-cycle with $(-K_{\w{X}}+2\w{D})\cdot\Lambda_0 =0$. In particular $\Lambda_1\not\subset\w{D}$, while $\Lambda_0$ is supported in $\w{D}$.

If $\Lambda_0=0$, then $\Lambda=\Lambda_1$ is integral and disjoint from $\w{D}$; moreover $\Lambda$ cannot intersect any exceptional curve (see Lemma \ref{basic1}$(b)$), and we get $(i)$.

If $\Lambda_0\neq 0$, then $\w{D}\cdot\Lambda_0<0$. Moreover we have $0=\w{D}\cdot \Lambda=\w{D}\cdot \Lambda_1+\w{D}\cdot \Lambda_0$, therefore $\w{D}\cdot \Lambda_1>0$. Together with  $(-K_{\w{X}}+2\w{D})\cdot \Lambda_1=1$, this implies that $-K_{\w{X}}\cdot \Lambda_1<0$, so that $\Lambda_1$ is an exceptional curve (see Lemma \ref{basic1}$(a)$), $-K_{\w{X}}\cdot \Lambda_1=-1$, $\w{D}\cdot \Lambda_1= 1$, and finally $\w{D}\cdot \Lambda_0=-1$. Hence $\Lambda_0\equiv C_{\w{D}}$ is integral and is a line in the quadric $\w{D}$; moreover $\Lambda_1\equiv [V_{\w{X}}]-C_{\w{D}}\equiv\ell$, so that $\Lambda_1=\ell_i$ for some $i\in\{1,\dotsc,d\}$. Since the 1-cycle $\Lambda$ is connected, $\Lambda_0$ must meet $\ell_i$. Finally $\Lambda_0$ cannot meet other exceptional curves (as shown in the proof of Prop.~\ref{importante}), and neither can $\Lambda_1$ (by Lemma \ref{basic1}$(b)$), so that we get $(ii)$.

\smallskip

 Now let $L'\subset D$ be an exceptional plane such that 
$C_{L'}\not\equiv C_L$, and $\ell'\subset\w{X}$ the corresponding exceptional curve, so that $\ell'\neq \ell_i$ for every $i=1,\dotsc,d$. By what precedes, we have $\ell'\cap\w{B}=\emptyset$, thus $L'\cap B=\emptyset$. 
\end{proof}
\begin{proposition}\label{yoga}
Notation as in Prop.~\ref{importante}. Suppose moreover that 
$B$ is fixed of type $(3,2)$ and adjacent to $D$, let
 $L'\subset D$ be an exceptional plane with $C_{L'}\not\equiv C_L$, and $V'$  the corresponding family given by  Prop.~\ref{importante}.
Then $D\cdot C_L=-1$, $D\cdot [V]=0$, and:
\begin{enumerate}[$\bullet$]
\item $B\cdot C_D=0$, $B\cdot C_L=1$,  $B\cdot [V']=0$, if $D$ is of type $(3,1)^{sm}$;
\item $B\cdot C_D=1$, $B\cdot C_L=2$, $B\cdot [V']=1$, if $D$ is of type $(3,0)^Q$.
\end{enumerate}
\end{proposition}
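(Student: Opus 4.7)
The proof splits into common preliminaries and a case split on the type of $D$.

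\medskip

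\noindent\textit{Common preliminaries.} Since $B$ is fixed of type $(3,2)$ and is covered by the family $V$, Lemma \ref{chitarre} forces $[V]=[C_B]$, hence $B\cdot[V]=B\cdot C_B=-1$. Adjacency with $D$ together with Lemma \ref{nigra}(a) gives $D\cdot C_B=D\cdot[V]=0$, and combining with $D\cdot C_D=-1$ and the relation $[V]+C_L\equiv C_D$ from Proposition \ref{importante}, I get $D\cdot C_L=-1$. The same numerical relation, together with the analogous $[V']+C_{L'}\equiv C_D$, yields $B\cdot C_L=B\cdot C_D+1$ and $B\cdot[V']=B\cdot C_D-B\cdot C_{L'}$, so all remaining equalities reduce to computing $B\cdot C_D$ (and, in the $(3,1)^{sm}$ case, $B\cdot C_{L'}$).

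\medskip

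\noindent\textit{Case $(3,1)^{sm}$.} If $B\cdot C_D>0$, Lemma \ref{aikido} forces $B$ to be disjoint from every exceptional plane of $D$, contradicting $B\cdot C_L>0$. For the reverse inequality $B\cdot C_D\geq 0$: the relation $[V]\neq[C_L]$ is already noted in the proof of Proposition \ref{importante} (via Lemma \ref{excplane}(b), since $L\not\subset B$), and the representative $C_{\w D}$ of $C_D$ is a general line in a general fiber of the $\pr^2$-bundle $\sigma_{|\w D}$; it cannot lie in $\w B$, as otherwise a general fiber, and then $\w D$ itself, would be contained in $\w B$, forcing $B=D$. Hence $B\cdot C_D=0$ and $B\cdot C_L=1$. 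For $B\cdot[V']=0$: from $0=B\cdot[V']+B\cdot C_{L'}$, both summands are non-negative ($B'\neq B$, otherwise Lemma \ref{excplane}(a) would give $[V']=[V]$ and hence $C_{L'}\equiv C_L$, contradicting the hypothesis; $L'\not\subset B$ by Proposition \ref{importante}), and their sum is zero, so both vanish.

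\medskip

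\noindent\textit{Case $(3,0)^Q$.} Lemma \ref{zoom2} applies since $D\cdot C_L=-1$, giving $B\cap L'=\emptyset$, whence $B\cdot C_{L'}=0$ and $B\cdot C_D=B\cdot[V']$. For the lower bound $B\cdot C_L\geq 2$: $B$ contains the transform $S$ of $S_0\subset\w D$ (the cone, or reducible quadric, of lines in $\w D$ through $q$), and working in the common resolution $\wi X$ the strict transform of $S_0$ meets the fiber $g^{-1}(q)\cong\pr^2$ in the projectivized tangent cone of $S_0$ at $q$, which is a conic; via the isomorphism $f_{|g^{-1}(q)}\colon g^{-1}(q)\stackrel{\sim}{\to}L$ this conic embeds in $B\cap L$, so $B\cdot C_L\geq 2$. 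The matching upper bound $B\cdot C_L\leq 2$ is the main obstacle. My plan is to analyze $\w B\cap\w D$ on the quadric $\w D$: writing $\sigma^*B_Y=\w B+\mu\w D$ with $\mu=B\cdot C_D$, one has $\w B_{|\w D}\equiv\mu H$ (using $\w D_{|\w D}\equiv -H$), so the task reduces to showing this class equals $H$. By the description of the family $V_{\w X}$ in the proof of Lemma \ref{zoom2}, $\w B\cap\w D$ is supported on hyperplane sections $S_{0,i}$ coming from the reducible members $\Gamma_0+\ell_i$ with $[\ell_i]=[\ell]$, and the heart of the argument is to show only the $S_0$ associated to our specific $\ell$ actually contributes. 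I expect this uniqueness to follow from the irreducibility of $V$ and from the uniqueness of the covering family of lines on $B$ (Lemma \ref{excplane}(a)) with class $[C_B]=C_D-C_L$. Once $B\cdot C_D=1$ is established, the equality $B\cdot[V']=1$ is immediate.
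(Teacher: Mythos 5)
Your common preliminaries and your treatment of the $(3,1)^{sm}$ case are correct, and the latter takes a genuinely different route from the paper. Where the paper establishes $B\cdot C_D=0$ by contracting the ray $\R_{\geq 0}[C_B]$, showing $D$ avoids the two-dimensional fibers, and then analyzing the components of $B\cap D$ inside the $\pr^2$-bundle $\w{D}$, you get the inequality $B\cdot C_D\leq 0$ directly from Lemma \ref{aikido} (if $B\cdot C_D>0$ then $B$ would miss every exceptional plane in $D$, contradicting $B\cdot C_L>0$ from Prop.~\ref{importante}), and $B\cdot C_D\geq 0$ from the fact that the general $C_{\w{D}}$ cannot lie in $\w{B}\neq\w{D}$. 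This is shorter and valid; there is no circularity, since Lemma \ref{aikido} is proved independently of Prop.~\ref{importante} and \ref{yoga}. The derivation of $B\cdot[V']=0$ as a sum of two non-negative terms equal to zero matches the paper.

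The $(3,0)^Q$ case, however, has a genuine gap exactly where you flag it: the upper bound $B\cdot C_L\leq 2$, equivalently $B\cdot C_D\leq 1$. Your reduction to showing $\w{B}_{|\w{D}}\equiv H$ is fine, but the assertion that only the cone $S_0$ attached to your chosen $\ell$ contributes is not something that follows from the irreducibility of $V$ or from Lemma \ref{excplane}$(a)$: the proof of Lemma \ref{zoom2} shows precisely that the family $V_{\w{X}}$ has reducible members $\Gamma_0+\ell_i$ for \emph{every} exceptional curve $\ell_i\equiv\ell$, so all the cones $S_0^1,\dotsc,S_0^d$ genuinely lie in $\w{B}\cap\w{D}$ and numerical uniqueness of the family gives no control on $d$. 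The paper rules out $d\geq 2$ by a geometric argument you have not set up: it first shows that $D$ is disjoint from the two-dimensional fibers of the contraction $f$ of $\R_{\geq 0}[C_B]$, so that distinct irreducible components of $B\cap D$ can only meet along fibers of $f$ (class in $\R_{\geq 0}[C_B]$), whereas for $i\neq j$ the surfaces $S^i$ and $S^j$ meet along a conic with class in $\R_{\geq 0}[C_D]$ --- a contradiction. Even granting $d=1$, you still need the scheme-theoretic multiplicity of $S$ in $B_{|D}$ to be one; the paper obtains this by intersecting with a curve of the family $V$ lying in $S$ (treating the smooth quadric and quadric cone cases separately). Neither step is present in your proposal, so the values $B\cdot C_D=1$, $B\cdot C_L=2$, $B\cdot[V']=1$ remain unproved.
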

\begin{proof}
We keep the same notation as in the proof of Prop.~\ref{importante}.
Note that $[C_B]=[V]$ by Lemma \ref{excplane}$(a)$.
\begin{prg}\label{mobili}
By Lemma \ref{nigra}$(a)$ we have $D\cdot [V]=0$; since $C_D\equiv C_L+[V]$, this also yields $D\cdot C_L=-1$. Thus $\w{D}\cdot\ell=1$, and $\w{D}$ and $\ell$ meet transversally at the unique point $q$. This implies that $D$ is smooth around $L$ and $\ph_{|D}\colon D\dasharrow \w{D}$ is regular around $L$ and is just the blow-up of $q$, with exceptional divisor $L$.

The transform $S\subset D\subset X$ of the surface ${S}_0\subset \w{D}\subset\w{X}$
is isomorphic to either $\mathbb{F}_1$ (if $D$ is $(3,1)^{sm}$), or to $\mathbb{F}_2$ (if $\w{D}$ is a smooth quadric), or to the union of two copies of $\mathbb{F}_1$ (if $\w{D}$ is a singular quadric). 
\end{prg}
\begin{prg}
Let $f\colon X\to X'$ be the contraction of $\R_{\geq 0}[C_B]$, so that $\Exc(f)=B$, $f$ can have at most finitely many $2$-dimensional fibers, and $B$ is a smooth $\pr^1$-bundle outside these fibers. 

We show that $D$ is disjoint from the possible $2$-dimensional fibers of $f$. Indeed if $F$ is such a fiber and $F\cap D\neq \emptyset$, since $D\cdot C_B=0$, we have $D\cdot\Gamma=0$ for every curve $\Gamma\subset F$, so that $F\subset D$.
For every exceptional plane $L'\subset D$ we have $D\cdot C_{L'}<0$, therefore the classes 
$[C_{L'}]$ and $[C_B]$ cannot be proportional. Since $\N(F,X)=\R[C_B]$ and 
$\N(L',X)=\R[C_{L'}]$, we deduce that
  $\dim(L'\cap F)\leq 0$. 
Therefore there exists an irreducible curve $\Gamma_2\subset F$  disjoint from every exceptional
plane $L'\subset D$. Then $\Gamma_2$ is contained in the open subset where $\ph\colon X\dasharrow \w{X}$ is an isomorphism, and we conclude that $\Gamma_2\equiv mC_D$ for some $m\in\Z_{\geq 0}$, which is impossible because $[\Gamma_2]\in\NE(f)$.

We deduce that $B\cap D$ is covered by one-dimensional fibers of $f$  and $B\cap D\subset B_{reg}$. 
The irreducible components of 
$B\cap D$ are $f^{-1}(K)$ where $K$ is an irreducible component of the curve $f(B\cap D)$, and distinct irreducible components  intersect at most along fibers of $f$.
\end{prg}
\begin{prg}
Suppose that $D$ is $(3,1)^{sm}$. We show that $S$ is a connected component of $B\cap D$. By contradiction,
let $T$ be an irreducible component of $B\cap D$ such that $T\neq S$ and $T\cap S\neq\emptyset$. Then $T$ must contain some curve $\Gamma$, and $\Gamma\cap L\neq\emptyset$, so that $T\cap L\neq\emptyset$ and  $T\cap L$ is a curve (as $T\subset D$ and $L\subset D_{reg}$). 
Since $f(T)$ is an irreducible curve and $f_{|T}$ is a $\pr^1$-bundle, we also have $\rho_T=2$ and $\N(T)=\R [T\cap L]\oplus\R[\Gamma]$.

 If $\w{T}\subset \w{D}$ is the transform of $T$, then $\w{T}$ cannot contain exceptional curves, and
$\rho_{\w{T}}=1$.
 We have $S_0\cap \w{T}\neq\emptyset$, $S_0\neq \w{T}$, $S_0$ is a fiber of the $\pr^2$-bundle $\sigma_{|\w{D}}$, and $\rho_{\w{T}}=1$, which yields a contradiction. 

Thus every irreducible component of $\w{B}\cap \w{D}$ is a fiber of $\sigma$, therefore $\w{B}$ is disjoint from the general fiber of $\sigma_{|\w{D}}$; this yields $B\cdot C_D=\w{B}\cdot C_{\w{D}}=0$. Then $C_D\equiv C_B+C_L$ and $B\cdot C_B=-1$ imply that $B\cdot C_L=1$.

Finally set $B':=\Lo V'$. We have $[V']\neq [V]$, thus $B'\neq B$ by Lemma \ref{excplane}$(a)$, and $B\cdot [V']\geq 0$. On the other hand we also have $B\cdot C_{L'}\geq 0$ because $L'\not\subset B$ by Prop.~\ref{importante}, hence
$0=B\cdot C_D=B\cdot C_{L'}+B\cdot [V']$ implies that $B\cdot C_{L'}=B\cdot [V']=0$.
\end{prg}
\begin{prg}
Suppose now that $D$ is $(3,0)^{Q}$. The proof of Lemma \ref{zoom2} shows that $\w{B}\cap\w{D}=S^1_0\cup\cdots\cup S^d_0$, where $S^i_0$ is the union of the lines in $\w{D}$ through the point $q_i:=\w{D}\cap\ell_i$ for $i=1,\dotsc,d$, and $\ell_1,\dotsc,\ell_d$ are all the exceptional curves of $\w{X}$ numerically equivalent to $\ell=\ell_1$, so that $\w{D}\cdot\ell_i=1$ for every $i$.
In $X$ we have $B\cap D=S^1\cup\cdots\cup S^d$, where $S^i$ is the transform of $S_0^i$.

We show that $d=1$. Otherwise,
 for $i\neq j$ the surfaces $S_0^i$ and $S_0^j$ meet along a plane conic contained in the open subset where $\ph\colon X\dasharrow\w{X}$ is an isomorphism,
 so that $S^i$ and $S^j$ meet along a curve with class in $\R_{\geq 0}[C_D]$.
 This is impossible, because they should
  intersect along fibers of $f\colon X\to X'$. We conclude that $d=1$ and that $D\cap B=S$.

Suppose that $\w{D}$ is a smooth quadric, so that $S$ is irreducible and contained in $D_{reg}$ (see \ref{mobili}). We have $B_{|D}=mS$ for some $m\in\Z_{\geq 1}$. If $\Gamma\subset S$ is a curve of the family $V$, we have $-1=B\cdot \Gamma=m S\cdot\Gamma$, where the last intersection is in $D$. This implies that $m=1$ and $B\cdot C_L=S\cdot C_L=2$ (again the last intersection is in $D$), because $S\cap L$ is a conic in $L\cong\pr^2$.

Suppose now that $\w{D}$ is a quadric cone, with vertex $v_0$; then $S_0$ is a Cartier divisor in $\w{D}$. In $D$ we have $(\ph_{|D})^*(S_0)=S+2L$, so that again $S$ is Cartier in $D$. Moreover $S=H_1+H_2$ where $H_i\cong\mathbb{F}_1$ and $H_1\cap H_2$ is a common fiber of the $\pr^1$-bundles, which contains the singular point $v:=\ph^{-1}(v_0)$. Note that $v\not\in L$ and that $L\cup (S\smallsetminus \{v\})\subset D_{reg}$.

Write $B_{|D}=m_1H_1+m_2H_2$, with $m_i\in\Z_{\geq 1}$, and let $\Gamma_1\subset H_1$ be a curve of the family $V$ disjoint from $H_2$, so that $\Gamma_1\subset D_{reg}$. Then $-1=B\cdot\Gamma_1=m_1 H_1\cdot\Gamma_1=-m_1$, so that $m_1=1$. Similarly we see that $m_2=1$, and finally that $B_{|D}=S$; as before this yields $B\cdot C_L=2$.

  We have $C_D\equiv C_B+C_L$ and $B\cdot C_B=-1$, therefore $B\cdot C_D=1$.
Finally we have $D\cap L'=\emptyset$ by Lemma \ref{zoom2}, thus $1=B\cdot C_D= B\cdot [V']+ B\cdot C_{L'}=B\cdot [V']$.\qedhere\end{prg}
\end{proof}
\begin{lemma}\label{zoom}
  Let $X$ be a smooth Fano $4$-fold with $\rho_X\geq 7$,  $D\subset X$ a fixed prime divisor of type $(3,1)^{sm}$ or $(3,0)^Q$, and $E\subset X$ a fixed prime divisor of type $(3,2)$  such that $D\cap E\neq \emptyset$ and $D\cdot C_E=0$.
  If $D$ is of type $(3,1)^{sm}$, assume moreover that $E\cdot C_D=0$.

  Then there exists an exceptional plane $L\subset D$ such that $D\cdot C_L=-1$  and $C_E$ is the family of lines given by $D$ and $L$ as in Prop.~\ref{importante}. 
\end{lemma}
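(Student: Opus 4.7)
The plan is to produce a curve of $V_E$ inside $D\cap E$, analyze its image in $\w{X}$, and use a numerical identity under the SQM $\ph\colon X\dasharrow\w{X}$ to identify the desired exceptional plane $L\subset D$. Pick $p\in D\cap E$ and a curve $\Gamma$ of the family $V_E$ through $p$. Since $\Gamma$ is irreducible, $D\cdot\Gamma=D\cdot C_E=0$, and $p\in\Gamma\cap D$, we have $\Gamma\subset D$; in particular $\w{\Gamma}\subset\w{D}$.

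Next I would show that $[\w{\Gamma}]=m[C_{\w{D}}]$ in $\N(\w{X})$ for some integer $m\geq 1$. If $D$ is of type $(3,0)^Q$ this is automatic because $\w{D}$ is an irreducible quadric $3$-fold with $\rho_{\w{D}}=1$. If $D$ is of type $(3,1)^{sm}$, then the hypothesis $E\cdot C_D=0$ yields $E_{\w{X}}\cdot C_{\w{D}}=0$, so the class of $E_{\w{X}}|_{\w{D}}$ in $\Pic(\w{D})$ has vanishing $\xi$-coefficient with respect to the splitting coming from the $\pr^2$-bundle $\pi=\sigma_{|\w{D}}\colon\w{D}\to C$; every effective representative of this class is thus a sum of $\pr^2$-fibers, and since $\w{\Gamma}\subset E_{\w{X}}\cap\w{D}$ is irreducible, it lies in a single such fiber, giving again $[\w{\Gamma}]=m[C_{\w{D}}]$.

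Let $L_1,\dots,L_s\subset D$ be the exceptional planes in $D$ met by $\Gamma$ with intersection multiplicities $k_i\geq 1$, and set $d_i:=-D\cdot C_{L_i}\geq 1$. Factoring $\ph$ through $\wi{X}\to X$ and $\wi{X}\to\w{X}$ as in Lemma~\ref{basic1}, the restriction to a $\pr^2$-fiber of $g\colon\wi{X}\to\w{X}$ yields, for any divisor $H$ on $X$ with strict transform $\w{H}$, the identity $g^*\w{H}-f^*H=\sum_i(H\cdot C_{L_i})E_i$. Intersecting with the common strict transform of $\Gamma$ and using the identification $\N(X)\cong\N(\w{X})$ gives
\begin{equation*}
[\Gamma]=[\w{\Gamma}]-\sum_i k_i[C_{L_i}].
\end{equation*}
Pairing with $-K_X$ produces $1=2m-\sum_ik_i$, and pairing with $D$ produces $0=-m+\sum_ik_id_i$; subtracting gives $\sum_ik_i(d_i-1)=1-m$. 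Since each summand on the left is non-negative and $m\geq 1$, we are forced to $m=1$, $d_i=1$ for every $i$ with $k_i>0$, and $\sum_ik_i=1$. Therefore $\Gamma$ meets exactly one exceptional plane $L\subset D$, transversally at a single point, with $D\cdot C_L=-1$ and $[C_L]\equiv[C_D]-[C_E]$.

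Finally, $\Gamma$ is one of the lines produced by the construction of Prop.~\ref{importante} applied to $D$ and $L$, so by the uniqueness of the family of lines covering $E$ (Lemma~\ref{excplane}$(a)$) the family $V_E$ coincides with the family produced by that proposition. The main obstacle is the step for $D$ of type $(3,1)^{sm}$: one must use $E\cdot C_D=0$ to confine $\w{\Gamma}$ to a single $\pr^2$-fiber of $\pi$, which is precisely what enables the subsequent arithmetic to force $m=1$ and $d_i=1$.
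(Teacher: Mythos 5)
Your proof is correct, but it takes a genuinely different route from the paper's. The paper descends to $Y$: since $D\cdot C_E=0$ the general $C_E$ avoids $D$ and induces a family of lines on $E_Y\subset Y$; a member meeting $\sigma(\w{D})$ is then identified, via Rem.~\ref{vecchissimo}, as the image of an exceptional curve $\ell$ with $\w{D}\cdot\ell=1$ (the case $D$ of type $(3,1)^{sm}$ needing $E\cdot C_D=0$ only to rule out $[V_Y]=[C]$), and the relation $\w{C}_E\equiv\ell+C_{\w{D}}$ falls out immediately. You instead stay upstairs: you take a member $\Gamma\subset D\cap E$, confine $[\w{\Gamma}]$ to $m[C_{\w{D}}]$ (automatic for the quadric; via $E_{\w{X}}\cdot C_{\w{D}}=0$ and the $\pr^2$-bundle structure for $(3,1)^{sm}$ --- this is exactly where the extra hypothesis enters), and then force $m=1$, a single plane, $k=1$ and $d=1$ by the discrepancy identity $[\Gamma]=[\w{\Gamma}]-\sum_ik_i[C_{L_i}]$ together with positivity of the $k_i$ and $d_i$. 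Your identity $g^*\w{H}-f^*H=\sum_i(H\cdot C_{L_i})E_i$ checks out, and the arithmetic is right. What the paper's route buys is brevity, at the cost of invoking Rem.~\ref{vecchissimo} as a black box; your route is self-contained at the level of Lemma~\ref{basic1} and makes visible why $D\cdot C_L=-1$ is forced. Two small points you should tighten: (1) you should note that $\Gamma$ is not contained in any exceptional plane $L_i$ (otherwise $[C_E]=[C_{L_i}]$ would give $D\cdot C_E<0$), so that its strict transforms exist and $m\geq 1$; (2) in the last step, sharing the single member $\Gamma$ does not by itself identify $V_E$ with the family $V$ of Prop.~\ref{importante} --- rather, $[V]\equiv C_D-C_L\equiv C_E$ gives $E\cdot[V]=-1<0$, hence $\Lo V\subseteq E$ and so $\Lo V=E$ since both are prime divisors, and only then does Lemma~\ref{excplane}$(a)$ apply. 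Both are one-line fixes.
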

\begin{proof}
Let $X\dasharrow\w{X}\stackrel{\sigma}{\to}Y$ be the contraction associated to $D$ as in \S\ref{contraction}, and
 $E_{\w{X}}\subset\w{X}$, $E_Y\subset Y$ the transforms of $E$. We have $D\cap E\neq\emptyset$ and $E$ does not contain exceptional planes (see Rem.~\ref{ikea}), therefore $\w{D}\cap E_{\w{X}}\neq\emptyset$ and $\sigma(\w{D})\cap E_Y\neq\emptyset$.

Since $D\cdot C_E=0$, $D$ is disjoint from the general curve $C_E$, so that $Y$ has a family of lines $V_Y$ with locus $E_Y$. If $D$ is $(3,1)^{sm}$ and $[C]=[V_Y]$, then $C\subset E_Y$ and $E\cdot C_D=E_{\w{X}}\cdot C_{\w{D}}>0$,
against our assumptions. Thus  $[C]\neq [V_Y]$.

Let $\Gamma\subset Y$ be a curve of the family $V_Y$ such that $\Gamma\cap\sigma(\w{D})\neq\emptyset$ and $\Gamma\neq\sigma(\w{D})$.
By Rem.~\ref{vecchissimo} we have $\Gamma=\sigma(\ell)$ where $\ell\subset\w{X}$ is an exceptional curve; moreover if $\w{C}_E\subset\w{X}$ is the transform of the general $C_E\subset X$, we have 
$\w{C}_E\equiv \ell+C_{\w{D}}$ in $\w{X}$. Then in $X$ we get $C_D\equiv C_L+C_E$ where $L\subset D$ is the exceptional plane corresponding to $\ell$. Moreover $-1=D\cdot C_D=D\cdot (C_L+C_E)=D\cdot C_L$, and we get the statement.
\end{proof}
\begin{corollary}\label{pasquetta}
Let $X$ be a smooth Fano $4$-fold with $\rho_X\geq 7$, and 
$D$ and $E$ two adjacent fixed prime divisors, of type $(3,0)^{Q}$ and $(3,2)$ respectively, such that  $D\cap E\neq \emptyset$. Then  
$E\cdot C_D=1$.
\end{corollary}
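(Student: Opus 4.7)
The plan is to combine Lemmas \ref{nigra} and \ref{zoom} with Prop.~\ref{yoga}, after identifying $E$ with the prime divisor produced by the construction of Prop.~\ref{importante}.

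Since $D$ and $E$ are adjacent and $E$ is of type $(3,2)$, Lemma \ref{nigra}$(a)$ yields $D\cdot C_E=0$. Because $D$ is of type $(3,0)^Q$ and $D\cap E\neq\emptyset$, the hypotheses of Lemma \ref{zoom} are satisfied (the extra assumption $E\cdot C_D=0$ is required only in the $(3,1)^{sm}$ case). I therefore obtain an exceptional plane $L\subset D$ with $D\cdot C_L=-1$ such that the family of lines $V$ produced from $(D,L)$ by Prop.~\ref{importante} has class $[V]=[C_E]$.

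Next I identify $B:=\Lo V$ with $E$. Since $E$ is of type $(3,2)$, we have $E\cdot C_E=-1$, hence $E\cdot[V]=-1$; consequently every curve of the family $V$ has strictly negative intersection with $E$ and is therefore contained in $E$. Thus $B\subseteq E$, and as both $B$ and $E$ are irreducible of dimension three, $B=E$.

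Finally, Rem.~\ref{generators} applied to the $(3,0)^Q$ divisor $D$ provides exceptional planes in $D$ with at least $m\geq 3$ linearly independent curve classes, so one can choose $L'\subset D$ with $C_{L'}\not\equiv C_L$. All hypotheses of Prop.~\ref{yoga} are now in place: $D$ is of type $(3,0)^Q$, $B=E$ is a fixed prime divisor of type $(3,2)$ adjacent to $D$, and $L,L'$ are exceptional planes in $D$ with $C_{L'}\not\equiv C_L$. The $(3,0)^Q$ bullet of that proposition gives $B\cdot C_D=1$, which is the desired equality $E\cdot C_D=1$. The only delicate point is the identification $B=E$; once this is settled, the corollary is an immediate application of Prop.~\ref{yoga}.
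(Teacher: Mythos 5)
Your proposal is correct and follows essentially the same route as the paper, which also deduces the corollary by applying Lemma \ref{zoom} (whose hypotheses hold here since Lemma \ref{nigra}$(a)$ gives $D\cdot C_E=0$ and the extra assumption is only needed in the $(3,1)^{sm}$ case) and then reading off $E\cdot C_D=1$ from the $(3,0)^Q$ case of Prop.~\ref{yoga}. The extra details you supply — the identification $B=E$ (already implicit in the conclusion of Lemma \ref{zoom}) and the existence of $L'$ via Rem.~\ref{generators} — are accurate verifications of the hypotheses that the paper leaves tacit.
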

\begin{proof}
By Lemma \ref{zoom} there exists an exceptional plane $L\subset D$ such that $C_E$ is the family of lines given by $D$ and $L$ as in Prop.~\ref{importante}, so that $E\cdot C_D=1$ by Prop.~\ref{yoga}.
\end{proof}  
\section{Constructing rational contractions of fiber type}\label{secfiber}
\noindent In this section we consider two situations where we can  construct a rational contraction of fiber type on $X$, and then prove that $\rho_X\leq 12$ using the results of Section \ref{fiber}. 
\begin{proposition}\label{test}
Let $X$ be a smooth Fano $4$-fold with $\rho_X\geq 8$ and $\delta_X\leq 1$,
 $D$ a fixed prime divisor of type $(3,1)^{sm}$ or $(3,0)^Q$, and $X\stackrel{\ph}{\dasharrow} \w{X}\stackrel{\sigma}{\to}Y$ the associated contraction as in \S\ref{contraction}.
Suppose that $Y$ contains a nef prime divisor
  $H$ covered by a family  of lines and such that $H\cap\sigma(\Exc(\sigma))=\emptyset$.
  Then 
  one of the following holds:
  \begin{enumerate}[$(i)$]
  \item  $X$ has a contraction onto a $3$-fold;
  \item $X$ contains a fixed prime divisor $E$ of type $(3,2)$ with $\N(E,X)\subsetneq\N(X)$;
  \item $\rho_X\leq 10$.
          \end{enumerate}
Moreover $\rho_X\leq 12$, and if $\rho_X=12$, then $X$ has a rational contraction onto a $3$-fold. \end{proposition}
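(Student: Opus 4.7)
The plan is to use $H$ to produce a nef prime divisor $B$ on $X$ covered by a family of lines, and then to analyze the associated semiample contraction via Proposition~\ref{usseaux}.

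Set $\w{H}:=\sigma^{*}H$: since $H\cap\sigma(\Exc\sigma)=\emptyset$, $\w H$ is a prime divisor of $\w X$ disjoint from $\w D$, isomorphic to $H$, and nef. All exceptional curves of the SQM $\ph\colon X\dasharrow\w X$ meet $\w D$ positively (see \S\ref{contraction}), hence are disjoint from $\w H$, and so $\ph^{-1}$ is an isomorphism in a neighbourhood of $\w H$. The strict transform $B\subset X$ is therefore a prime divisor isomorphic to $\w H$, with $[B]=[\w H]\in\Nef(\w X)\subseteq\Mov(\w X)=\Mov(X)$ under the identification $\Nu(X)=\Nu(\w X)$. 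The family of lines on $H$ lifts through $\sigma$ and $\ph^{-1}$ to a family $V$ in $X$ with $\Lo V=B$. Since $[B]\in\Mov(X)$, $B$ cannot be a fixed prime divisor, and Lemma~\ref{chitarre} (applicable since $\rho_X\geq 8$) forces $B$ to be nef.

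As $X$ is a Mori dream space and $B$ is nef, $B$ is semiample and defines a regular contraction $g\colon X\to Z$. If $\dim Z=3$, we obtain $(i)$ of the statement directly. If $\dim Z\in\{1,2\}$, $g$ is of fiber type; letting $\tau$ be the smallest face of $\Eff(X)$ containing $[B]$ (a movable face, since $[B]$ is in its relative interior), we apply Proposition~\ref{usseaux} to the pair $(\tau,g)$. Its cases $(i)$ and $(ii)$ give $(i)$ and $(ii)$ here; case $(iv)$ gives $\rho_X=10$, hence $(iii)$; in cases $(iii)$, $(v)$, $(vi)$ one exploits that $g^{*}\Nef(Z)=\R_{\geq 0}[B]$ and hence $\dim(\tau\cap\Mov(X))=1$, combined with $\delta_X\leq 1$, to bound $\dim\tau$ and collapse the estimates $\rho_X\leq 8+\dim\tau$ and $\rho_X\leq 9+\dim\tau$ to $\rho_X\leq 10$; the non-minimal case $(vi)$ is handled by descent to a minimal movable subface.

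The main obstacle is the remaining case $\dim Z=4$, in which $B$ is nef and big and $g$ is birational, so Proposition~\ref{usseaux} does not apply. When $D$ is of type $(3,1)^{sm}$, $H$ cannot be ample since $H\cap C=\emptyset$ would contradict $H\cdot C>0$; the intermediate possibility that $H$ be nef, big and non-ample still has to be excluded by using the covering family of lines together with the exceptional locus of the birational model $Z_{Y}$ defined by $H$ to produce a curve incompatible with nefness of $B$. When $D$ is of type $(3,0)^{Q}$, $H$ can a priori be ample disjoint from the point $p$, and the argument is more delicate: one has to combine Lemma~\ref{taberna} (the bijection between faces of $\Eff(X)$ containing $[D]$ and faces of $\Eff(Y)$) with $\delta_X\leq 1$ and $\rho_Y\geq 7$ to derive a contradiction. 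Once all subcases are settled, the final bound $\rho_X\leq 12$, together with the implication that $\rho_X=12$ forces a rational contraction onto a $3$-fold, follows by combining case $(i)$ with Theorem~\ref{3fold}, case $(ii)$ with Theorem~\ref{natale}, and case $(iii)$ which gives $\rho_X\leq 10<12$ directly.
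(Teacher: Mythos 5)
Your opening moves match the paper's: transport $H$ to a nef prime divisor $H_X$ on $X$ (disjoint from $D$ and from the indeterminacy locus of $\ph$, hence still covered by lines), and study the contraction $h\colon X\to Z$ defined by a large multiple of it. But the heart of the matter is precisely the case you label as ``the main obstacle'' and leave open: showing that $h$ is of fiber type, i.e.\ that $H$ is not big. Your remarks that this ``still has to be excluded'' and that ``the argument is more delicate'' are not a proof; they are the proof's missing core. The paper settles it by exhibiting a non-zero class in $\mov(X)\cap H_X^{\perp}$ (so that $[H_X]\in\partial\Eff(X)$): when $D$ is of type $(3,1)^{sm}$ one uses $[C]\in\NE(g)$ and, if $[C]\notin\mov(Y)$, the structure results of \cite[Lemma 5.11(3), Prop.~5.8]{blowup} together with Cor.~\ref{solfeggio} to get $[C]+[C_{E_2}]\in\mov(X)$; when $D$ is of type $(3,0)^Q$ one uses the families of lines $V,V'$ produced by Prop.~\ref{importante} from two exceptional planes $L,L'\subset D$ with $C_L\not\equiv C_{L'}$ (which exist by Rem.~\ref{generators}), and Lemma~\ref{atletica} plus Lemma~\ref{2faces}$(b)$ to conclude $[V]+[V']\in\mov(X)$. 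None of this is in your proposal, and the hints you do give (Lemma~\ref{taberna} and $\delta_X\leq 1$ for the $(3,0)^Q$ case) do not obviously lead anywhere.

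There are also looser ends in the fiber-type cases. You apply Prop.~\ref{usseaux} to ``the pair $(\tau,g)$'', but that proposition requires $f$ to be a contraction associated to $\tau$ as in \ref{setup}, i.e.\ with $\dim f^*\Nef(Z)=\dim(\tau\cap\Mov(X))$; your $g$ need not satisfy this, and your subsequent claim that $\dim(\tau\cap\Mov(X))=1$ bounds $\dim\tau$ is unjustified ($\tau$ can be much larger than its movable part). The paper instead argues directly: $\dim Z=1$ is impossible because $h(D)=\{pt\}$ and $\delta_X\leq 1$ force $\N(F,X)=\N(D,X)=\ker h_*$ for a general fiber $F$, contradicting $F\cap D=\emptyset$ and $D\cdot C_D=-1$; and for $\dim Z=2$ it invokes Lemma~\ref{detail2} and Lemma~\ref{27} (which is where alternatives $(ii)$ and $(iii)$ actually come from). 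As written, your argument establishes the statement only under the unproved assumption that $H$ is not big, so it has a genuine gap.
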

\begin{proof}
Let $g\colon Y\to Z$ be the contraction defined by $mH$ for $m\in\mathbb{N}$, $m\gg 0$,
so that $H=g^*A$ where $A\subset Z$ is an ample prime Cartier divisor, and $\NE(g)=\NE(Y)\cap H^{\perp}$. 
If $\ell\subset\w{X}$ is an exceptional curve, then by Lemma \ref{vecchio} $\sigma(\ell)$ cannot meet any line in $Y$ outside $\sigma(\Exc(\sigma))$, so that $\sigma(\ell)\cap H=\emptyset$ and $[\sigma(\ell)]\in\NE(g)$. Moreover $H$ is contained in the open subset of $Y$ where the birational map $X\dasharrow Y$ is an isomorphism; 
let $H_X\subset X$ and $\w{H}\subset \w{X}$ be the transforms of $H$. Then
$\w{H}=\sigma^*(g^*A)$, and $H_X$ is  still nef, so that $h:=g\circ\sigma\circ\ph\colon X\to Z$ is regular, $H_X=h^*A$, and $\NE(h)=\NE(X)\cap H_X^{\perp}$. Moreover $H_X\cap D=\emptyset$, therefore $h(D)=\{pt\}$.
$$\xymatrix{X\ar[dr]_h\ar@{-->}[r]^{\ph}&{\w{X}}\ar[r]^{\sigma}&Y\ar[dl]^g\\
  &Z&  }$$

\medskip

We show that $g$ and $h$ are of fiber type.

If $D$ is $(3,1)^{sm}$, have $H\cdot C=0$ hence $[C]\in\NE(g)$.
If $[C]\in\mov(Y)$, then $g$ is of fiber type. Otherwise, 
$[C]$ generates an extremal ray of type $(3,2)$ of $\NE(Y)$ by \cite[Lemma 5.11(3)]{blowup}, let $E_1$ be its locus.
By \cite[Prop.~5.8 and its proof]{blowup}, there is fixed prime divisor $E_2\subset Y$ of type $(3,2)$, such that $E_2\cdot C>0$, so that
 $[C]+[C_{E_2}]\in\mov(X)$
 by Cor.~\ref{solfeggio}. If $\Gamma\subset E_2$ is an irreducible curve such that $\Gamma\equiv C_{E_2}$ and $\Gamma\cap C\neq\emptyset$, then by Rem.~\ref{vecchissimo} $\Gamma=\sigma(\ell)$ where $\ell\subset\w{X}$ is an exceptional curve, thus $[\Gamma]=[C_{E_2}]\in\NE(g)$,   $[C]+[C_{E_2}]\in\NE(g)$, and again 
$g$ is of fiber type.

If $D$ is $(3,0)^Q$, 
 let $V$ be a family of lines in $X$ given by $D$ and an exceptional plane $L\subset D$ as in Prop.~\ref{importante}, so that $\Lo V$ is a divisor and $C_D\equiv [V]+C_L$. Since $H_X$ is nef and $H_X\cdot C_D=0$, we have $H_X\cdot [V]=0$ and $[V]\in \NE(h)$. If $[V]\in\mov(X)$, then $h$ is of fiber type. Otherwise there exists a prime divisor $B$ such that $B\cdot [V]<0$, hence $B=\Lo V$ and by Lemma \ref{chitarre} $[V]$ generates an extremal ray of type $(3,2)$. 
By Rem.~\ref{generators} we can choose an exceptional plane $L'\subset D$ with $C_{L'}\not\equiv C_L$ and this yields another family of lines $V'$ with $[V']\neq [V]$. As before, $[V']\in\NE(h)$, and either $[V']\in\mov(X)$ and $h$ is of fiber type,
or  $[V']$ generates an extremal ray of type $(3,2)$, with locus $B'$.
Then $B\cdot [V']>0$ and $B'\cdot [V]>0$ by Lemma \ref{atletica}, so that $[V]+[V']\in\mov(X)$ by Lemma \ref{2faces}$(b)$, and again 
$h$ is of fiber type.

\medskip

If $\dim Z=3$, we get $(i)$.

We show that $\dim Z>1$. Otherwise, let $F\subset X$ be a general fiber of $h$.
We have $\N(F,X)\subseteq\ker h_*\subsetneq\N(X)$, and 
since $\delta_X\leq 1$, we deduce that  $\N(F,X)=\ker h_*$, and similarly $\N(D,X)=\ker h_*$. On the other hand $F\cap D=\emptyset$, thus $\N(F,X)\subseteq D^{\perp}$, which is impossible because $D\cdot C_D=-1$ and $[C_D]\in\N(D,X)=\N(F,X)$.

Finally suppose that $\dim Z=2$. Since $h$ is not equidimensional, by Lemma \ref{detail2} we have that $\rho_Z=1$ and $D$ is the unique prime divisor contracted to a point. Then Lemma \ref{27} yields $(ii)$ or $(iii)$.

The last statement follows from Th.~\ref{3fold} and \ref{natale} (note that $X$ is not a product of surfaces, see Ex.~\ref{2022} or Rem.~\ref{delta}).
\end{proof}
\begin{lemma}\label{eccellenza}
Let $X$ be a smooth Fano $4$-fold with $\rho_X\geq 7$ and $\delta_X\leq 1$, and $D\subset X$ a fixed prime divisor of type $(3,0)^Q$. Suppose that there are two fixed prime divisors $E_1,E_2$, of type $(3,2)$, both adjacent to $D$.
Then $\rho_X\leq 12$, and if $\rho_X=12$, then $X$ has a rational contraction onto a $3$-fold.
\end{lemma}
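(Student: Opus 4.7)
The plan is to use Proposition~\ref{yoga} to force $E_1\cdot C_{E_2}=E_2\cdot C_{E_1}=1$, and then apply Proposition~\ref{usseaux} to the smallest face of $\Eff(X)$ containing $[E_1+E_2]$.

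First I would reduce to the case $\N(E_1,X)=\N(E_2,X)=\N(X)$: if $\N(E_i,X)\subsetneq\N(X)$ for some $i$, Theorem~\ref{natale} yields the full conclusion. In the remaining case, Remark~\ref{easy} forces $D\cap E_i\neq\emptyset$ for $i=1,2$ (since $D\cdot C_D=-1$ prevents $\N(E_i,X)\subseteq D^\perp$ from equaling $\N(X)$); Corollary~\ref{pasquetta} then gives $E_i\cdot C_D=1$, and Lemma~\ref{nigra}(a) gives $D\cdot C_{E_i}=0$. Lemma~\ref{zoom} supplies exceptional planes $L_i\subset D$ with $D\cdot C_{L_i}=-1$ such that $C_{E_i}$ is the family of lines associated to $(D,L_i)$ by Proposition~\ref{importante}. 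Since $E_1\neq E_2$ span distinct extremal rays, $[C_{E_1}]\neq[C_{E_2}]$, hence $[C_{L_1}]\neq[C_{L_2}]$ via $C_D\equiv C_{L_i}+C_{E_i}$; Proposition~\ref{yoga} applied with $(B,L,L')=(E_1,L_1,L_2)$ and its symmetric counterpart then yields $E_1\cdot C_{E_2}=E_2\cdot C_{E_1}=1$.

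By Lemma~\ref{2faces}(c), $\langle[E_1],[E_2]\rangle\cap\Mov(X)=\R_{\geq 0}[E_1+E_2]$ and $E_1+E_2$ is movable and non-big. Let $\tau\subseteq\Eff(X)$ denote the smallest face containing $[E_1+E_2]$: by construction $\tau$ is a movable face (its relative interior contains $[E_1+E_2]\in\Mov(X)$), and it necessarily contains both $[E_1]$ and $[E_2]$ (any supporting functional $\lambda\in\Eff(X)^\vee$ with $\lambda\cdot[E_1+E_2]=0$ satisfies $\lambda\cdot E_1=\lambda\cdot E_2=0$, as both intersections are nonnegative). Applying Proposition~\ref{usseaux} to $\tau$, case~(iii) is ruled out because $[E_1]\in\tau$ is a fixed class of type $(3,2)$; cases~(i), (ii), (iv) each give $\rho_X\leq 12$ with a rational contraction onto a $3$-fold whenever $\rho_X=12$, via Theorem~\ref{3fold}, Theorem~\ref{natale}, or the explicit description in~(iv). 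For cases~(v) and (vi) one argues $\tau=\langle[E_1],[E_2]\rangle$, so that $\dim\tau=2$, yielding $\rho_X\leq 11$ in~(v) and minimality ruling out~(vi).

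The main obstacle is the identification $\tau=\langle[E_1],[E_2]\rangle$. Since $[E_1+E_2]=1\cdot[E_1]+1\cdot[E_2]+0\cdot[F]$ for any putative additional extreme ray $[F]$ of $\tau$, the class $[E_1+E_2]$ fails to lie in the relative interior of a strictly larger face unless an alternative all-positive decomposition exists, which would force $[F]$ to be linearly dependent on $[E_1],[E_2]$ with mixed-sign coefficients. Ruling out such intruding extreme rays of $\Eff(X)$ near $\langle[E_1],[E_2]\rangle$ --- essentially showing that no effective prime class of the form $a[E_1]-b[E_2]$ (or $-a[E_1]+b[E_2]$) with $a,b>0$ can be extreme --- is where the key technical work will lie, and should follow from Remark~\ref{referee} applied to $E_1+E_2$ together with the intersection pattern established above ($E_i\cdot C_{E_j}=1$ for $i\neq j$, $E_i\cdot C_{E_i}=-1$, $D\cdot C_{E_i}=0$), which pin down the local face structure of $\Eff(X)$ near $[E_1+E_2]$.
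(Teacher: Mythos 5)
Your first half follows the paper exactly: reduce via Theorem~\ref{natale} to $\N(E_i,X)=\N(X)$, deduce $D\cap E_i\neq\emptyset$, and use Lemma~\ref{zoom} and Proposition~\ref{yoga} to obtain $E_i\cdot C_D=1$, $D\cdot C_{E_i}=0$, and $E_1\cdot C_{E_2}=E_2\cdot C_{E_1}=1$. The divergence, and the gap, is in the second half. Your entire endgame rests on the identification $\tau=\langle[E_1],[E_2]\rangle$, i.e.\ on $\langle[E_1],[E_2]\rangle$ being a face of $\Eff(X)$, and you explicitly leave this as ``key technical work.'' It is not a technicality: the natural supporting functional for this cone would be $[C_{E_1}+C_{E_2}]\in\mov(X)=\Eff(X)^{\vee}$, but $D\cdot(C_{E_1}+C_{E_2})=0$, so the face $(C_{E_1}+C_{E_2})^{\perp}\cap\Eff(X)$ already contains $[D]$ and has dimension at least $3$. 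Equivalently, $[F]$ lies in the minimal face containing $[E_1+E_2]$ iff $E_1+E_2-\epsilon F$ is pseudo-effective for some $\epsilon>0$; your ``mixed-sign coefficients'' heuristic only addresses rays in the plane spanned by $[E_1],[E_2]$ and says nothing about rays like $[D]$ (or others with zero intersection against both $C_{E_i}$) that could sit on every supporting hyperplane through $[E_1+E_2]$. Remark~\ref{referee} gives no control over this face structure. Without $\dim\tau=2$, case~$(v)$ of Proposition~\ref{usseaux} gives only $\rho_X\leq 9+\dim\tau$, which is vacuous, and case~$(vi)$ is not excluded.

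The paper avoids this problem by changing the divisor: it takes $H:=E_1+E_2+2D$ rather than $E_1+E_2$. One checks $H\cdot C_D=H\cdot C_{E_1}=H\cdot C_{E_2}=0$ and $H\cdot C_G\geq 0$ for all other fixed prime divisors, so $H$ is movable by the criterion of \cite[Lemma 5.29(2)]{blowup}, and $H\cdot(C_{E_1}+C_{E_2})=0$ with $[C_{E_1}+C_{E_2}]\in\mov(X)$ shows $H$ is non-big. The contraction $f$ defined by $mH$ then satisfies $H\cdot C_D=0$, so $f$ contracts $D$ to a point; hence if $\dim Z=2$ then $f$ is not equidimensional, and Lemma~\ref{sushi} (not Proposition~\ref{usseaux}) forces either $\dim Z=3$ or $\N(E_1,X)\subsetneq\N(X)$, concluding via Theorems~\ref{3fold} and~\ref{natale}. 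Note that the summand $2D$ is essential both to make $H\cdot C_D=0$ (your $E_1+E_2$ has $(E_1+E_2)\cdot C_D=2$) and to sidestep any need to identify a face of $\Eff(X)$. As written, your proof does not close.
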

\begin{proof}
Let $i\in\{1,2\}$; we have $D\cdot C_{E_i}=0$ by Lemma \ref{nigra}$(a)$.
If $E_i\cap D=\emptyset$ for some $i\in\{1,2\}$, then $\N(E_i,X)\subseteq D^{\perp}\subsetneq\N(X)$, and we conclude by Th.~\ref{natale}. Thus we can assume that $E_i\cap D\neq \emptyset$ for $i=1,2$, so that by Lemma \ref{zoom} there exists an exceptional plane $L_i\subset D$ such that $C_{E_i}$ is the family of lines given by $D$ and $L_i$ as in Prop.~\ref{importante}.
Then Prop.~\ref{yoga} yields:
$$E_i\cdot C_D=E_1\cdot C_{E_2}=E_2\cdot C_{E_1}=1.$$

Consider now $H:=E_1+E_2+2D$; we show that $H$ is movable. By \cite[Lemma 5.29(2)]{blowup}, $[H]\in\Mov(X)$  if and only if $H\cdot C_G\geq 0$ for every fixed prime divisor $G\subset X$. This is clear if $G\neq E_1,E_2,D$; moreover
$H \cdot C_{E_1}=H \cdot C_{E_2}=H\cdot C_D=0$, therefore $H$ is movable. 
We also have $H\cdot (C_{E_1}+C_{E_2})=0$ and $[C_{E_1}+C_{E_2}]\in\mov(X)=\Eff(X)^{\vee}$ by Lemma \ref{2faces}$(b)$, so that $[H]\in\partial\Eff(X)$, namely $H$ is not big.

Let $f\colon X\dasharrow Z$ be the rational contraction of fiber type defined by $mH$ for $m\in\mathbb{N}$, $m\gg 0$. Since $H\cdot C_D=0$, $f$ contracts $D$ to a point, so that if $\dim Z=2$ $f$ is not equidimensional. Therefore by Lemma \ref{sushi} we have either $\dim Z=3$, or $\N(E_1,X)\subsetneq\N(X)$, and we get the statement by Th.~\ref{3fold} and \ref{natale} respectively  (note that $X$ is not a product of surfaces, see Ex.~\ref{2022} or Rem.~\ref{delta}). 
\end{proof}
\section{The case $(3,1)^{sm}$}\label{case31}
\noindent Let $X$ be a smooth Fano $4$-fold with $\rho_X\geq 7$ and $\delta_X\leq 1$; in this section we prove the following result, which implies
Th.~\ref{main} when $X$ has a fixed prime divisor $D$ of type $(3,1)^{sm}$.
\begin{thm}\label{finalmente}
  Let $X$ be a smooth Fano $4$-fold with $\rho_X\geq 7$ and $\delta_X\leq 1$, having a fixed divisor of type $(3,1)^{sm}$. Then $\rho_X\leq 12$, and if $\rho_X=12$, then $X$ has a rational contraction onto a $3$-fold. 
\end{thm}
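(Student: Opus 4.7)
The plan is to exploit the fixed prime divisor $D$ of type $(3,1)^{sm}$ via its associated contraction $X\stackrel{\ph}{\dasharrow}\widetilde{X}\stackrel{\sigma}{\to}Y$ (with $\sigma$ blowing up a smooth curve $C\subset Y$ and $Y$ Fano), together with the many divisors covered by lines that Prop.~\ref{importante} produces from the exceptional planes in $D$. I may assume $\rho_X\geq 8$. Since $\delta_X\leq 1$, Th.~\ref{codim} rules out $X$ being a product of surfaces, so $\dim\N(D,X)\geq\rho_X-1$ and Rem.~\ref{generators} gives pairwise disjoint exceptional planes $L_1,\dotsc,L_m\subset D$ with linearly independent classes $[C_{L_i}]$ and $m\geq\rho_X-3$. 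Applying Prop.~\ref{importante} to each $(D,L_i)$ produces pairwise distinct prime divisors $B_i$ (distinctness by Lemma~\ref{excplane}$(a)$) covered by families of lines $V_i$ with $[V_i]+[C_{L_i}]=[C_D]$ and $B_i\cdot C_{L_i}>0$; by Lemma~\ref{chitarre} each $B_i$ is either nef or a fixed prime divisor of type $(3,2)$.

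I would then carry out three reductions that dispense with large classes of configurations: (R1) if some $B_i$ is a fixed prime divisor of type $(3,2)$ with $\N(B_i,X)\subsetneq\N(X)$, Th.~\ref{natale} yields the conclusion; (R2) if $X$ admits a rational contraction onto a $3$-fold, Th.~\ref{3fold} does; (R3) if $\Eff(X)$ has a $1$-dimensional movable face, Prop.~\ref{usseaux} applied with $\dim\tau=1$ gives $\rho_X\leq 10$ (case (vi) is vacuous since a $1$-dimensional face is automatically minimal, and cases (i), (ii) fold into (R1), (R2)). After these reductions every nef $B_i$ is big (else $\R_{\geq 0}[B_i]$ would be a $1$-dimensional movable face), every fixed prime divisor of type $(3,2)$ has full numerical support, and there is no rational contraction of $X$ onto a $3$-fold.

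The proof then splits into two cases. In Case~A, some $B_i$ is nef and big: I would apply Prop.~\ref{test} to $D$, locating the required nef prime divisor $H\subset Y$ covered by a family of lines and disjoint from $\sigma(\Exc(\sigma))=C$ by pushing $B_i$ along $\sigma\circ\ph$ to $Y$ and controlling the resulting line-geometry through Lemmas~\ref{test2}$(a)$, \ref{vecchio}, and \ref{fabrizio}. In Case~B, every $B_i$ is fixed of type $(3,2)$: Lemma~\ref{2faces}$(a)$ together with $[V_i]=[C_D]-[C_{L_i}]$ shows that $B_i$ and $D$ are adjacent precisely when $D\cdot C_{L_i}=-1$. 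For indices $i$ with $B_i$ adjacent to $D$, Prop.~\ref{yoga} yields $B_i\cdot C_D=0$, $B_i\cdot C_{L_i}=1$, and $B_i\cdot [V_j]=0$ for $j\neq i$, from which a direct intersection-matrix computation shows that $[D],[B_1],\dotsc,[B_m]$ are linearly independent and pairwise adjacent, generating via Lemma~\ref{simplicial} a simplicial fixed face of $\Eff(X)$ of dimension $m+1\geq\rho_X-2$. For indices $i$ with $B_i$ not adjacent to $D$, Lemma~\ref{2faces}$(b)$ supplies the movable divisor $D+B_i$. Combining the rigidity of the fixed-face configuration with the movable divisors produced, and using Lemma~\ref{aikido} and Cor.~\ref{solfeggio} to constrain further interactions, one manufactures a movable non-big divisor on $X$, whose associated rational contraction of fiber type, analyzed via Prop.~\ref{usseaux}, gives $\rho_X\leq 12$ with the required $3$-fold contraction when $\rho_X=12$.

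The main obstacle I foresee is Case~A: transferring the nefness and line-covering structure of $B_i$ from $X$ to a usable divisor in $Y$. The curves of $V_i$ meet the flipped exceptional plane $L_i$ and get contracted by $\sigma$ (they lie in fibers of $\sigma_{|\widetilde{D}}$), so the pushforward of $B_i$ to $Y$ is a divisor whose relation to $C$ is not a priori nice; extracting the hypothesis of Prop.~\ref{test} will require a delicate use of the line-geometry lemmas of \S\ref{add1}, and this is where most of the technical effort is concentrated.
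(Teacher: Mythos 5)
Your overall strategy (produce divisors covered by lines from the exceptional planes of $D$ via Prop.~\ref{importante}, then split according to whether they are nef or fixed of type $(3,2)$) differs from the paper, which first passes to the blow-down $Y$ of $D$ and runs essentially the entire argument there; but more importantly the proposal has gaps I do not see how to close. First, Case~A cannot work as described: every divisor $B_i$ produced by Prop.~\ref{importante} from $D$ and $L_i$ satisfies $B_i\cdot C_{L_i}>0$ with $L_i\subset D$ and contains the transform of a fiber $S_0$ of $\sigma_{|\w{D}}$, so $B_i\cap D\neq\emptyset$ and the transform of $B_i$ in $Y$ necessarily meets $C=\sigma(\Exc(\sigma))$; the hypothesis $H\cap\sigma(\Exc(\sigma))=\emptyset$ of Prop.~\ref{test} therefore fails for every divisor your construction yields. (When a nef divisor covered by lines in $Y$ does meet $C$, Lemma~\ref{test2}$(a)$ forces $C$ itself to be a curve of the family, which is why the paper obtains the divisors to which Prop.~\ref{test} applies from a fixed prime divisor of $Y$ \emph{not} of type $(3,2)$, whose existence must first be established.) Second, the reduction ``after R3 every nef $B_i$ is big'' is unjustified: a nef non-big $B_i$ has class in the relative interior of some movable face of $\Eff(X)$, but that face need not be one-dimensional, so R3 does not apply and Prop.~\ref{usseaux} on a high-dimensional movable face gives no bound. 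The nef non-big case is thus left uncovered.

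In Case~B the decisive step is also missing. Pairwise adjacency of $D,B_1,\dotsc,B_m$ only produces two-dimensional fixed faces; it does not imply that the whole collection spans a face of $\Eff(X)$, and Lemma~\ref{simplicial} only says that a face already known to be fixed is simplicial --- it cannot be used to create the face. Moreover the concluding sentence (``one manufactures a movable non-big divisor\dots analyzed via Prop.~\ref{usseaux}, gives $\rho_X\leq 12$'') conceals the actual content: Prop.~\ref{usseaux} yields $\rho_X\leq 8+\dim\tau$ or $\rho_X\leq 9+\dim\tau$, so one needs a movable face of dimension at most $3$ or $4$, and producing one is precisely where the paper spends most of its effort (showing every fixed prime divisor of $Y$ is of type $(3,2)$ or $(3,1)^{sm}$, building an equidimensional fibration $Y\dasharrow\pr^2$ with fiber $\pr^2$, identifying the facet $(C_{E_i}+C_{B_i})^{\perp}\cap\Eff(Y)$ and a second facet, and finally a quasi-elementary contraction onto a del Pezzo surface whose $(-1)$-curve geometry forces $\rho_X\leq 11$). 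A single movable divisor $D+B_i$ from Lemma~\ref{2faces}$(b)$ --- which is not even known to be non-big without the hypothesis of Lemma~\ref{2faces}$(c)$ --- does not substitute for this analysis.
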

Let us give an outline of the proof.
We consider the contraction $X\dasharrow Y$ associated to $D$, and
work in the smooth Fano $4$-fold $Y$. 
We show that we can reduce to the following situation:
\begin{enumerate}[--]
\item
 except possibly one, the fixed prime divisors $E_1,\dotsc,E_r$ of type $(3,2)$ of $Y$ satisfy $E_i\cdot C_{E_j}=0$ for $i\neq j$;
\item $Y$ contains at most one nef prime divisor covered by lines;
\item $\Eff(Y)$ is generated by classes of fixed prime divisors, none of type $(3,0)^{sm}$, and there is at least one not of type $(3,2)$, say $B$.
  \item 
    The divisors covered by lines given by $B$ and its exceptional planes, as in Section \ref{lines}, are all fixed of type $(3,2)$, except at most one.
     From the properties of these families of lines we deduce that $B$ is $(3,1)^{sm}$, so that every fixed prime divisor of $Y$ is of type $(3,2)$ or $(3,1)^{sm}$.
\end{enumerate}
    Then we
    use these fixed prime divisors to construct some movable, non-big divisors in $Y$, which yield some rational contractions of fiber type on $Y$ and $X$; these also allow to describe some facets of $\Eff(Y)$. Finally we conclude applying the results
    in \S \ref{fumo}.
\begin{proof}[Proof of Th.~\ref{finalmente}]
  We assume that $\rho_X\geq 8$, and note that $X$ is not a product of surfaces (see Ex.~\ref{2022} or Rem.~\ref{delta}).
  We also assume the following:
  \begin{enumerate}[$(a)$]
  \item $X$ has no rational contraction onto a $3$-fold;
  \item  $\N(E_X,X)=\N(X)$ for every  fixed prime divisor
     $E_X\subset X$ of type $(3,2)$;
\item $X$ has no fixed prime divisor of type $(3,0)^{sm}$.
   \end{enumerate}
  Otherwise, the statement follows from Th.~\ref{3fold}, \ref{natale}, or \ref{30}, respectively.
  \begin{prg}\label{Umask}
    Let  $X\dasharrow\w{X}\stackrel{\sigma}{\to} Y$ be the contraction associated to $D$ as in \S\ref{contraction}, so that $Y$ is a smooth Fano $4$-fold with $\rho_Y\geq 7$, and 
$\sigma$ is the blow-up of a smooth irreducible curve $C\subset Y$.
    We have the following:
\begin{enumerate}[$(a')$]
\item $Y$ has no rational contraction onto a $3$-fold;
\item  $\N(E,Y)=\N(Y)$ for every  fixed prime divisor
     $E\subset Y$ of type $(3,2)$;
   \item $Y$ has no fixed prime divisor of type $(3,0)^{sm}$.
   \end{enumerate}
\end{prg}
   \begin{proof}
 The implication $(a)\Rightarrow (a')$ is clear.
  For $(b)\Rightarrow (b')$, let $E\subset Y$ be a fixed prime divisor  of type $(3,2)$, and let us consider its transforms 
  $E_X\subset X$ and $E_{\w{X}}\subset\w{X}$. Then $E_X$ is a fixed prime divisor of type $(3,2)$ by Lemma \ref{pfizer}, and $\dim\N(E_X,X)=\dim\N(E_{\w{X}},\w{X})$ by Lemma \ref{cena}, thus $(a)$ yields $\N(E_{\w{X}},\w{X})=\N(\w{X})$. On the other hand
  $E=\sigma(E_{\w{X}})$, hence $\N(E,Y)=\sigma_*(\N(E_{\w{X}},\w{X}))=\N(Y)$.
  
 Finally if $Y$ 
 has a fixed prime divisor of type $(3,0)^{sm}$,
its transform in $X$
is a fixed prime divisor of type $(3,0)^{sm}$ by Lemma \ref{pfizer}, therefore
$(c)\Rightarrow (c')$.
\end{proof}
\begin{prg}\label{reductions}
We note that $Y$ is not a product of surfaces, because a Fano $4$-fold 
 $S_1\times S_2$ with $\rho_{S_1}>1$ has a contraction onto $\pr^1\times S_2$, contradicting $(a')$.
Then we have $\delta_Y\leq 1$ by $(a')$ and  Theorems \ref{codim} and  \ref{delta2}.
\end{prg}
\begin{prg}\label{previous}
 Every fixed prime divisor $E$ of type $(3,2)$ in $Y$ meets $C$.
 
  Indeed if $E\cap C=\emptyset$, then $E$ is disjoint from the images of all exceptional curves in $\w{X}$, by Lemma \ref{vecchio}, thus
  $E$ is contained in the open where the birational map $X\dasharrow Y$ is an isomorphism. Its transform $E_X\subset X$ is a fixed prime divisor of type $(3,2)$ by Lemma \ref{pfizer}, and it is
  disjoint from $D$, so that $\N(E_X,X)\subseteq D^{\perp}\subsetneq\N(X)$ (see Rem.~\ref{easy}), contradicting $(b)$.
\end{prg}
\begin{prg}\label{E_0}
  We introduce the following notation: if $C$ belongs to a family of lines which cover a divisor, we denote such divisor by $E_0$; by Lemma \ref{chitarre} it could be nef, or fixed of type $(3,2)$; in this last case $C_{E_0}\equiv C$.
\end{prg}
\begin{prg}\label{pippo}
  If $E\subset Y$ is a fixed prime  divisor  of type $(3,2)$, 
then $E\cap C\neq\emptyset$ by \ref{previous}, and applying \cite[Lemma 5.11]{blowup} we know that either $C\not\subset E$ and $E\cdot C>0$, or $E=E_0$ and $C_{E_0}\equiv C$.
\end{prg}  
\begin{prg}\label{VQR}
Let $E_1,\dotsc,E_r$ be the fixed prime divisors of $Y$ of type $(3,2)$ with
$E_i\cdot C>0$, so that by \ref{pippo} the fixed prime divisors of type $(3,2)$ of  $Y$ are either $r$ (namely $E_1,\dotsc,E_r$) or $r+1$  (namely $E_0,E_1,\dotsc,E_r$).

We show that $E_i\cdot C_{E_j}=0$ for every $i,j\in\{1,\dotsc,r\}$ with $i\neq j$. 

Take for simplicity $i=1$ and $j=2$, and let $\Gamma$ be a curve such that $\Gamma\equiv C_{E_2}$ and $C\cap\Gamma\neq\emptyset$. Then by Rem.~\ref{vecchissimo} $\Gamma$ is the image of an exceptional curve of $\w{X}$, hence by Lemma \ref{vecchio} $\Gamma$ cannot intersect any curve of anticanonical degree one (different from $C$ and from $\Gamma$ itself). Therefore $\Gamma\cap E_1=\emptyset$ and $E_1\cdot C_{E_2}=0$.

This implies that the classes $[E_1],\dotsc,[E_r]$ are linearly independent in $\Nu(Y)$, hence $r\leq\rho_Y$. In fact $r<\rho_Y$, otherwise given an ample divisor $A$, we can write $A\equiv\sum_{i=1}^{\rho_Y}\lambda_i E_i$, and $A\cdot C_{E_j}=-\lambda_j>0$ for every $j=1,\dotsc,\rho_Y$, which gives a contradiction.
\end{prg}
\begin{prg}\label{ancora}
Suppose that $E_0$ is a fixed prime divisor of type $(3,2)$. Then $E_0$ 
and $E_i$ are not adjacent, $E_0\cdot C_{E_{i}}>0$, and $E_0+E_i$ is movable, for every $i=1,\dotsc,r$.
This follows from Cor.~\ref{solfeggio}, because $E_{i}\cdot C_{E_0}=E_{i}\cdot C>0$.
\end{prg}
 \begin{prg}\label{fibr}
  Suppose that there exists
  a movable face $\tau$ of $\Eff(Y)$ such that, if $f\colon Y\dasharrow Z$ is an associated rational contraction of fiber type as in \ref{setup}, we have $Z\cong\pr^2$.
  
Then either $\rho_X\leq 10$, or
  $\tau$ is a facet and the general fiber of $f$ is $\pr^2$.
\end{prg}
\begin{proof}
   Let $\zeta\colon\w{Y}\dasharrow Y$ be a SQM such that  $\tilde{f}:=f\circ\zeta\colon \w{Y}\to\pr^2$ is regular and $K$-negative (see Lemma \ref{basic2}), and let $F\subset\w{Y}$ be a general fiber of $\tilde{f}$, so that $F$ is a smooth del Pezzo surface. Note that $F$ is contained in the open subset where $\zeta$ is an isomorphism, because the indeterminacy locus of $\zeta$ has dimension at most one, see Lemma \ref{basic1}.

If $\rho_F=1$, then $F\cong\pr^2$ and $\tau$ is a facet by \eqref{estate}.
Suppose that $\rho_F>1$. Then $F$ is covered by rational curves of anticanonical degree $2$, so that $\w{Y}$ and $Y$ have a covering family of rational curves of anticanonical degree $2$. By Lemma \ref{test2}$(b)$
 $C$ must be a component of a curve of this family in $Y$; moreover 
$C$ cannot meet any exceptional plane (see Rem.~\ref{vecchissimo}) and thus it is contained in the open subset where $\zeta^{-1}$ is an isomorphism and $f$ is regular, so that $f(C)$ is a point.

 Let us consider now the composition $X\dasharrow\pr^2$. It contracts $D$ to a point, and no other prime divisors, because $f$ is equidimensional (see \ref{setup}). Then Lemma \ref{27} and $(b)$ yield $\rho_X\leq 10$.
 \end{proof}
\begin{prg}\label{GGA}
  Suppose that there exists  a minimal movable face  $\tau$ of $\Eff(Y)$ with $\dim \tau\leq 2$.

  Then either  $\rho_X\leq 10$, or $\dim\tau=2$ and  $\tau$ does not contain classes of fixed prime divisors of type $(3,2)$.
\end{prg}
\begin{proof}
We apply
  Prop.~\ref{usseaux} to $Y$ and $\tau$; case $(vi)$ of the Proposition is excluded because $\tau$ is minimal, and
cases $(i)$ and $(ii)$ are excluded by $(a')$ and $(b')$ respectively. 

In case $(iii)$ of Prop.~\ref{usseaux} we get either $\dim\tau=1$, $\rho_Y\leq 9$, and $\rho_X\leq 10$, or $\dim\tau=2$ and  $\tau$ does not contain classes of fixed prime divisors of type $(3,2)$.

In case $(v)$  we apply \ref{fibr} and get $\rho_X\leq 10$, because $\tau$ is not a facet.

Finally in case $(iv)$ of Prop.~\ref{usseaux} we have $\rho_Y=10$ and
there is a contraction $f\colon Y\to\pr^1$ with general fiber $S\times\pr^1$, $S$ a del Pezzo surface with $\rho_S=9$. We show that this last case cannot happen.

 Since $S\times\pr^1$ is covered by rational curves of anticanonical degree $2$, as in \ref{fibr} we see that $C$ must be contained in a fiber of $f$.
  Let us consider the composition $f\circ\sigma\colon\w{X}\to\pr^1$. There is a SQM $\psi\colon \wi{X}\dasharrow\w{X}$ such that  $g:=\psi\circ f\circ\sigma\colon \wi{X}\to\pr^1$ is regular and $K$-negative (see Lemma \ref{basic2}), and $\wi{X}\cong X$ by Lemma \ref{regular}; let $F\subset X$ be the general fiber of $g$.
  We have $g(D)=\{p_0\}$ and $g^{-1}(p_0)$ has at least another irreducible component $D_2$, so that $F\cap (D\cup D_2)=\emptyset$ and $\N(F,X)\subseteq D^{\perp}\cap D_2^{\perp}$ (see Rem.~\ref{easy}). On the other hand, since $D$ is fixed, the classes $[D]$ and $[D_2]$ cannot be proportional in $\Nu(X)$, thus
  $D^{\perp}\neq D_2^{\perp}$ and $\dim\N(F,X)\leq\rho_X-2$, contradicting $\delta_X\leq 1$.
\end{proof}
 \emph{Thus
  we can assume that $\Eff(Y)$ is generated by classes of fixed prime divisors, and that if $\tau$ is a 2-dimensional movable face of $\Eff(Y)$, then $\tau$ does not contain classes of fixed prime divisors of type $(3,2)$.}
\begin{prg}\label{primavera}
If $Y$ contains a nef prime divisor $H$ covered by a family $V$ of lines, then  either $\rho_X\leq 10$, or $H=E_0$ and $[C]\equiv [V]$.
  
Indeed if $H\cap C=\emptyset$, then Prop.~\ref{test} together with $(a)$ and $(b)$ yields $\rho_X\leq 10$. If instead $H\cap C\neq\emptyset$, then $C$ is a member of the family $V$ by Lemma \ref{test2}$(a)$, so that $H=E_0$ (see \ref{E_0}).
\end{prg}
\emph{Thus
  we can assume that if $Y$ contains a nef prime divisor $H$ covered by a family $V$ of lines, then $H=E_0$ and $[C]\equiv [V]$.}
\begin{prg} We show that $Y$ has some fixed prime divisor not of type $(3,2)$.
  
  By \ref{GGA} $\Eff(Y)$ is generated by classes of fixed prime divisors, so there are at least $\rho_Y$ of them. On the other hand, by \ref{VQR} $Y$ has at most $r+1\leq\rho_Y$ fixed prime divisors of type $(3,2)$. If $Y$ has
  exactly $\rho_Y$ fixed prime divisors of type $(3,2)$, then $E_0$ is fixed of type $(3,2)$ (see \ref{E_0} and \ref{VQR}). By \ref{GGA} any $2$-dimensional face of  $\Eff(Y)$ containing $[E_0]$ is fixed, and yields a fixed prime divisor $B$ adjacent to $E_0$. Then $B\neq E_i$ for every $i=1,\dotsc,r$ by \ref{ancora},
 thus $B$  is not of type $(3,2)$.
 \end{prg}
 \begin{prg}\label{B0}
   Let $B\subset Y$ be a fixed prime divisor not of type $(3,2)$; by $(c')$ $B$ is of type $(3,1)^{sm}$ or  $(3,0)^Q$. Let  $L\subset B$ be an exceptional plane, and 
   let us consider the family of lines $V$ in $Y$ given by $B$ and $L$
    as in Prop.~\ref{importante}, so that $\Lo V$ is a divisor and $[V]\equiv C_B-C_L$.
    We show that
$[V]\in\{[C],[C_{E_1}],\dotsc,[C_{E_r}]\}$.

Indeed if $\Lo V$ is not nef, then it is a fixed prime divisor of type $(3,2)$ by Lemma \ref{chitarre}, so that  by \ref{VQR} $\Lo V\in\{E_0,E_1\dotsc,E_r\}$, and
$[V]\in\{[C],[C_{E_1}],\dotsc,[C_{E_r}]\}$. If instead $\Lo V$ is nef, then 
by \ref{primavera} $\Lo V=E_0$ and $[V]=[C]$.
\end{prg}
\begin{prg}\label{dip}
Let us vary the exceptional plane $L$ in $B$ and consider all the families of lines that we obtain; 
  we denote by $\eta_1,\dotsc,\eta_s\in\N(Y)$ their distinct numerical classes. We show that  $s\geq \rho_Y-3$.

  Indeed we have $\delta_Y\leq 1$ by \ref{reductions}, so $\dim\N(B,Y)\geq \rho_Y-1$.
  By Rem.~\ref{generators} $B$ contains at least $\rho_Y-3$ exceptional planes  $L_1,\dotsc,L_{\rho_Y-3}\subset B$ such that the classes $[C_{L_1}],\dotsc,[C_{L_{\rho_Y-3}}]$ are linearly independent.  In particular the classes
$[C_B-C_{L_i}]$  of the $\rho_Y-3$ corresponding families are all distinct.
\end{prg}
\begin{prg}\label{B}
  By \ref{B0} we have $\eta_j \in\{[C],[C_{E_1}],\dotsc,[C_{E_r}]\}$ for every $j=1,\dotsc,s$, and $s\geq\rho_Y-3$ by \ref{dip}. Therefore
$r\geq s-1\geq \rho_Y-4=\rho_X-5\geq 3$; moreover up to renumbering we can assume that  $\eta_{1},\eta_{2}\in\{[C_{E_1}],\dotsc,[C_{E_r}]\}$.
  \end{prg}
 \begin{prg}
By Lemma \ref{pfizer} for every $i=1,\dotsc,r$ the transform 
$\w{E}_i$ of $E_i$ in $X$ is a fixed prime divisor of type $(3,2)$ adjacent to $D$. Moreover, since $E_i\not\supset C$ (see \ref{pippo}), in $\w{X}$ the divisor $\sigma^{-1}(E_i)$ is disjoint from the general fiber of $\sigma$, and hence $\w{E}_i\cdot C_D=\sigma^{-1}(E_i)\cdot C_{\w{D}}=0$.

We conclude that $D$ has at least $\rho_X-5$ adjacent fixed prime divisors of type $(3,2)$ and having intersection zero with $C_D$.
 \end{prg}
In this first part of the proof we have shown the following.
\begin{proposition}\label{sabrina}
  Let $X$ be a Fano $4$-fold with $\rho_X\geq 8$ and $\delta_X\leq 1$, and $D\subset X$
  a fixed prime divisor of type $(3,1)^{sm}$. Then one of the following holds:
  \begin{enumerate}[$(i)$]
  \item $X$ has a rational contraction onto a $3$-fold;
  \item $X$ has a fixed prime divisor $E_X$ of type $(3,2)$ such that $\N(E_X,X)\subsetneq\N(X)$;
\item $X$ has a fixed prime divisor of type $(3,0)^{sm}$;
  \item $\rho_X\leq 10$;
   \item there are at least $\rho_X-5$ fixed prime divisors of type $(3,2)$ adjacent to $D$ and having intersection zero with $C_D$.
     \end{enumerate}
\end{proposition}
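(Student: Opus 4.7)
The plan is to work on the Fano $4$-fold $Y$ obtained from the contraction associated to $D$, namely $X \stackrel{\ph}{\dasharrow} \w{X} \stackrel{\sigma}{\to} Y$, where $\sigma$ is the blow-up of a smooth irreducible curve $C \subset Y$. First I would assume that conclusions $(i)$, $(ii)$, $(iii)$ all fail and transfer these negative hypotheses to $Y$: by Lemma \ref{pfizer} the type of a fixed prime divisor is preserved under $\sigma$, and by Lemma \ref{cena} the codimension of $\N(E,X)$ for a fixed $(3,2)$ divisor is preserved as well, yielding analogues $(a')$, $(b')$, $(c')$ on $Y$. In particular $Y$ is not a product of surfaces (else it admits a contraction onto $\pr^1$, contradicting $(a')$), so Theorems \ref{codim} and \ref{delta2} give $\delta_Y \leq 1$.

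Next I would analyze the fixed $(3,2)$ divisors of $Y$. Each such $E$ must meet $C$: otherwise $E$ lies in the locus where $X \dasharrow Y$ is an isomorphism (using Lemma \ref{vecchio} to avoid images of exceptional curves), and its transform $E_X \subset X$ would be disjoint from $D$, forcing $\N(E_X,X) \subseteq D^\perp$ and contradicting $(b)$. Applying \cite[Lemma 5.11]{blowup} I would then split these divisors into $E_1,\dotsc,E_r$ with $E_i \cdot C > 0$ and a possibly present $E_0$ with $[C_{E_0}] \equiv [C]$. The crucial numerical identity $E_i \cdot C_{E_j} = 0$ for $i \neq j$ would follow because any curve representing $C_{E_j}$ and meeting $C$ is the image of an exceptional curve of $\w{X}$ by Remark \ref{vecchissimo}, which by Lemma \ref{vecchio} cannot meet $E_i$. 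Consequently the classes $[E_1],\dotsc,[E_r]$ are linearly independent, and a short positivity argument gives $r < \rho_Y$.

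I would then reduce to the case where $\Eff(Y)$ is generated by classes of fixed prime divisors, by applying Proposition \ref{usseaux} to any minimal movable face of $\Eff(Y)$ of dimension at most $2$: cases $(i)$, $(ii)$ of that proposition are ruled out by $(a')$, $(b')$, case $(iii)$ gives $\rho_X \leq 10$, case $(v)$ combined with Lemmas \ref{27} and \ref{test2} also gives $\rho_X \leq 10$ (after checking $C$ sits inside a fiber over $\pr^2$), and case $(iv)$ is excluded since it would force a reducible fiber through $D$ contradicting $\delta_X \leq 1$. A parallel reduction using Proposition \ref{test} with Lemma \ref{test2}(a) would arrange that the only nef prime divisor of $Y$ covered by lines is $E_0$, with covering family numerically equal to $[C]$. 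A counting argument using $r + 1 \leq \rho_Y$ would then produce a fixed prime divisor $B \subset Y$ not of type $(3,2)$, which by $(c')$ is of type $(3,1)^{sm}$ or $(3,0)^Q$.

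The final step applies Proposition \ref{importante} to $B$ and each of its exceptional planes. Each such plane $L \subset B$ yields a family of lines $V$ covering a divisor, with $[V] \equiv [C_B] - [C_L]$; by the reductions this divisor is either a fixed $(3,2)$ divisor (so $[V] \in \{[C_{E_1}],\dotsc,[C_{E_r}]\}$) or nef covered by lines (forcing $[V] = [C]$). By Remark \ref{generators} $B$ contains at least $\rho_Y - 3$ exceptional planes with linearly independent classes $[C_L]$, producing $\rho_Y - 3$ distinct classes $[V]$, hence $r \geq \rho_Y - 4 = \rho_X - 5$. Pulling $E_1,\dotsc,E_r$ back to $X$ via Lemma \ref{pfizer} gives fixed $(3,2)$ divisors adjacent to $D$; since $C \not\subset E_i$, the preimage $\sigma^{-1}(E_i)$ is disjoint from the general fiber of $\sigma$, so that $\w{E}_i \cdot C_D = 0$, yielding $(v)$. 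The main obstacle is the careful bookkeeping of the movable-face classification in Proposition \ref{usseaux} together with the possible outcomes for the covering divisors of the families built from $B$, ensuring that every alternative either falls into conclusions $(i)$--$(iv)$ or produces the required $E_i$'s.
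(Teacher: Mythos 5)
Your proposal is correct and follows essentially the same route as the paper's own argument (which is the first part of the proof of Theorem~\ref{finalmente}): passing to $Y$, transferring the negations of $(i)$--$(iii)$ via Lemmas~\ref{pfizer} and~\ref{cena}, isolating $E_0,E_1,\dotsc,E_r$ with $E_i\cdot C_{E_j}=0$, reducing via Propositions~\ref{usseaux} and~\ref{test} so that $\Eff(Y)$ is generated by fixed classes, and then counting the $\rho_Y-3$ families of lines produced from $B$ by Proposition~\ref{importante} and Remark~\ref{generators}. The only detail glossed over is that producing $B$ not of type $(3,2)$ also uses that $E_0$ is adjacent to none of the $E_i$ (the paper's step~\ref{ancora}), not just the bound $r+1\leq\rho_Y$.
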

We assume from now on that $\rho_X\geq 9$, so that $\rho_Y\geq 8$.
\begin{prg}\label{B2}
By \ref{B} we have
$\eta_{1}=[C_{E_{i_1}}]$ and $\eta_{2}=[C_{E_{i_2}}]$ for some $i_1,i_2\in\{1,\dotsc,r\}$. Then 
$E_{i_1}$ is the locus of the family of lines with class $\eta_{1}$, and $E_{i_1}\cdot \eta_{2}=E_{i_1}\cdot C_{E_{i_2}}=0$ by
\ref{VQR}.  By Lemma \ref{atletica} this implies that $B$ is of type $(3,1)^{sm}$.
\end{prg}
We conclude that \emph{every fixed prime divisor of $Y$ is of type $(3,1)^{sm}$ or $(3,2)$.}
\begin{prg}\label{lite}
  Let $i\in\{1,\dotsc,r\}$. We show that $E_i$ and $B$ are adjacent if and only if $B\cdot C_{E_i}=0$, if and only if $E_i\cdot C_B=0$.

  Indeed the first equivalence follows from Lemmas \ref{2faces}$(a)$ and \ref{nigra}$(a)$; similarly
if $E_i\cdot C_B=0$ then  $E_i$ and $B$ are adjacent. Conversely, if the two divisors are adjacent,  let $i_1,i_2$ be as in \ref{B2}; we can assume that $i_1\neq i$, so that $E_i\cdot C_{E_{i_1}}=0$ by \ref{VQR}. There exists an exceptional plane $L_1\subset B$ such that $C_B\equiv C_{L_1}+\eta_1\equiv C_{L_1}+C_{E_{i_1}}$, hence $E_i\cdot C_B=E_i\cdot C_{L_1}$. Now if 
  $E_i\cdot C_B>0$, then  $E_i\cdot C_{L_1}>0$, but Lemma \ref{aikido} yields $E_i\cap L_1=\emptyset$, a contradiction. Thus $E_i\cdot C_B=0$.
\end{prg}  
\begin{prg}\label{notturna}
  Suppose that $E_0$ is a fixed prime divisor of type $(3,2)$. Then $E_0\cdot C_B>0$.

  Indeed consider again $i_1$ and $L_1$ as in \ref{B2} and \ref{lite}. We  have $E_0\cdot C_{E_{i_1}}>0$ by \ref{ancora}, and $E_0\cdot C_{L_1}\geq 0$ (see Rem.~\ref{ikea}), so that $E_0\cdot C_B=E_0\cdot C_{L_1}+E_0\cdot C_{E_{i_1}}>0$.  
\end{prg} 
\begin{prg}\label{torino}
Either $\rho_X\leq 11$, or $B$ is adjacent to some divisor among $E_1,\dotsc,E_r$.

Indeed, let us  apply Prop.~\ref{sabrina} to $Y$ and $B$. 
By $(a')$, $(b')$, and $(c')$, either $\rho_Y\leq 10$ and $\rho_X\leq 11$, or 
$Y$ has a fixed prime divisor of type $(3,2)$ adjacent to $B$ and having intersection zero with $C_B$. By \ref{notturna} this divisor must be among $E_1,\dotsc,E_r$.
\end{prg} 
\emph{Thus we can assume that every fixed prime divisor $B\subset Y$ of type $(3,1)^{sm}$ is adjacent to some divisor among $E_1,\dotsc,E_r$.}
\begin{prg}\label{colazione}
Let $h\in\{1,\dotsc,r\}$ be such that $E_h$ and $B$ are adjacent. Then
$B\cdot C_{E_h}=E_h\cdot C_B=0$ by \ref{lite}, and
$E_h\cap B\neq\emptyset$
by $(b')$  (otherwise $\N(E_h,Y)\subseteq B^{\perp}\subsetneq\N(Y)$, see Rem.~\ref{easy}), so that we can apply Lemma \ref{zoom}. We deduce that
 there exists an exceptional plane  $L_0\subset B$  such that $C_B\equiv C_{E_h}+C_{L_0}$ and $E_h$ is the locus of the family of lines given by $B$ and $L_0$, so that
$[C_{E_h}] =\eta_{j_0}$ for some $j_0\in\{1,\dotsc,s\}$ (see \ref{dip}).
\end{prg}  
\begin{prg}\label{motorino}
We have $\eta_j\in\{[C_{E_1}],\dotsc,[C_{E_r}]\}$ for every $j=1,\dotsc,s$ (see \ref{dip}).
  
Indeed by \ref{B} it is enough to show that $\eta_j\neq[C]$ for  every $j=1,\dotsc,s$.
Let $E_h$ and $\eta_{j_0}$ be as in \ref{colazione}. By 
   Prop.~\ref{yoga}
  we have $E_{h}\cdot \eta_j=0$ for every $j\in\{1,\dotsc,s\}$, $j\neq j_0$, 
moreover $E_h\cdot\eta_{j_0}=-1$.
 On the other hand
   $E_{h}\cdot C>0$ (see \ref{VQR}), 
   so that $\eta_j\neq [C]$ for every $j=1,\dotsc,s$.
\end{prg}
 \emph{We conclude that  for every exceptional plane $L\subset B$ there exists some $i\in\{1,\dotsc,r\}$ such that $C_B\equiv C_L+C_{E_i}$.}
\begin{prg}\label{fibr2}
  Suppose that there exists  a minimal movable face  $\tau$ of $\Eff(Y)$ with
  $\dim\tau\leq 3$. 

Then either $\rho_X\leq 11$, or $\dim\tau=3$ and $\tau$  does not contain classes of fixed prime divisor of type $(3,2)$.
\end{prg}
\begin{proof}
 We apply Prop.~\ref{usseaux} to $Y$ and $\tau$; 
 case $(vi)$ of the Proposition is excluded because $\tau$ is minimal, and
cases $(i)$ and $(ii)$ are excluded by $(a')$ and $(b')$ respectively. 

 In case $(iii)$ of Prop.~\ref{usseaux},  either $\dim\tau=2$, $\rho_Y\leq 10$, and $\rho_X\leq 11$, or $\dim\tau= 3$ and  $\tau$ does not contain classes of fixed prime divisors of type $(3,2)$. In case $(iv)$ we have $\rho_Y=10$ and $\rho_X=11$. 
In case $(v)$ we apply \ref{fibr} and get $\rho_X\leq 10$, because $\tau$ is not a facet.
\end{proof}
 \emph{Thus
  we can assume that every $2$-dimensional face of $\Eff(Y)$ is fixed, and that if $\tau$ is a 3-dimensional movable face of $\Eff(Y)$, then $\tau$ does not contain classes of fixed prime divisors of type $(3,2)$.}
\begin{prg}\label{milano}
  Let $L\subset B$ be an exceptional plane. There exists  a fixed prime divisor  $G\subset Y$ of type $(3,1)^{sm}$, adjacent to $B$, and such that $L\subset G$.
\end{prg}
\begin{proof}
 Consider 
 $Y\dasharrow\w{Y}\stackrel{\sigma_Z}{\to} Z$ the contraction associated to $B$ (see \S\ref{contraction}), so that $Z$ is a smooth Fano $4$-fold with $\rho_Z\geq 7$, and let $C_Z\subset Z$ be the center of the blow-up $\sigma_Z$. Let $\ell\subset \w{Y}$ be the exceptional curve corresponding to $L$, and $\Gamma\subset Z$ its image.

 By \ref{fibr2} and Lemma \ref{taberna} the cone $\Eff(Z)$ is generated by  classes of fixed prime divisors, thus there exists a fixed prime divisor $G_0\subset Z$ such that $G_0\cdot\Gamma>0$. By Lemma \ref{pfizer}, the transform $G\subset Y$
 of $G_0$ is a fixed prime divisor adjacent to $B$ and having the same type as $G_0$.

If $G_0\cap C_Z=\emptyset$, then $G_0$ cannot be of type $(3,2)$, otherwise as in \ref{previous} we get $G\cap B=\emptyset$, contradicting $(b')$.
Then  $G_0$ and $G$ are of type $(3,1)^{sm}$ by \ref{B2}. Moreover $\sigma_Z^{-1}(G_0)\cdot\ell=G_0\cdot\Gamma>0$, thus we conclude that $G\cdot C_L<0$ and
$L\subset G$.

\smallskip
 
Suppose now that $G_0\cap C_Z\neq\emptyset$. We show that there is another fixed prime divisor $P_0$ of $Z$ such that $P_0\cdot\Gamma>0$ and $P_0\cap C_Z=\emptyset$.

By \cite[Lemma 5.11(1)]{blowup}  $G_0$ must be a fixed prime divisor of type $(3,2)$; let $R$ be the associated extremal ray $R$ of type $(3,2)$ of $\NE(Z)$. Since $G_0\cdot\Gamma>0$, $[\Gamma]\not\in R$ and $\Gamma$ must intersect some curve of anticanonical degree one with class in $R$, and by Lemma \ref{vecchio} we conclude that $[C_Z]\in R$, $-K_Z\cdot C_Z=1$, and 
 $\Gamma\not\subset G_0$.

 Let us consider the contraction $f\colon Z\to W$ such that $\NE(f)=R$, so that $G_0=\Exc(f)$, and set $\Gamma':=f(\Gamma)$.

 There is a one-dimensional face $\eta$ of $\Eff(W)$ such that $\eta\cdot\Gamma'>0$. If $\eta$ is movable, then by
 Lemma \ref{taberna} $\eta=f_*\eta_Z$ where $\eta_Z$ is a $2$-dimensional movable face of $\Eff(Z)$ containing $[G_0]$. Again by Lemma \ref{taberna} we have $\eta_Z=(\sigma_Z)_*\eta_Y$ where $\eta_Y$ is a $3$-dimensional movable face of $\Eff(Y)$ containing $[G]$, contradicting \ref{fibr2}.

Thus $\eta$ is fixed, and there is a fixed prime divisor $P\subset W$ such that $P\cdot \Gamma'>0$, so that $f^*P\cdot\Gamma>0$ in $Z$.
 
If $P_0\subset Z$ is the transform of $P$, we have $f^*P=P_0+mG_0$ with $m=P_0\cdot C_Z$. On the other hand $P_0$ is fixed and adjacent to $G_0$ by Lemma \ref{taberna}$(c)$, hence $m=0$ by Lemma \ref{nigra}$(a)$, and $P_0\cdot\Gamma>0$. Finally $C_Z\not\subset P_0$ by \cite[Lemma 5.11(2)]{blowup}, thus $P_0\cap C_Z=\emptyset$.
\end{proof}
\begin{prg}\label{minimal}
For every fixed prime divisor $B\subset Y$ of type $(3,1)^{sm}$ let $n_B$ be the number of divisors among $E_1,\dotsc,E_r$ which are adjacent to $B$, so that $n_B>0$ by \ref{torino}. 
Let us choose $B_1$ with
$n_{B_1}$ minimal, namely such that $n_{B_1}\leq n_B$ for every fixed prime divisor $B\subset Y$ of type $(3,1)^{sm}$.
  \end{prg}
\begin{prg}
There exists $i\in\{1,\dotsc,r\}$ such that $B_1+E_i$ is movable and non-big.
\end{prg}
\begin{proof}
 By \ref{torino} there exists some $h\in\{1,\dotsc,r\}$ such that  $E_h$ is adjacent to $B_1$, hence $E_h\cdot C_{B_1}=B_1\cdot C_{E_h}=0$ by \ref{lite}. 
Then by \ref{colazione} there exists
   an exceptional plane $L_0\subset B_1$
  such that  $C_{B_1}\equiv C_{L_0}+C_{E_h}$.  

 We apply \ref{milano} to $B_1$ and $L_0$, and let $G\subset Y$ be a fixed prime divisor of type $(3,1)^{sm}$ adjacent to $B_1$ and such that $L_0\subset G$. We have $G\cdot C_{B_1}=B_1\cdot C_{G}=0$ by Lemma \ref{nigra}$(b)$.

By \ref{motorino}
 there exists $i\in\{1,\dotsc,r\}$
 such that $C_{G}\equiv C_{L_0}+C_{E_{i}}$; we have $i\neq h$, because $C_{B_1}\not\equiv C_{G}$. Recall that $E_{i}\cdot C_{E_{h}}=E_{h}\cdot C_{E_{i}}=0$ by \ref{VQR}.
 We get:
 \stepcounter{thm}
 \begin{subequations}
 \begin{align}
   C_{B_1}+C_{E_{i}}&\equiv  C_{L_0}+C_{E_{h}}+C_{E_{i}}\equiv C_{G}+C_{E_{h}}\label{somma}\\
B_1\cdot C_{E_{i}}&=B_1\cdot(C_{G}+C_{E_{h}}-C_{B_1})=1\label{primo}\\\notag
G\cdot C_{E_h}&=G\cdot (C_{B_1}+C_{E_{i}}-C_{G})=G\cdot C_{E_{i}}+1\geq 1\\
E_{i}\cdot C_{B_1}&=E_{i}\cdot(C_{G}+C_{E_{h}}-C_{E_i})=  E_{i}\cdot C_{G}+1\geq 1
 \label{secondo}\end{align}
 \end{subequations}
so that $E_{h}$ is not adjacent to $G$ and $E_{i}$ is not adjacent to $B_1$
by \ref{lite}.

 Let $j\in\{1,\dotsc,r\}\smallsetminus\{i,h\}$. Then $E_j\cdot C_{E_{i}}=E_{j}\cdot C_{E_{h}}=0$ by \ref{VQR}, so that
 $E_j\cdot C_{B_1}=E_j\cdot C_{G}$ by \eqref{somma}, therefore $E_j$ is adjacent to $B_1$ if and only if it is adjacent to $G$, again by \ref{lite}.

 We conclude that either $E_{i}$ is not adjacent to $G$ and
$n_{G}=n_{B_1}-1$, or  $E_{i}$ is adjacent to $G$ and
$n_{G}=n_{B_1}$ (see \ref{minimal}). By the minimality of $n_{B_1}$, we conclude that $E_{i}$ must be adjacent to $G$, so that $E_{i}\cdot C_{G}=0$ by \ref{lite}, and finally we get  $B_1\cdot C_{E_i}=E_{i}\cdot C_{B_1}=1$ by \eqref{primo} and \eqref{secondo}.

By Lemma \ref{2faces}$(c)$, this implies that $B_1+E_{i}$ is movable and non-big.
\end{proof}
\begin{prg}\label{pioggia}
  Assume for simplicity that $i=1$, and
  let $f\colon Y\dasharrow Z$ be the rational contraction of fiber type defined by $m_1(B_1+E_{1})$ for $m_1\in\mathbb{N}$, $m_1\gg 0$. By Lemma \ref{sushi}, $(a')$, and $(b')$, we have that $Z\cong\pr^2$ and $f$ is equidimensional.

Let 
   $\tau$ be the smallest face
  of $\Eff(Y)$ containing $f^*\Nef(\pr^2)$, so that $\tau$ is a movable face, 
 $f$ is a rational contraction associated to $\tau$ as in \ref{setup}, and $\tau\cap\Mov(Y)=f^*\Nef(\pr^2)$. 
By \ref{fibr} we can assume that $\tau$ is a facet 
 of $\Eff(Y)$ and that the general fiber of $f$ is $\pr^2$.

Let $\Gamma_1,\dotsc,\Gamma_m\subset\pr^2$ be the  irreducible curves 
 such that $f^*\Gamma_i$ is reducible; we can assume that $B_1+E_1=f^*\Gamma_1$.
By \cite[Lemma 5.2]{fibrations} we know that $f^*\Gamma_i$ has two irreducible components, both fixed divisors, at least one 
of which of type $(3,2)$. We also have $m=\rho_Y-2$
 by \cite[Cor.~2.16]{fibrations}.

Recall from \ref{VQR} that the fixed prime divisors of type $(3,2)$ of $Y$ are $E_1,\dotsc,E_r$, and possibly $E_0$. If $E_0$ is a component of $f^*\Gamma_i$ for some $i\in\{1,\dotsc,\rho_Y-2\}$, then for $k\neq i$ $f^*\Gamma_k$ must have as a component $E_j$ for
 some $j\in\{1,\dotsc,r\}$. Then $E_0$ and $E_j$ should be adjacent by \cite[Cor.~2.18]{fibrations}, contradicting \ref{ancora}.

Therefore up to renumbering we can assume that $E_i$ is a component of $f^*\Gamma_i$ for every $i=1,\dotsc,\rho_Y-2$ (in particular $r\geq\rho_Y-2$).

If $j\in\{1,\dotsc,r\}$, $j\neq i$, we have $E_j\cdot C_{E_i}=0$ by \ref{VQR}, so that $E_i$ and $E_j$ are adjacent by Lemma \ref{2faces}$(a)$;
since 
$f^*\Gamma_i$ is movable,
 the second component 
$B_i$ of $f^*\Gamma_i$
must be
 a fixed prime divisor of type $(3,1)^{sm}$.

Note that $\tau$ is generated by the classes of fixed prime divisors that do not dominate
 $\pr^2$ under $f$ (see \ref{setup}), so that:
$$\tau=\langle [E_{1}],\dotsc,[E_{\rho_Y-2}],[B_1],\dotsc,[B_{\rho_Y-2}]\rangle.$$
Moreover, by \cite[Cor.~2.18]{fibrations}, for every partition $\{1,\dotsc,\rho_Y-2\}=I\sqcup J$ the cone $\langle [E_i],[B_j]\rangle_{i\in I, j\in J}$ is a fixed face of $\tau$. In particular $B_i$ and $E_j$ are adjacent for every $i,j\in\{1,\dotsc,\rho_Y-2\}$, $i\neq j$, and $E_j\cdot C_{B_i}=B_i\cdot C_{E_j}=0$ by \ref{lite}.

We also have, for every $i=1,\dotsc,\rho_Y-2$, that  $\langle [E_i],[B_i]\rangle\cap\Mov(Y)=f^*\Nef(\pr^2)$, therefore
 $E_i\cdot C_{B_i}=B_i\cdot C_{E_i}=1$ by Lemma \ref{2faces}$(c)$. Hence for every $j\in\{1,\dotsc,\rho_Y-2\}$ we have $E_j\cdot(C_{E_i}+C_{B_i})=B_j\cdot(C_{E_i}+C_{B_i})=0$, so that $\tau$ lies on the hyperplane $(C_{E_i}+C_{B_i})^{\perp}$, and  finally
\stepcounter{thm}
\begin{equation}\label{tau}
\tau=(C_{E_i}+C_{B_i})^{\perp}\cap\Eff(Y).\end{equation}
\end{prg}
\begin{prg}\label{adjacent}
Every fixed prime divisor $P\subset Y$ of type $(3,1)^{sm}$
can be not adjacent to at most two divisors among $E_1,\dotsc,E_r$.

Indeed by \ref{pioggia} $B_1$ is adjacent to $E_2,\dotsc,E_{\rho_Y-2}$, so that $n_{B_1}\geq\rho_Y-3$ (see \ref{minimal}). By the minimality of $n_{B_1}$, we deduce that $n_P\geq\rho_Y-3$ for every fixed prime divisor $P\subset Y$ of type $(3,1)^{sm}$, namely $P$ is adjacent to at least $\rho_Y-3$ divisors among $E_1,\dotsc,E_r$. Since $r\leq\rho_Y-1$ (see \ref{VQR}), we conclude that $P$ can be not adjacent to at most two divisors among $E_1,\dotsc,E_r$.
\end{prg}  
\begin{prg}\label{eta}
We consider now the fixed face $\langle [B_1],\dotsc,[B_{\rho_Y-4}],[E_{\rho_Y-3}],[E_{\rho_Y-2}]\rangle$ of $\tau$, of dimension $\rho_Y-2$; there exists 
 a facet $\eta$ of $\Eff(Y)$ such that
$$\tau\cap\eta=\langle [B_1],\dotsc,[B_{\rho_Y-4}],[E_{\rho_Y-3}],[E_{\rho_Y-2}]\rangle.$$

Let $P\subset Y$ be a fixed prime divisor with $[P]\in\eta\smallsetminus 
\langle [B_1],\dotsc,[B_{\rho_Y-4}],[E_{\rho_Y-3}],[E_{\rho_Y-2}]\rangle$; in particular $[P]\not\in\tau$.

The possibilities for $P$ are $P=E_0$, $P=E_{\rho_Y-1}$ (and $r=\rho_Y-1$), or $P$ of type $(3,1)^{sm}$.
\end{prg}
\begin{prg}
Either $P=E_{\rho_Y-1}$ or $P$ is of type $(3,1)^{sm}$.

Indeed if $P=E_0$, then $\eta$ contains both $[E_0+E_{\rho_Y-3}]$ and $[E_0+E_{\rho_Y-2}]$ that are movable by \ref{ancora}. Let $\eta_0$ be the minimal face of $\eta$ containing $[2E_0+E_{\rho_Y-3}+E_{\rho_Y-2}]$, so that $\eta_0$ is movable
 and
$[E_0+E_{\rho_Y-3}],[E_0+E_{\rho_Y-2}]\in\eta_0\cap\Mov(Y)$. Let $g\colon Y\dasharrow W$ be a
rational contraction of fiber type associated to $\eta_0$ as in \ref{setup}, so that
$\rho_W=\dim(\eta_0\cap\Mov(Y))\geq 2$ and $[E_0+E_{\rho_Y-3}]\in g^*\Nu(W)$.
Then Lemma \ref{sushi}, $(a')$, and $(b')$ give a contradiction.
\end{prg}
\begin{prg}\label{zero}
Up to reordering $E_1,\dotsc,E_{\rho_Y-4}$, we have
 $P\cdot C_{E_i}=0$ for every $i=1,\dotsc,\rho_Y-6$.

This is clear by \ref{VQR} if
   $P=E_{\rho_Y-1}$. If $P$ is of type $(3,1)^{sm}$, then by \ref{adjacent} $P$
can be not adjacent to at most two divisors 
 among $E_1,\dotsc,E_{\rho_Y-4}$, so up to renumbering we can assume that $P$ is adjacent to $E_1,\dotsc,E_{\rho_Y-6}$, and $P\cdot C_{E_i}=0$ for every $i=1,\dotsc,\rho_Y-6$ by \ref{lite}.
\end{prg}
\begin{prg}
We have $[P+B_i]\in\eta\cap\Mov(Y)$ for every $i=1,\dotsc,\rho_Y-6$. 

Indeed let $i\in\{1,\dotsc,\rho_Y-6\}$. Clearly $[P+B_i]\in\eta$ because $[P],[B_i]\in\eta$. Moreover since $[P]\not\in\tau$ (see \ref{eta}), we must have 
$P\cdot (C_{E_i}+C_{B_i})>0$ by \eqref{tau}, and $P\cdot C_{E_i}=0$ by \ref{zero}, thus  $P\cdot C_{B_i}>0$.  We also have $B_i\cdot C_P>0$: this follows from \ref{lite} if $P=E_{\rho_Y-1}$, and from
Cor.~\ref{solfeggio}
 if $P$ is $(3,1)^{sm}$.
Finally
$[P+B_i]\in\Mov(Y)$ by Lemma \ref{2faces}$(b)$.
\end{prg}
\begin{prg}
  Let  $\eta_1$ be the minimal face of $\eta$ containing $[(\rho_Y-6)P+B_1+\cdots+B_{\rho_Y-6}]$, so that $\eta_1$ is movable,
contains $[P],[B_{1}],\dotsc,[B_{\rho_Y-6}]$, and
$[P+B_i]\in\eta_1\cap\Mov(Y)$ for every $i=1,\dotsc,\rho_Y-6$. Let $h\colon Y\dasharrow S$ be a
rational contraction of fiber type associated to $\eta_1$ as in \ref{setup}, so that
$\rho_S=\dim(\eta_1\cap\Mov(Y))\geq \rho_Y-6\geq 2$ and $\eta_1$ is the smallest face of $\Eff(Y)$ containing $h^*\Eff(S)$.

Using Prop.~\ref{usseaux}, $(a')$, and $(b')$, we see that we must be in case $(vi)$ of the Proposition, namely that $\dim S=2$ and $h$ is quasi-elementary.  Then  $h^*\Eff(S)$ is a face of $\Eff(Y)$ by \cite[Prop.~2.22]{eff}, thus
 $\eta_1=h^*\Eff(S)\cong\Eff(S)$, and  $\rho_S=\dim(\eta_1\cap\Mov(Y))=\dim\eta_1$. If $\rho_S\leq 3$ we get $\rho_Y\leq 9$ and $\rho_X\leq 10$, so we can assume that $\rho_S\geq 4$, and  $S$ is a smooth del Pezzo surface by Lemma \ref{ufficio}.
\end{prg}
\begin{prg}
Recall that $[B_1],\dotsc,[B_{\rho_Y-4}]\in\eta$ and $[B_1],\dotsc,[B_{\rho_Y-6}]\in\eta_1$. Up to exchanging $B_{\rho_Y-5}$ and $B_{\rho_Y-6}$ we can assume that
for some  $t\in\{\rho_Y-6,\rho_Y-5,\rho_Y-4\}$ we have
$[B_1],\dotsc,[B_{t}]\in\eta_1$ and that $[B_i]\not\in\eta_1$ for $i=t+1,\dotsc,\rho_Y-4$ (if $t<\rho_Y-4$). Note that $\langle[B_1],\dotsc,[B_{t}]\rangle$ is a fixed face of $\eta_1$ (see \ref{eta}), hence $\rho_S=\dim\eta_1\geq t+1$.
\end{prg}
\begin{prg}
  For every $i=1,\dotsc,t$ we have $B_i=h^*C_i$ where $C_i\subset S$ is an irreducible curve. Since $h^*(\langle[C_1],\dotsc,[C_{t}]\rangle)=\langle[B_1],\dotsc,[B_{t}]\rangle$ is a fixed face of $\Eff(Y)$, $\langle[C_1],\dotsc,[C_{t}]\rangle$ must be a fixed face of $\Eff(S)$,
  so that $C_1,\dotsc,C_t$ are pairwise disjoint $(-1)$-curves.

We show that there exists a $(-1)$-curve $C'\subset S$ different from $C_{t-1}$ and $C_{t}$, and disjoint from $C_1,\dotsc,C_{t-2}$. In particular $\langle[C'],[C_1],\dotsc,[C_{t-2}]\rangle$ is a fixed face of $\Eff(S)$.

  Indeed let $\alpha\colon S\to S'$ be the contraction of the $(-1)$-curves $C_1,\dotsc,C_{t-2}$, so that $S'$ is a smooth del Pezzo surface with $\rho_{S'}=\rho_S-(t-2)\geq 3$. Then $\alpha(C_{t-1})$ and $\alpha(C_{t})$ are  $(-1)$-curves in $S'$, and there is a $(-1)$-curve $C''\subset S'$ different from $\alpha(C_{t-1})$ and $\alpha(C_{t})$. Then $C'$ is the transform of $C''$ in $S$.
\end{prg}
\begin{prg}
Let us consider $P':=h^*C'$, so that $\langle[P'],[B_1],\dotsc,[B_{t-2}]\rangle=h^*(\langle[C'],[C_1],
\dotsc,[C_{t-2}]\rangle)$ is a fixed face of $\Eff(Y)$. Then
$P'$ is a fixed prime divisor in $Y$, with class in $\eta$, distinct from  $B_{t-1}$ and $B_{t}$, and adjacent to $B_1,\dotsc,B_{t-2}$.

If $P'$ is of type $(3,2)$, we apply Lemma \ref{sushi} and get a contradiction with $(a')$ and $(b')$.
Therefore $P'$ is of type $(3,1)^{sm}$, and $P'\cdot C_{B_i}=0$ for $i=1,\dotsc,t-2$ by Lemma \ref{nigra}$(b)$. Moreover $P'$ is different from $B_1,\dotsc,B_t$ by construction, from $B_{t+1},\dotsc,B_{\rho_Y-4}$ (if $t<\rho_Y-4$) because $[P']\in\eta_1$, and from $E_{\rho_Y-3},E_{\rho_Y-2}$ because they are of type $(3,2)$. Thus $[P']\not\in\langle [B_1],\dotsc,[B_{\rho_Y-4}],[E_{\rho_Y-3}],[E_{\rho_Y-2}]\rangle=\tau\cap\eta$, and finally $[P']\not\in\tau$.

 By \eqref{tau}  we have $0<P'\cdot(C_{E_i}+C_{B_i})=P'\cdot C_{E_i}$ for every $i=1,\dotsc,t-2$, so that by \ref{lite} $P'$ is not adjacent to $t-2$ divisors among $E_1,\dotsc,E_r$. Therefore \ref{adjacent} yields $t-2\leq 2$, $\rho_Y-6\leq t\leq 4$, $\rho_Y\leq 10$, and $\rho_X\leq 11$. This concludes the proof of Th.~\ref{finalmente}.\qedhere
\end{prg}
\end{proof}
\section{The case $(3,0)^{Q}$}\label{Q}
\noindent In this section we conclude the proof of Th.~\ref{main}, by considering the case of fixed prime divisors of type $(3,0)^Q$; in fact we prove the following more refined version.
\begin{thm}\label{main2}
Let $X$ be a smooth Fano $4$-fold having a small elementary contraction. Then $\rho_X\leq 12$, and if $\rho_X=12$, then $X$ has a rational contraction onto a $3$-fold.
\end{thm}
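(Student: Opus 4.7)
The plan is to carry out a series of reductions that will land us in a situation where we can apply the dedicated tools for fixed prime divisors of type $(3,0)^Q$ developed in \S\ref{add2} and Section \ref{divlines}. Let $g\colon X\to Y$ be the small elementary contraction. By Rem.~\ref{stress2}, either $\rho_X\leq 11$ (and we are done), or there exists a fixed prime divisor $D\subset X$ with $D\cdot\NE(g)<0$. By Th.~\ref{kawamata} and Rem.~\ref{ikea}, $D$ cannot be of type $(3,2)$. If $X$ is a product of surfaces, Rem.~\ref{delta} forces $\rho_X\leq 2\delta_X+2\leq 8$ (and products of surfaces with $\rho\geq 7$ have no small contraction, cf.~Ex.~\ref{2022}), so we may assume $X$ is not such a product; then Th.~\ref{codim} and Th.~\ref{delta2} let us assume $\delta_X\leq 1$. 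By Th.~\ref{30} we may assume $D$ is not of type $(3,0)^{sm}$, and by Th.~\ref{finalmente} we may assume $X$ has no fixed prime divisor of type $(3,1)^{sm}$. Thus $D$ is of type $(3,0)^Q$ and every fixed prime divisor of $X$ is of type $(3,2)$ or $(3,0)^Q$.

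Next, I would consider the contraction $X\dasharrow\w{X}\stackrel{\sigma}{\to} Y$ associated to $D$ as in \S\ref{contraction}. By Rem.~\ref{generators}, since $\delta_X\leq 1$, $D$ contains at least $\rho_X-3$ exceptional planes $L_1,\dotsc,L_m$ with linearly independent classes $[C_{L_i}]$. Prop.~\ref{importante} produces from each $L_i$ a family of lines $V_i$ with $[V_i]+[C_{L_i}]\equiv[C_D]$, covering a prime divisor $B_i\neq D$. By Lemma \ref{chitarre}, each $B_i$ is either nef or is a fixed prime divisor of type $(3,2)$, and by Lemma \ref{excplane}$(a)$ distinct classes $[V_i]$ give distinct divisors $B_i$.

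The core of the argument is then a case analysis on how these $B_i$'s sit in $X$. If some $B_i$ is nef, pushing it down to $Y$ (it does not meet $\sigma(\Exc(\sigma))=p$ after transform by Lemma \ref{vecchio2} in generic position) and applying Prop.~\ref{test} yields $\rho_X\leq 12$ with the equality case covered by a rational contraction onto a $3$-fold. If at least two of the $B_i$ are fixed prime divisors of type $(3,2)$ adjacent to $D$, Lemma \ref{eccellenza} gives the conclusion directly. Otherwise we are in the delicate situation where essentially all the $B_i$'s are fixed of type $(3,2)$ but \emph{not} adjacent to $D$; using Lemma \ref{atletica}, Prop.~\ref{yoga}, Lemma \ref{pizza}, and Cor.~\ref{pasquetta} we obtain precise numerical information on the intersections $B_i\cdot C_{L_j}$ and $B_i\cdot [V_j]$, as well as on the location of the adjacent fixed divisors under $\sigma$.

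Finally, I would combine these numerical data with the movable/fixed face analysis of \S\ref{fixedfaces} and \S\ref{movable} in the style of steps \ref{GGA}, \ref{fibr2}, \ref{pioggia} of the proof of Th.~\ref{finalmente}: construct from $D$ together with carefully chosen $B_i$ (and their relatives on $Y$) a movable, non-big divisor on $X$, producing a rational contraction of fiber type $X\dasharrow Z$. Applying Prop.~\ref{usseaux} (together with Th.~\ref{3fold} and Th.~\ref{natale} to absorb the good cases), the remaining possibilities force $\rho_X\leq 12$, with equality only when there is a rational contraction onto a $3$-fold. The hardest part of the plan will be the bookkeeping in this last step: controlling simultaneously the fixed faces of $\Eff(X)$ generated by fixed divisors of types $(3,2)$ and $(3,0)^Q$, preventing higher-dimensional minimal movable faces, and ruling out the degenerate case where the target of the rational contraction is $\pr^1$ or $\pr^2$ and the general fiber has large Picard number, in analogy with \ref{GGA} and the del Pezzo-with-$\rho=9$ case of Lemma \ref{regular} and Lemma \ref{DAD}.
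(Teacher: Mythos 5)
Your opening reductions (small contraction $\Rightarrow$ fixed prime divisor $D$ with $D\cdot\NE(g)<0$, not of type $(3,2)$ by Th.~\ref{kawamata} and Rem.~\ref{ikea}, then eliminating products of surfaces, $\delta_X\geq 2$, type $(3,0)^{sm}$ and type $(3,1)^{sm}$) match the paper exactly and are fine. After that there are two genuine gaps.

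First, the step ``if some $B_i$ is nef, push it down to $Y$ and apply Prop.~\ref{test}'' fails. Prop.~\ref{test} requires a nef divisor $H\subset Y$ covered by lines with $H\cap\sigma(\Exc(\sigma))=\emptyset$. But the divisor $B_i=\Lo V_i$ produced by Prop.~\ref{importante} from $D$ and an exceptional plane $L_i\subset D$ always contains the surface $S$ (the transform of the cone of lines $S_0\subset\w{D}$ through $q\in\ell_i\cap\w{D}$), and $S\subset D$; hence $B_i\cap D\neq\emptyset$ and the transform of $B_i$ in $Y$ passes through $p=\sigma(\w{D})$, so the hypothesis of Prop.~\ref{test} is violated. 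In the paper Prop.~\ref{test} is invoked (inside Lemma \ref{michi}) only for divisors covered by lines built from an \emph{adjacent} fixed divisor $E$ of type $(3,0)^Q$ and an exceptional plane $L\subset E$ with $L\cap D=\emptyset$; there $D\cdot[V]=0$, the general line avoids $D$, and one can genuinely split into the cases $B\cap D=\emptyset$ (where Prop.~\ref{test} or Th.~\ref{natale} applies) and $B\cap D\neq\emptyset$ (which instead yields a distinguished exceptional plane $M\subset D$ with $C_M+C_E\equiv C_D+C_L$).

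Second, your endgame is a placeholder rather than an argument, and it points in a different direction from what actually closes the proof. The paper's mechanism is not ``construct a movable non-big divisor and apply Prop.~\ref{usseaux}'': after reducing (via Prop.~\ref{usseaux} and Lemma \ref{eccellenza}) to the situation where every $2$-dimensional face of $\Eff(X)$ is fixed and the fixed divisors adjacent to $D$ are, with at most one exception, of type $(3,0)^Q$, the key new inputs are Lemmas \ref{michi} and \ref{covid} on \emph{pairs of adjacent $(3,0)^Q$ divisors}. Lemma \ref{covid} forces each adjacent $E_h$ to satisfy that $E_h^{\perp}$ is spanned by $[C_D]$ together with $\rho_X-2$ of the classes $\gamma_1,\dotsc,\gamma_t$ of lines in the exceptional planes of $D$; since there are at least $\rho_X$ (resp.\ $\rho_X-1$) such $E_h$ giving distinct subsets, one gets $t\geq\rho_X$, and then the divisors $B_i$ (with their intersection numbers $B_i\cdot\gamma_j=0$ for $j\neq i$) pin down a plane $\pi=B_1^{\perp}\cap\cdots\cap B_{\rho_X-2}^{\perp}$ containing $\gamma_{\rho_X-1},\dotsc,\gamma_t$; a direct numerical computation with $-K_X+D$ and $D\cdot\gamma_i$ then produces three adjacent divisors with linearly dependent classes, a contradiction. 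None of this combinatorial structure (in particular the fact that the relevant auxiliary divisors are the \emph{adjacent $(3,0)^Q$ divisors}, not the type-$(3,2)$ divisors $B_i$) appears in your plan, so the hardest part of the proof is still missing.
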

First we prove two preliminary lemmas which, given two adjacent fixed prime divisors of type $(3,0)^Q$, yield either $\rho_X\leq 12$, or the existence of some special exceptional planes. Then we deal with the final proof; let us give an outline.

Let $X$ be a smooth Fano $4$-fold with $\rho_X\geq 7$ and having a small elementary contraction. Using the previous results we can assume that
  every fixed prime divisor of $X$ is of type $(3,2)$ or $(3,0)^Q$ (with at least one $D$ of type $(3,0)^Q$), that every $2$-dimensional face of $\Eff(X)$ is fixed, and finally that we are in one of the following cases:
\begin{enumerate}[$(a)$]
\item every fixed prime divisor of $X$ is of type $(3,0)^Q$;
\item  $D$  has one adjacent fixed prime divisor of type $(3,2)$, and the other divisors adjacent to $D$ are of type $(3,0)^Q$.
\end{enumerate}
  These two cases will be treated in parallel; for simplicity let us consider now case $(a)$.
 We work with the fixed prime divisors $E_1,\dotsc,E_r$ which are adjacent to $D$. We also consider the classes $\gamma_1,\dotsc,\gamma_t\in\N(X)$ of all lines in the exceptional planes contained in $D$. We show that for $h=1,\dotsc,r$ the hyperplane $E_h^{\perp}\subset\N(X)$ must contain $\rho_X-2$ classes among  $\gamma_1,\dotsc,\gamma_t$, and for $j\neq h$, $E_j^{\perp}$ must contain a different subset of $\rho_X-2$ classes. Then we develop more conditions on the $\gamma_i$'s and the possible subsets of $\rho_X-2$ classes contained in the $E_h^{\perp}$'s, in order to get the statement by applying the preliminary lemmas.
\begin{lemma}\label{michi}
Let $X$ be a smooth Fano $4$-fold with $\rho_X\geq 8$ and $\delta_X\leq 1$, and let $D,E$ be two adjacent fixed prime divisors of type $(3,0)^Q$ in $X$. Let $L\subset E$ be an exceptional plane such that $L\cap D=\emptyset$. Then 
one of the following holds:
\begin{enumerate}[$(i)$]
\item
$\rho_X\leq 12$, and if $\rho_X=12$, then $X$ has a rational contraction onto a $3$-fold;
\item
there exists an exceptional plane ${M}\subset D$ such that $C_{M}+C_E\equiv C_D+C_L$ and $D\cdot C_{M}=-1$.
\end{enumerate}
\end{lemma}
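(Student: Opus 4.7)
My plan is to start by applying Proposition~\ref{importante} to $E$ and $L$, obtaining a family of lines $V$ in $X$ whose locus $B := \Lo V$ is distinct from $E$ and satisfies $[V]+C_L\equiv C_E$ and $B\cdot C_L>0$. A key numerical identity is $D\cdot[V]=0$: indeed $D\cdot C_E=0$ by Lemma~\ref{nigra}$(b)$ (adjacency of $D,E$, both not of type $(3,2)$), and $D\cdot C_L=0$ since $L\cap D=\emptyset$. Lemma~\ref{chitarre} will then split the argument into two cases: $B$ is either a fixed prime divisor of type $(3,2)$, or nef.

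In the $(3,2)$ case, I would use $[V]=[C_B]$ from Lemma~\ref{excplane}$(a)$ to conclude that $D\cdot C_B=0$, making $D$ and $B$ adjacent via Lemma~\ref{2faces}$(a)$. If $D\cap B=\emptyset$, Remark~\ref{easy} gives $\N(B,X)\subseteq D^{\perp}\subsetneq\N(X)$ and Theorem~\ref{natale} yields conclusion $(i)$. Otherwise, Lemma~\ref{zoom} applied to the pair $(D,B)$ of types $(3,0)^Q$ and $(3,2)$ produces an exceptional plane $M\subset D$ with $D\cdot C_M=-1$ and $C_B\equiv C_D-C_M$; combining with $[V]\equiv C_E-C_L$ gives $C_M+C_E\equiv C_D+C_L$, which is conclusion $(ii)$.

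In the nef case, I would pass to the contraction $X\stackrel{\varphi}{\dasharrow}\tilde X\stackrel{\sigma}{\to}Y$ associated to $D$ and aim to apply Proposition~\ref{test} to $H$, the image of $B$ in $Y$. Since $D\cdot[V]=0$ and $B\neq D$, every $\Gamma\in V$ with $\Gamma\not\subset D$ satisfies $\Gamma\cap D=\emptyset$, and such members lift isomorphically through $\varphi$ and $\sigma$ to sweep $H$ with lines. The crucial step will be a dichotomy on whether some $\Gamma\in V$ lies in $D$. If so, then $\tilde\Gamma\subset\tilde D$ has class $n[C_D]$ for some integer $n\geq 1$, and Lemma~\ref{basic1} supplies the identity
\[
n[C_D]\equiv[V]+\sum_i e_i[C_{L_i}]\quad\text{in }\N(X),
\]
with $L_i\subset D$ the exceptional planes in $D$ and $e_i\geq 0$ counting intersections of $\Gamma$ with $L_i$. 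Intersecting with $-K_X$ gives $\sum e_i=2n-1$, while intersecting with $D$ and setting $d_i:=-D\cdot C_{L_i}\geq 1$ gives $\sum e_i d_i=n$; subtracting forces $\sum e_i(d_i-1)=1-n\leq 0$, whence $n=1$, a unique $e_j=1$ with $d_j=1$, and $C_{L_j}\equiv C_D+C_L-C_E$, which is conclusion $(ii)$ with $M:=L_j$. If no $\Gamma\in V$ lies in $D$, then $B\cap D=\emptyset$, so $\tilde B\cap\tilde D=\emptyset$ by Remark~\ref{intersection}, hence $H\cap\sigma(\Exc\sigma)=\emptyset$, and $H$ is nef in $Y$ (since $B\cap L_i=\emptyset$ for all $L_i\subset D$ makes intersections of $\tilde B$ with the exceptional curves of $\tilde X$ vanish); Proposition~\ref{test} then yields conclusion $(i)$. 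The technical heart of the proof will be this class-transformation analysis, which relies on the quadric structure of $\tilde D$ and the detailed intersection theory across the SQM~$\varphi$.
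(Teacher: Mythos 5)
Your proposal is correct and reaches the same four sub-cases as the paper, but it organizes them differently and, in the key step, derives conclusion $(ii)$ by a different mechanism. The paper's primary split is on whether $B\cap D=\emptyset$ (equivalently $p\in B_Y$): when $B$ meets $D$ it passes to $Y$, takes the member of the induced family $V_Y$ through $p$, and identifies it via Rem.~\ref{vecchissimo} as the image of an exceptional curve $\ell$ with $\w{D}\cdot\ell=1$, which hands over the plane $M$ directly. You instead split first on the Lemma~\ref{chitarre} dichotomy; in the $(3,2)$ case you quote Lemma~\ref{zoom} (a clean shortcut the paper does not use here), and in the nef case you analyse a member $\Gamma\subset D$ of $V$ upstairs in $X$, pinning down $M$ by pure arithmetic from the relations $\sum e_i=2n-1$ and $\sum e_id_i=n$. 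That computation is valid and in fact never uses nefness of $B$, so it could have replaced the Lemma~\ref{zoom} step as well; its only soft spot is the attribution of the identity $n[C_D]\equiv[V]+\sum_ie_i[C_{L_i}]$ to Lemma~\ref{basic1}, which as stated only yields the anticanonical relation $2n=1+\sum e_i$ — the full identity in $\N(X)$ additionally needs the standard flip relation $B\cdot C_{L_i}=-\w{B}\cdot\ell_i$ (equivalently $[C_{L_i}]\leftrightarrow-[\ell_i]$ under the SQM identification) together with $f_*$/$g_*$ bookkeeping on the common resolution; this is true and consistent with how the paper derives $C_D\equiv[V]+C_L$ in Prop.~\ref{importante}, so the argument stands. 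The remaining pieces (Th.~\ref{natale} when $B\cap D=\emptyset$ and $B$ is fixed, Prop.~\ref{test} when $B\cap D=\emptyset$ and $B$ is nef, using $p\notin B_Y$) match the paper; your route buys a more self-contained, computational identification of $M$ at the cost of redoing intersection theory across the SQM that the paper outsources to Rem.~\ref{vecchissimo}.
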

\begin{proof}
 Let $V$ be 
 the family of lines in $X$ given by $E$ and $L$ as in Prop.~\ref{importante}, so that $C_E\equiv C_L+[V]$ and $B:=\Lo V$ is a divisor.
Since $L\cap D=\emptyset$ we have $D\cdot C_{L}=0$, moreover $D\cdot C_E=0$ by Lemma \ref{nigra}$(b)$, hence $D\cdot [V]=0$, and the general curve of the family $V$ is disjoint from $D$.

Let $X\stackrel{\ph}{\dasharrow}\w{X}\stackrel{\sigma}{\to} Y$ be the contraction associated to $D$ as in \S\ref{contraction}, so that $\Exc(\sigma)=\w{D}$ and $\sigma(\w{D})=p$. Then the birational map $X\dasharrow Y$ is an isomorphism on the general curve of the family $V$, and $V$ yields a family of lines $V_Y$ in $Y$, with locus a prime divisor $B_Y\subset Y$ which is the transform of $B\subset X$; 
we also consider the transform 
$\w{B}$ of $B$ in $\w{X}$.

Recall that for every exceptional plane $L'\subset D$ we have $D\cdot C_{L'}<0$, while $D\cdot [V]=0$, hence $[V]\neq[C_{L'}]$ and $L'\not\subset B$ by Lemma \ref{excplane}$(b)$. Therefore neither $B\cap D$ nor $\w{B}\cap\w{D}$, if non-empty, can be contained in the indeterminacy locus of $\ph$ or $\ph^{-1}$, and we conclude that $B\cap D\neq \emptyset$ if and only if $\w{B}\cap\w{D}\neq\emptyset$, if and only if $p\in B_Y$.

Suppose that $B\cap D=\emptyset$. If $B$ is not nef, then $B$ is fixed of type $(3,2)$ by Lemma \ref{chitarre},  and $\N(B,X)\subseteq D^{\perp}\subsetneq\N(X)$ (see Rem.~\ref{easy}), so we get 
  $(i)$ by Th.~\ref{natale}. If $B$ is nef, then $B_Y$ is nef too (because $B$ is contained in the open subset where the birational map $X\dasharrow Y$ is an isomorphism), and we get again $(i)$ by Prop.~\ref{test}.

Finally suppose that $B\cap D\neq\emptyset$, so that  $p\in B_Y$. Let $\Gamma$ be a general curve of the family $V_Y$, and $\Gamma_0$ a curve of the family containing $p$. By Rem.~\ref{vecchissimo} we have $\Gamma_0=\sigma(\ell)$, where $\ell\subset\w{X}$ is an exceptional curve such that $\w{D}\cdot \ell=1$; moreover $\w{\Gamma}\equiv\ell+C_{\w{D}}$, where $\w{\Gamma}\subset\w{X}$ is the transform of $\Gamma$. Now if $M\subset D\subset X$ is the exceptional plane corresponding to $\ell$, we have $D\cdot C_{M}=-\w{D}\cdot\ell=-1$, and $[V] \equiv\lambda C_M+C_{{D}}$ where $\lambda\in\R$; intersecting with $-K_X$ we get
$C_D\equiv [V]+C_{M}$. This yields $C_{M}+C_E\equiv C_D+C_L$, so we get $(ii)$.
\end{proof}
For simplicity we set $\rho:=\rho_X$ for the rest of the section.
\begin{lemma}\label{covid}
  Let $X$ be a smooth Fano $4$-fold with $\rho\geq 7$ and $\delta_X\leq 1$, and
  let $D,E$ be two adjacent fixed prime divisors of type $(3,0)^Q$. Suppose that 
one of the following holds:
\begin{enumerate}[$(a)$]
\item every fixed prime divisor of $X$ is of type $(3,0)^Q$;
\item there is a fixed prime divisor $F$, of type $(3,2)$, such that
$\langle [D],[E],[F]\rangle$ is a fixed face of $\Eff(X)$.
\end{enumerate}
Then one of the following holds:
\begin{enumerate}[$(i)$]
\item
 $\rho\leq 12$, and if $\rho=12$, then $X$ has a rational contraction onto a $3$-fold;
\item
 there exist $L_1,\dotsc,L_{\rho-2}\subset {E}$ exceptional planes, disjoint from $D$, such that\\
$[C_E],[C_{L_1}],\dotsc,[C_{L_{\rho-2}}]$ is a basis of $D^{\perp}$.
In case $(b)$, we can moreover assume that  $C_E\equiv C_F+C_{L_1}$ and 
$[C_E],[C_F],[C_{L_2}],\dotsc,[C_{L_{\rho-2}}]$ is a basis of $D^{\perp}$.
\end{enumerate}
\end{lemma}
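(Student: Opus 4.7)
I would first reduce to a situation where conclusion~(i) does not hold, so there is something nontrivial to prove. By Theorems~\ref{natale} and~\ref{30}, I may assume $X$ has no fixed prime divisor of type $(3,0)^{sm}$ and that every fixed prime divisor $F$ of type $(3,2)$ satisfies $\N(F,X)=\N(X)$; by Lemma~\ref{eccellenza} at most one fixed prime divisor of type $(3,2)$ is adjacent to $D$, so in case~(a) none exists. In case~(b), the condition $\N(F,X)=\N(X)$ together with Rem.~\ref{easy} forces $F\cap D\neq\emptyset$ and $F\cap E\neq\emptyset$; applying Lemma~\ref{zoom} to $E$ and $F$ then yields an exceptional plane $L_1\subset E$ with $E\cdot C_{L_1}=-1$ and $C_E\equiv C_F+C_{L_1}$. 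Writing $C_{L_1}\equiv C_E-C_F$ and using Lemma~\ref{nigra}(a),(b) we get $D\cdot C_{L_1}=0$; since exceptional planes in $E$ are pairwise disjoint (see \S\ref{contraction}), this forces $L_1\cap D=\emptyset$.

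Next I record the basic structure. By Lemma~\ref{nigra}(b), $D\cdot C_E=E\cdot C_D=0$ and $D\cap E$ is either empty or a disjoint union of exceptional planes; in particular $[C_E]\in D^{\perp}$. Any exceptional plane $L\subset E$ is either disjoint from $D$ (and then $D\cdot C_L=0$, so $[C_L]\in D^{\perp}$) or coincides with a component of $D\cap E$ (and then $D\cdot C_L=-1$). Since $\delta_X\leq 1$, $\dim\N(E,X)\geq\rho-1$, so Rem.~\ref{generators} produces at least $\rho-2$ exceptional planes in $E$ with linearly independent classes spanning $\ker\bigl(\sigma_*\colon \N(E,X)\to\N(\w E,\w X)\bigr)$.

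The main obstacle is to upgrade this to $\rho-2$ exceptional planes in $E$ that are simultaneously disjoint from $D$ and have linearly independent classes, since a priori some of the planes from Rem.~\ref{generators} could lie in $D\cap E$. Assuming conclusion~(i) fails, Lemma~\ref{michi} gives the following correspondence: for every exceptional plane $L\subset E$ with $L\cap D=\emptyset$ there is an exceptional plane $M\subset D$ with $C_L\equiv C_M+C_E-C_D$, and by the symmetric application (exchanging the roles of $D$ and $E$, both being of type $(3,0)^Q$) the analogous statement holds with the roles reversed. Combining this bijection of classes with Rem.~\ref{generators} applied to $D$ (yielding at least $\rho-2$ exceptional planes of $D$ with independent classes) and with the dimension bounds $\dim\N(D,X),\dim\N(E,X)\leq\rho$ coming from $\delta_X\leq 1$, the classes of exceptional planes in $E$ disjoint from $D$ are forced to span a subspace of dimension at least $\rho-2$; the delicate counting here (balancing the classes coming from components of $D\cap E$ against their counterparts in $D$ via the correspondence) is where I expect the argument to require the most care.

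Once $L_1,\dotsc,L_{\rho-2}\subset E$ are selected, exceptional planes disjoint from $D$ with linearly independent classes, the vectors $[C_E],[C_{L_1}],\dotsc,[C_{L_{\rho-2}}]$ lie in the hyperplane $D^{\perp}$; they are independent because $\sigma_*[C_E]\neq 0$ while $\sigma_*[C_{L_i}]=0$, so $[C_E]$ is not in the span of the $[C_{L_i}]$, and the $[C_{L_i}]$ are independent by construction. Hence they form a basis of the $(\rho-1)$-dimensional space $D^{\perp}$. For the refinement in case~(b), I take $L_1$ to be the plane produced above from Lemma~\ref{zoom}, so that $C_E\equiv C_F+C_{L_1}$; then $[C_F]=[C_E]-[C_{L_1}]$ lies in the span of $[C_E],[C_{L_1}]$, and replacing $[C_{L_1}]$ by $[C_F]$ gives the basis $[C_E],[C_F],[C_{L_2}],\dotsc,[C_{L_{\rho-2}}]$ of $D^{\perp}$.
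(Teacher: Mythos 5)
Your set-up (the reductions via Theorems~\ref{natale} and \ref{30}, the use of Lemma~\ref{nigra} to get $D\cdot C_E=E\cdot C_D=0$ and the structure of $D\cap E$, and the treatment of $L_1$ in case~$(b)$ via Lemma~\ref{zoom}) matches the paper. But the core of the argument — producing $\rho-2$ exceptional planes in $E$ that are \emph{disjoint from $D$} and have independent classes — is exactly the step you leave as a ``delicate counting'', and the mechanism you propose for it does not work. Lemma~\ref{michi} takes as input an exceptional plane $L\subset E$ with $L\cap D=\emptyset$ and produces a plane in $D$; it says nothing about the problematic planes, namely the components of $D\cap E$, whose classes satisfy $D\cdot C_L<0$ and hence do not even lie in $D^{\perp}$. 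If $k\geq 1$ of the $m$ planes from Rem.~\ref{generators} applied to $E$ in $X$ are components of $D\cap E$, your construction yields only $m-k\leq\rho-3$ usable planes, and no correspondence with planes of $D$ repairs this, since what you need is a lower bound on the span of the classes of planes of $E$ \emph{disjoint} from $D$.

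The paper resolves this by a structurally different move that your proposal omits entirely: it passes to the SQM $X\dasharrow\w{X}$ associated to $D$, where $\w{D}\cap E_{\w{X}}=\emptyset$ (the components of $D\cap E$ are precisely the flipped planes), so \emph{every} exceptional plane of $E_{\w{X}}$ is automatically disjoint from $\w{D}$ and, being disjoint from the exceptional curves by Lemma~\ref{basic1}$(b)$, transforms back to a plane in $E$ disjoint from $D$. The count $\rho-2$ then requires $\dim\N(E_{\w{X}},\w{X})=\rho-1$, equivalently $\N(E_Y,Y)=\N(Y)$; your bound $\dim\N(E,X)\geq\rho-1$ from $\delta_X\leq 1$ is computed in $X$ and is not the relevant one, because flipping the components of $D\cap E$ can drop this dimension. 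The case $\N(E_Y,Y)\subsetneq\N(Y)$ is a genuine alternative that must be handled separately — the paper does so via Lemma~\ref{pizza} followed by Lemma~\ref{eccellenza} to land in conclusion~$(i)$ — and your proof has no counterpart for it. Until both of these points are supplied, the proof is incomplete at its main step.
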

\begin{proof}
We note that $X$ is not a product of surfaces (see Ex.~\ref{2022} or Rem.~\ref{delta}).
 Let $X\stackrel{\ph}{\dasharrow} \w{X}\stackrel{\sigma}{\to} Y$ be the contraction associated to $D$ as in \S\ref{contraction}, and $E_{\w{X}}\subset \w{X}$, $E_Y\subset Y$ the transforms of $E$; we still denote by $C_E\subset E_{\w{X}}$ the transform of $C_E$. By Lemma \ref{nigra}$(b)$ we know that $D\cdot C_E=E\cdot C_D=0$ and $D\cap E$ is either empty, or a union of exceptional planes; on the other hand $\w{D}\cap E_{\w{X}}=\emptyset$ in $\w{X}$.

If $\N(E_Y,Y)\subsetneq\N(Y)$, we apply Lemma \ref{pizza}. Either $X$ has a rational contraction onto a $3$-fold and we get $(i)$ by Th.~\ref{3fold}, or there is a fixed prime divisor $G\subset X$, of type $(3,2)$, adjacent to $D$, and such that $E\cdot C_G>0$. In particular we are in case $(b)$, and $G\neq F$ because $E\cdot C_F=0$ by Lemma \ref{nigra}$(a)$. Then $D$ is adjacent to two fixed prime divisors of type $(3,2)$, and 
 we get again $(i)$ by Lemma \ref{eccellenza}. 

Otherwise $\N(E_Y,Y)=\N(Y)$, and since $\N(E_Y,Y)=\sigma_*(\N(E_{\w{X}},\w{X}))$, we get $\dim\N(E_{\w{X}},\w{X})\geq\rho-1$. Since $\w{D}\cap  E_{\w{X}}=\emptyset$, we have $\N(E_{\w{X}},\w{X})=\w{D}^{\perp}$ (see Rem.~\ref{easy}); note that $[C_{\w{D}}]\not\in\N(E_{\w{X}},\w{X})$ because $\w{D}\cdot C_{\w{D}}=-1$.

There is a SQM $\w{X}\dasharrow \wi{X}$, obtained by considering the 
$E_{\w{X}}$-negative small extremal rays of $\NE(\w{X})$, whose indeterminacy locus is the union of the exceptional planes contained in $E_{\w{X}}$, and that has a factorization as in Lemma \ref{basic1}. Moreover the transform $E_{\wi{X}}\subset\wi{X}$ is the exceptional locus of an elementary divisorial contraction of type $(3,0)^Q$. Then, after \cite[Rem.~3.13]{eff}, as in Rem.~\ref{generators} we see that
 $E_{\w{X}}$ contain at least $\dim\N(E_{\w{X}},\w{X})-\dim\N(E_{\wi{X}},\wi{X})=\rho-2$ exceptional planes
 $L_1,\dotsc,L_{\rho-2}$ 
 such that $[C_{E}],[C_{L_1}],\dotsc,[C_{L_{\rho-2}}]$ is a basis of
$\N(E_{\w{X}},\w{X})$, so that $[C_{\w{D}}],[C_E],[C_{L_1}],\dotsc,[C_{L_{\rho-2}}]$ is a basis of $\N(\w{X})$.
 Note that each $L_i$ is disjoint from $\w{D}$ and also from every exceptional curve of $\w{X}$, by Lemma \ref{basic1}$(b)$, hence $L_i$ is contained in the open subset where $\ph\colon X\dasharrow \w{X}$ is an isomorphism,
and its transform in $X$ is still an exceptional plane $L_i\subset E$ such that $L_i\cap D=\emptyset$.
Moreover  $[C_E],[C_{L_1}],\dotsc,[C_{L_{\rho-2}}]$ are linearly independent in $\N(X)$, and they belong to $D^{\perp}$.

In case $(b)$, if $F\cap E=\emptyset$, then $\N(F,X)\subseteq E^{\perp}\subsetneq\N(X)$ (see Rem.~\ref{easy}) and we get again $(i)$ by Th.~\ref{natale}. Suppose that $F\cap E\neq\emptyset$; we have $E\cdot C_F=0$ by Lemma \ref{nigra}$(a)$, and by Lemma \ref{zoom} there exists an exceptional plane $L\subset E$ such that $C_E\equiv C_F+C_{L}$. Moreover we have $D\cdot C_F=0$ again by Lemma \ref{nigra}$(a)$, thus $0=D\cdot C_E= D\cdot (C_F+C_{L})=D\cdot C_L$. If $L\subset D$ we have $D\cdot C_L<0$ (see \ref{contraction}), therefore $L\cap D=\emptyset$ and $L$ is contained in the open subset where  $\ph\colon X\dasharrow\w{X}$ is an isomorphism.
Then we just choose $L_1=L$ in the previous construction, and get $(ii)$. 
\end{proof}
\begin{proof}[Proof of Theorem \ref{main2}]
\begin{prg}\label{rettore}
We assume that $\rho\geq 8$. 
Since $X$ has a small elementary contraction, it is not a product of surfaces, and we can assume that $\delta_X\leq 1$ by
 Theorems \ref{codim} and \ref{delta2}.
We can also assume that $X$ has no fixed prime divisor of type $(3,0)^{sm}$ or $(3,1)^{sm}$, by Theorems \ref{30}
and \ref{finalmente}, so that every fixed prime divisor of $X$ is of type $(3,0)^Q$ or $(3,2)$.

By Prop.~\ref{usseaux} we can assume that every $2$-dimensional face $\tau$ of $\Eff(X)$ is fixed, and that if $\tau$ is a $3$-dimensional non-fixed face of $\Eff(X)$,  then $\tau$ is generated by classes of fixed prime divisors of type $(3,2)$. In particular $\Eff(X)$ is generated by classes of fixed prime divisors.

By assumption $\NE(X)$ has a small extremal ray $R$, and there exists some fixed prime divisor having negative intersection with $R$; moreover it cannot be of type $(3,2)$ (see Rem.~\ref{stress}), thus
there exists at least one fixed prime divisor of type $(3,0)^Q$.
 By Lemma \ref{eccellenza} we can assume that we are in one of the following cases:
\begin{enumerate}[$(a)$]
\item every fixed prime divisor of $X$ is of type $(3,0)^Q$;
\item there is a fixed prime divisor $D$ of type $(3,0)^Q$ with one adjacent  fixed prime divisor 
$F$ of type $(3,2)$, and the other fixed prime divisors adjacent to $D$ are of type $(3,0)^Q$.
\end{enumerate}
\end{prg}
\begin{prg}\label{CCS}
Let $\tau$ be a  $3$-dimensional face of $\Eff(X)$
containing the class of some fixed prime divisor of type $(3,0)^Q$. Then $\tau$ is fixed by \ref{rettore},  so it is simplicial (see Lemma \ref{simplicial}), and
 by  Lemma \ref{eccellenza} we can assume that $\tau$ contains at most one class of a fixed prime divisor of type $(3,2)$.
\end{prg}
\begin{prg}\label{gelato}
  In case $(b)$, we can assume that every $4$-dimensional face $\eta$ of  $\Eff(X)$ containing $[D]$ and $[F]$ is fixed and simplicial.
\end{prg}
\begin{proof}
Let $\tau$ be a facet of $\eta$ containing $[D]$. Then by \ref{CCS}
$\tau$ is fixed and contains  at most one class of a fixed prime divisor of type $(3,2)$. Thus every facet $\tau'$ of $\eta$ such that $\dim(\tau\cap\tau')=2$ still contains the class of a fixed prime divisor of type $(3,0)^Q$. Proceeding in this way, we conclude that every facet of $\eta$ is fixed.

Now if $\eta$ is not fixed, it is a minimal movable face, and applying Prop.~\ref{usseaux} we get the statement. If instead $\eta$ is fixed, it is simplicial too (see Lemma \ref{simplicial}).
\end{proof}  
\begin{prg}\label{letto}
  Let us fix the following notation. In case $(a)$, we fix a $D$ of type $(3,0)^Q$, and
  $E_1,\dotsc,E_r$ are the fixed prime divisors adjacent to $D$. In case $(b)$, $E_1,\dotsc,E_r$ are the fixed prime divisors such that $\langle [D],[E_i],[F]\rangle$ is a fixed face of $\Eff(X)$. 

In both cases $E_1,\dotsc,E_r$ are all of type $(3,0)^Q$ (in case $(b)$ we use \ref{CCS}). Note that $E_i\cdot C_D=D\cdot C_{E_i}=0$ for all $i$ by Lemma \ref{nigra}$(b)$. Moreover, by \ref{rettore}, in case $(a)$ 
$\langle [D],[E_i]\rangle$ are all the $2$-dimensional faces of $\Eff(X)$ containing $[D]$, and in case $(b)$  $\langle [D],[E_i],[F]\rangle$ are all the $3$-dimensional faces of $\Eff(X)$ containing $\langle [D],[F]\rangle$.

Suppose that we are in  case $(b)$; similarly as above we have $D\cdot C_F=E_i\cdot C_F=0$ for all $i$. Moreover if $F$ is disjoint from one of the divisors $D,E_1,\dotsc,E_r$, then $\N(F,X)\subsetneq\N(X)$ (see Rem.~\ref{easy}) and we conclude by 
Th.~\ref{natale}. Thus we can assume that $F$ intersects all divisors $D,E_1,\dotsc,E_r$, and by Cor~\ref{pasquetta} we deduce that 
$F\cdot C_D=F\cdot C_{E_i}=1$ for all $i$. Summing up we have:
\stepcounter{thm}
\begin{equation}\label{divano}
D\cdot C_{E_i}=E_i\cdot C_D=D\cdot C_F=E_i\cdot C_F=0\ \text{ and }\ F\cdot C_D=F\cdot C_{E_i}=1\ \text{ for all }i.
\end{equation}
\end{prg}
\begin{prg}\label{treno}
We have $r\geq\rho$ in case $(a)$, and $r\geq\rho-1$ in case $(b)$.
\end{prg}
\begin{proof}
Note that $r\geq\rho-1$ and $r\geq\rho-2$ respectively for dimensional reasons, because  $\Eff(X)$ is a cone of dimension $\rho$, thus $[D]$ must 
be contained in at least 
 $\rho-1$ faces of dimension $2$, and similarly in case $(b)$.

In case $(a)$, assume that  $r=\rho-1$. Since every $3$-dimensional face of $\Eff(X)$ is fixed and simplicial by \ref{rettore}, we conclude that for every $i\neq j$ the cone $\langle[D],[E_i],[E_j]\rangle$ is a face of $\Eff(X)$, in particular $E_i$ and $E_j$ are adjacent, hence $E_i\cdot C_{E_j}=0$ by Lemma \ref{nigra}$(b)$. Now the classes of $D,E_1,\dotsc,E_{\rho-1}$ form a basis of $\Nu(X)$, so that we can write
$$-K_X\equiv a_0D+\sum_{i=1}^{\rho-1}a_iE_i$$
with $a_i\in \R$. Using \eqref{divano}, intersecting with $C_D$ we get $a_0=-2$, and intersecting with $C_{E_j}$ we get $a_j=-2$ for $j=1,\dotsc,\rho-1$, which gives a contradiction.

Case $(b)$ is similar. Suppose that $r=\rho-2$, and recall that by \ref{gelato} every $4$-dimensional face of $\Eff(X)$ containing $[D]$ and $[F]$ is fixed and simplicial. Then for every $i\neq j$ the cone $\langle[D],[F],[E_i],[E_j]\rangle$ is a face of $\Eff(X)$,  in particular $E_i$ and $E_j$ are adjacent, hence $E_i\cdot C_{E_j}=0$. Now the classes of $D,F,E_1,\dotsc,E_{\rho-2}$ form a basis of $\Nu(X)$, so that we can write
$$-K_X\equiv a_0D+\sum_{i=1}^{\rho-2}a_iE_i+bF$$
with $a_i,b\in \R$. Using \eqref{divano}, intersecting with $C_F$ we get $b=-1$, intersecting with 
$C_D$ we get $a_0=b-2=-3$ and intersecting with $C_{E_j}$ we get $a_j=b-2=-3$, $j=1,\dotsc,\rho-2$, again a contradiction. 
\end{proof}
\begin{prg}\label{pasqua}
 Let us consider all the exceptional planes contained in $D$, and let $\gamma_1,\dotsc,\gamma_t\in\N(X)$ be the distinct classes of all lines in these exceptional planes. We have $D\cdot\gamma_i<0$ for every $i$ (see \S\ref{contraction}); let us reorder the $\gamma_i$'s in such a way that:  
\stepcounter{thm}
\begin{equation}\label{compleanno}
D\cdot\gamma_i=-1\ \text{ for }i=1,\dotsc,s\ \text{ and }\ 
D\cdot\gamma_i\leq -2\ \text{ for }i=s+1,\dotsc,t.
\end{equation} 
In case $(b)$, we can assume that $C_D\equiv C_F+\gamma_1$. Indeed 
 we have $D\cap F\neq\emptyset$ and $D\cdot C_F=0$ (see \ref{letto}), and by Lemma \ref{zoom}  there exists $i\in\{1,\dotsc,s\}$ such that $C_D\equiv C_F+\gamma_i$; up to renumbering we can assume that $i=1$.
\end{prg}
\begin{prg}\label{freccia}
Let us consider $E_h$ with $h\in\{1,\dotsc,r\}$; we apply Lemma \ref{covid} to $D$ and $E_h$ (and $F$ in case $(b)$).
Either we get the statement, or $E_h^{\perp}$
must be generated  $[C_D]$ and by  $\rho-2$ classes among  $\gamma_1,\dotsc,\gamma_t$.

If we choose some $E_j$ different from $E_h$, then the classes $[E_h]$ and $[E_j]$ cannot be multiples, hence
the hyperplanes $E_h^{\perp}$ and $E_j^{\perp}$ are different. Thus  $E_j^{\perp}$ is generated  by $[C_D]$ and by a different choice of $\rho-2$ among the classes $\gamma_1,\dotsc,\gamma_t$.

In case $(a)$, using \ref{treno}, we get: $$\rho\leq r\leq\binom{t}{\rho-2},$$
which implies that $t\geq\rho$.

Similarly,
 in case $(b)$, since $C_D\equiv C_F+\gamma_1$, by Lemma \ref{covid} we have also that $E_h^{\perp}$
 must be generated  $[C_D]$, $[C_F]$, and by $\rho-3$ classes among $\gamma_2,\dotsc,\gamma_t$.
Then we get
 $$\rho-1\leq r\leq\binom{t-1}{\rho-3},$$
which implies again that $t\geq\rho$.
\end{prg}
\begin{prg}
Now we apply Lemma \ref{covid} to $E_1$ and $D$ (with the roles interchanged with respect to \ref{freccia}), and $F$ in case $(b)$. Either we get the statement, or there exist exceptional planes $L_1,\dotsc,L_{\rho-2}\subset E_1$
such that $L_j\cap D=\emptyset$ and the classes $[C_{L_1}],\dotsc,[C_{L_{\rho-2}}]$ are all distinct. 
\end{prg}
\begin{prg}\label{gomme}
 We apply Lemma \ref{michi} to $D$, $E_1$, and $L_j\subset E_1$, with $j\in\{1,\dotsc,\rho-2\}$. Either we get the statement, or there exists an exceptional plane $M_j\subset D$ such that $C_{M_j}\equiv C_D+C_{L_j}-C_{E_1}$ and $D\cdot C_{M_j}=-1$.

 In particular the classes $[C_{M_j}]$, for $j=1,\dotsc,\rho-2$, are all distinct, and must appear among $\gamma_1,\dotsc,\gamma_s$ (see \eqref{compleanno}); we deduce that $s\geq \rho-2$.
\end{prg}
\begin{prg}\label{AV}
Fix $i\in\{1,\dotsc,s\}$, so that $D\cdot\gamma_i=-1$. Let $P_i\subset D$ be an exceptional plane whose lines have class $\gamma_i$, and let $V_i$ be the family of lines in $X$ given by $D$ and $P_i$ as in Prop.~\ref{importante}, with locus a prime divisor $B_i\neq D$, so that  $C_D\equiv\gamma_i+[V_i]$, $B_i\cdot\gamma_i>0$, and $P_i\not\subset B_i$. 
Moreover by Lemma \ref{zoom2} $B_i$ cannot meet any exceptional plane $L\subset D$ such that $C_L\not\equiv\gamma_i$, so that $B_i\cdot\gamma_j=0$ for every $j\in\{1,\dotsc,t\}$, $j\neq i$. 
In particular $B_i\cap D$ is not a  union of exceptional planes, and $B_i\cdot C_D>0$ by Rem.~\ref{intersection}.
 Summing up we have:
 $$C_D\equiv\gamma_i+[V_i],\  B_i\cdot C_D>0,\ B_i\cdot\gamma_i>0,\ \text{and}
\
B_i\cdot\gamma_j=0\text{ for every }j\in\{1,\dotsc,t\},\ j\neq i.$$ 
\end{prg}
\begin{prg}\label{assemblee}
We deduce that both $\gamma_1,\dotsc,\gamma_s\in\N(X)$ and $[B_1],\dotsc,[B_s]\in\Nu(X)$ are linearly independent, and that $$\gamma_{s+1},\dotsc,\gamma_t\in B_1^{\perp}\cap\cdots\cap B_s^{\perp}.$$
Moreover note that $(-K_X+D)\cdot\gamma_i=0$ for every $i=1,\dotsc,s$, and since $-K_X+D\not\equiv 0$, we have $s\leq\rho-1$,   namely $s\in\{\rho-2,\rho-1\}$ (see \ref{gomme}).
\end{prg}
\begin{prg}\label{sum}
We will need the following estimation. For every $i=1,\dotsc,s$ we have $B_i\cdot [V_i]\geq -1$ (see Lemma \ref{chitarre}) and $0<B_i\cdot\gamma_i=B_i\cdot C_D-B_i\cdot [V_i]\leq B_i\cdot C_D+1$, thus
$$\frac{B_i\cdot C_D}{B_i\cdot\gamma_i}\geq \frac{B_i\cdot C_D}{B_i\cdot
C_D+1}\geq\frac{1}{2}\quad\text{ and }\quad
M:=\sum_{i=1}^s\frac{B_i\cdot C_D}{B_i\cdot\gamma_i}\geq\frac{1}{2}s.$$
\end{prg}
\begin{prg}
We show that
$s=\rho-2$.

Otherwise, we have $s=\rho-1$ by \ref{assemblee}.
Note that $(-K_X+D)^{\perp}$ is generated by $\gamma_1,\dotsc,\gamma_{\rho-1}$ (see \ref{assemblee}), and $(-K_X+D)\cdot C_D=1$, so that $\gamma_1,\dotsc,\gamma_{\rho-1}, [C_D]$ is a basis of $\N(X)$.
Recall that $t\geq \rho$ by \ref{freccia}, and write
$$\gamma_{\rho}=\sum_{i=1}^{\rho-1}a_i\gamma_i+a_0[C_D]$$
with $a_i\in\R$. By intersecting with $B_j$ for $j\in\{1,\dotsc,\rho-1\}$
we get  (see \ref{AV})
$$a_j=-a_0\frac{B_j\cdot C_D}{B_j\cdot\gamma_j}\quad\text{and}\quad 
\gamma_{\rho}=a_0\left(-\sum_{i=1}^{\rho-1}\frac{B_i\cdot C_D}{B_i\cdot\gamma_i}
\gamma_i+[C_D]\right).$$
Intersecting with $-K_X$ we get $1=a_0(-M+2)$, where  as in \ref{sum} we have $M\geq s/2=(\rho-1)/2\geq 7/2$. Hence 
$$\gamma_{\rho}=-\frac{1}{M-2}\left(-\sum_{i=1}^{\rho-1}\frac{B_i\cdot C_D}{B_i\cdot\gamma_i}
\gamma_i+[C_D]\right),$$
and finally intersecting with $D$ we get (see \eqref{compleanno}):
$$-2\geq D\cdot \gamma_{\rho}=-\frac{M-1}{M-2},$$
 which yields $M\leq 3$, a contradiction. Therefore $s=\rho-2$.
\end{prg}
\begin{prg}
Let us consider the plane 
$$\pi:=B_1^{\perp}\cap\cdots\cap B_{\rho-2}^{\perp}\subset\N(X).$$
We have $\gamma_{\rho-1},\dotsc,\gamma_t\in\pi$ (see \ref{assemblee}), and since $-K_X\cdot\gamma_i=1$ for every $i$, these classes cannot be proportional.
Recall that $t\geq\rho$ by \ref{freccia}, so there are at least two such classes.

Let us consider the $2$-dimensional cone $\langle \gamma_{\rho-1},\dotsc,\gamma_t\rangle\subset\pi$; up to renumbering we can assume that $\gamma_{\rho-1}$ and $\gamma_{\rho}$ generate the cone, that $D\cdot \gamma_{\rho-1}\leq D\cdot\gamma_{\rho}$, 
and that if $t>\rho$ the remaining classes $\gamma_{\rho+1},\dotsc,\gamma_t$ belong to the interior of the cone.
\end{prg}
\begin{prg}\label{-3}
Suppose that
$D\cdot \gamma_{\rho-1}=D\cdot\gamma_{\rho}$, and set $m:=-D\cdot \gamma_{\rho-1}\geq 2$ (see \eqref{compleanno}). Then $(-mK_X+D)\cdot\gamma_{\rho-1}=(-mK_X+D)\cdot\gamma_{\rho}=0$,
hence $(-mK_X+D)^{\perp}\supset\pi$
and
$$-mK_X+D\equiv\sum_{i=1}^{\rho-2}\lambda_iB_i$$
with $\lambda_i\in\R$. Intersecting with $\gamma_j$, $j\in\{1,\dotsc,\rho-2\}$, we get $m-1=\lambda_jB_j\cdot\gamma_j$ and 
$$-mK_X+D\equiv\bigl(m-1\bigr)\sum_{i=1}^{\rho-2}\frac{1}{B_i\cdot\gamma_i}B_i.$$
Then intersecting with $C_D$ we get $2m-1=(m-1)M$ where $M\geq s/2=(\rho-2)/2$ by \ref{sum}. Since $m\geq 2$, we get $M\leq 3$ and $\rho= 8$.
\end{prg}
\emph{We assume from now on that $D\cdot \gamma_{\rho-1}< D\cdot\gamma_{\rho}\leq -2$, so that $D\cdot\gamma_{\rho-1}\leq -3$.}
\begin{prg}
Let $h\in\{1,\dotsc,r\}$. Since $E_h$ is adjacent to $D$,
$E_h\cap D$ is either empty or a disjoint union of exceptional planes, by Lemma \ref{nigra}$(b)$. Thus for every  $i\in\{1,\dotsc,t\}$, if $P_i\subset D$
is an exceptional plane whose lines have class $\gamma_i$, we have either $P_i\cap E_h=\emptyset$ and
$E_h\cdot\gamma_i= 0$,
or $P_i\subset E_h$ and
$E_h\cdot\gamma_i< 0$ (see \S\ref{contraction}); in any case
 we have:
  $$E_h\cdot\gamma_i\leq 0.$$ 
\end{prg}
\begin{prg}
We show that  $E_h^{\perp}$ does not contain the plane $\pi$.
Suppose otherwise: then
there exist $\lambda_i\in\R$ such that $E_h\equiv\sum_{i=1}^{\rho-2}\lambda_iB_i$, and for every $i\in\{1,\dotsc,\rho-2\}$ we get
$$0\geq E_h\cdot\gamma_i=\lambda_iB_i\cdot\gamma_i$$
thus $\lambda_i\leq 0$; this is impossible because $E_h$ is effective and non-zero.

Therefore $E_h^{\perp}$ can contain at most one of the classes $\gamma_{\rho-1},\dotsc,\gamma_t\in\pi$. 
On the other hand, since $E_h\cdot \gamma_i\leq 0$ for every $i$,  $E_h^{\perp}$ must cut the cone $\langle \gamma_{\rho-1},\gamma_{\rho}\rangle$ along a face, and we conclude that $E_h^{\perp}\cap\{\gamma_{\rho-1},\dotsc,\gamma_t\}$ is either empty, or $\gamma_{\rho-1}$, or $\gamma_{\rho}$.
\end{prg}
\begin{prg}
 Recall that by \ref{freccia} $E_h^{\perp}$ must contain $\rho-2$ classes among $\gamma_1,\dotsc,\gamma_t$, and in case $(b)$ it must contain  $\rho-3$ classes among $\gamma_2\dotsc,\gamma_t$.
Thus 
 in case $(a)$ 
$E_h^{\perp}$ must contain either $\gamma_1,\dotsc,\gamma_{\rho-2}$, or  $\gamma_1,\dotsc,\check{\gamma}_i,\dotsc,\gamma_{\rho-2},\gamma_{\rho-1}$,
or $\gamma_1,\dotsc,\check{\gamma}_i,\dotsc,\gamma_{\rho-2},\gamma_{\rho}$, with $i\in\{1,\dotsc,\rho-2\}$; similarly in 
case $(b)$ 
$E_h^{\perp}$ must contain either $\gamma_2,\dotsc,\gamma_{\rho-2}$, or  $\gamma_2,\dotsc,\check{\gamma}_i,\dotsc,\gamma_{\rho-2},\gamma_{\rho-1}$,
or $\gamma_2,\dotsc,\check{\gamma}_i,\dotsc,\gamma_{\rho-2},\gamma_{\rho}$, with $i\in\{2,\dotsc,\rho-2\}$.

Again by \ref{freccia} the set of $\gamma_i$'s contained in $E_h^{\perp}$ must vary when $h$ varies, and by \ref{treno} 
we have $r\geq\rho$ in case $(a)$ and $r\geq\rho-1$ in case $(b)$. Therefore
there exists 
$h_0\in\{1,\dotsc,r\}$ such that  $E_{h_0}\cdot\gamma_{\rho-1}=0$.
\end{prg}
\begin{prg}
Consider an exceptional plane $P_{\rho-1}\subset D$ whose lines have class $\gamma_{\rho-1}$. 
We have $E_{h_0}\cdot\gamma_{\rho-1}=0$, therefore $P_{\rho-1}\not\subset E_{h_0}$
(otherwise $E_{h_0}\cdot\gamma_{\rho-1}<0$, see \S\ref{contraction}) and hence 
$P_{\rho-1}\cap E_{h_0}=\emptyset$.

We apply Lemma \ref{michi} to $D$, $E_{h_0}$, and $P_{\rho-1}\subset D$. Either we get the statement, or
 there exists an exceptional plane $N\subset E_{h_0}$ such that $C_D+C_{N}\equiv C_{E_{h_0}}+\gamma_{\rho-1}$ (note that $[C_N]\neq\gamma_{\rho-1}$).
We show that this last case leads to a contradiction.
\end{prg}
\begin{prg}
We have $D\cdot C_{N}=D\cdot(C_{E_{h_0}}+\gamma_{\rho-1}-C_D)=D\cdot \gamma_{\rho-1}+1\leq -2$ by \eqref{divano} and \ref{-3}, so that $N\subset D$ and
 $[C_N]=\gamma_j$ for some $j \in\{\rho,\dotsc,t\}$. Note that all the classes $\gamma_{\rho-1},\dotsc,\gamma_{t}$ have intersection $1$ with $-K_X$, so they  belong to the segment from $\gamma_{\rho-1}$ to $\gamma_{\rho}$ in the plane $\pi$, and there exists some $\lambda\in (0,1]$ such that
$$[C_N]=(1-\lambda)\gamma_{\rho-1}+\lambda\gamma_{\rho}.$$
Intersecting with $D$ we get
\begin{gather*}
D\cdot \gamma_{\rho-1}+1=D\cdot C_{N}=(1-\lambda) D\cdot\gamma_{\rho-1}+\lambda D\cdot\gamma_{\rho}=D\cdot \gamma_{\rho-1}+\lambda(D\cdot\gamma_{\rho}-
D\cdot\gamma_{\rho-1})\\
\text{and}\quad\lambda=\frac{1}{D\cdot \gamma_{\rho}-D\cdot\gamma_{\rho-1}}
\end{gather*}
so that $[C_N]$ is uniquely determined and 
 $[C_ {E_{h_0}}]=[C_D]+[C_N]-\gamma_{\rho-1}$ is uniquely determined too. This means that $E_{h_0}$ is the unique divisor in $\{E_1,\dotsc,E_r\}$ such that  $E_{h_0}\cdot\gamma_{\rho-1}=0$.
\end{prg}
\begin{prg}
In case $(a)$, we deduce that $r=\rho$, and up to renumbering we can assume that $h_0=\rho-1$ and:
\begin{enumerate}[$\bullet$]
\item
$E_i^{\perp}$ contains $\gamma_1,\dotsc,\check{\gamma}_i,\dotsc,\gamma_{\rho-2},\gamma_{\rho}$, for $i=1,\dotsc,\rho-2$;
\item $E_{\rho-1}^{\perp}$ contains  
$\gamma_2,\dotsc,\gamma_{\rho-2},\gamma_{\rho-1}$;
\item
$E_{\rho}^{\perp}$ contains $\gamma_1,\dotsc,\gamma_{\rho-2}$.
\end{enumerate}
Then $E_1^{\perp}$, $E_{\rho-1}^{\perp}$ and $E_{\rho}^{\perp}$ all contain the $\rho-2$ classes $\gamma_2,\dotsc,\gamma_{\rho-2},[C_D]$, which are linearly independent, because  $\gamma_2,\dotsc,\gamma_{\rho-2}$ are linearly independent and belong to $(-K_X+D)^{\perp}$, while 
 $(-K_X+D)\cdot C_D=1$. We deduce that the classes $[E_1]$, $[E_{\rho-1}]$ and $[E_{\rho}]$ must be linearly dependent, but this is impossible, because they generate one-dimensional faces of $\Eff(X)$.
\end{prg}
\begin{prg}
Case $(b)$ is similar:
we deduce that 
 $r=\rho-1$
and we find three distinct divisors $E_i$'s that have zero intersection with $C_D$, $C_F$, and $\rho-4$ among the $\gamma_j$'s; we conclude that these three divisors must have linearly dependent classes, which gives again a contradiction. This concludes the proof of Th.~\ref{main2}.
\qedhere
\end{prg}
\end{proof}
\providecommand{\noop}[1]{}
\providecommand{\bysame}{\leavevmode\hbox to3em{\hrulefill}\thinspace}
\providecommand{\MR}{\relax\ifhmode\unskip\space\fi MR }
\providecommand{\MRhref}[2]{%
  \href{http://www.ams.org/mathscinet-getitem?mr=#1}{#2}
}
\providecommand{\href}[2]{#2}

\end{document}